\newtheorem{thm}{Theorem}[section]
\newtheorem*{thm*}{Theorem}
\newtheorem*{prop*}{Proposition}
\newtheorem*{corol*}{Corollary}
\newtheorem*{lemma*}{Lemma}
\newtheorem{prop}[thm]{Proposition}
\newtheorem{corol}[thm]{Corollary}
\newtheorem{lemma}[thm]{Lemma}
\newtheorem*{claim*}{Claim}
\newtheorem*{question*}{Question}
\theoremstyle{definition}
\newtheorem{defi}[thm]{Definition}
\newtheorem*{defi*}{Definition}
\newtheorem{remark}[thm]{Remark}
\newtheorem*{remark*}{Remark}
\newtheorem*{remarks*}{Remarks}
\theoremstyle{plain}
\newtheorem*{namedthm}{\namedthmname}
\newcounter{namedthm}
\newenvironment{named}[1]
{\def\namedthmname{#1}%
	\refstepcounter{namedthm}%
	\namedthm\def\@currentlabel{#1}}
{\endnamedthm}
\title{Periodic boundary points for simply connected Fatou components of transcendental maps}
\author[1]{Anna Jov\'e\thanks{This work is supported by the Spanish government grant FPI PRE2021-097372 and PID2020-118281GB-C32. Corresponding author. \url{ajovecam7@alumnes.ub.edu}.}}
\affil[1]{\small Departament de Matemàtiques i Informàtica, Universitat de Barcelona, Barcelona, Spain}
\begin{document}
\maketitle
\begin{abstract}
Let $ f\colon\mathbb{C}\to\widehat{\mathbb{C}} $ be a transcendental map, and let $ U $ be an attracting or parabolic basin, or a doubly parabolic Baker domain. Assume $ U $ is simply connected. Then, we prove that periodic points are dense in $ \partial U $, under certain hypothesis on the postsingular set. This generalizes a result by F. Przytycki and A. Zdunik for rational maps \cite{przytycki_zdunik}. Our proof uses techniques from measure theory, ergodic theory, conformal analysis, and inner functions. In particular, a result on the distortion of inner functions near the unit circle is provided, which is of independent interest.
\end{abstract}

\section{Introduction}

Chaos is a key concept related to the complexity of a topological dynamical system, which is in turn, and according to any definition of chaos, strongly connected with the density of periodic points. In this paper, we investigate density of periodic points on invariant subset of the phase space, namely the boundary of stable regions like, for example basins of attraction.

Throughout this paper, we consider the discrete  dynamical system generated by  a holomorphic map $ f \colon\mathbb{C}\to\widehat{\mathbb{C}} $, i.e. we study the sequences of iterates $ \left\lbrace f^n(z)\right\rbrace _n $, where $ z\in{\mathbb{C}} $. We assume $ f $ is neither constant nor Möbius. If infinity is an essential singularity of $ f $, then we say that $ f $ is a \textit{transcendental meromorphic map}; otherwise $ f $ extends holomorphically to $ \widehat{\mathbb{C}} $ as a rational map. These dynamical systems arise naturally, for example, from the popular Newton’s root-finding method applied to entire functions. For general background on the iteration of meromorphic maps, we refer to \cite{bergweiler}. 

In this situation, the complex plane, regarded as the phase space of the dynamical system, is divided into two totally invariant sets: the {\em Fatou set} $ \mathcal{F}(f) $,  the set of points $ z\in\mathbb{C} $ such that $ \left\lbrace f^n\right\rbrace _{n\in\mathbb{N}} $ is well-defined and forms a normal family in some neighbourhood of $ z$; and the {\em Julia set} $ \mathcal{J}(f) $, its complement, where the dynamics is chaotic.  In particular, periodic points are dense in $ \mathcal{J}(f) $.  Indeed, for rational maps, this was already established by Fatou and Julia in the beginning of the twentieth century. Baker \cite[Thm. 1]{Baker_FixedPoints} proved that this holds for transcendental entire functions as well, relying on a deep theorem of Ahlfors, a proof that was later generalized to transcendental meromorphic functions \cite[Thm. 1]{BakerKotusLu_I}.  The goal of this paper is to study density of periodic points in appropriate invariant subsets of the Julia set, which will be determined by means of the Fatou set.

More precisely, the Fatou set is open and consists in general of infinitely many components, which are called \textit{Fatou components}. Due to the invariance of the Fatou and Julia sets, Fatou components are either periodic, preperiodic or wandering. Periodic Fatou components are classified into \textit{rotation domains} (\textit{Siegel disks} or \textit{Herman rings}), \textit{attracting} and \textit{parabolic basins}, and \textit{Baker domains}, being the latter exclusive of transcendental functions.

Observe that, if $ U $ is a $ p $-periodic Fatou component, then $ \left\lbrace f^n(\partial U)\right\rbrace _{n=0}^{p-1} $ is a closed invariant subset of the Julia set. Hence, we ask the following question.

\begin{question*}
	Let $ U $ be a periodic Fatou component. Are periodic points dense in $ \partial U $?
\end{question*}

Note that although periodic points are dense in the Julia set, {\em a priori} they could  accumulate on $ \partial U $ only from the complement of $ \overline{U} $, without being in $ \partial U $. For instance, if $ U $ is a rotation domain with locally connected boundary, then there are no periodic points in $ \partial U $ at all. Nevertheless, F. Przytycki and A. Zdunik gave a positive answer to the question for rational maps, excluding Fatou components which are rotation domains.

\begin{thm*}{\bf (Przytycki-Zdunik, {\normalfont\cite{przytycki_zdunik}})}
	Let $ f\colon\widehat{\mathbb{C}}\to\widehat{\mathbb{C}} $ be a rational map, and let $ U $ be an attracting or parabolic basin for $ f $. Then, periodic points are dense in $ \partial U $.
\end{thm*}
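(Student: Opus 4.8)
\textbf{Step 1 (set-up and reduction).} The plan is to transport the problem to the unit disc through a Riemann map, identify the action of $ f $ on $ \partial U $ with the boundary action of an inner function, and then manufacture periodic points of $ f $ on $ \partial U $ out of periodic points of that inner function on $ \partial\mathbb{D} $. Replacing $ f $ by $ f^{p} $, where $ p $ is the period of $ U $, we may assume $ f(U)=U $: periodic points of $ f^{p} $ in $ \partial U $ are periodic for $ f $, and $ \partial U $ is unchanged. An invariant component of an attracting basin contains the attracting fixed point, and an invariant component of a parabolic basin is attached to the parabolic fixed point (which lies on its boundary), so $ U $ is now an immediate basin component. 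I carry out the argument under the additional assumption --- the setting of the present paper --- that $ U $ is simply connected, so that it carries a Riemann map; the general, possibly multiply connected, rational case is discussed in Step 4.

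\textbf{Step 2 (the inner function).} Let $ \phi\colon\mathbb{D}\to U $ be a Riemann map and set $ g:=\phi^{-1}\circ f\circ\phi\colon\mathbb{D}\to\mathbb{D} $; since $ f $ maps the simply connected domain $ U $ into itself, $ g $ is an inner function --- a finite Blaschke product when $ f|_{U} $ is proper, of infinite degree otherwise. In the attracting case we normalise $ \phi(0) $ to be the fixed point, so $ g(0)=0 $ with $ 0<|g'(0)|<1 $; in the parabolic case the Denjoy--Wolff point of $ g $ lies on $ \partial\mathbb{D} $. By Fatou's theorem $ \phi $ has radial limits a.e.\ on $ \partial\mathbb{D} $, producing a measurable map $ \phi^{*}\colon\partial\mathbb{D}\to\partial U $ which semiconjugates the a.e.-defined boundary extension of $ g $ to $ f|_{\partial U} $ and pushes normalised Lebesgue measure on $ \partial\mathbb{D} $ to harmonic measure $ \omega $ on $ \partial U $, a measure of full support; moreover this boundary extension is ergodic with respect to Lebesgue. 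The structural fact I would rely on is that an inner function which is not a M\"obius transformation has a \emph{dense} set of periodic points on $ \partial\mathbb{D} $, each of them \emph{repelling} in the sense that the Julia--Wolff--Carath\'eodory angular derivative of the corresponding return map is a real number $ >1 $; for a finite Blaschke product this is just the fact that an expanding circle map has dense periodic points, and in general it belongs to the dynamical theory of inner functions.

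\textbf{Step 3 (transfer --- the crux).} Fix an $ n $-periodic repelling point $ \zeta\in\partial\mathbb{D} $ of $ g $. I claim its prime-end impression $ I(\zeta)\subset\partial U $ reduces to the single point $ w=\phi^{*}(\zeta) $, and that $ w $ is a repelling $ n $-periodic point of $ f $. Indeed $ I(\zeta) $ is a continuum, it is $ f^{n} $-invariant, and the boundary repulsion of $ g^{n} $ at $ \zeta $ furnishes a holomorphic inverse branch $ \psi $ of $ g^{n} $ near $ \zeta $, fixing $ \zeta $ with $ |\psi'(\zeta)|<1 $; conjugating by $ \phi $ gives an inverse branch $ \Psi=\phi\circ\psi\circ\phi^{-1} $ of $ f^{n} $, defined on a neighbourhood of $ I(\zeta) $ inside $ U $, contracting towards $ I(\zeta) $ and fixing it. To conclude that $ I(\zeta) $ is a single point one needs $ f^{n} $ to be uniformly expanding near $ I(\zeta) $, and this is exactly where a hypothesis on the postsingular set enters: keeping the forward orbits of the singular values away from $ I(\zeta) $ makes all inverse branches of $ f $ contract in the hyperbolic metric of a punctured neighbourhood (a Montel-type normality argument), so the contracting branch $ \Psi $ can fix only a point. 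The quantitative ingredient needed to make this effective --- that the distortion of $ \phi $ near $ \zeta $ is controlled by the distortion of $ g $ near $ \partial\mathbb{D} $ --- is the distortion theorem announced in the abstract; together with the upper semicontinuity of prime-end impressions and the fact that the essential range of $ \phi^{*} $ equals $ \operatorname{supp}\omega=\partial U $, it turns the dense supply of periodic $ \zeta $ into a dense set of periodic points $ w $ in $ \partial U $.

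\textbf{Step 4 (removing hypotheses).} The scheme above most directly yields the statement under a hypothesis on the postsingular set, which is also what the transcendental generalisation in this paper requires. To reach the \emph{unconditional} rational theorem --- including multiply connected immediate basins, as occur for many Newton maps, and maps whose postcritical set is large --- one cannot collapse the impression at every periodic prime end, and one argues instead, following Przytycki and Zdunik, through the harmonic measure $ \omega $ and geometric coding trees: one shows that $ \omega $ has a positive Lyapunov exponent, so that a closing lemma produces repelling periodic points throughout $ \operatorname{supp}\omega=\partial U $, the degenerate zero-exponent case being treated separately, where it forces $ \partial U $ to be a real-analytic curve. \emph{The main obstacle is Step 3}: extracting, from the soft ergodic and inner-function picture, the quantitative control near $ \partial U $ that is needed to see that periodic prime ends land at honest periodic points of $ f $; the remaining steps are bookkeeping or citation.
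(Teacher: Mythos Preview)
Your proposal takes a route that diverges from both the Przytycki--Zdunik argument (as summarized in the paper's introduction) and the paper's own proof of the generalization (Theorem~\ref{thm-periodic-points-are-dense}, which specializes to the simply connected rational case). The essential difference is this: you try to locate periodic points of the inner function $g$ on $\partial\mathbb{D}$ first and then \emph{push them forward} through $\phi^{*}$ to $\partial U$; the paper works the other way around, constructing periodic points of $f$ \emph{directly in the dynamical plane} via a closing-lemma argument, and uses the inner function only as an auxiliary tool (to establish recurrence of $f|_{\partial U}$ and to control how inverse branches of $g$ distort radial segments, so that the periodic point produced is accessible). Concretely, the paper fixes an $\omega_U$-generic $x\in\partial U$, builds a hyperbolic domain $W$ around $\partial U$ in which relevant inverse branches of $f$ are non-expanding, uses recurrence to find $N$ with $f^{N}(x)$ close to $x$ and with enough visits to a region of strict contraction, and then observes that the resulting branch $F_N$ of $f^{-N}$ maps a small $W$-hyperbolic disk compactly into itself, so the Denjoy--Wolff theorem gives a repelling $N$-periodic point of $f$ there.

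Your Step~3 has a genuine gap that is not merely technical. The map $\Psi=\phi\circ\psi\circ\phi^{-1}$ is defined only on (a piece of) $U$, because $\phi$ does not extend continuously to $\partial\mathbb{D}$; you therefore cannot apply it to the impression $I(\zeta)\subset\partial U$, and the sentence ``the contracting branch $\Psi$ can fix only a point'' has no meaning as written --- one would need an inverse branch of $f^{n}$ defined on a full \emph{plane} neighbourhood of $I(\zeta)$, and nothing in your setup provides that (the impression may meet the postcritical set). A second, independent gap: even granting that each periodic $\zeta$ has a singleton impression, density of such $\zeta$ in $\partial\mathbb{D}$ does not yield density of the $\phi^{*}(\zeta)$ in $\partial U$, since $\phi^{*}$ is only a measurable (not continuous) semiconjugacy and periodic points form a null set. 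The paper sidesteps both issues by never attempting to transport individual periodic points across $\phi^{*}$; instead it exploits that $\omega_U$ has full support, so it suffices to produce a periodic point of $f$ in every $D(x,\delta)$ with $x\in\partial U$, and this is done entirely in the $f$-plane. Your Step~4 correctly names the ingredients of the actual Przytycki--Zdunik argument (positive Lyapunov exponent of $\omega_U$, Pesin theory/closing lemma, geometric coding tree for the non-simply-connected case), but those are a different proof, not a completion of Steps~1--3.
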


In the seminal paper \cite{przytycki_zdunik}, two different proofs are provided: one for simply connected attracting basins, and a general one, which works in the non-simply connected or parabolic situations. The latter relies on a  technique, known as geometric coding trees, which has been shown not to work well in the infinite degree case, even for hyperbolic maps (see \cite[p. 405]{BaranskiKarpinska_GeometricCodingTree}).

Their proof relies on three specific features of rational maps: $ f $ having finitely many {\em singular values} (points on which some branch of $ f^{-1} $ is not locally well-defined, see Sect. \ref{subsect-regular-singular}), $ f $ extending analytically to the boundary of  $ U $ (taken in $ \widehat{\mathbb{C}} $),  and $ f $ having finite degree. Note that these three assumptions are no longer satisfied for transcendental meromorphic maps. Indeed, a transcendental  map $ f $ can have infinitely many singular values, and it may have essential singularities on the boundary of  $ U $, implying that $ f|_{\partial U} $ is no longer analytic. In addition, $ f|_{ U} $ may have infinite degree. 
Moreover, when dealing with transcendental meromorphic functions, one encounters other new challenges, namely a new type of Fatou components (Baker domains, on which iterates accumulate on the essential singularity), and  the presence of asymptotic values.

The goal of this paper is to show that, by imposing some mild assumptions on the 
\textit{postsingular set} of $ f $\[ P(f)\coloneqq \overline {\bigcup\limits_{s\in SV(f)}\bigcup\limits_{n\geq 0} f^n(s)},\] where $ SV(f) $ denotes the set of singular values of $ f $, we are able to overcome these challenges, and prove the density of periodic boundary points.
Indeed, we show the following. (Recall that, given a simply connected domain $ U $, $ C\subset U $ is a {\em crosscut}  if $ C $ is a Jordan arc such that $ \overline{C}= C\cup \left\lbrace a,b\right\rbrace  $, with $ a,b\in\partial U $, $ a\neq b $; any of the two connected components of $ U\smallsetminus C $ is a {\em crosscut neighbourhood}.)

\begin{named}{Theorem A}\label{teo:A} {\bf (Periodic boundary points are dense)} 
	Let $ f\colon\mathbb{C}\to\widehat{\mathbb{C}} $ be a  meromorphic function, and let $ U $ be a periodic simply connected Fatou component for $ f $.  
	
	\noindent Assume the following conditions are satisfied.
	\begin{enumerate}[label={\em (\roman*)}]
		\item\label{maini} $ U $ is either an attracting basin, a parabolic basin, or a doubly parabolic Baker domain, with $ f|_{\partial U} $ recurrent.
		\item\label{mainii} There exists $ x\in\partial U $ and $ r>0 $ such that $ P(f)\cap D(x,r)=\emptyset $.
		\item\label{mainiii} There exists a crosscut neighbourhood $ N \subset U$ with $ P(f)\cap N =\emptyset$. 
	\end{enumerate}
	Then, periodic points are dense in $ \partial U $.
\end{named}

Next, we shall outline the main steps on the proof of \cite{przytycki_zdunik}, and explain how the new difficulties that appear for transcendental maps are overcome, showing at the same time the need for the hypotheses of the theorem.  For simplicity, we shall assume that the Fatou component $ U $ is invariant.

In the case of an attracting basin of a rational function, 
	Pesin's theory can be applied to prove that for $ \omega_U $-almost every $ x\in\partial U $, there exist inverse branches which are locally well-defined and contracting with respect to the Euclidean metric \cite[Lemma 1]{przytycki_zdunik} (see also \cite[Lemma 1]{PUZ91}, and \cite[Thm. 11.2.3]{PrzytyckiUrbanski}).  Crucial ingredients in this proof are the ergodic properties of $ f|_{\partial U} $, studied in \cite{przytycki_attracting, przytycki_attracting2}, together with  $ f|_{\partial U} $ being analytic and the finitude of critical values. None of the previous conditions is satisfied for a general transcendental meromorphic map, so \textit{a priori} Pesin's theory cannot be applied in our situation. We solve this by assuming that inverse branches are well-defined $ \omega_U $-almost everywhere (this is a straightforward consequence of {\em\ref{mainii}}), and we prove contraction of inverse branches with respect to the hyperbolic metric in a suitable domain.

Next, we  extend the proof of \cite{przytycki_zdunik} to other Fatou components, apart from attracting basins. Indeed, our proof relies only on the ergodic properties of the map $ f|_{\partial U} $, not on the precise type of Fatou component we are considering. More precisely, we only ask  $ f|_{\partial U} $ to be ergodic and recurrent with respect to the harmonic measure $ \omega_U  $, which implies that $ \omega_U $-almost every orbit in $ \partial U $ is dense in $ \partial U $.  Hence, all Fatou components for which the boundary map is ergodic and recurrent may be considered, and  these include attracting and parabolic basins, rotation domains and certain types of Baker domains (for instance, doubly parabolic Baker domains with singular values compactly contained in $ U $, Thm. \ref{thm-ergodic-boundary-map}). However, note that rotation domains never satisfy the hypothesis of our theorem, since $ P(f) $ is always dense in their boundary, and {\em\ref{mainii}} is never fulfilled.

Finally, as it is common in constructions of this kind, the proof relies strongly on the inferred dynamics in the unit disk $ \mathbb{D} $ via the Riemann map $ \varphi\colon\mathbb{D}\to U $. More precisely, let $ U $ be  a $ p $-periodic (simply connected) attracting basin of a rational map $ f $, and consider a Riemann map $ \varphi\colon\mathbb{D}\to U $. Then, the function \[g\colon\mathbb{D}\to\mathbb{D}, \hspace{0.5cm} g\coloneqq\varphi^{-1}\circ f^p\circ \varphi\] is the inner function associated to $ (f^p, U) $ by $ \varphi $ (see Sect. \ref{subsect-associated-inner-function}). 

A careful study of the associated inner function is required. 
In the case of a rational attracting basin considered in \cite{przytycki_zdunik}, $ g $ is a finite Blaschke product, which can be chosen to satisfy $ g(0)=0 $.  We shall view $ g $ as a rational map $ g\colon\widehat{\mathbb{C}}\to\widehat{\mathbb{C}} $, extended by Schwarz reflection. Then, its critical values (which are finitely many) are compactly contained in $ \mathbb{D} $ (and, by reflection, in $ \widehat{\mathbb{C}}\smallsetminus\overline{\mathbb{D}} $) and their orbits converge uniformly to 0 (or to $ \infty $), which are attracting fixed points. Hence, inverse branches of $ g $ are  well-defined for all points in $\partial \mathbb{D} $.   Moreover, precise estimates on the behaviour of such inverse branches are given in \cite[Lemma 2]{przytycki_zdunik}.

In contrast to this setting,  in the general situation we consider, $ g $ is no longer a finite Blaschke product, and may not have an attracting fixed point in $ \mathbb{D} $. However, having singular values of $ f|_U $  compactly contained in $ U $ allows us to control inverse branches for the associated inner function $ g $ at $ \lambda $-almost every point in $ \mathbb{D} $, even if $ g $ has infinite degree. Indeed, we consider the maximal meromorphic extension of $ g $:
\[g\colon\widehat{\mathbb{C}}\smallsetminus E(g)\to \widehat{\mathbb{C}},\] where $ E(g)\subset \partial\mathbb{D} $ denotes the set of singularities of $ g $ (points at which $ g $ cannot be extended analytically), and denote by $ g^*\colon\partial\mathbb{D} \to\partial\mathbb{D}$ its radial extension (see Sect. \ref{sect-iteration-inner-functions}).  In this situation, we prove the following result concerning inner functions (not necessarily associated to Fatou components), which is of independent interest. \begin{named}{Theorem B}\label{thm-inverse-inner}{\bf (Inverse branches at boundary points)}
	Let $ g\colon\mathbb{D}\to\mathbb{D} $ be an inner function, such that $ g^*|_{\partial\mathbb{D}}  $ is recurrent. Assume that there exists a crosscut neighbourhood $ N\subset\mathbb{D} $ with $ P(g)\cap N=\emptyset $. Then,
	for $ \lambda $-almost every $ \xi\in\partial\mathbb{D} $, there exists $ \rho_0\coloneqq \rho_0(\xi)>0 $ such that all  branches $ G_n$ of $ g^{-n }$ are well-defined in $ D(\xi, \rho_0) $. In particular, the set $ E(g) $ of singularities of $ g $ has $ \lambda $-measure zero.
	
		\noindent  In addition, for all $0< \alpha<\frac{\pi}{2} $, there exists $ \rho_1<\rho_0 $ such that, for all $ n\geq0 $, all branches $ G_n $ of $ g^{-n} $ are well-defined in $ D(\xi, \rho_1) $ and, if $ \gamma $ denotes the radial segment at $ \xi $, then the curve $ G_n(\gamma) $ tends   to $ G_n(\xi )$ non-tangetially with angle at most $ \alpha $.
		
\end{named}

Note that $ \alpha $ does not depend on $ n $, nor on the chosen inverse branch. Apart from giving a precise characterization of inverse branches, \ref{thm-inverse-inner} also describes measure-theoretically the set of singularities, improving the results in \cite{efjs, FatousAssociates, JoveFagella2}. Compare also with the situation for one component inner functions (a more restrictive class of inner functions) described in \cite[Part III]{ivrii2023inner}.

At this point, we shall make some additional remarks, in order to clarify and  contextualize our results. 
 
 First, one should observe that the class of (transcendental) meromorphic functions is not closed under composition: iterates $ f^n $ of a transcendental meromorphic function $ f $ have, in general, countably  many analytic singularities, so they are no longer meromorphic functions of the plane. Hence, we consider functions in class $ \mathbb{K} $, the smallest class of functions which  includes  transcendental meromorphic functions and which is closed under composition. Formally, $ f\in\mathbb{K} $ if there exists a compact countable set $ E(f) \subset\widehat{\mathbb{C}}$ such that \[f\colon \widehat{\mathbb{C}}\smallsetminus E(f)\to \widehat{\mathbb{C}}\] is meromorphic in $ \widehat{\mathbb{C}}\smallsetminus E(f) $ but in no larger set. 
	The theory of Fatou and Julia of iteration of rational maps was extended to class $ \mathbb{K} $ by Bolsch, and Baker, Domínguez and Herring \cite{ Bolsch_repulsivepp, Bolsch-thesis,Bolsch-Fatoucomponents, BakerDominguezHerring,BakerDominguezHerring2,  Dominguez4,Dominguez3}. Although more sophisticated tools are needed, the main features of iteration theory extend successfully to class $ \mathbb{K} $ (see Sect. \ref{sect-classeK}). In particular, if $ f\in \mathbb{K} $, then for any $ k\geq 1 $, $ f^k\in \mathbb{K} $ and $ \mathcal{F}(f)=\mathcal{F}(f^k) $. This allows us to reduce the study of $ k $-periodic Fatou components to the study of the invariant ones, just replacing $ f $ by $ f^k $.
	
Second, \ref{teo:A} concerns periodic Fatou components which are simply connected. It is well-known that Fatou components of meromorphic functions (and for functions in class $ \mathbb{K} $) are either simply connected, doubly connected or infinitely connected. There are plenty of examples of functions and classes of functions whose Fatou components are simply connected. For instance, periodic Fatou components of entire maps  are always simply connected \cite{Baker_SimplyConnected}. Moreover, if $ f $ is an entire function, then its Newton's method \[N_f\colon\mathbb{C}\to\widehat{\mathbb{C}},\hspace{0.5cm} N_f(z)\coloneqq z-\dfrac{f(z)}{f'(z)}\] is a meromorphic function, whose Fatou components are simply connected \cite{Taixés1, Taixés2,bfjk_connectivity,bfjk_Newton}.
	
Third, for a transcendental meromorphic function, it is not  even known whether there exists a periodic point in the boundary of every periodic Fatou component.   In particular, our result also answers this question for a wide class of Fatou components.
	
Finally, \ref{teo:A} and \ref{thm-inverse-inner} are presented in the introduction in a simplified form. 
For the stronger version of \ref{teo:A}, see Theorem \ref{thm-periodic-points-are-dense}; while for \ref{thm-inverse-inner} in its complete form, look at Theorem \ref{thm-postsingular}.

\begin{remark*}{\bf (Doubly connected Fatou components)} 
	It is well-known that periodic Fatou components of functions in class $ \mathbb{K} $ are either simply connected, doubly connected or infinitely connected. However, in contrast with the rational case, doubly connected Fatou components are not necessarily Herman rings.
Indeed,	an explicit example of an invariant doubly connected attracting basin of a holomorphic self-map of the punctured plane $ \mathbb{C}^* $ is provided in \cite[p. 545]{Bolsch-Fatoucomponents} (see also \cite[Sect. 3 and 6]{Keen}). Moreover, in 	\cite{EvdoridouMartiPeteSixsmith} an example of a doubly connected Baker domain of a holomorphic self-map of the punctured plane $ \mathbb{C}^* $
is constructed using approximation theory. More examples are provided in \cite{huang2022connectivity}. 

It is our belief that doubly connected Fatou components can be studied in a similar maner as simply connected ones. Indeed, in this case, there always exists a universal covering $ \pi\colon\mathbb{D}\to U $, which behaves locally near $ \partial\mathbb{D} $ as a conformal map.  Hence, it seems plausible that the same arguments apply to prove that periodic points are dense in the boundary of such Fatou components, under the same conditions of \ref{teo:A} (or its general version \ref{thm-periodic-points-are-dense}).
\end{remark*}

\vspace{0.2cm}
{\bf Structure of the paper.} In Section \ref{sect-prelim}, some preliminary definitions and results are collected, which are used through the article. This includes distortion estimates for univalent maps, abstract ergodic theory, boundary extension of holomorphic maps, and harmonic measure. Section \ref{sect-iteration-inner-functions} is devoted to the iteration of inner functions, including the classical Denjoy-Wolff Theorem, Cowen's classification of self-maps of the unit disk, and Doering-Mañé's results on the ergodic properties of inner functions. Next, in Section \ref{sect-teoA}, we deal with inverse branches of inner functions near the unit circle, proving \ref{thm-inverse-inner}. In Section \ref{sect-classeK} we include the Fatou and Julia theory of functions in class $ \mathbb{K} $, and the results needed for the statement and the proof of Theorem \ref{thm-periodic-points-are-dense}. Finally, Section \ref{sect-demostracio} is devoted to prove  \ref{teo:A} using all the tools  previously developed.

\vspace{0.2cm}
{\bf Notation.} Throughout this article, $ \mathbb{C} $ and $ \widehat{\mathbb{C}} $ denote the complex plane and the Riemann sphere, respectively. Given a domain $ U\subset\widehat{\mathbb{C}} $, we denote its boundary (in $ \widehat{\mathbb{C}} $) by $ \widehat{\partial} U $. We save the notation of $ {\partial} U $ to denote the boundary of $ U $ with respect to the domain of definition of the function we are considering (see Sec. \ref{sect-classeK}).

 If $ U $ is a  simply connected domain, $ \omega_U $ stands for the class of harmonic measures in $ \widehat{\partial} U $,  while  the notation $ \omega_U (z_0, \cdot)$ stands for the harmonic measure in $ \widehat{\partial} U $ with respect to $ z_0\in U $ (see Sect. \ref{subsection-harmonic-measure}). We denote by $ \mathbb{D} $, the unit disk; by $ \partial\mathbb{D} $, the unit circle; and by $ \lambda $, the Lebesgue measure on $ \partial\mathbb{D} $, normalized so that $ \lambda ( \partial\mathbb{D} )=1 $.

\vspace{0.2cm}
{\bf Acknowledgments.} I am indebted to my supervisor, Núria Fagella, for all her support and encouragement about this project. I also wish to thank Oleg Ivrii, for interesting discussions and carefully reading de manuscript, Lasse Rempe, and specially Anna Zdunik for interesting discussions and comments. Finally, I want to express my gratitude to the Complex Dynamics Group of the University of Barcelona for a series of lectures on ergodic theory, which inspired this paper.

\section{Preliminaries}\label{sect-prelim}
In this section we gather the tools we use throughout the article, including distortion estimates for univalent maps, abstract ergodic theory, boundary extension of holomorphic maps, and harmonic measure. Although all the results in this section seem to be well-known, we include the proof of those for which we could not find a written reference.

\subsection{Distortion estimates for univalent maps}

We need the following results concerning the distortion for univalent maps.

\begin{thm}{\bf (De Branges, {\normalfont\cite{bieberbach}})}\label{thm-bieberbach}
	Let $ \varphi\colon\mathbb{D}\to \mathbb{C} $ be univalent, with $ \varphi(0)=0 $ and $ \varphi'(0)=1 $. Then, \[\varphi(z)=z+\sum_{n\geq 2} a_n z^n,\] with $ \left| a_n\right| \leq n $, for $ n\geq 2 $.
\end{thm}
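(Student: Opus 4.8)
This is the Bieberbach conjecture, so I would not expect a short argument; the approach I would take is de Branges's proof, which obtains it from the (formally stronger) Milin conjecture via Loewner's parametric representation of univalent functions. The plan splits into four stages.

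\emph{Reduction to slit Loewner chains.} Single-slit mappings — those whose image is $\mathbb{C}$ minus a Jordan arc running to $\infty$ — are dense in the class $S$ of normalized univalent functions in the topology of locally uniform convergence, and each coefficient functional $\varphi\mapsto a_n$ is continuous; so it suffices to bound $|a_n|$ for slit mappings. Such a $\varphi$ embeds into a Loewner chain $f(z,t)$, $t\ge 0$, with $f(z,0)=\varphi(z)$ and $f(z,t)=e^{t}z+\cdots$, solving
\[
\frac{\partial f}{\partial t}(z,t)=z\,\frac{\partial f}{\partial z}(z,t)\,\frac{\kappa(t)+z}{\kappa(t)-z},\qquad |\kappa(t)|=1 .
\]

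\emph{Passage to logarithmic coefficients and Lebedev--Milin.} Writing $\log\bigl(f(z,t)/(e^{t}z)\bigr)=\sum_{k\ge1}2\gamma_k(t)z^k$, I would apply the square-root transform $z\mapsto\sqrt{\varphi(z^{2})}$ (an odd function in $S$) together with the Lebedev--Milin exponentiation inequalities to reduce $|a_n|\le n$ first to Robertson's conjecture for odd univalent functions and then to Milin's conjecture: for every $n$,
\[
\sum_{k=1}^{n}(n+1-k)\Bigl(k\,|\gamma_k(0)|^{2}-\tfrac1k\Bigr)\le 0 .
\]

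\emph{The de Branges functional.} Introduce weight functions $\tau_1(t),\dots,\tau_n(t)$ — the solution of an explicit linear ODE system with $\tau_k(0)=n+1-k$ and $\tau_k(t)\to0$ as $t\to\infty$ — and set $\Phi(t)=\sum_{k=1}^{n}\bigl(k\,|\gamma_k(t)|^{2}-\tfrac1k\bigr)\tau_k(t)$. Differentiating along the Loewner flow, substituting the PDE to express $\dot\gamma_k$ through $\gamma_1,\dots,\gamma_k$ and $\kappa(t)$, and using the defining ODE for the $\tau_k$, one rearranges $\Phi'(t)$ into a sum of squared quantities in $(\gamma_k,\kappa)$ weighted by explicit linear combinations of $\tau_k$ and $\dot\tau_k$. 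If those combinations are nonnegative then $\Phi'\ge0$; since $\Phi(\infty)=0$ and $\Phi(0)$ is exactly the Milin functional, this yields $\Phi(0)\le0$, proving Milin and hence the theorem.

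\emph{The positivity input — the main obstacle.} Solving the $\tau_k$-system explicitly writes the relevant combinations of $\tau_k,\dot\tau_k$ as sums of Jacobi polynomials $P_j^{(\alpha,0)}$ on $[-1,1]$, and their nonnegativity is precisely the Askey--Gasper inequality $\sum_{j=0}^{m}\frac{(\alpha+1)_j}{(1)_j}P_j^{(\alpha,0)}(x)\ge0$ for $\alpha>-2$, $-1\le x\le1$. I expect essentially all the difficulty to sit here — a genuine special-function estimate, established by Askey and Gasper via hypergeometric transformations — together with arranging the weights so that the computation of the previous stage collapses to a manifestly signed form; the Loewner and Lebedev--Milin ingredients are classical. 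As a consistency check, the case $n=2$ (the bound $|a_2|\le2$ that already gives the Koebe $1/4$-theorem and the distortion estimates used later in the paper) drops out elementarily from the area theorem applied to $1/\varphi(1/\zeta)$.
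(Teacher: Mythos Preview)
The paper does not prove this theorem at all: it is stated with a citation to de~Branges and then used as a black box in the proof of Corollary~\ref{corol-distotion}. So there is nothing to compare your argument against --- you have sketched (correctly, in outline) the actual de~Branges proof, while the paper simply quotes the result.

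It is perhaps worth remarking that the paper's application does not need anything close to the full Bieberbach conjecture. In Corollary~\ref{corol-distotion} the only use made of $|a_n|\le n$ is to bound $\sum_{n\ge2}|a_n|r^{n-1}$ by a quantity tending to $0$ as $r\to0$. For this, any polynomial growth bound on the coefficients suffices: Littlewood's elementary inequality $|a_n|\le en$ (from the area theorem and subordination), or even just the classical Koebe distortion theorem applied directly, would give the same conclusion with a slightly different constant $C(r)$. So while your four-stage program (Loewner chains, Lebedev--Milin reduction to Milin's conjecture, the de~Branges functional, and Askey--Gasper positivity) is the right route to the sharp statement as written, it is vastly more than what the paper actually requires downstream.
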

\begin{corol}{\bf (Distortion estimates for univalent maps)}\label{corol-distotion} Let $ \varphi\colon D(z_0, r_0) \to \mathbb{C}$ be univalent, and let $ r\in(0,r_0) $. Then, there exists $ C\coloneqq C(r, r_0)$, with $ C(r, r_0)\to 0 $ as $ \frac{r}{r_0}\to 0 $, such that, for all $ z\in D(z_0, r) $,\[ \left| \varphi(z)-L(z)\right| \leq C\left|\varphi '(z_0)\right| \left| z-z_0\right| , \] where $ L $ stands for the liner map $ L(z)\coloneqq \varphi(z_0)+\varphi'(z_0)(z-z_0) $.

\noindent In particular,  if $ \varphi $ additionally satisfies $ \varphi(0)=0 $ and $ \varphi'(0)=1 $.  Then, for all $ r\in (0,1) $,  there exists  $ C \coloneqq C(r)$, with $ C(r) \to 0$ as $ r\to 0 $, for all $ z\in D(0,r) $, \[\left| \frac{\varphi(z)}{z}-1\right|\leq C .\] 
\end{corol}
Note that, in both cases, $ C $ does not depend on the univalent map considered.
\begin{proof} Let us start by proving the particular case of $ \varphi \colon\mathbb{D}\to\mathbb{C} $, satisfying $ \varphi(0)=0 $ and $ \varphi'(0)=1 $. Then,	by Theorem \ref{thm-bieberbach}, for all $ z\in\mathbb{D} $, it holds \[\dfrac{\varphi(z)}{z}=1+\sum_{n\geq 2} a_n z^{n-1}, \]with $ \left| a_n\right| \leq n $, for $ n\geq 2 $. Hence, for $ r\in (0,1) $ and $ z\in D(0,r) $, it holds
		\[\left| \dfrac{\varphi(z)}{z}-1\right|=\left| \sum_{n\geq 2} a_n z^{n-1}\right| \leq  \sum_{n\geq 2} \left| a_n \right| r^{n-1}\leq  \sum_{n\geq 2}  n r^{n-1}\eqqcolon C(r).\] Note that the last power series converges for $ r<1 $, and $ C(r) \to 0$ as $ r\to 0 $, as desired.
		
	Now, consider any univalent map $ \varphi \colon\mathbb{D}\to\mathbb{C} $, and  let $ \psi\colon\mathbb{D}\to\mathbb{C} $ be defined as\[ \psi(w)\coloneqq \dfrac{\varphi(z_0+r_0w)-\varphi(z_0)}{r_0\varphi'(z_0)}.\]Note that $ \psi$ is univalent, and satisfies $ \psi(0)=0 $ and $ \psi'(0)=1 $. Let $ r<r_0 $ and $ \rho\coloneqq \frac{r}{r_0}<1 $. Hence, there exists $ C\coloneqq C(\rho)$, such that, for $ w\in D(0,\rho) $, \[\left| \frac{\psi(w)}{w}-1\right| \leq C.\] Letting $ z=z_0+r_0w $, we get that, for $ z\in D(z_0, r) $,\[ \dfrac{\left|\varphi(z)-(\varphi(z_0)+\varphi'(z_0)(z-z_0))\right| }{\left| z-z_0\right| \left| \varphi'(z_0)\right|} \leq C,\] as desired.
\end{proof}

\subsection{Abstract ergodic theory}
We recall some basic notions used in abstract ergodic theory (for more details, see e.g. \cite{Aaronson97,PrzytyckiUrbanski,Hawkins}).

\begin{defi}{\bf (Ergodic properties of measurable maps)} Let $ (X,\mathcal{A}, \mu) $ be a measure space, and let $ T\colon X\to X $ be measurable. Then, $ T $ is: \begin{itemize}
		\item {\em non-singular}, if for every $ A\in\mathcal{A} $, it holds $ \mu (T^{-1}(A))=0 $ if and only if  $ \mu (A)=0 $;
		\item {\em $ \mu $- preserving}, if for every $ A\in\mathcal{A} $, it holds $ \mu (T^{-1}(A))=\mu (A) $ (we also say that $ \mu $ is $ T $-invariant);
		\item {\em recurrent}, if for every  $ A\in\mathcal{A} $ and $ \mu $-almost every $ x\in A $, there exists a sequence $ n_k\to \infty $ such that $ T^{n_k}(x)\in A $;
		\item {\em ergodic}, if $ T $ is non-singular and for every $ A\in\mathcal{A} $ with $ T^{-1}(A)=A $, it holds $ \mu(A)=0$ or $ \mu (X\smallsetminus A)=0 $.
	\end{itemize}
\end{defi}

Clearly, invariance implies non-singularity. Moreover, the following holds true.

\begin{thm}{\bf (Poincaré Recurrence Theorem, {\normalfont\cite[Thm. 2.12]{Hawkins}})}\label{thm-poincare-recurrence}
		Let $ (X,\mathcal{A}, \mu) $ be a measure space, and let $ T\colon X\to X $ be a measurable transformation. Assume $ \mu(X)<\infty $, and $ T $ is $ \mu $-preserving. Then, $ T $ is recurrent with respect to $ \mu $. 
\end{thm}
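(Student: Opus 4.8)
The plan is to run the classical ``tower of preimages'' argument. Fix $ A\in\mathcal{A} $; if $ \mu(A)=0 $ there is nothing to prove, so assume $ \mu(A)>0 $. The key object is the set of points of $ A $ that never return to $ A $,
\[
B\coloneqq \{x\in A : T^n(x)\notin A \text{ for all } n\geq 1\}= A\smallsetminus\bigcup_{n\geq 1}T^{-n}(A),
\]
which is measurable since $ T $ is measurable and the union is countable. I would first show $ \mu(B)=0 $, and then upgrade ``$ \mu $-almost every point of $ A $ returns at least once'' to ``$ \mu $-almost every point of $ A $ returns infinitely often''.

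For the first step, the crucial observation is that the sets $ B, T^{-1}(B), T^{-2}(B),\dots $ are pairwise disjoint. Indeed, if $ 0\leq m<n $ and $ x\in T^{-m}(B)\cap T^{-n}(B) $, then $ y\coloneqq T^m(x)\in B\subseteq A $, while $ T^{\,n-m}(y)=T^n(x)\in B\subseteq A $ with $ n-m\geq 1 $; this contradicts the definition of $ B $. Since $ T $ is $ \mu $-preserving, $ \mu(T^{-n}(B))=\mu(B) $ for every $ n\geq 0 $, and by the disjointness together with the hypothesis $ \mu(X)<\infty $,
\[
\sum_{n\geq 0}\mu(B)=\sum_{n\geq 0}\mu\bigl(T^{-n}(B)\bigr)=\mu\Bigl(\bigcup_{n\geq 0}T^{-n}(B)\Bigr)\leq \mu(X)<\infty,
\]
which forces $ \mu(B)=0 $.

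For the second step, I would identify the set of points of $ A $ that return to $ A $ only finitely often. If $ x\in A $ has this property, then either $ x $ never returns, i.e. $ x\in B $, or there is a last return time $ N\geq 1 $, in which case $ T^N(x)\in A $ and $ T^m(T^N(x))\notin A $ for all $ m\geq 1 $, so $ T^N(x)\in B $ and $ x\in T^{-N}(B) $. Hence this set is contained in $ \bigcup_{N\geq 0}T^{-N}(B) $, a countable union of sets of measure $ \mu(B)=0 $, and is therefore $ \mu $-null. Consequently, for $ \mu $-almost every $ x\in A $ the set $ \{n\geq 1 : T^n(x)\in A\} $ is infinite, i.e. there is a sequence $ n_k\to\infty $ with $ T^{n_k}(x)\in A $. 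Since $ A\in\mathcal{A} $ was arbitrary, $ T $ is recurrent with respect to $ \mu $.

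There is no serious obstacle here: the argument is elementary and the only points requiring care are the pairwise disjointness of the tower $ \{T^{-n}(B)\}_{n\geq 0} $ (where one must use $ T^m(x)\in B\subseteq A $, not merely $ T^m(x)\in B $) and the ``last return time'' bookkeeping that promotes a single return to infinitely many. Both uses of the hypotheses are visible: $ \mu $-preservation enters through $ \mu(T^{-n}(B))=\mu(B) $, and finiteness of $ \mu(X) $ is what truncates the summation and kills $ \mu(B) $.
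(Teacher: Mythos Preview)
Your proof is correct and is the classical argument. Note, however, that the paper does not give its own proof of this statement: it is stated with a citation to \cite[Thm.~2.12]{Hawkins} and used as a black box, so there is nothing to compare against.
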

\begin{thm}{\bf (Almost every orbit is dense, {\normalfont\cite[Prop. 1.2.2]{Aaronson97}})}\label{thm-almost-every-orbit-dense}
	Let $ (X,\mathcal{A}, \mu) $ be a measure space, and let $ T\colon X\to X $ be non-singular. Then, the following are equivalent.\begin{enumerate}[label={\em (\alph*)}]
		\item $ T $ is ergodic and recurrent.
		\item\label{thm-orbitesdenses-b}  For every  $ A\in\mathcal{A} $ with $ \mu (A)>0 $, we have that for $ \mu $-almost every $ x\in X $, there exists a sequence $ n_k\to \infty $ such that $ T^{n_k}(x)\in A $.
	\end{enumerate}
\end{thm}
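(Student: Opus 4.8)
The plan is to prove the two implications separately; the key object throughout is the set
\[ B \coloneqq \bigcap_{N \geq 0} \bigcup_{n \geq N} T^{-n}(A) \]
of points whose forward orbit meets a fixed $A \in \mathcal{A}$ infinitely often, which is measurable because $T$ is.

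For the implication (b) $\Rightarrow$ (a), both conclusions are essentially formal. Recurrence: if $\mu(A) = 0$ the condition in the definition is vacuous, and if $\mu(A) > 0$ then the conclusion of (b) holds for $\mu$-almost every $x \in X$, hence in particular for $\mu$-almost every $x \in A$, which is exactly what recurrence on $A$ requires. Ergodicity: $T$ is non-singular by hypothesis, so it suffices to verify the invariant-set condition; if $T^{-1}(A) = A$ with $\mu(A) > 0$, then $T^{-n}(A) = A$ for all $n$, so $T^{n}(x) \in A$ forces $x \in A$, and applying (b) to $A$ shows that $\mu$-almost every $x \in X$ lies in $A$, i.e.\ $\mu(X \smallsetminus A) = 0$.

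For the implication (a) $\Rightarrow$ (b), fix $A$ with $\mu(A) > 0$. First I would check that $B$ is strictly invariant, $T^{-1}(B) = B$: the condition $x \in T^{-1}(B)$ says $T^{m}(x) \in A$ for infinitely many $m \geq 1$, which is equivalent to $T^{m}(x) \in A$ for infinitely many $m \geq 0$, i.e.\ $x \in B$. Next, recurrence applied to $A$ itself gives that for $\mu$-almost every $x \in A$ there is a sequence $n_k \to \infty$ with $T^{n_k}(x) \in A$; since $n_k \to \infty$ entails infinitely many distinct return times, such $x$ lies in $B$, so $\mu(A \smallsetminus B) = 0$ and therefore $\mu(B) \geq \mu(A \cap B) = \mu(A) > 0$. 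Finally, ergodicity applied to the invariant set $B$ of positive measure yields $\mu(X \smallsetminus B) = 0$, which is precisely statement (b), the required sequence $n_k \to \infty$ being furnished by the ``infinitely many $n$'' built into the definition of $B$.

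I do not expect a genuine obstacle here; the proof is short and the delicate points are purely bookkeeping: measurability of $B$ (countable set operations on the measurable sets $T^{-n}(A)$), the correct passage between ``$n_k \to \infty$'' and ``infinitely many returns'', and the vacuous case $\mu(A) = 0$ in the reverse implication. The one structurally essential observation to get right is that ergodicity is used only in the form $\mu(B) > 0 \Rightarrow \mu(X \smallsetminus B) = 0$, so recurrence is genuinely needed beforehand to guarantee $\mu(B) > 0$; ergodicity alone would not suffice.
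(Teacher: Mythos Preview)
Your proof is correct and complete. The paper does not actually supply a proof of this statement; it is quoted with a reference to \cite[Prop.~1.2.2]{Aaronson97} and used as a black box, so there is nothing to compare your argument against beyond noting that it recovers the cited result by the standard route.
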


Note that, if the space $ X $ is endowed with a topology whose open sets are measurable and have positive measure, then statement {\em \ref{thm-orbitesdenses-b}} implies that $ \mu $-almost every orbit is dense in $ X $.

In holomorphic dynamics, it is possible to replace  the  function by an iterate of it, since the dynamics remain essentially the same. Thus, we are interested in knowing which ergodic properties remain under taking iterates of the function.

\begin{lemma}{\bf (Ergodic properties for $ T^k $)}\label{lemma-ergodic-properties-invariant}
		Let $ (X,\mathcal{A}, \mu) $ be a measure space, and let $ T\colon X\to X $ be non-singular. Let $ k$ be a positive integer. Then, \begin{enumerate}[label={\em (\alph*)}]
			\item $ T $ is recurrent if and only if $ T^k $ is recurrent.
			\item If $ T^k $ is ergodic, so is $ T $. The converse is not true in general.
		\end{enumerate} 
\end{lemma}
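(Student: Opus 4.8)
The plan is to prove each item separately, using only elementary set-theoretic manipulations of the defining conditions.

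\textbf{Part (a), recurrence.} First I would observe that if $T^k$ is recurrent, then $T$ is recurrent: given $A \in \mathcal{A}$, applying recurrence of $T^k$ to $A$ produces, for $\mu$-a.e. $x \in A$, a sequence $m_j \to \infty$ with $(T^k)^{m_j}(x) = T^{k m_j}(x) \in A$, so the sequence $n_j := k m_j \to \infty$ witnesses recurrence of $T$. For the converse, suppose $T$ is recurrent; I want to show $T^k$ is recurrent. Fix $A \in \mathcal{A}$. The idea is to use recurrence of $T$ repeatedly together with a pigeonhole argument on residues mod $k$. For $\mu$-a.e. $x \in A$, recurrence of $T$ gives infinitely many $n$ with $T^n(x) \in A$; among these infinitely many integers, at least one residue class $r \in \{0, 1, \dots, k-1\}$ occurs infinitely often, but $r$ may depend on $x$. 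To handle this dependence, I would instead argue as follows: let $A_0 := A$ and define inductively, using recurrence of $T$ on the relevant sets, a nested structure — more cleanly, consider for each $x$ the first return time $n_1(x)$ of $x$ to $A$ under $T$, then the first return time after that, etc., producing an infinite increasing sequence of return times $n_1(x) < n_2(x) < \cdots$ for a.e. $x \in A$ (this uses that recurrence of $T$ implies a.e. point of $A$ returns infinitely often, which in turn follows by iterating the recurrence property). Among the differences $n_{i+1}(x) - n_i(x)$, consider the consecutive return times: since there are infinitely many return times and only finitely many residues mod $k$, some residue class is hit infinitely often by $\{n_i(x)\}$, say with subsequence $n_{i_\ell}(x)$ all congruent mod $k$; then $T^{n_{i_1}(x)}(x) \in A$ and $T^{n_{i_\ell}(x)}(x) = (T^k)^{(n_{i_\ell}(x) - n_{i_1}(x))/k}(T^{n_{i_1}(x)}(x)) \in A$. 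This shows the point $y := T^{n_{i_1}(x)}(x) \in A$ returns to $A$ infinitely often under $T^k$. To promote this to recurrence of $T^k$ on $A$ itself (not just on a forward image), I would apply the standard fact (Theorem~\ref{thm-almost-every-orbit-dense}-type reasoning, or a direct Poincaré-style argument) that it suffices to check the conclusion on a generating family, or alternatively decompose $A = \bigsqcup_{r=0}^{k-1} B_r$ where $B_r := \{x \in A : \text{the first return time of } x \text{ to } A \text{ under } T^k \text{ along the above scheme lands in residue } r\}$ and argue residue-class by residue-class; the measurability of first-return times makes this rigorous.

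\textbf{Part (b), ergodicity.} Suppose $T^k$ is ergodic; I want $T$ ergodic. Non-singularity of $T$ follows from non-singularity of $T^k$: if $\mu(A) = 0$ then $\mu(T^{-k}(A)) = 0$, and since $T^{-k}(A) = T^{-1}(T^{-(k-1)}(A))$, non-singularity of $T^k$ combined with a short induction (or directly: $\mu(T^{-1}(A)) = 0 \iff \mu(A) = 0$ is what we want, and this is exactly the $k=1$ reading which we derive from the $T^{-k}$ statement applied appropriately) gives non-singularity of $T$; more carefully, $T$ non-singular is assumed in the hypothesis of the lemma, so I only need the invariant-set condition. Let $A$ satisfy $T^{-1}(A) = A$. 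Then $T^{-k}(A) = A$, so by ergodicity of $T^k$, either $\mu(A) = 0$ or $\mu(X \setminus A) = 0$. Hence $T$ is ergodic.

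For the counterexample showing the converse of (b) fails: I would take $X = \{0,1\}$ with the counting measure (or $X = \mathbb{Z}/2\mathbb{Z}$ with normalized counting measure), and $T$ the swap $0 \leftrightarrow 1$. Then $T$ is ergodic (the only invariant sets are $\emptyset$ and $X$), but $T^2 = \mathrm{id}$, for which $\{0\}$ is invariant with both $\{0\}$ and $\{1\}$ of positive measure, so $T^2$ is not ergodic. This is a clean finite example; one could alternatively use an irrational rotation composed suitably, but the two-point example is simplest.

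\textbf{Expected main obstacle.} The only genuinely delicate point is the converse direction of part (a): transferring recurrence from $T$ to $T^k$. The subtlety is that the residue class mod $k$ along which a point returns can depend measurably on the point, so one cannot simply restrict to a single $T^{k}$-orbit structure globally; the fix is the first-return-time decomposition of $A$ into measurable pieces indexed by residues, handled piece by piece, which is routine but needs to be written with care about measurability and null sets. Everything else is a direct unwinding of definitions.
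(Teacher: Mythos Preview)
Your approach is essentially the paper's: the pigeonhole on residues mod $k$ for part~(a), and the one-line implication $T^{-1}(A)=A \Rightarrow T^{-k}(A)=A$ for part~(b). Two points of difference are worth noting.

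\textbf{Promotion step in (a).} Your decomposition $B_r$ as written does not close the gap: if $B_r$ records the residue of the \emph{first} return time under $T$, then knowing $x\in B_r$ with $r\neq 0$ gives no $T^k$-return of $x$ itself, and ``residue-class by residue-class'' stalls there. The paper handles this more directly by contradiction: suppose the set $B\subset A$ of points that return to $A$ only finitely often under $T^k$ has positive measure; apply recurrence of $T$ to $B$ and run the \emph{same} pigeonhole argument on returns to $B$. This produces a point $y\in B$ returning to $B\subset A$ infinitely often under $T^k$, contradicting $y\in B$. This is the clean fix for the obstacle you correctly identified; your decomposition can be repaired to say the same thing, but only by effectively reproducing this contradiction.

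\textbf{Counterexample in (b).} You use the two-point swap on $\{0,1\}$; the paper uses the shift $x\mapsto x+1$ on $\mathbb{Z}$ with counting measure. Both are valid; yours is arguably cleaner.
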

\begin{proof} 
	\begin{enumerate}[label={(\alph*)}]
		\item It is clear that $ T^k $ recurrent implies $ T $ recurrent. We shall see the converse. To do so, consider $ A\in\mathcal{A} $ with $ \mu (A)>0 $. Since $ T $ is assumed to be recurrent, for $ \mu $-almost every $ x\in A $ there exists a sequence $ n_j \to \infty$ such that $ T^{n_j}(x)\in A $. For such $ x $, consider the following subsequences 
		\[\left\lbrace T^{kn}(x)\right\rbrace _n,\left\lbrace T^{kn+1}(x)\right\rbrace _n, \dots, \left\lbrace T^{2kn-1}(x)\right\rbrace _n. \] At least one of them, say $ \left\lbrace T^{kn+l}(x)\right\rbrace _n $, contains infinitely many $ T^{n_j}(x)$'s. Choose $ n $ and $ k $ so that \[y\coloneqq T^{kn+l}(x)\in A  . \] Then, $ y $ is a point in $ A $ whose orbit returns to $ A $ infinitely many times under $ T^k $. We claim that such points have full measure in $ A $. Assume that, on the contrary, there exists $ B\subset A $ with $ \mu (B)>0 $ such that, for all $ x\in B $, $ T^{nk}(x) \in A$,  only for finitely many $ n $'s. Applying the same procedure as before to $ B $ we can find a  point in $ B $ whose orbit returns to $ B $, and hence to $ A $, infinitely many times under $ T^k $, which is a contradiction. Hence, $ T^k $ is recurrent.
		\item Let $ A\in\mathcal{A} $ be such that $ T^{-1}(A)=A $. Then, $ T^{-k}(A)=A $, and, since $ T^k $ is assumed to be ergodic, either $ \mu(A)=0 $ or $ \mu(X\smallsetminus A)=0 $. Thus, $ T $ is ergodic.
		
		To see that the converse is not true in general, consider the space $ \mathbb{Z} $ endowed with the counting measure $ \mu $, i.e. given $ X\subset\mathbb{Z} $, $ \mu(X)$ is the number of elements of $ X $. Then, the translation $ T\coloneqq x\mapsto x+1 $ is ergodic, since there are no proper $ T $-invariant subsets of $ X $. However, $ T^2 = x\mapsto x+2$ is not ergodic, since $ 2\mathbb{Z} $ is invariant, and $ \mu(2\mathbb{Z})>0 $ and $ \mu(\mathbb{Z}\smallsetminus2\mathbb{Z})>0 $.
	\end{enumerate}
\end{proof}
Mostly, we will use the measure space $ (\partial\mathbb{D}, \mathcal{B}(\partial\mathbb{D}), \lambda) $, where $ \mathcal{B}(\partial\mathbb{D}) $ denotes the Borel $ \sigma $-algebra of $ \partial\mathbb{D} $, and $ \lambda $, its normalized Lebesgue measure. We need the concept of Lebesgue density.

\begin{defi}{\bf (Lebesgue density)}\label{def-lebesgue-density}
Given a Borel set $ A\in  \mathcal{B}(\partial\mathbb{D})$, the {\em Lebesgue density} of $ A $ at $ \xi\in \partial\mathbb{D} $ is defined as \[ d_\xi (A)\coloneqq\lim\limits_{\rho\to 0}\dfrac{\lambda(A\cap D(\xi, \rho))}{\lambda(D(\xi, \rho))}.\] A point $ \xi\in\partial \mathbb{D} $ is called a {\em Lebesgue density point} for $ A $ if $ d_\xi (A)=1 $.
\end{defi}

Given any Borel set $ A\in  \mathcal{B}(\partial\mathbb{D})$, with $ \lambda(A)>0 $, then $ \lambda $-almost every point in $A $ is a Lebesgue density point for $ A $ \cite[p. 138]{Rudin}.

\subsection{Generalized radial arcs and Stolz angles}
Throughout the article, the following concepts will be needed, which describe ways one may approach a  boundary point $ \xi\in\partial\mathbb{D} $.  

In the sequel, we denote the (Euclidean) disk of radius $ \rho >0$ centered at $ \xi\in\partial\mathbb{D} $ by $ D(\xi,\rho) $. We also consider the radial segment at $ \xi $ of length $ \rho>0 $, \[R_{\rho}(\xi)\coloneqq \left\lbrace r\xi\colon r\in(1-\rho,1)\right\rbrace .\]

\begin{defi}{\bf (Landing set)}
	Given a curve $ \gamma\colon \left[ 0,1\right) \to\widehat{\mathbb{C}} $, we consider its \textit{landing set} \[L(\gamma)\coloneqq \left\lbrace v\in\widehat{\mathbb{C}}\colon \textrm{ there exists }\left\lbrace t_n\right\rbrace _n\subset\left[ 0,1\right) , t_n\to 1\textrm{ such that }\gamma(t_n)\to v \right\rbrace .\] 
\end{defi}

By definition, $ L(\gamma) $ is a connected, compact subset of $ \widehat{\mathbb{C}} $. We say that  $ \gamma $ \textit{lands} at $ v\in\widehat{\mathbb{C}} $ if $ L(\gamma)=\left\lbrace v\right\rbrace  $, or, equivalently, if  \[\lim\limits_{t\to 1^-}\gamma(t)=v.\] Given a domain $ U\subset\widehat{\mathbb{C}} $, a point $ p\in\widehat{\partial} U $ is \textit{accessible} from $ U $ if there exists a curve $ \gamma\subset U $ landing at $ p $. 

\begin{defi}{\bf (Crosscut neighbourhoods and Stolz angles)} Let $ \xi\in\partial\mathbb{D} $.
	\begin{itemize}
		\item 	A \textit{crosscut} $ C $ is an open Jordan arc $ C\subset\mathbb{D} $ such that $ \overline{C}=C\cup \left\lbrace a,b\right\rbrace  $, with $ a,b\in\partial\mathbb{D} $. If $ a=b $, we say that $ C $ is {\em degenerate}; otherwise it is {\em non-degenerate}.
		
		\item A \textit{crosscut neighbourhood} of $ \xi \in\partial\mathbb{D}$ is an open set $ N\subset\mathbb{D} $ such that $ \xi\in\partial N$, and $C\coloneqq \partial N \cap\mathbb{D} $ is a  non-degenerate crosscut. We usually write $ N_\xi $ or  $ N_C $, to stress the  dependence on the point $ \xi $ or on the crosscut $ C $. Note that for a crosscut neighbourhood $ N $, $ \partial\mathbb{D}\cap \overline{N} $ is a non-trivial arc. 
		\item Given $ \xi\in\partial\mathbb{D} $, a \textit{Stolz angle}\footnote{Note that the usual defintion of Stolz angle is \[\Delta=\left\lbrace z\in\mathbb{D}\colon \left| \textrm{Arg }\xi- \textrm{Arg }(\xi-z)\right| <\alpha, \left| \xi-z \right| <\rho \right\rbrace .\]However,  both definitions are equivalent for our purposes, and the stated one is slightly more convenient in our setting.} at $ \xi $ is a set of the form 	\[\Delta_{\alpha, \rho}=\left\lbrace z\in\mathbb{D}\colon \left| \textrm{Arg }\xi- \textrm{Arg }(\xi-z)\right| <\alpha, \left| z \right| >1-\rho \right\rbrace .\] 
		\item We say that  $ \gamma $ \textit{lands non-tangentially} at $ \xi \in\partial\mathbb{D}$  if $ \gamma $ lands at $ \xi $, and there exists a Stolz angle $ \Delta_{\alpha, \rho} $ at $ \xi $ with $ \gamma\subset\Delta_{\alpha, \rho}  $. 
	\end{itemize}
\end{defi}

\begin{figure}[h]
	\centering
	\captionsetup[subfigure]{labelformat=empty, justification=centering}
	\hfill
	\begin{subfigure}{0.22\textwidth}
		\includegraphics[width=\textwidth]{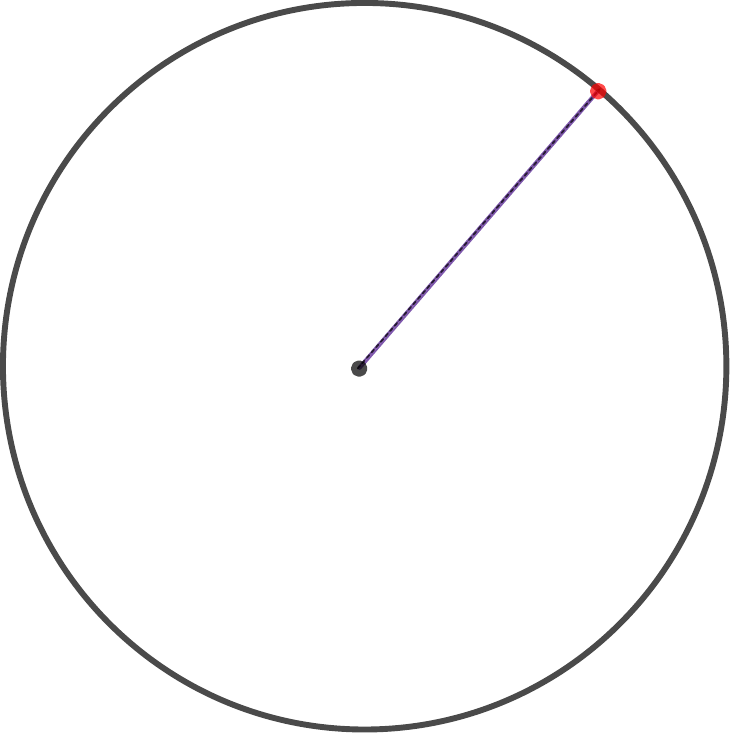}
		\setlength{\unitlength}{\textwidth}
		\put(-0.15, 0.87){$\xi$}
		\caption{\footnotesize Radial segment}
	\end{subfigure}
	\hfill
	\begin{subfigure}{0.22\textwidth}
		\includegraphics[width=\textwidth]{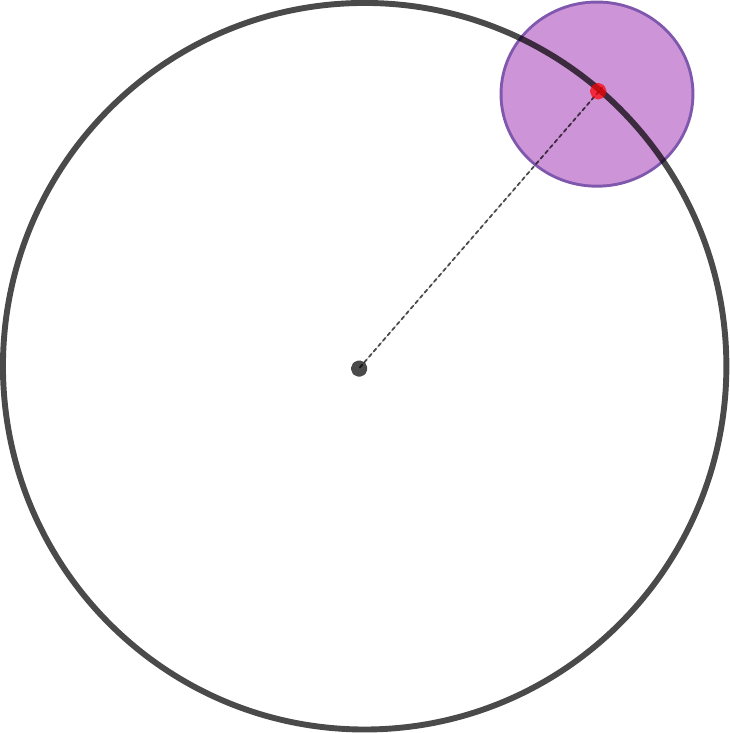}
		\setlength{\unitlength}{\textwidth}
		\put(-0.15, 0.87){$\xi$}
		\caption{\footnotesize (Euclidean) disk}
	\end{subfigure}
	\hfill
	\begin{subfigure}{0.22\textwidth}
		\includegraphics[width=\textwidth]{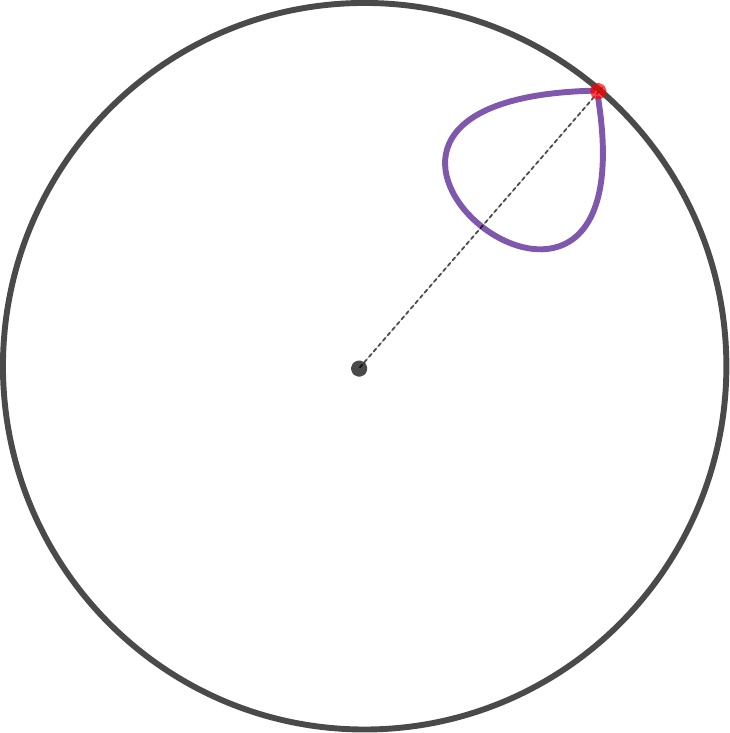}
				\setlength{\unitlength}{\textwidth}
		\put(-0.15, 0.87){$\xi$}
		\caption{\footnotesize Degenerate crosscut}
	\end{subfigure}
\hfill
		\vspace{1cm}
	
		\hfill
	\begin{subfigure}{0.22\textwidth}
		\includegraphics[width=\textwidth]{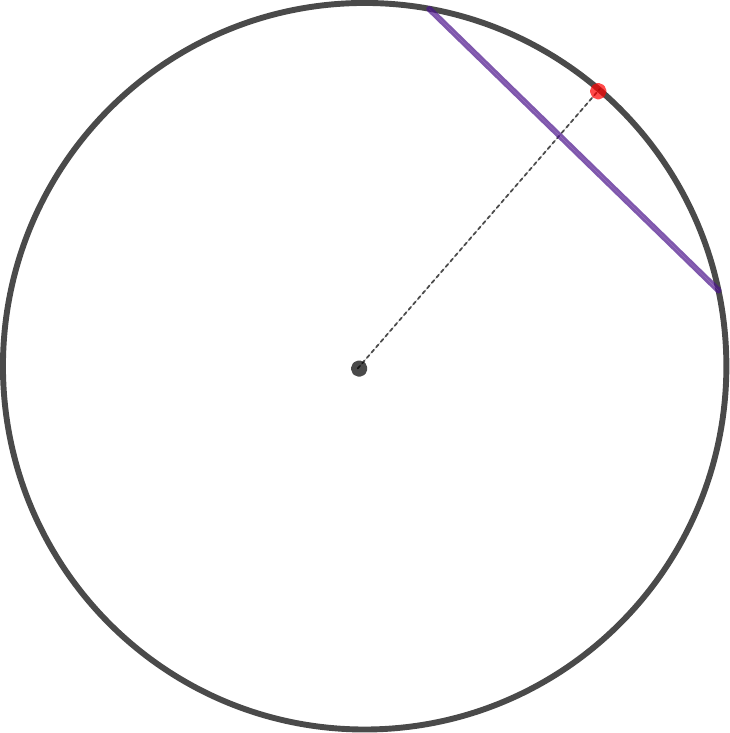}
				\setlength{\unitlength}{\textwidth}
		\put(-0.15, 0.87){$\xi$}
		\caption{\footnotesize (Non-degenerate) crosscut}
	\end{subfigure}
	\hfill
	\begin{subfigure}{0.22\textwidth}
		\includegraphics[width=\textwidth]{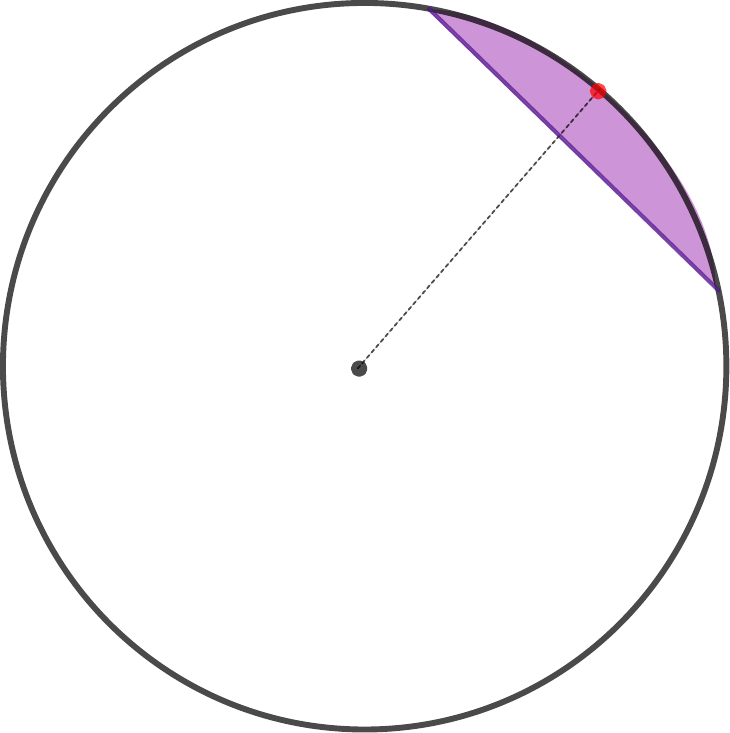}
				\setlength{\unitlength}{\textwidth}
		\put(-0.15, 0.87){$\xi$}
		\caption{\footnotesize Crosscut neighbourhood}
	\end{subfigure}
	\hfill
	\begin{subfigure}{0.22\textwidth}
		\includegraphics[width=\textwidth]{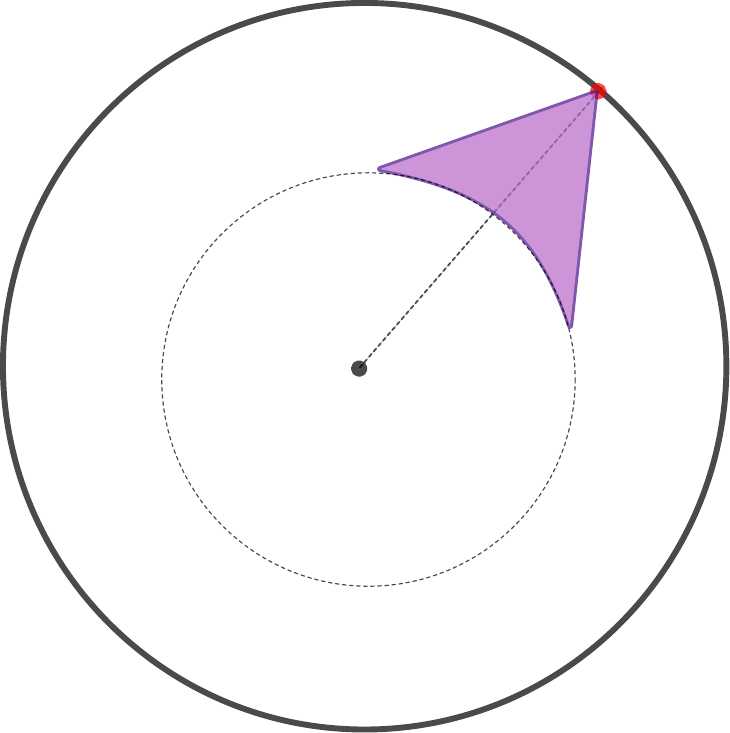}
				\setlength{\unitlength}{\textwidth}
		\put(-0.15, 0.87){$\xi$}
		\caption{\footnotesize Stolz angle \\ \textcolor{white}{text}}
	\end{subfigure}
	\hfill
	\hfill
	\caption{\footnotesize Different sets related to $ \xi\in\partial\mathbb{D} $.}\label{fig-radial-nbhds}
\end{figure}

Some times it is more convenient to work in the upper half-plane $ \mathbb{H} $ rather than in the unit disk $ \mathbb{D} $.  The previous concepts can be defined analogously for points in $ \partial\mathbb{H} $. In particular, the specific formulas for both the radial segment and Stolz angles at a point $ x\in\mathbb{R}$ are \[  R_{\rho}^\mathbb{H}(x)\coloneqq\left\lbrace z\in\mathbb{H}\colon {\textrm{Im }}w<\rho, { \textrm{Re }}w=x \right\rbrace;\]
\[\Delta^\mathbb{H}_{\alpha, \rho}(x)\coloneqq\left\lbrace z\in\mathbb{H}\colon {\textrm{Im }}w<\rho, \dfrac{ \left| \textrm{Re }w-x\right| }{ {\textrm{Im }}w}<\tan\alpha \right\rbrace .\]

A more flexible notion of radial segment and Stolz angle will be needed for our purposes.

\begin{defi}{\bf (Generalized radial arc and  Stolz angle) } \label{defi-radi-stolz-angle} Let $ p\in\overline{\mathbb{D}} $ and let $ \xi\in\partial\mathbb{D} $, $ \xi\neq p $. Let $ \rho>0 $ and $ 0<\alpha<\pi/2 $. \begin{itemize}
		\item If $ p\in \mathbb{D}$, consider the Möbius transformation $ M\colon\mathbb{D}\to\mathbb{D} $, $ M(z)=\dfrac{p-z}{1-\overline{p}z} $. Then,  the {\em (generalized) radial segment} $ R_{\rho}(\xi,p) $ of length $ \rho $ at $ \xi $ is defined as the preimage under $ M $ of the radial segment $ R_\rho(M(\xi)) $. Analogously, the {\em (generalized) Stolz angle} $ \Delta_{\alpha, \rho}(\xi, p) $ of angle $ \alpha $ and length $ \rho $  is  the preimage under $ M $ of the Stolz angle $ \Delta_{\alpha, \rho}(M(\xi)) $.
		That is, \[ R_{\rho}(\xi, p)\coloneqq M^{-1}(R_{\rho}(M(\xi))),\]\[ \Delta_{\alpha, \rho}(\xi, p)\coloneqq M^{-1}(\Delta_{\alpha,\rho}(M(\xi))).\]
		
		\item 	If $ p\in\partial\mathbb{D} $, consider the Möbius transformation $ M\colon\mathbb{D}\to\mathbb{H} $, $ M(z)=i\dfrac{p+z}{p-z} $. Then, the {\em (generalized) radial segment} and {\em Stolz angle} at $ \xi $ are defined as the preimages of the corresponding radial segment and Stolz angle at $ M(\xi) \in\mathbb{R}$. That is, \[ R_{\rho}(\xi,p)\coloneqq M^{-1}(R_{\rho}^\mathbb{H}(M(\xi)))\] \[ \Delta_{\alpha, \rho}(\xi,p)\coloneqq M^{-1}(\Delta_{\alpha,\rho}^\mathbb{H}(M(\xi))).\]
	\end{itemize}
\end{defi}
See Figures \ref{fig-StolzangleD} and \ref{fig-StolzangleH}.

Observe that $ R_{\rho}(\xi) =R_{\rho}(\xi, 0) $,  and $ \Delta_{\alpha, \rho}(\xi) =\Delta_{\alpha, \rho}(\xi, 0) $. Note also that $ R_{\rho}(\xi,p) $ is a curve landing non-tangentially at $ \xi\in\partial\mathbb{D} $, while $ \Delta_{\alpha, \rho}(\xi,p) $ is an angular neighbourhood of $ \xi $, since Möbius transformations are conformal, and hence angle-preserving. 
		\begin{figure}[htb!]\centering
	\includegraphics[width=14cm]{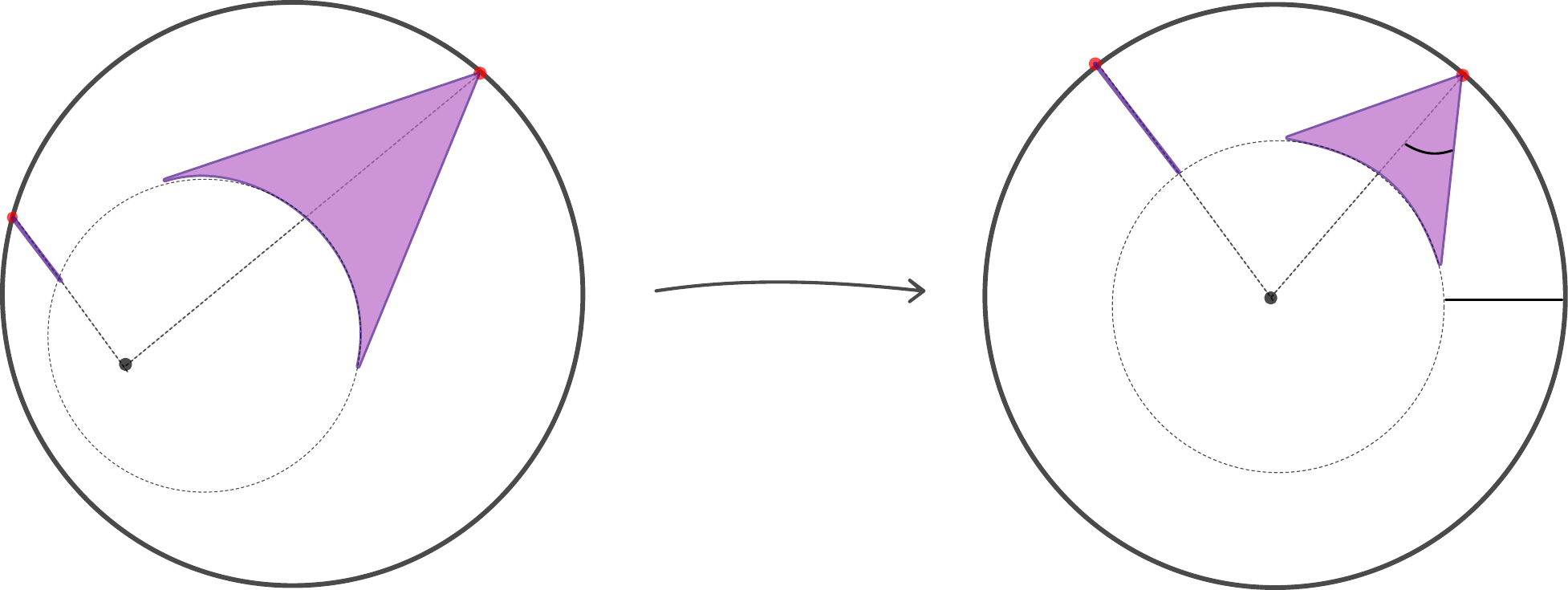}
	\setlength{\unitlength}{14cm}
	\put(-0.5, 0.2){$M$}
	\put(-0.92, 0.125){$p$}
	\put(-0.18, 0.18){$0$}
		\put(-0.685, 0.33){$\xi_1$}
	\put(-1.025, 0.23){$\xi_2$}
			\put(-0.06, 0.33){$M(\xi_1)$}
	\put(-0.39, 0.33){$M(\xi_2)$}
	\put(-0.68,0.02){$ \mathbb{D} $}
		\put(-0.05,0.02){$ \mathbb{D} $}
			\put(-0.88, 0.31){\footnotesize$\Delta_{\alpha, \rho}(\xi_1, p)$}
		\put(-0.98, 0.22){\footnotesize$R_{ \rho}(\xi_2, p)$}
			\put(-0.22, 0.32){\footnotesize$\Delta_{\alpha, \rho}(M(\xi_1))$}
		\put(-0.35, 0.25){\footnotesize$R_{ \rho}(M(\xi_2))$}
			\put(-0.05, 0.165){$ \rho $}
		\put(-0.1, 0.26){$ \alpha$}
	\caption{\footnotesize  Radial arc and angular neighbourhood with respect to $ p\in\mathbb{D} $.}\label{fig-StolzangleD}
\end{figure}
\begin{figure}[htb!]\centering
	\includegraphics[width=14cm]{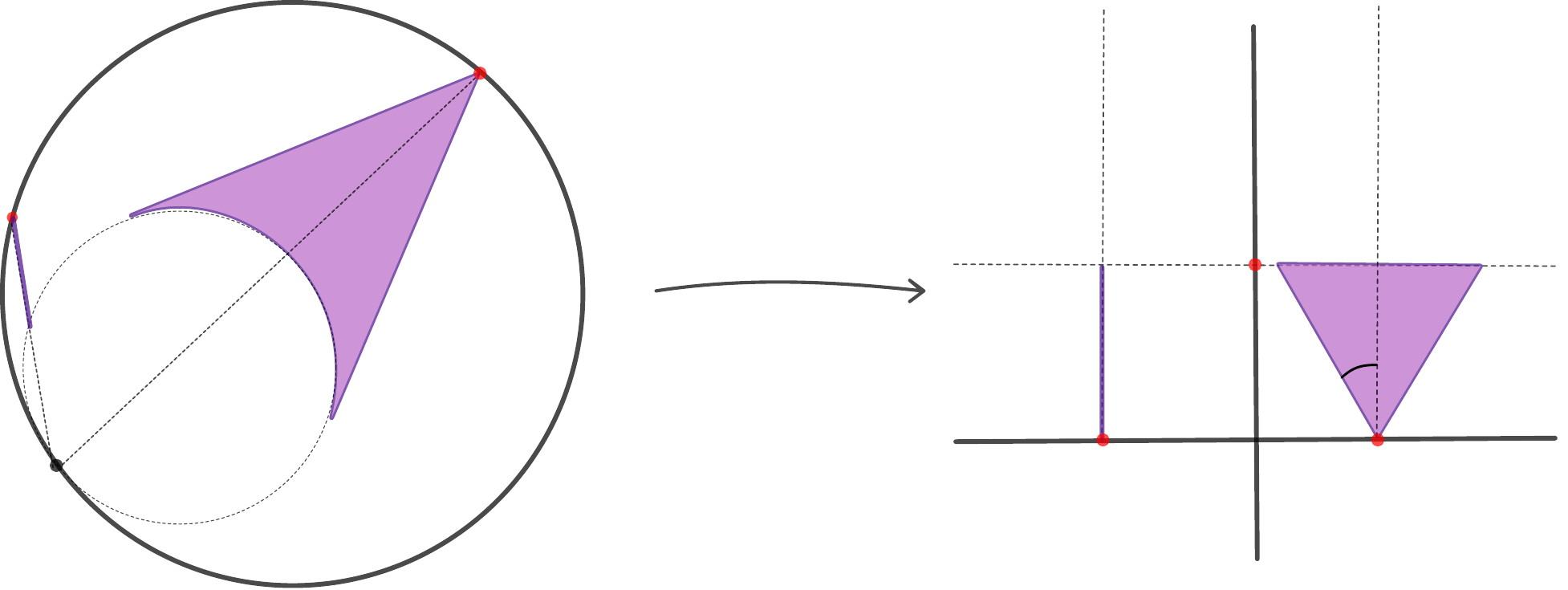}
		\setlength{\unitlength}{14cm}
	\put(-0.5, 0.2){$M$}
	\put(-0.98,0.05){$p$}
	\put(-0.685, 0.33){$\xi_1$}
		\put(-0.88, 0.3){\footnotesize$\Delta_{\alpha, \rho}(\xi_1, p)$}
			\put(-0.98, 0.2){\footnotesize$R_{ \rho}(\xi_2, p)$}
			\put(-0.08, 0.15){\footnotesize$\Delta^\mathbb{H}_{\alpha, \rho}(M(\xi_1))$}
				\put(-0.41, 0.15){\footnotesize$R^\mathbb{H}_{ \rho}(M(\xi_2))$}
	\put(-1.025, 0.23){$\xi_2$}
	\put(-0.15, 0.06){$M(\xi_1)$}
	\put(-0.34, 0.06){$M(\xi_2)$}
		\put(-0.68,0.02){$ \mathbb{D} $}
				\put(-0.06, 0.33){$ \mathbb{H} $}
								\put(-0.23, 0.21){$ \rho i $}
												\put(-0.145, 0.15){$ \alpha$}
	\caption{\footnotesize Radial arc and angular neighbourhood with respect to $ p\in\partial\mathbb{D} $.}\label{fig-StolzangleH}
\end{figure}

\subsection{Boundary behaviour of meromorphic maps in $ \mathbb{D} $}\label{subsection-radial-extension-meromorphic-maps}
In this section, we are interested in the boundary behaviour of meromorphic maps $ h\colon\mathbb{D}\to\widehat{\mathbb{C}} $. Since $ h $ may not extend continuously to $ \partial\mathbb{D} $, the concepts of radial and angular limit are a keystone on studying the boundary behavior of $ h $.

\begin{defi}{\bf (Radial and angular limit)}
	Let $ h\colon\mathbb{D}\to\widehat{\mathbb{C}} $ be a meromorphic map, and let $ \xi\in\partial\mathbb{D} $.
We say that  $ h $  has \textit{radial limit} at $ \xi  $ if the limit \[h^*(\xi)\coloneqq \lim\limits_{t\to 1^-}h(t\xi) \]  exists. 
We say that  $ h $  has \textit{angular limit} at $ \xi  $ if, for any Stolz angle $ \Delta $ at $ \xi $, the limit \[\lim\limits_{z\to\xi, z\in\Delta}h(z) \]  exists. 
\end{defi}
 
 Note that, whenever we write $ h^*(\xi)=v $ we are assuming implicitly that the radial limit exists, and equals $ v $. The map \[h^*\colon\partial\mathbb{D}\to\widehat{\mathbb{C}}\]  is called the \textit{radial extension} of $ h $ (defined wherever the radial limit exists). 
 
 For maps $ h\colon\mathbb{D}\to\widehat{\mathbb{C}} $ omitting three values in $ \widehat{\mathbb{C}}  $, the following theorem relates radial and angular limits.
\begin{thm}{\bf (Lehto-Virtanen, {\normalfont\cite[Sect. 4.1]{Pommerenke}})}\label{thm-lehto-virtanen}
	Let $ h\colon\mathbb{D}\to\widehat{\mathbb{C}} $ be a meromorphic map omitting three values in $ \widehat{\mathbb{C}}  $. Let $ \gamma $ be a curve in $ \mathbb{D} $ landing at $ \xi\in\partial \mathbb{D} $. If $ h(\gamma) $ lands at a point $ v\in\mathbb{C} $, then $ h $ has angular limit at $ \xi $  equal to $ v $. In particular, radial and angular limits are the same.
\end{thm}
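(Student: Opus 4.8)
The plan is to reduce the statement to the classical Lindel\"of theorem for bounded holomorphic functions by passing to the universal cover of the target. After a rotation we may assume $\xi=1$, and after post-composing $h$ with a M\"obius transformation of $\widehat{\mathbb{C}}$ (which keeps $h$ omitting three values and, being a homeomorphism of $\widehat{\mathbb{C}}$, transports radial and angular limits) we may assume $v=0$; write $Y:=\widehat{\mathbb{C}}\smallsetminus\{a_1,a_2,a_3\}$, where $a_1,a_2,a_3$ are the three omitted values. Then $Y$ is hyperbolic, so there is a holomorphic universal covering $\pi\colon\mathbb{D}\to Y$. Since $\mathbb{D}$ is simply connected, the holomorphic map $h\colon\mathbb{D}\to Y$ admits a holomorphic lift $\widetilde h\colon\mathbb{D}\to\mathbb{D}$ with $\pi\circ\widetilde h=h$; in particular $\widetilde h$ is a \emph{bounded} holomorphic function on $\mathbb{D}$, and the problem is now to read off information about $\widetilde h$ from the hypothesis on $h$.

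The next step is to show that $\widetilde h$ has an asymptotic value in $\mathbb{D}$ at $1$ along $\gamma$. Since $h(\gamma(t))\to 0\in Y$ and $\pi$ is a covering map, choose an evenly covered open neighbourhood $V\ni 0$ in $Y$, so that $\pi^{-1}(V)=\bigsqcup_{j}V_j$ with each $\pi|_{V_j}\colon V_j\to V$ a homeomorphism. For $t$ close to $1$ we have $h(\gamma(t))\in V$, hence $\widetilde h(\gamma(t))\in\pi^{-1}(V)=\bigsqcup_j V_j$; as $t\mapsto\widetilde h(\gamma(t))$ is continuous and the $V_j$ are disjoint open sets, $\widetilde h(\gamma(t))$ eventually lies in a single sheet $V_{j_0}$. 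Therefore $\widetilde h(\gamma(t))=(\pi|_{V_{j_0}})^{-1}(h(\gamma(t)))\to(\pi|_{V_{j_0}})^{-1}(0)=:p\in V_{j_0}\subset\mathbb{D}$, so $p$ is an asymptotic value of $\widetilde h$ at $1$ along $\gamma$.

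Now apply the classical Lindel\"of theorem (see \cite[Sect.~4.1]{Pommerenke}) to the bounded function $\widetilde h$: having the asymptotic value $p$ at $1$, it has angular limit $p$ at $1$. Consequently, for every Stolz angle $\Delta$ at $1$, $\widetilde h(z)\to p$ as $z\to 1$ within $\Delta$, and since $\pi$ is continuous at the interior point $p$ with $\pi(p)=0$, we obtain $h(z)=\pi(\widetilde h(z))\to 0=v$ as $z\to 1$ within $\Delta$. Undoing the normalisation gives the angular limit $v$ for the original $h$ at $\xi$. The final assertion is then immediate, because the radial segment at $\xi$ is contained in every Stolz angle at $\xi$ (on the radial segment one has $\operatorname{Arg}(\xi-z)=\operatorname{Arg}\xi$), so the existence of the angular limit $v$ forces the radial limit to exist and equal $v$ as well.

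All of the above is bookkeeping with the covering; the genuine difficulty is contained in the classical Lindel\"of theorem invoked in the third step. The delicate point there is that $\gamma$ may approach $\xi$ \emph{tangentially}, in which case it stays at infinite hyperbolic distance from the radial segment, so the asymptotic value cannot be transported into a Stolz angle by equicontinuity alone; the standard remedy is a two‑constants (harmonic measure) argument in a lune $D(\xi,\rho)\cap\mathbb{D}$, in which the tail of $\gamma$ supplies a crosscut on which $|\widetilde h-p|$ is small, and where one needs a uniform positive lower bound for the harmonic measure of that crosscut as seen from the Stolz‑angle points lying in the lune — this is the step requiring care, since $\gamma$ may oscillate inside the lune. (Alternatively, the non‑tangential case can be treated directly, without Lindel\"of, by rescaling with automorphisms of $\mathbb{D}$ centred along $\gamma$, extracting a locally uniform limit of $h$ composed with these automorphisms and forcing it to be the constant $v$ by producing zeros of the limit minus $v$ that accumulate at an interior point; this variant stays closer to the conformal and hyperbolic techniques used elsewhere in the paper.)
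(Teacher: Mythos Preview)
The paper does not give a proof of this theorem; it is quoted from Pommerenke \cite[Sect.~4.1]{Pommerenke} and used as a black box. So there is no ``paper's own proof'' to compare against, and your argument should be judged on its own merits.

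Your strategy --- lift through the universal cover $\pi\colon\mathbb{D}\to Y=\widehat{\mathbb{C}}\smallsetminus\{a_1,a_2,a_3\}$, obtain a bounded lift $\widetilde h\colon\mathbb{D}\to\mathbb{D}$, and invoke the classical Lindel\"of theorem --- is standard and essentially correct. There is, however, one case you have silently excluded. In your second step you ``choose an evenly covered open neighbourhood $V\ni 0$ in $Y$'' and produce $p\in\pi^{-1}(0)\subset\mathbb{D}$. This presupposes $v\in Y$, i.e.\ that $v$ is \emph{not} one of the three omitted values. Nothing in the hypothesis rules that out: $h(\gamma(t))$ lies in the image of $h$, but its limit $v$ may well be a puncture of $Y$. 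In that situation $\pi^{-1}(v)$ is empty and your construction of $p$ collapses.

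The fix is short but worth writing down. If $v=a_1$, say, then as $h(\gamma(t))\to a_1$ the lift $\widetilde h(\gamma(t))$ eventually enters $\pi^{-1}(V)$ for a small punctured disk $V$ about $a_1$; the components of $\pi^{-1}(V)$ are horocycles at points of $\partial\mathbb{D}$, and by the same connectedness argument $\widetilde h(\gamma)$ is eventually trapped in a single such horocycle, hence lands at a point $\zeta\in\partial\mathbb{D}$. Lindel\"of still gives that $\widetilde h$ has angular limit $\zeta$, and the local structure of $\pi$ near $\zeta$ (horocycle $\to$ punctured disk about $a_1$) then yields $h(z)=\pi(\widetilde h(z))\to a_1=v$ angularly. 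Alternatively, the normal-function proof in Pommerenke handles both cases uniformly without splitting.
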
 

\begin{remark}{\bf (Limit on generalized radial arcs and Stolz angles)}
Note that, in particular, the Lehto-Virtanen Theorem justifies that, for meromorphic maps omitting three values, it is equivalent to take the limit along the radial segment, than along any generalized radial arc. Likewise, the angular limit can be computed along generalized Stolz angles. 
\end{remark}

The following theorems describe more precisely  the boundary behaviour of meromorphic maps $ h\colon\mathbb{D}\to\widehat{\mathbb{C}} $ in terms of measure.

\begin{thm}{\bf (Radial extensions are measurable, {\normalfont\cite[Prop. 6.5]{Pommerenke}})}\label{thm-mesurabilitat-funcions-disk}
Let $ h\colon\mathbb{D} \to\widehat{\mathbb{C}}$ be continuous. Then, the points $ \xi\in\partial\mathbb{D} $ where the radial limit $ h^* $ exists form a Borel set, and if $ A\subset\widehat{\mathbb{C}} $ is a Borel set, then \[(h^*)^{-1}(A)\coloneqq\left\lbrace \xi\in\partial\mathbb{D}\colon h^*(\xi)\in A \right\rbrace \subset\partial\mathbb{D}\] is also a Borel set.
\end{thm}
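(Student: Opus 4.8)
The plan is to realize both sets as the outcome of countably many Borel operations applied to functions that are manifestly continuous on $\partial\mathbb{D}$. Throughout, equip $\widehat{\mathbb{C}}$ with the spherical metric $\sigma$, so that it becomes a compact (in particular, complete) metric space and the point at infinity causes no trouble. For each $r\in(0,1)$ introduce the radial oscillation
\[
\mathrm{osc}_r(\xi)\coloneqq\sup\bigl\{\sigma\bigl(h(s\xi),h(t\xi)\bigr) : s,t\in[r,1)\bigr\},\qquad \xi\in\partial\mathbb{D}.
\]
Since $h$ is continuous on the open disk, $(s,t)\mapsto\sigma(h(s\xi),h(t\xi))$ is continuous, so this supremum is unchanged if $s,t$ are restricted to the countable dense set $\mathbb{Q}\cap[r,1)$; hence $\mathrm{osc}_r$ is a countable supremum of functions of $\xi$ that are continuous on $\partial\mathbb{D}$, so it is lower semicontinuous, in particular Borel. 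Note also that $\mathrm{osc}_r(\xi)$ is non-increasing in $r$.

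First I would handle the domain of $h^*$. By the Cauchy criterion in the complete space $(\widehat{\mathbb{C}},\sigma)$, the radial limit $h^*(\xi)$ exists if and only if $\inf_{r\in(0,1)}\mathrm{osc}_r(\xi)=0$; using monotonicity in $r$ one may let $r$ run through the rationals, so the set where $h^*$ exists equals
\[
E=\bigcap_{m\geq1}\ \bigcup_{r\in\mathbb{Q}\cap(0,1)}\ \bigl\{\xi\in\partial\mathbb{D} : \mathrm{osc}_r(\xi)<1/m\bigr\},
\]
a countable intersection of countable unions of Borel sets, hence Borel.

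Next I would treat $h^*$ itself. On $E$ the radial limit may be computed along the single sequence $t_n=1-1/n$, so $h^*|_E$ is the pointwise limit of the maps $h_n\colon\xi\mapsto h\bigl((1-1/n)\xi\bigr)$, each continuous on $\partial\mathbb{D}$. A pointwise limit of Borel maps into a (separable) metric space is Borel, so $h^*\colon E\to\widehat{\mathbb{C}}$ is Borel measurable; therefore, for every Borel $A\subset\widehat{\mathbb{C}}$, the set $(h^*)^{-1}(A)=\{\xi\in E : h^*(\xi)\in A\}$ is Borel in $\partial\mathbb{D}$, which is the remaining assertion.

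The one step that requires a little care — and the only place where continuity of $h$ on $\mathbb{D}$ is genuinely used — is the reduction of the oscillation supremum to a countable one, which is what guarantees Borel measurability of $\mathrm{osc}_r$; past that, the argument is a routine bookkeeping of Borel operations, and I do not anticipate any serious obstacle.
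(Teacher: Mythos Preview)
Your argument is correct: the reduction of $\mathrm{osc}_r$ to a countable supremum of continuous functions, the Cauchy-criterion description of $E$, and the identification of $h^*|_E$ as a pointwise limit of continuous maps are all sound, and together they yield both assertions. There is nothing to compare against here, since the paper does not supply its own proof of this statement --- it is quoted as a known result from \cite[Prop.~6.5]{Pommerenke} and used as a black box.
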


In the particular case where $ h= \varphi\colon\mathbb{D}\to U $ is a Riemann map, 
the following theorem, due to Fatou, Riesz and Riesz, ensures the existence of radial limits almost everywhere. 

\begin{thm}{\bf (Existence of radial limits) }\label{thm-FatouRiez}
	Let $ \varphi\colon\mathbb{D}\to U $ be a Riemann map. Then, for $ \lambda $-almost every point $ \xi \in\partial \mathbb{D}$, the radial limit $ \varphi^*(\xi ) $ exists. Moreover, if we fix $ \xi\in \partial\mathbb{D} $ for which $ \varphi^*(\xi) $ exists, then $ \varphi^*(\xi )\neq  \varphi^*(\zeta)  $, for $ \lambda $-almost every point $ \zeta\in\partial \mathbb{D}$.
\end{thm}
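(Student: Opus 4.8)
The plan is to reduce both assertions to two classical facts about \emph{bounded} holomorphic functions on $\mathbb{D}$: Fatou's theorem, that such a function has radial limits $\lambda$-almost everywhere, and the uniqueness theorem of F.\ and M.\ Riesz, that such a function is identically constant as soon as its radial limit is constant on a set of positive Lebesgue measure. The bridge to this bounded setting will be a square-root construction. First I would observe that, $U$ being a Fatou component, it is hyperbolic, so $\widehat{\mathbb{C}}\smallsetminus U$ contains the nonempty Julia set and $\widehat{\partial} U$ is infinite. Fixing two distinct points $p,p'\in\widehat{\partial} U\subset\widehat{\mathbb{C}}\smallsetminus U$ and a Möbius transformation $T$ with $T(p)=\infty$ and $T(p')=0$, the map $h\coloneqq T\circ\varphi\colon\mathbb{D}\to\mathbb{C}$ is univalent (it omits $\infty$) and nowhere vanishing (it omits $0$), hence on the simply connected disk it admits a holomorphic square root $r$ with $r^{2}=h$. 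This $r$ is again univalent, and $r(\mathbb{D})\cap(-r(\mathbb{D}))=\emptyset$: a common value would give two points with equal $h$-value, forcing them to coincide, hence $r=-r=0$ and $h=0$ there, which is impossible. Since $-r(\mathbb{D})$ is a nonempty open set, I can pick $c\in\mathbb{C}$ and $\varepsilon>0$ with $D(c,\varepsilon)\subset -r(\mathbb{D})$; then $|r-c|\geq\varepsilon$ throughout $\mathbb{D}$, so $g\coloneqq 1/(r-c)$ is bounded and holomorphic on $\mathbb{D}$.

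For the existence of radial limits I would then argue as follows: by Fatou's theorem $g$ has radial limits $\lambda$-almost everywhere, hence so does $r=c+1/g$ (with value in $\widehat{\mathbb{C}}$, taking the value $\infty$ exactly where $g^{*}=0$), hence so do $h=r^{2}$ and $\varphi=T^{-1}\circ h$. This proves the first assertion.

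For the second assertion, fix $\xi\in\partial\mathbb{D}$ with $v\coloneqq\varphi^{*}(\xi)$ existing, and recall that every existing radial limit of a Riemann map lies on $\widehat{\partial} U$ (an interior value is excluded since $\varphi^{-1}$ would be continuous there), so $v\notin U$. Put $E\coloneqq(\varphi^{*})^{-1}(\{v\})$, which is Borel by Theorem~\ref{thm-mesurabilitat-funcions-disk}. I would rerun the construction with $T$ now chosen so that $T(v)=0$ and $T(p)=\infty$ for some $p\in\widehat{\partial} U\smallsetminus\{v\}$; since $v\notin U$, the resulting $h=T\circ\varphi$ is again nowhere vanishing, so the square root $r$ and the bounded function $g$ are available exactly as before. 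On $E$ one has $h^{*}=0$, hence $(r^{*})^{2}=0$, hence $r^{*}=0$, hence $g^{*}=-1/c$. If $\lambda(E)>0$, then $g+1/c$ is a bounded holomorphic function whose radial limit vanishes on a set of positive measure, so by the F.\ and M.\ Riesz uniqueness theorem $g\equiv-1/c$; unwinding the construction forces $r\equiv0$, then $h\equiv0$, then $\varphi\equiv v$, contradicting the univalence of $\varphi$. Hence $\lambda(E)=0$, which is precisely the claim.

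The hard part is the passage to the bounded setting: $U$ need not be bounded, and may even be dense in $\widehat{\mathbb{C}}$, so $\widehat{\mathbb{C}}\smallsetminus\overline{U}$ can be empty and one cannot turn $\varphi$ into a bounded function by a mere Möbius change of variable. The square-root construction (equivalently, the classical fact that every Riemann map belongs to the Nevanlinna class) is what circumvents this, and the only subtle point inside it is upgrading the non-vanishing of $r-c$ to a uniform lower bound $|r-c|\geq\varepsilon$, which is exactly what makes $g$ bounded. Tracking the value $\infty$ on the sphere, i.e.\ the set $\{\varphi^{*}=T^{-1}(\infty)\}$, causes no difficulty, since it is a single value. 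A shorter alternative would be to invoke directly that univalent functions lie in the Nevanlinna class $N$, that $N$-functions have radial limits $\lambda$-a.e., and that a nonconstant $N$-function cannot have boundary values equal to a fixed value on a set of positive measure; the construction above merely makes the argument self-contained.
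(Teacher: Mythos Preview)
The paper does not prove this theorem: it is stated as a classical result due to Fatou and the brothers Riesz, and is quoted without proof in the preliminaries. Your argument is a correct and essentially standard proof of this classical fact, via the Koebe square-root trick to pass from a univalent map to a bounded holomorphic function, followed by Fatou's theorem and the F.~and M.~Riesz uniqueness theorem. One small remark: you invoke that ``$U$ being a Fatou component'' to obtain two distinct points in $\widehat{\partial}U$, but the theorem as stated concerns an arbitrary Riemann map, not necessarily onto a Fatou component; this is harmless, since any simply connected domain conformally equivalent to $\mathbb{D}$ has connected complement in $\widehat{\mathbb{C}}$ containing more than one point, so $\widehat{\partial}U$ is automatically an infinite continuum and your choice of $p,p'$ goes through unchanged.
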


\subsection{Harmonic measure}\label{subsection-harmonic-measure}
Let $ U $ be a simply connected domain. Then, the Riemann map $ \varphi\colon \mathbb{D}\to U $ induces a measure in $ \widehat{\partial} U $, the harmonic measure, which is the appropriate one when dealing with the boundaries of Fatou components. We define harmonic measure in $ \widehat{\partial} U $ in terms of the push-forward under a Riemann map of the normalized measure on the unit circle $ \partial\mathbb{D} $.

\begin{defi}{\bf (Harmonic measure)}
	Let $ U\subsetneq{\mathbb{C}} $ be a simply connected domain, $ z\in U $, and let $ \varphi\colon\mathbb{D}\to U $ be a Riemann map, such that $ \varphi(0)=z\in U $. Let $ (\partial\mathbb{D}, \mathcal{B}, \lambda) $ be the measure space on $ \partial \mathbb{D} $ defined by $ \mathcal{B} $, the Borel $ \sigma $-algebra of $ \partial \mathbb{D} $, and $ \lambda $, its normalized Lebesgue measure. Consider the measurable space $ (\widehat{\mathbb{C}}, \mathcal{B}(\widehat{\mathbb{C}})) $, where  $ \mathcal{B}(\widehat{\mathbb{C}}) $ is the Borel $ \sigma $-algebra of $ \widehat{\mathbb{C}} $. Then, given $ A\in  \mathcal{B}(\widehat{\mathbb{C}}) $, the \textit{harmonic measure at $ z $ relative to $ U $} of the set $ A$ is defined as:\[\omega_U(z, A)\coloneqq\lambda ((\varphi^*)^{-1}(A)).\]
\end{defi}

Note that the harmonic measure $ \omega_U(z,\cdot) $ is well-defined. Indeed, by Theorem \ref{thm-mesurabilitat-funcions-disk}, the set \[(\varphi^*)^{-1}(A)=\left\lbrace \xi\in\partial\mathbb{D}\colon\varphi^*(\xi)\in A\right\rbrace \]is a Borel set of $\partial \mathbb{D} $, and hence measurable. 
We also note that the definition of $ \omega_U(z, \cdot) $ is independent of the choice of $ \varphi$, provided it satisfies $ \varphi(0)=z $, since $ \lambda $ is invariant under rotation.

We refer to \cite{harmonicmeasure2, Pommerenke} for equivalent definitions and further properties of the harmonic measure. We only need the following simple facts.

\begin{lemma}{\bf (Sets of zero and full harmonic measure)}
	Let $ U\subsetneq{\mathbb{C}} $ be a simply connected domain, and $ B\in\mathcal{B}(\widehat{\mathbb{C}}) $.  If there exists $ z_0\in U $ such that $ \omega_U(z_0, B)=0 $ (resp. $ \omega_U(z_0, B)=1 $), then $ \omega_U(z, B)=0 $ (resp. $ \omega_U(z, B)=1 $) for all $ z\in U $.
	In this case, we say that the set $ B $ has {\em zero} (resp. {\em full}) {\em harmonic measure relative to $ U $}, and we write $ \omega_U(B)=0 $ (resp. $ \omega_U(B)=1 $).
\end{lemma}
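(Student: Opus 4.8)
The statement is that the property ``$\omega_U(\cdot,B)\in\{0,1\}$'' is independent of the base point. The plan is to compare harmonic measures at two points $z_0,z\in U$ by transporting everything to $\mathbb{D}$ via Riemann maps and observing that changing the base point amounts to precomposing with a Möbius automorphism of $\mathbb{D}$, which preserves $\lambda$-null sets and sets of full $\lambda$-measure on $\partial\mathbb{D}$. The only non-formal point will be that radial limits commute (a.e.) with such a boundary automorphism, which is exactly what the Lehto--Virtanen theorem is for.

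\textbf{Step 1: Set-up.} Fix $z_0,z\in U$ and Riemann maps $\varphi_0,\varphi_z\colon\mathbb{D}\to U$ with $\varphi_0(0)=z_0$, $\varphi_z(0)=z$. Then $T\coloneqq\varphi_0^{-1}\circ\varphi_z\colon\mathbb{D}\to\mathbb{D}$ is a conformal automorphism, i.e. a Möbius transformation whose pole lies outside $\overline{\mathbb{D}}$; hence $T$ extends to a real-analytic diffeomorphism of $\overline{\mathbb{D}}$, and in particular its restriction $T|_{\partial\mathbb{D}}\colon\partial\mathbb{D}\to\partial\mathbb{D}$ is a bi-Lipschitz homeomorphism. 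Consequently $T$ and $T^{-1}$ map $\lambda$-null sets to $\lambda$-null sets and sets of full $\lambda$-measure to sets of full $\lambda$-measure.

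\textbf{Step 2: Radial limits transform correctly.} I claim $\varphi_z^*(\xi)=\varphi_0^*\big(T(\xi)\big)$ for $\lambda$-almost every $\xi\in\partial\mathbb{D}$. By Theorem \ref{thm-FatouRiez}, $\varphi_0^*$ exists outside a $\lambda$-null set $E_0\subset\partial\mathbb{D}$; since $T^{-1}(E_0)$ is again $\lambda$-null, it suffices to treat $\xi\notin T^{-1}(E_0)$. For such $\xi$, consider the curve $\gamma(r)\coloneqq T(r\xi)$, $r\in(1-\rho,1)$. Since $T$ is continuous on $\overline{\mathbb{D}}$ and conformal at $\xi$, $\gamma$ lands at $T(\xi)\in\partial\mathbb{D}$ and does so non-tangentially (its image arc meets $\partial\mathbb{D}$ orthogonally at $T(\xi)$, being the image of a radial segment under a conformal map), so $\gamma$ is eventually contained in a Stolz angle at $T(\xi)$. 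Now $\varphi_0$ omits every value of $\widehat{\mathbb{C}}\smallsetminus U$, which (as $U\subsetneq\mathbb{C}$ is simply connected) is an infinite set; applying Theorem \ref{thm-lehto-virtanen} with the radial segment at $T(\xi)$ shows that $\varphi_0$ has angular limit $\varphi_0^*(T(\xi))$ at $T(\xi)$, hence $\lim_{r\to1^-}\varphi_0(\gamma(r))=\varphi_0^*(T(\xi))$. But $\varphi_0(\gamma(r))=\varphi_0(T(r\xi))=\varphi_z(r\xi)$, so the limit on the left is $\varphi_z^*(\xi)$, proving the claim.

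\textbf{Step 3: Conclusion.} For $B\in\mathcal{B}(\widehat{\mathbb{C}})$, Step 2 gives
\[
(\varphi_z^*)^{-1}(B)\;=\;T^{-1}\big((\varphi_0^*)^{-1}(B)\big)
\]
up to a set of $\lambda$-measure zero. Since $T$ preserves $\lambda$-null sets and sets of full measure (Step 1), we get $\lambda\big((\varphi_z^*)^{-1}(B)\big)=0$ iff $\lambda\big((\varphi_0^*)^{-1}(B)\big)=0$, and likewise with $0$ replaced by $1$. Recalling that $\omega_U(w,B)=\lambda\big((\varphi_w^*)^{-1}(B)\big)$ for the Riemann map sending $0$ to $w$ (and that this value does not depend on the choice of such Riemann map, as noted after the definition of harmonic measure), we conclude $\omega_U(z,B)=0\iff\omega_U(z_0,B)=0$ and $\omega_U(z,B)=1\iff\omega_U(z_0,B)=1$. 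As $z,z_0\in U$ were arbitrary, the lemma follows.

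\textbf{Main obstacle.} Everything except Step 2 is bookkeeping. The delicate point is the a.e.\ identity $\varphi_z^*=\varphi_0^*\circ T|_{\partial\mathbb{D}}$: a Riemann map need not extend continuously to $\partial\mathbb{D}$, so one cannot simply ``plug in'' boundary values, and one must pass from the radial approach to $T(\xi)$ to the approach along the non-radial curve $r\mapsto T(r\xi)$; this is precisely the content of Lehto--Virtanen together with the remark on generalized radial arcs. (Alternatively, the whole statement follows at once from Harnack's inequality, which yields that $\omega_U(z,\cdot)$ and $\omega_U(z_0,\cdot)$ are mutually absolutely continuous with Radon--Nikodym derivative bounded above and below; but the Riemann-map argument is the one consistent with the framework set up above.)
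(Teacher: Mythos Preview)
Your proof is correct. The paper itself does not prove this lemma; it is stated as one of the ``simple facts'' about harmonic measure, with a reference to the literature. So there is no approach to compare against.

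Your argument via the Möbius automorphism $T=\varphi_0^{-1}\circ\varphi_z$ and Lehto--Virtanen is clean and entirely consistent with the paper's framework (it uses exactly the definition of $\omega_U$ as the push-forward of $\lambda$ under $\varphi^*$). One small remark on Step~2: you correctly observe that $T$ maps the radial segment at $\xi$ to a curve orthogonal to $\partial\mathbb{D}$ at $T(\xi)$; since $T$ is a M\"obius map this image is in fact a circular arc, so the non-tangential landing is immediate. The alternative you mention at the end---Harnack's inequality, giving that the measures $\omega_U(z,\cdot)$ are pairwise mutually absolutely continuous---is the more common textbook route and is arguably shorter, but your Riemann-map argument has the advantage of staying strictly within the definitions the paper has set up.
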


 Finally, we are interested in the support of $ \omega_U $. Recall that \[\textrm{supp }\omega_U\coloneqq\left\lbrace x\in\widehat{\mathbb{C}}\colon \textrm{ for all }r>0,\textrm{ } \omega_U(D(x,r))>0\right\rbrace .\]
 Note that it only depends on the sets of zero measure, hence it is well-defined without specifying the base-point of the harmonic measure.

 \begin{lemma}{\bf (Support of harmonic measure)}\label{lemma_support_harmonic_measure}	Let $ U\subsetneq{\mathbb{C}} $ be a simply connected domain. Then,
 	\[\textrm{\em supp }\omega_U =\widehat{\partial }U.\]
 \end{lemma}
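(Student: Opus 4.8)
The plan is to prove the two inclusions $\operatorname{supp}\omega_U \subseteq \widehat{\partial}U$ and $\widehat{\partial}U \subseteq \operatorname{supp}\omega_U$ separately, using the defining push-forward relation $\omega_U(z,A) = \lambda((\varphi^*)^{-1}(A))$ for a Riemann map $\varphi\colon\mathbb{D}\to U$ with $\varphi(0)=z$. Fix such a $z$ and $\varphi$ throughout.

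For the inclusion $\operatorname{supp}\omega_U \subseteq \widehat{\partial}U$: first I would observe that $\omega_U$ is carried by $\widehat{\partial}U$, i.e. $\omega_U(z, \widehat{\mathbb{C}}\smallsetminus\widehat{\partial}U)=0$. Indeed, by Theorem \ref{thm-FatouRiez} the radial limit $\varphi^*(\xi)$ exists for $\lambda$-a.e.\ $\xi\in\partial\mathbb{D}$, and whenever it exists it is a point of $\overline{U}$; moreover a standard Lindelöf-type argument (or: $\varphi^*(\xi)\in U$ would force $\varphi$ to take the value $\varphi^*(\xi)$ only with bounded multiplicity along a radius approaching $\partial\mathbb{D}$, contradicting properness of $\varphi$) shows $\varphi^*(\xi)\in\widehat{\partial}U$ for a.e.\ $\xi$. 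Hence $(\varphi^*)^{-1}(\widehat{\mathbb{C}}\smallsetminus\overline{U})$ and $(\varphi^*)^{-1}(U)$ have $\lambda$-measure zero, so $\omega_U$ is supported on the closed set $\widehat{\partial}U$; since $\operatorname{supp}\omega_U$ is by definition the smallest closed set of full measure, $\operatorname{supp}\omega_U\subseteq\widehat{\partial}U$.

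For the reverse inclusion $\widehat{\partial}U\subseteq\operatorname{supp}\omega_U$: let $p\in\widehat{\partial}U$ and $r>0$; I must show $\omega_U(z, D(p,r))>0$, i.e.\ $\lambda\big((\varphi^*)^{-1}(D(p,r))\big)>0$. Since $p\in\widehat{\partial}U$, there is a point $q\in U\cap D(p,r)$; choose $r'>0$ with $D(q,r')\subset D(p,r)\cap U$. The set $V:=\varphi^{-1}(D(q,r'))$ is a nonempty open subset of $\mathbb{D}$, and since $\varphi$ is proper onto $U$, $V$ cannot be compactly contained in $\mathbb{D}$ — equivalently, $\overline{V}$ meets $\partial\mathbb{D}$ in an arc $I$ of positive length (one can see this because $D(q,r')$ being relatively compact in $U$ would otherwise force $V$ relatively compact in $\mathbb{D}$, contradicting that $\varphi(V)$ accumulates on $\partial D(q,r')\subset U$ which is fine, so more carefully: pick a sequence $w_n\in D(q,r')$ with $w_n\to$ a boundary point of $D(q,r')$ lying... ) — the cleanest route is: take any crosscut or simply note that $\partial\mathbb{D}\cap\overline{V}$ has positive $\lambda$-measure by the F.\ and M.\ Riesz theorem, since otherwise the bounded analytic function $\varphi$ would have radial limits outside $D(q,r')$ a.e.\ yet equal $\varphi$ on a set of positive measure of $V$'s boundary — and then for a.e.\ $\xi\in I$ the radial limit $\varphi^*(\xi)$ exists and lies in $\overline{D(q,r')}\subset D(p,r)$, giving $\lambda((\varphi^*)^{-1}(D(p,r)))\geq\lambda(I)>0$.

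The main obstacle is making the second inclusion rigorous without circularity: the assertion ``$\varphi^{-1}$ of a small disk around a boundary point has boundary trace of positive measure'' needs either the F.\ and M.\ Riesz theorem (radial limits of a bounded nonconstant analytic function cannot be constant, nor confined to a small set, on a positive-measure set unless the function is that constant) or a direct compactness argument using properness of $\varphi\colon\mathbb{D}\to U$. I would prefer the direct argument: pick $a\in U\cap D(p,r)$ and a path $\sigma$ in $U$ from $\varphi(0)=z$ to $\partial U$ passing near $a$; if $\omega_U(z,D(p,r))=0$ then $(\varphi^*)^{-1}(D(p,r))$ is $\lambda$-null, which combined with Theorem \ref{thm-FatouRiez} and a prime-end / Lindelöf argument contradicts accessibility of boundary points near $p$ from inside $U$. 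I would also invoke Lemma on sets of zero/full harmonic measure so that the choice of basepoint $z$ is immaterial, and cite \cite{Pommerenke, harmonicmeasure2} for the standard boundary-behaviour facts used.
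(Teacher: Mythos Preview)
Your first inclusion is fine. The second inclusion, however, contains a genuine gap that none of your three sketched fixes actually closes.

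The specific error: you choose $q\in U\cap D(p,r)$ and a disk $D(q,r')\subset D(p,r)\cap U$, then set $V=\varphi^{-1}(D(q,r'))$ and assert that $V$ is not compactly contained in $\mathbb{D}$. This is false. Since $D(q,r')$ is compactly contained in $U$ and $\varphi\colon\mathbb{D}\to U$ is a homeomorphism, $V$ \emph{is} compactly contained in $\mathbb{D}$; it never touches $\partial\mathbb{D}$, and no arc $I\subset\partial\mathbb{D}$ can be produced from it. You seem to notice this mid-sentence, but the subsequent appeal to the F.\ and M.\ Riesz theorem is not a coherent argument, and the final ``direct argument'' via accessibility only yields a \emph{single} $\xi_0\in\partial\mathbb{D}$ with $\varphi^*(\xi_0)\in D(p,r)$, not a set of positive $\lambda$-measure.

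If you want to stay with the Riemann-map route, the correct object to pull back is $D(p,r)\cap U$ itself, which is \emph{not} relatively compact in $U$ because $p\in\widehat\partial U$. One then argues via a crosscut: since $U$ is simply connected and $p\notin U$, the circle $\partial D(p,r/2)$ cannot lie entirely in $U$, so some component of $\partial D(p,r/2)\cap U$ is a crosscut $C$ of $U$ whose crosscut neighbourhood lies in $D(p,r)$; its preimage $\varphi^{-1}(C)$ is a crosscut of $\mathbb{D}$ bounding an arc $I\subset\partial\mathbb{D}$ with $\lambda(I)>0$, and for $\lambda$-a.e.\ $\xi\in I$ one gets $\varphi^*(\xi)\in\overline{D(p,r)}$. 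This works but needs care.

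The paper bypasses all of this. It simply invokes the equivalent definition of $\omega_U(z,\cdot)$ as the Perron solution of the Dirichlet problem: since every boundary point of a simply connected planar domain is regular, the harmonic function $z\mapsto\omega_U(z,D(p,r)\cap\widehat\partial U)$ has boundary limit $1$ at points of $D(p,r)\cap\widehat\partial U$, hence is strictly positive on $U$. That is the entire argument, with a reference to Conway for the background.
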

 This follows easily from considering an equivalent definition of harmonic measure in terms of solutions to the Dirichlet problem, see e.g. \cite[Chap. 21]{Conway}.

\subsection{Regular and singular values for holomorphic maps}\label{subsect-regular-singular}

Throughout the paper, we will make an extensive use of the concepts of regular and singular values. Although these definitions are quite standard in the context of entire or meromorphic maps (i.e. with one single essential singularity), we believe it is useful to give definitions in the rather general context of functions of class $ \mathbb{K} $ or inner functions.

We consider the following class of meromorphic functions, denoted by $ \mathbb{M} $, consisting of functions\[f\colon \widehat{\mathbb{C}}\smallsetminus E(f)\longrightarrow \widehat{\mathbb{C}},\] where $ \Omega(f)\coloneqq\widehat{\mathbb{C}}\smallsetminus E(f)  $ is the largest set where $ f $ is meromorphic, and, for all $ z\in E(f) $, the cluster set $ Cl(f, z) $ of $ f $ at $ z $ is $ \widehat{\mathbb{C}} $, that is
\begin{align}\label{eq-cluster-set}
	Cl(f, z)=\left\lbrace w\in \widehat{\mathbb{C}}\colon\textrm{ there exists }\left\lbrace z_n\right\rbrace _n\subset \Omega (f) ,\, z_n\to z, \,f(z_n)\to w \right\rbrace =\widehat{\mathbb{C}}.
\end{align}

If $ E(f)=\emptyset $, then $ f $ is rational and we make the further assumption that $ f $ is non-constant. 
Note that $ \Omega(f)$ is open, and $ E(f) $ has empty interior. Indeed, if $ z $ is an interior point for $ E $, there does not exist any sequence in $ \Omega (f) $ converging to $ z $, and hence $ Cl(f, z) $ is empty, a contradiction.

In the case that $ f $ is an inner function (Sect. \ref{sect-iteration-inner-functions}), $ E(f) $ will be a closed subset of $ \partial\mathbb{D} $, while if $ f\in\mathbb{K} $, $ E(f) $ is a countable subset of the Riemann sphere. In both cases, the assumption on the cluster set is satisfied (see e.g.  \cite{BakerDomínguez, BakerDominguezHerring}).

In this general setting, regular and singular values, and critical and asymptotic values, are defined as follows. Note that appropriate charts have to be used when dealing with $ \infty $.
\begin{defi}{\bf (Regular and singular values)}\label{defi-regular-singular-values}
	Given a value $ v\in \widehat{\mathbb{C}}$, we say that  $ v $ is a \textit{regular} value for $ f$ if there exists $ r\coloneqq r(v)>0 $ such that all branches $ F_1 $ of $ f^{-1} $ are well-defined (and, hence, conformal) in $ D(v,r) $. Otherwise we say that $ v $ is a \textit{singular} value for $ f $. 
\end{defi}

The set of singular values of $ f $ is denoted by $ SV(f) $. Note that $ SV(f) $ is closed by definition, and it is the smallest set for which \[f\colon \widehat{\mathbb{C}}\smallsetminus(E(f)\cup f^{-1}(SV(f)))\longrightarrow\widehat{\mathbb{C}}\smallsetminus SV(f)\] is a covering map.

\begin{defi}{\bf (Critical and asymptotic values)}
	Given a value $ v\in \widehat{\mathbb{C}}$, we say  that $ v $ is a \textit{critical value}  if there exists  $ z\in\Omega $ such that $ f' (z)=0$ and $ f(z)=v $. We say that $ z $ is a {\em critical point}.
	
\noindent	We say that $ v $ is an {\em asymptotic value} if there exists a curve $ \gamma\colon \left[ 0,1\right)  \to \Omega$ such that $ \gamma(t)\to\partial\Omega $ and $ f(\gamma(t))\to v $, as $ t\to 1 $. We say that the curve $ \gamma $ is an {\em asymptotic path}.
\end{defi}

The set of critical values of $ f $ is denoted by $ CV(f) $, while $ AV(f) $ stands for the set of asymptotic values.

Note that  we do not assume, in general, that $ \gamma(t) $ lands at a definite point $ \partial\Omega$ as $ t\to 1 $. 
 However, this will be the case for both inner functions (Lemma \ref{lemma-asymptotic-paths-inner}) and functions in class $ \mathbb{K} $ (Sect. \ref{subsect-associated-inner-function}). We say that $ v $  is an asymptotic value corresponding to $ x\in \partial \Omega $ if $ v=\lim\limits_{t\to 1} f(\gamma(t))$, where $ \gamma $ is a curve such that $ \gamma(t)\to x $ as $ t\to1$. Note that an asymptotic value may correspond to more than one point in $ \partial \Omega  $.

The following lemma makes explicit the relation between regular and singular values, and critical and asymptotic values, in the sense of Iversen \cite{Iversen-thesis,BergweilerEremenko}. Although its content is well-known for meromorphic functions and for functions of finite type, we were unable to find a proof in the literature which fits into the general setting we are considering.
We follow the ideas of \cite{BergweilerEremenko}. 

\begin{lemma}{\bf (Characterization of singular values)}
	Let $ f \in \mathbb{M}$. Then, \[SV(f)=\overline{CV(f)\cup AV(f)}.\]
\end{lemma}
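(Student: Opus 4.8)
The plan is to prove the two inclusions separately, with the inclusion $\overline{CV(f)\cup AV(f)}\subseteq SV(f)$ being the easy direction and the reverse the substantive one.

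First I would check $\overline{CV(f)\cup AV(f)}\subseteq SV(f)$. Since $SV(f)$ is closed by definition, it suffices to show $CV(f)\cup AV(f)\subseteq SV(f)$. If $v$ is a critical value, say $f(z_0)=v$ with $f'(z_0)=0$, then no branch of $f^{-1}$ can be conformal on any disk $D(v,r)$, because such a branch $F_1$ would satisfy $F_1(v)=z_0$ and $F_1'(v)f'(z_0)=1$, impossible; so $v\in SV(f)$. If $v$ is an asymptotic value with asymptotic path $\gamma\colon[0,1)\to\Omega(f)$, $\gamma(t)\to\partial\Omega(f)$, $f(\gamma(t))\to v$, suppose for contradiction $v$ were regular, with a branch $F_1$ of $f^{-1}$ conformal on $D(v,r)$. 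For $t$ close to $1$ we have $f(\gamma(t))\in D(v,r)$; following $\gamma$ and comparing with $F_1\circ f\circ\gamma$ (using that $F_1\circ f=\mathrm{id}$ on the relevant sheet, which holds on the connected piece of $\gamma$ once it enters the region where $F_1$ is a genuine inverse) forces $\gamma(t)=F_1(f(\gamma(t)))\to F_1(v)\in\Omega(f)$, contradicting $\gamma(t)\to\partial\Omega(f)$. (One must be slightly careful identifying which branch $F_1\circ f$ equals the identity along $\gamma$, but the monodromy-type argument is standard.) Hence $v\in SV(f)$.

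For the reverse inclusion $SV(f)\subseteq\overline{CV(f)\cup AV(f)}$, I would argue by contrapositive: suppose $v\notin\overline{CV(f)\cup AV(f)}$ and show $v$ is a regular value. Pick $r>0$ with $\overline{D(v,r)}\cap\overline{CV(f)\cup AV(f)}=\emptyset$ and also $\overline{D(v,r)}\cap E(f)=\emptyset$ (using that $E(f)$ is closed and, for inner functions or class $\mathbb{K}$, $CV\cup AV$ together with $E$ is arranged so this is possible — more precisely, if $v\notin\overline{CV\cup AV}$ then $v$ has a neighbourhood disjoint from $CV\cup AV$, and since the cluster set at each point of $E(f)$ is all of $\widehat{\mathbb C}$, points of $E(f)$ accumulate asymptotic values, so $\overline{CV\cup AV}\supseteq$ something forcing $v\notin E(f)$ too; I would make this precise). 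Following Bergweiler–Eremenko, fix a point $z_0\in f^{-1}(v)$ — which exists because $f$ omits at most two values (Picard / the cluster-set hypothesis) and $v$, not being an asymptotic value near an essential singularity, is actually attained — and let $F_1$ be the branch of $f^{-1}$ with $F_1(v)=z_0$. Continue $F_1$ analytically along every path in $D(v,r)$: the continuation never meets a critical point (since $D(v,r)$ avoids $CV(f)$) and never "escapes to $\partial\Omega(f)$" (since that would produce an asymptotic value in $\overline{D(v,r)}$, again excluded). By the monodromy theorem on the simply connected disk $D(v,r)$, $F_1$ extends to a single-valued holomorphic branch of $f^{-1}$ on $D(v,r)$, so $v$ is regular.

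The main obstacle is the reverse inclusion: making rigorous, in the general class $\mathbb{M}$ (where $E(f)$ may be an uncountable subset of $\partial\mathbb D$ for inner functions, or a countable subset of $\widehat{\mathbb C}$ in class $\mathbb{K}$), the dichotomy that an analytic continuation of $f^{-1}$ along a path in $D(v,r)$ either extends indefinitely or else the corresponding lift tends to $\partial\Omega(f)$ and thereby exhibits $v$ as an asymptotic value. This requires a careful argument that the obstruction set for continuation is discrete (coming from $CV(f)$) together with a normal-family / exhaustion argument showing that if continuation fails along some path then one can extract an asymptotic path; here the hypothesis that $Cl(f,z)=\widehat{\mathbb C}$ for $z\in E(f)$ is what guarantees $v$ itself is attained and that $E(f)$ does not interfere locally. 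I would model this step closely on \cite{BergweilerEremenko}, checking that each step only uses that $f\colon\Omega(f)\to\widehat{\mathbb C}$ is a meromorphic (hence open, discrete) map and that $SV(f)$ is precisely the set over which $f$ fails to be a covering.
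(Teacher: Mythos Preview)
Your two-inclusion strategy is sound, but your route for the substantive inclusion differs from the paper's. You argue by analytic continuation and monodromy: fix a preimage $z_0\in f^{-1}(v)$, continue the germ of $f^{-1}$ along every path in $D(v,r)$, and show that an obstruction to continuation would produce a critical or asymptotic value in $\overline{D(v,r)}$. The paper instead uses Iversen's tract classification: for each nested choice of components $U(r)\subset f^{-1}(D(v,r))$ it shows that either $\bigcap_{r>0}U(r)=\{z\}$ with $f(z)=v$ (regular preimage if $f'(z)\neq 0$, critical otherwise) or $\bigcap_{r>0}U(r)=\emptyset$, from which an asymptotic path is assembled. This repackages exactly the dichotomy you identify as the main obstacle---whether a lift extends or escapes to $\partial\Omega(f)$---as the single question of whether the nested intersection is nonempty, and it treats all components of $f^{-1}(D(v,r))$ at once, including those that might \emph{a priori} contain no preimage of $v$ (your argument, starting from a fixed $z_0$, would need an extra step to rule these out).

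Two minor points to clean up. Your requirement $\overline{D(v,r)}\cap E(f)=\emptyset$ conflates domain and target: $E(f)$ sits in the domain of $f$, while $D(v,r)$ is a neighbourhood in the target $\widehat{\mathbb{C}}$, so no such disjointness is needed. And the existence of a preimage $z_0$ does not come from Picard in class $\mathbb{M}$; the correct observation (which the paper makes) is that the cluster-set hypothesis forces $f^{-1}(D(v,r))\neq\emptyset$ for every $r>0$ whenever $E(f)\neq\emptyset$, so if $v$ were omitted then every nested $U(r)$ would have empty intersection and $v$ would already lie in $AV(f)\subset\overline{CV(f)\cup AV(f)}$.
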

\begin{proof}
	
	Let $ v\in\widehat{\mathbb{C}} $. For every $ r>0 $, choose a component $ U(r) $ of $ f^{-1}(D(v,r)) $ in such a way that $ r_1<r_2 $ implies $ U(r_1)\subset U(r_2) $. Note that $ f^{-1}(D(v,r)) $ is non-empty for all $v\in\widehat{\mathbb{C}} $ and $ r>0 $, because of (\ref{eq-cluster-set}) if $ E(f)\neq \emptyset $ and trivially if $ E(f)=\emptyset $.
	
	Two possibilities can occur.
	\begin{itemize}
		\item $ \bigcap_{r>0} U(r)\neq\emptyset$. In such a case, there exists $ z\in\Omega(f) $ such that $ \bigcap_{r>0} U(r)=\left\lbrace z\right\rbrace $ and, hence, $ f(z)=v $ (indeed, note that if the previous intersection was larger than a point, this would contradict the Open Mapping Principle). If $ f'(z)\neq 0 $, then the inverse branch $ F_1 $ of $ f^{-1} $ sending $ v $ to $ z $ is well-defined and conformal in $ D(v, r_0) $, for some $ r_0>0 $. If $ f'(z)= 0 $, $ f $ acts as a branched covering around $ z $, and the corresponding inverse branch of $ f^{-1} $ is not well-defined around $ v $. Note that, this latter case occurs if and only if $ z $ is a critical point and $ v $ is a critical value.
		\item $ \bigcap_{r> 0} U(r)=\emptyset$. We show that this case correponds with the case of $ v $ being an asymptotic value. 
		First, assume $ \bigcap_{r> 0} U(r)=\emptyset$, and we shall construct an asymptotic path $ \gamma $ for $ v $. Let $ r_k $ be a sequence of positive real numbers tending to 0, and let $ z_k\in U(r_k) $. Let $ \gamma_k \subset U(r_k)$ be a curve connecting $ z_k $ to $ z_{k+1} $. Then, $ \gamma=\cup_k\gamma_k $ satisfies $ \gamma(t)\to\partial \Omega $, and hence is an asymptotic path for $ v $.
		
		On the other hand, if $ v $ is an asymptotic value, let $ \gamma  $ be an asymptotic path. Then, define $ U(r) $ to be the connected component of $ f^{-1}(D(v,r)) $ containing the \textit{tail} of $ \gamma $. Then, $ \bigcap_{r> 0} U(r)=\emptyset$, as desired.
	\end{itemize}
	Then, it is clear that $ v $ is a regular value (as in Def. \ref{defi-regular-singular-values}) if and only if $ v $ is not a critical, nor an asymptotic value, nor an accumulation thereof.
\end{proof}

\section{Iteration of inner functions}\label{sect-iteration-inner-functions}
Consider a holomorphic map $ g\colon\mathbb{D}\to\mathbb{D} $. Since $ g $ is bounded, the radial extension $ g^* $ exists $ \lambda $-almost everywhere (Thm. \ref{thm-FatouRiez}). We are interested in the case where $ g^* $ preserves $ \partial\mathbb{D} $ $ \lambda $-almost everywhere.

\begin{defi}{\bf (Inner function)}
	A holomorphic self-map of the unit disk $ g\colon\mathbb{D}\to\mathbb{D} $ is an {\em inner function} if its radial extension $ g^* $ satisfies $ g^*(\xi)\in\partial \mathbb{D} $, for $ \lambda $-almost every point  $\xi\in \partial\mathbb{D} $.
	\end{defi}

In general, inner functions present a highly discontinuous behaviour in $ \partial\mathbb{D} $.

\begin{defi}{\bf (Singularity)}
	Let $ g $ be an inner function. A point $ \xi\in\partial\mathbb{D} $ is called a {\em singularity} of $ g $ if $ g $ cannot be continued analytically to any neighbourhood of $ \xi $. Denote the set of singularities of $ g $ by $ E(g) $.
\end{defi}

Throughout the paper, given any inner function $ g $, we consider it continued to $ \widehat{\mathbb{C}}\smallsetminus\overline{\mathbb{D}} $ by the reflection principle, and to $ \partial\mathbb{D}\smallsetminus E(g) $ by analytic continuation. In other words, $ g $ is considered as its maximal meromorphic extension \[g\colon \widehat{\mathbb{C}}\smallsetminus E(g)\to \widehat{\mathbb{C}}.\]

If an inner function has finite degree, then it is a finite Blaschke product. In this case, $ g $ has no singularities, and it extends to the Riemann sphere as  a rational map. On the other hand, infinite degree inner functions must have at least one singularity. The following lemma characterizes the singularities of an inner function.

\begin{lemma}{\bf (Characterization of singularities, {\normalfont \cite[Thm. II.6.6]{garnett}})}\label{lemma-sing}
	Let $ g\colon\mathbb{D}\to\mathbb{D} $ be an inner function. Then, $ \xi\in E(g) $ if and only if, for any crosscut neighbourhood $ N_\xi $ of $ \xi $, \[\overline{g(N_\xi)}=\overline{\mathbb{D}}.\]
\end{lemma}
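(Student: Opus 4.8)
This is \cite[Thm.~II.6.6]{garnett}; here is the argument I would follow. Throughout, recall that $\xi$ is an interior point of the arc $I:=\overline{N_\xi}\cap\partial\mathbb D$ determined by a crosscut neighbourhood $N_\xi$. The implication ``$\overline{g(N_\xi)}=\overline{\mathbb D}$ for every crosscut neighbourhood $N_\xi$ $\Rightarrow$ $\xi\in E(g)$'' is immediate by contraposition: if $g$ extends holomorphically, hence continuously, to a disc $D(\xi,\varepsilon)$, then picking $\delta<\varepsilon$ with $g\big(\mathbb D\cap D(\xi,\delta)\big)\subset D\big(g(\xi),\tfrac12\big)$ exhibits a crosscut neighbourhood whose image has closure properly contained in $\overline{\mathbb D}$.

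For the converse I would also argue by contraposition: assuming some crosscut neighbourhood $N$ of $\xi$ has $K:=\overline{g(N)}\subsetneq\overline{\mathbb D}$, I will continue $g$ analytically across $\xi$. After shrinking $N$ (which only shrinks $\overline{g(N)}$) I may take $N=\mathbb D\cap D(\xi,\delta)$, so that near the interior of $I$ the domain $N$ is bounded by the analytic arc $\partial\mathbb D$. Since $K$ is closed it cannot contain $\mathbb D$ (a closed set containing $\mathbb D$ contains $\overline{\mathbb D}$), so there is $p\in\mathbb D\setminus K$ and a disc $D(p,2\eta)\subset\mathbb D$ disjoint from $K$. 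Let $T$ be a Möbius automorphism of $\mathbb D$ with $T(p)=0$ and set $\widetilde g:=T\circ g$. Then $\widetilde g$ maps $N$ into $\mathbb D$ while avoiding a disc around $0$, so $0<2\eta'\le|\widetilde g|<1$ on $N$ for some $\eta'$; moreover, as $g$ is inner, $|\widetilde g^*(\zeta)|=|T(g^*(\zeta))|=1$ for a.e.\ $\zeta\in\partial\mathbb D$ (Theorem~\ref{thm-FatouRiez} and the definition of inner function). Because $N$ is simply connected and $\widetilde g$ is zero-free, a holomorphic branch $\ell:=\log\widetilde g$ exists on $N$, and $v:=\operatorname{Re}\ell=\log|\widetilde g|$ is a \emph{bounded} harmonic function on $N$ whose radial limits vanish for a.e.\ $\zeta\in I$ (the radial segment at any interior point of $I$ eventually enters $N$).

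The heart of the matter is to extend $v$ — hence $\ell$, hence $\widetilde g$ and $g$ — across the interior of $I$. Map $N$ conformally onto $\mathbb D$ by $\Psi$, which extends analytically across $\Psi^{-1}(\mathring I)$ by Schwarz reflection in the analytic boundary arc; then $v\circ\Psi$ is bounded and harmonic on $\mathbb D$ with non-tangential boundary value $0$ a.e.\ on $I':=\Psi^{-1}(\mathring I)$. By Fatou's theorem $v\circ\Psi$ equals the Poisson integral of its $L^\infty$ boundary function, which is supported in $\partial\mathbb D\setminus I'$; since the Poisson kernel $P(z,\theta)$ is harmonic in $z$ for $z\neq e^{i\theta}$, this Poisson integral is harmonic on a full neighbourhood of $\mathring{I'}$, so $v\circ\Psi$ extends harmonically (indeed by odd reflection) across $\mathring{I'}$. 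Transporting back, $v$ extends harmonically across $\mathring I$; on the simply connected doubled domain it admits a harmonic conjugate, so $\ell$ extends holomorphically across $\mathring I\ni\xi$, whence $\widetilde g=e^\ell$ extends holomorphically and $g=T^{-1}\circ\widetilde g$ extends meromorphically across $\xi$. Finally $g$ has no pole at $\xi$, since $g(r\xi)\in\mathbb D$ stays bounded as $r\to1^-$, and the remaining poles of the meromorphic extension are isolated; shrinking once more, $g$ is holomorphic on a neighbourhood of $\xi$, i.e.\ $\xi\notin E(g)$.

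The one genuinely delicate step is the reflection: the classical Schwarz reflection principle is not directly available because the boundary values of $v$ on $I$ are controlled only almost everywhere, which forces the detour through the Poisson representation of bounded harmonic functions. The passage from $g$ to $v=\log|T\circ g|$ is exactly what converts the inner-function constraint ``$|g^*|=1$ a.e.\ on $I$'' into the tractable ``$v^*=0$ a.e.\ on $I$'', and it relies both on $g$ being inner and on the elementary observation that $\overline{g(N)}\neq\overline{\mathbb D}$ already forces $\overline{g(N)}$ to omit an interior point of $\mathbb D$.
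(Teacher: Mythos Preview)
The paper does not supply its own proof of this lemma; it is quoted from \cite[Thm.~II.6.6]{garnett} and used as a black box. Your argument is a correct rendition of the classical proof: the easy direction is immediate from continuity, and for the substantive direction you reduce, via a M\"obius map sending an omitted interior value to $0$, to extending the bounded harmonic function $\log|T\circ g|$ across the arc, which you carry out through the Poisson representation of bounded harmonic functions (the a.e.\ vanishing on $I'$ forces the boundary data to be supported off $I'$, so the Poisson integral is harmonic across $\mathring{I'}$). One notational slip worth fixing: with $\Psi\colon N\to\mathbb D$ as you declare, the transported function should be $v\circ\Psi^{-1}$ and the image arc $I'=\Psi(\mathring I)$, not $v\circ\Psi$ and $\Psi^{-1}(\mathring I)$; the mathematics is unaffected.
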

\subsection{Iteration of holomorphic self-maps of the unit disk}
The asymptotic behaviour of the iterates of a holomorphic self-map of the unit disk is essentially described by the Denjoy-Wolff theorem. Note that the results in this section are valid for any holomorphic self-map of $ \mathbb{D} $, not necessarily an inner function.

\begin{thm}{\bf (Denjoy-Wolff,{ \normalfont \cite[Thm. 5.2]{milnor}})}
Let $ g\colon\mathbb{D}\to\mathbb{D} $ be holomorphic, which is not the identity nor an elliptic Möbius transformation. Then, there exists a point $ p\in\overline{\mathbb{D}} $, the Denjoy-Wolff point of $ g $, such that for all $ z\in\mathbb{D} $, $ g^n(z)\to p$. 
\end{thm}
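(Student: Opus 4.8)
The plan is to run the classical two-case proof of Denjoy--Wolff, splitting according to whether $g$ has a fixed point in $\mathbb{D}$ and reducing the convergence assertion, in both cases, to a short normal-families argument. As soft preliminaries I would record: \emph{(a)} since every $g^n$ maps $\mathbb{D}$ into $\mathbb{D}$, the family $\{g^n\}$ is normal (Montel), and each locally uniform subsequential limit $h$ is, by the open mapping theorem, either a constant in $\overline{\mathbb{D}}$ or a holomorphic self-map of $\mathbb{D}$; \emph{(b)} a non-constant subsequential limit $h=\lim_k g^{n_k}$ would, by a standard composition argument (choose gaps $n_{k+1}-n_k\to\infty$, take a limit $\psi$ of $g^{\,n_{k+1}-n_k}$, deduce $\psi\circ h=h$ hence $\psi\equiv\mathrm{id}$, so $g^{m_k}\to\mathrm{id}$ with $m_k\to\infty$; then injectivity and surjectivity of $g$ follow), force $g$ to be an automorphism of $\mathbb{D}$ with $g^{m_k}\to\mathrm{id}$, i.e.\ the identity or an elliptic M\"obius map --- both excluded; \emph{(c)} consequently every subsequential limit $q=\lim_k g^{n_k}$ is constant, and since $g$ carries compacta of $\mathbb{D}$ into compacta of $\mathbb{D}$ one has $g^{n_k}\circ g\to q$ while $g\circ g^{n_k}\to g(q)$, so $g(q)=q$ whenever $q\in\mathbb{D}$.

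\textbf{Case 1: $g$ has a fixed point $p\in\mathbb{D}$.} Then $g$ is not an automorphism fixing $p$ (those are precisely the identity and the elliptic M\"obius maps), so by the Schwarz--Pick lemma $g$ strictly contracts the hyperbolic metric of $\mathbb{D}$. Hence the orbit of any $z_0$ stays in the closed hyperbolic ball about $p$ through $z_0$, a compact subset of $\mathbb{D}$; every subsequential limit of $\{g^n(z_0)\}$ therefore lies in $\mathbb{D}$ and, by preliminary \emph{(c)}, is a fixed point of $g$; since strict Schwarz--Pick forbids two distinct fixed points in $\mathbb{D}$, that limit must be $p$. So $g^n(z_0)\to p$ for every $z_0$.

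\textbf{Case 2: $g$ has no fixed point in $\mathbb{D}$.} Here I would prove Wolff's lemma by the ``$p_r$ trick''. For $r\in(0,1)$ the map $g_r\coloneqq r\,g$ sends $\mathbb{D}$ into $\overline{r\mathbb{D}}\Subset\mathbb{D}$, hence has a fixed point $p_r\in\mathbb{D}$ (apply Brouwer's theorem to $g_r$ on a closed disk $\overline{\rho\mathbb{D}}$ with $r<\rho<1$). Taking $r_n\to1$ and, along a subsequence, $p_{r_n}\to p\in\overline{\mathbb{D}}$, the relation $r_n g(p_{r_n})=p_{r_n}$ forces $p\notin\mathbb{D}$ (else $g(p)=p$), so $p\in\partial\mathbb{D}$. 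Applying Schwarz--Pick to $g_{r_n}$ at its fixed point $p_{r_n}$, and using $1-\rho(a,b)^2=\dfrac{(1-|a|^2)(1-|b|^2)}{|1-\bar b a|^2}$ with $\rho(a,b)=\bigl|\tfrac{a-b}{1-\bar b a}\bigr|$, the inequality $\rho(r_n g(z),p_{r_n})\le\rho(z,p_{r_n})$ becomes, after cancelling the common factor $1-|p_{r_n}|^2>0$,
\[
\frac{1-r_n^{2}|g(z)|^{2}}{\bigl|1-r_n\,\overline{p_{r_n}}\,g(z)\bigr|^{2}}\ \ge\ \frac{1-|z|^{2}}{\bigl|1-\overline{p_{r_n}}\,z\bigr|^{2}}\,.
\]
Letting $n\to\infty$ and rewriting with $|1-\bar p w|=|p-w|$ (as $|p|=1$) yields Wolff's inequality
\[
\frac{|p-g(z)|^{2}}{1-|g(z)|^{2}}\ \le\ \frac{|p-z|^{2}}{1-|z|^{2}}\qquad(z\in\mathbb{D}),
\]
that is, every horocycle at $p$ is $g$-invariant. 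Consequently the orbit of any $z_0$ is trapped in a closed horocycle $\overline{H}$, and $\overline{H}\cap\partial\mathbb{D}=\{p\}$; a subsequential limit $q$ of $\{g^n(z_0)\}$ then lies in $\overline{H}$, cannot lie in $\mathbb{D}$ (else $g(q)=q$, impossible in this case), and hence equals $p$. Therefore $g^n(z_0)\to p$.

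\textbf{Main obstacle.} The conceptual heart and the main technical hurdle is Case 2, specifically Wolff's lemma: having the idea of approximating by the contractions $g_r$, identifying the Denjoy--Wolff point $p$ as the boundary limit of their fixed points, and checking that the Schwarz--Pick estimates for $g_{r_n}$ survive the passage $r_n\to1$ to give horocycle invariance (the clean cancellation of the factor $1-|p_{r_n}|^2$ is what makes this limit harmless). The other point requiring care is preliminary \emph{(b)}, the exclusion of non-constant subsequential limits of $\{g^n\}$ --- equivalently, the statement that if some subsequence of the iterates accumulates at a non-constant map, then $g$ is the identity or an elliptic M\"obius transformation.
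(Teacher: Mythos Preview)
Your argument is correct and is essentially the classical Denjoy--Wolff proof (normal-families reduction to constant limits, Schwarz--Pick in the interior-fixed-point case, and Wolff's horocycle lemma via the $g_r=rg$ approximation in the boundary case). There is nothing to compare against here: the paper does not supply its own proof of this theorem but simply quotes it from Milnor \cite[Thm.~5.2]{milnor} as a standard background result, so your write-up goes well beyond what the paper itself provides.
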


Hence, holomorphic self-maps of $ \mathbb{D} $ are classified into two types: the elliptic ones, for which $ p\in \mathbb{D} $, and the non-elliptic ones, with $ p\in \partial\mathbb{D} $. In the first case, the Schwarz lemma describes  the dynamics precisely. 
\begin{thm}{\bf (Schwarz lemma, {\normalfont \cite[Lemma 1.2]{milnor}})}\label{scharzlemma} Let $ g\colon\mathbb{D}\to\mathbb{D} $ be holomorphic, with $ g(0)=0 $. Then, for all $ z\in\mathbb{D} $, $\left| g( z )\right| \leq\left| z\right|  $, and $ \left| g'(0)\right| \leq 1 $. 
\end{thm}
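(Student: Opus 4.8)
The plan is to reduce the statement to the maximum modulus principle applied to a single auxiliary function. Since $g(0)=0$, the Taylor expansion of $g$ about the origin has no constant term, so $g(z)=\sum_{n\geq 1}a_nz^n$ and the function
\[
h(z)\coloneqq\begin{cases} g(z)/z & z\in\mathbb{D}\smallsetminus\{0\},\\ g'(0)=a_1 & z=0,\end{cases}
\]
is holomorphic on all of $\mathbb{D}$ (the apparent singularity at $0$ is removable, since $h(z)=\sum_{n\geq 0}a_{n+1}z^n$ near $0$). This is the only point that needs a word of justification; everything else is a routine application of a classical principle.

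Next I would fix $r\in(0,1)$ and work on the closed disk $\overline{D(0,r)}\subset\mathbb{D}$. On the boundary circle $|z|=r$ we have $|h(z)|=|g(z)|/|z|=|g(z)|/r\leq 1/r$, because $g$ maps $\mathbb{D}$ into $\mathbb{D}$ and hence $|g(z)|<1$. Since $h$ is holomorphic on a neighbourhood of $\overline{D(0,r)}$, the maximum modulus principle gives $|h(z)|\leq 1/r$ for every $z\in\overline{D(0,r)}$.

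Finally, I would let $r\to 1^-$: for any fixed $z\in\mathbb{D}$ the estimate $|h(z)|\leq 1/r$ holds for all $r$ with $|z|<r<1$, so passing to the limit yields $|h(z)|\leq 1$. Unwinding the definition of $h$ gives $|g(z)|\leq|z|$ for all $z\in\mathbb{D}$, and evaluating at $z=0$ gives $|g'(0)|=|h(0)|\leq 1$, as claimed. There is no genuine obstacle here; the proof is entirely classical, and the only mild care needed is in handling the removable singularity and in noting that the bound $|h|\leq 1/r$ improves to $|h|\leq 1$ only in the limit, not for any single $r<1$.
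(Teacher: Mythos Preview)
Your proof is correct and is precisely the classical argument for the Schwarz lemma. The paper does not supply its own proof of this statement---it simply cites \cite[Lemma 1.2]{milnor}---so there is nothing to compare against beyond noting that your argument is the standard one found in that reference.
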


An analogous result was obtained by Wolff for non-elliptic self-maps of $ \mathbb{D} $.
\begin{thm}{\bf (Wolff lemma, {\normalfont \cite{wolff}})}\label{wolfflemma} Let $ g\colon\mathbb{D}\to\mathbb{D} $ be holomorphic, with Denjoy-Wolff point $p\in\partial\mathbb{D}$. Let $ D \subset \mathbb{D}$ be an open disk tangent to $ \partial \mathbb{D} $ at $ p $. Then, $ g(D) \subset D$. In particular, $ g^*(p)=p $.  
\end{thm}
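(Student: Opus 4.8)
The plan is to prove Wolff's lemma by the classical approximation argument: one perturbs $g$ into maps that \emph{do} have interior fixed points, applies the Schwarz--Pick inequality at those fixed points, and passes to the limit. \textbf{Step 1 (approximating maps).} For $r\in(0,1)$ set $g_r:=r\,g$. Then $g_r(\mathbb{D})\subset\overline{D(0,r)}$, a compact subset of $\mathbb{D}$, and $g_r\to g$ uniformly on $\mathbb{D}$ as $r\to1$. Being a holomorphic self-map of $\mathbb{D}$ with relatively compact image, $g_r$ is a uniform contraction of the hyperbolic metric on $\overline{D(0,r)}$ (Schwarz--Pick, which follows from Theorem~\ref{scharzlemma} by pre- and post-composing with Möbius automorphisms), hence has a unique fixed point $a_r\in\mathbb{D}$.

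\textbf{Step 2 (Schwarz--Pick at the fixed point and its limit).} Applying the Schwarz--Pick inequality $\dfrac{|1-\overline{h(w)}\,h(z)|^2}{(1-|h(z)|^2)(1-|h(w)|^2)}\le\dfrac{|1-\overline{w}z|^2}{(1-|z|^2)(1-|w|^2)}$ to $h=g_r$, $w=a_r$, and cancelling the common factor $1-|a_r|^2$, one gets
\[
\frac{|1-\overline{a_r}\,g_r(z)|^2}{1-|g_r(z)|^2}\ \le\ \frac{|1-\overline{a_r}\,z|^2}{1-|z|^2},\qquad z\in\mathbb{D}.
\]
Next I would check that $|a_r|\to1$ as $r\to1$: a subsequential limit $a_{r_k}\to a\in\mathbb{D}$ would satisfy $g(a)=a$ by the uniform convergence $g_{r_k}\to g$, contradicting that the Denjoy--Wolff point of $g$ lies on $\partial\mathbb{D}$. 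Passing to a subsequence with $a_{r_k}\to q\in\partial\mathbb{D}$ and letting $k\to\infty$ (using $|1-\overline{q}z|=|q-z|$ when $|q|=1$) turns the inequality into
\[
\frac{|q-g(z)|^2}{1-|g(z)|^2}\ \le\ \frac{|q-z|^2}{1-|z|^2},\qquad z\in\mathbb{D},
\]
i.e.\ $g$ maps every horocycle $H_c(q):=\{z\in\mathbb{D}:|q-z|^2<c\,(1-|z|^2)\}$ into itself.

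\textbf{Step 3 (identifying $q=p$ and concluding).} Fix any $z_0\in\mathbb{D}$; its forward orbit lies in $H_c(q)$ with $c=|q-z_0|^2/(1-|z_0|^2)$, while the Denjoy--Wolff theorem gives $g^n(z_0)\to p$. Since $\overline{H_c(q)}$ is the closed disk internally tangent to $\partial\mathbb{D}$ at $q$, we have $\overline{H_c(q)}\cap\partial\mathbb{D}=\{q\}$, forcing $p=q$. A one-line computation identifies $H_c(p)$ with the Euclidean disk of radius $c/(1+c)$ internally tangent to $\partial\mathbb{D}$ at $p$, so any open disk $D\subset\mathbb{D}$ tangent to $\partial\mathbb{D}$ at $p$ is some $H_c(p)$, whence $g(D)\subset D$. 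Finally, since $rp\in H_{c(r)}(p)$ with $c(r)=(1-r)/(1+r)\to0$ and $H_c(p)\subset D(p,2c)$, the inclusion $g(rp)\in H_{c(r)}(p)$ forces $g(rp)\to p$ as $r\to1$, i.e.\ $g^*(p)=p$. The genuinely delicate point is the normal-families passage from the $g_r$ back to $g$ together with the identification $q=p$; this is exactly where the hypothesis $p\in\partial\mathbb{D}$ is used, first to rule out an interior fixed point of $g$ and then to pin down the center of the limiting horocycle via Denjoy--Wolff convergence.
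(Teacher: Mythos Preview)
Your proof is correct and is essentially the classical argument due to Wolff. The paper does not give its own proof of this statement: it is simply quoted as a known result with a reference to \cite{wolff}, so there is nothing to compare against. One minor remark: in Step~1 you assert that $g_r$ has a unique fixed point by appealing to hyperbolic contraction; strictly speaking what you use is that $g_r$ maps $\mathbb{D}$ into the compact set $\overline{D(0,r)}\subset\mathbb{D}$, so Brouwer (or Rouch\'e, or the Earle--Hamilton theorem) already gives a fixed point, and uniqueness then follows from Schwarz--Pick as you indicate. Everything else---the Schwarz--Pick inequality at $a_r$, the passage to the limit yielding horocycle invariance at some boundary point $q$, the identification $q=p$ via the Denjoy--Wolff theorem, and the deduction of $g^*(p)=p$ from the shrinking horocycles---is clean and standard.
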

	
Another equivalent way of stating Wolff lemma is that, for	any holomorphic function $ h\colon\mathbb{H}\to\mathbb{H} $ with Denjoy-Wolff point $ \infty $ and any upper half-plane $ H $, $ h(H) \subset H$ (see also \cite[Lemma 2.33]{Bargmann}).

Note that, in the elliptic case, $ g $ is holomorphic in a neighbourhood of the Denjoy-Wolff point $ p\in\mathbb{D} $, which is fixed and it is either attracting (if $ \left| g'(p)\right| \in (0,1)$) or superattracting (if $ g'(p)=0$). In the former case, $ g $ is conjugate to $ z\mapsto \left| g'(p) \right| z $ in a neighourhood of $ p $ (by Koenigs Theorem, see e.g. \cite[Chap. 8]{milnor}). In the latter case, the dynamics are conjugate to those of $ z\mapsto z^d $, where $ d $ stands for the local degree of $ g $ at $ p $ (by Böttcher Theorem, see e.g. \cite[Chap. 9]{milnor}).

An analogous result for the non-elliptic case is given by the following result of Cowen, which leads to a classification of non-elliptic self-maps of $ \mathbb{D} $ in terms of the dynamics near the Denjoy-Wolff point. 
\begin{defi}{\bf (Absorbing domains and fundamental sets)}
	Let $ U $ be a domain in $ \mathbb{C} $ and let $ f\colon U \to U $ be a holomorphic map. 	A domain $ V\subset U $ is said to be an \textit{absorbing domain} for $ f $ in $ U $ if  $ f(V)\subset V $ and for every compact set $ K\subset U $ there exists $ n\geq 0 $ such that $ f^n(K)\subset V $.
	If, additionally,  $ V $ is  simply connected and $ f|_{V} $ is univalent, $ V$ is said to be a \textit{fundamental set} for $ f $ in $ U $.
\end{defi}
\begin{thm}{\bf (Cowen's classification of self-maps of $ \mathbb{D} $, {\normalfont \cite{cowen}})}\label{teo-cowen}
	Let $ g$ be a holomorphic self-map of  $ \mathbb{D} $ with Denjoy-Wolff point $ p\in\partial \mathbb{D} $. Then, there exists a fundamental set $ V $ for $ g $ in $ \mathbb{D} $.
	
	\noindent Moreover, given a fundamental set $ V $, there exists a domain $ \Omega $ equal to $ \mathbb{C} $ or $ \mathbb{H}=\left\lbrace \textrm{\em Im }z>0\right\rbrace  $, a holomorphic map $ \psi\colon \mathbb{D}\to\Omega  $, and a Möbius transformation $ T\colon\Omega\to\Omega $, such that: \begin{enumerate}[label={\em (\alph*)}]
		\item $ \psi(V) $ is a fundamental set for $ T $ in $ \Omega $,
		\item $ \psi\circ g=T\circ \psi $ in $ \mathbb{D} $,
		\item $ \psi $ is univalent in $ V $.
	\end{enumerate}
	
	Moreover, $ T $  and $ \Omega $ depend only on the map $ g $, not  on the fundamental set $ V $. In fact (up to a conjugacy of $ T $ by a Möbius transformation preserving $ \Omega $), one of the following cases holds:\begin{itemize}
		\item $ \Omega= \mathbb{C} $, $ T=\textrm{\em id}_\mathbb{C} +1 $ {\em (doubly parabolic type)},
		\item $ \Omega= \mathbb{H} $, $ T=\lambda\textrm{\em id}_\mathbb{H}$, for some $ \lambda>1 $ {\em (hyperbolic type)},
		\item $ \Omega= \mathbb{H} $, $ T=\textrm{\em id}_\mathbb{H} \pm1 $ {\em (simply parabolic type)}.
	\end{itemize}
\end{thm}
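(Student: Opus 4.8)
The plan is to follow Cowen's original argument \cite{cowen}, which I now sketch. Conjugating by a Möbius transformation that sends the Denjoy--Wolff point $p$ to $\infty$, we may assume $g$ is a holomorphic self-map of the upper half-plane $\mathbb{H}$ with Denjoy--Wolff point $\infty$. By Wolff's lemma (Theorem \ref{wolfflemma}, in its half-plane form) $g$ maps every upper half-plane $\{\operatorname{Im}z>t\}$ into itself, so all orbits escape to $\infty$ and $g$ fixes no point of $\mathbb{H}$; moreover the invariant (Schwarz--Pick) form of Theorem \ref{scharzlemma} shows that the hyperbolic step $d_{\mathbb{H}}(g^{n+1}(z),g^{n}(z))$ is non-increasing in $n$, hence convergent, with limit independent of $z$. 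The first task is to produce a \emph{fundamental set}: a simply connected, forward invariant $V\subset\mathbb{H}$ on which $g$ is univalent and which absorbs every compact set. Following Cowen, $V$ is assembled along the geodesic ray towards $\infty$ out of small hyperbolic balls centred at the orbit points $g^{n}(z_{0})$, of radius just above the (bounded) hyperbolic step so that $g$ carries each into the next, routed to avoid the (discrete) critical set of $g$ and thin enough that Corollary \ref{corol-distotion} forces $g$ to be injective on each piece; a monodromy argument along $V$, which retracts onto the orbit geodesic, then upgrades this to injectivity of $g|_{V}$, while absorption holds because $g^{n}(K)$ has bounded hyperbolic diameter and is driven towards $\infty$ along the orbit of $z_{0}$ by Wolff's lemma.

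Next comes the linearisation. Let $\Phi\colon V\to D$ be a Riemann map and set $T_{0}\coloneqq\Phi\circ g|_{V}\circ\Phi^{-1}$, a univalent self-map of $D$ with $T_{0}(D)\subsetneq D$ for which $D$ is absorbing. Gluing countably many copies of $D$ along $T_{0}$ yields a simply connected, non-compact Riemann surface $\Omega\coloneqq\bigcup_{n\ge0}T_{0}^{-n}(D)$ on which $T_{0}$ extends to a conformal automorphism $T$; by uniformisation $\Omega$ is conformally $\mathbb{C}$ or $\mathbb{D}\cong\mathbb{H}$, and we take $\Omega=\mathbb{C}$ or $\Omega=\mathbb{H}$ accordingly. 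Since $V$ is absorbing, one may define $\psi\colon\mathbb{D}\to\Omega$ by $\psi(z)\coloneqq T^{-n}(\Phi(g^{n}(z)))$ for any $n$ with $g^{n}(z)\in V$; the functional equation on $V$ makes this independent of $n$, $\psi$ is holomorphic, $\psi|_{V}$ is univalent (being $\Phi$ followed by an embedding), $\psi\circ g=T\circ\psi$ holds on $\mathbb{D}$, and $\psi(V)=D$ is a fundamental set for $T$ in $\Omega$.

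It remains to normalise $T$ and to establish uniqueness. From $g^{n}(z_{0})\to p$ we get $T^{n}(\psi(z_{0}))=\psi(g^{n}(z_{0}))\to\hat p\in\partial\Omega$, so $\hat p$ is the Denjoy--Wolff point of the automorphism $T$; and $T$ has no fixed point in $\Omega$, since such a point would be absorbed into $\psi(V)$ and pull back through $\psi|_{V}$ to a fixed point of $g$ in $\mathbb{D}$. If $\Omega=\mathbb{C}$, then $T$ is affine with no fixed point, i.e.\ a nontrivial translation, conjugate to $z\mapsto z+1$ (doubly parabolic). If $\Omega=\mathbb{H}$, send $\hat p$ to $\infty$, so $T(z)=\lambda z+\beta$ with $\lambda>0$, $\beta\in\mathbb{R}$: when $\lambda\ne1$, $T$ has a second boundary fixed point and, after possibly swapping the two and conjugating, equals $z\mapsto\lambda z$ with $\lambda>1$ (hyperbolic); when $\lambda=1\ne\beta$, $T$ is conjugate to $z\mapsto z\pm1$ (simply parabolic). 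For uniqueness, if $(\psi_{i},T_{i},\Omega_{i})$, $i=1,2$, are two solutions, pass to a common fundamental set $V$; then $\psi_{2}\circ(\psi_{1}|_{V})^{-1}$ conjugates $T_{1}$ to $T_{2}$ on $\psi_{1}(V)$ and, by the same gluing argument, extends to a conformal isomorphism $\Omega_{1}\to\Omega_{2}$ intertwining $T_{1}$ and $T_{2}$; hence $\Omega$ and the conjugacy class of $T$ depend only on $g$.

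The main obstacle, I expect, is the construction of the fundamental set in the first paragraph: making $V$ simultaneously simply connected, forward invariant, absorbing, and injectivity-preserving for $g$ in the presence of possibly infinitely many critical points, and then checking in the second paragraph that the glued surface $\bigcup_{n}T_{0}^{-n}(D)$ is exactly the claimed model and that $T_{0}$ extends to a genuine Möbius automorphism of it. The Schwarz--Pick inequality and the distortion estimates of Corollary \ref{corol-distotion} supply the quantitative control on small hyperbolic balls, but the global bookkeeping that welds these local facts into the objects $V$, $\Omega$, $T$, $\psi$ is the substance of Cowen's argument.
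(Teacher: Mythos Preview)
The paper does not prove this theorem: it is quoted verbatim from Cowen \cite{cowen} as a preliminary black box (note the citation in the theorem header and the absence of any proof environment following it), so there is no in-paper argument to compare against. Your sketch is a faithful outline of Cowen's original proof --- conjugate the Denjoy--Wolff point to $\infty$ on $\mathbb{H}$, use Wolff's lemma and Schwarz--Pick to control orbits, build a fundamental set along an orbit, linearise by taking a direct limit of Riemann surfaces to obtain $\Omega$ and the automorphism $T$, then classify $T$ by its fixed-point structure on $\partial\Omega$ --- and you correctly flag the construction of $V$ and the identification of the glued surface as the genuine work. Since the paper only cites the result, your write-up goes well beyond what is required here; for the purposes of this paper a reference to \cite{cowen} suffices.
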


Finally, note that if $ g $ is a self-map of $ \mathbb{D} $, so is $ g^k $, for all $ k\geq1 $. The type in Cowen's classification is preserved by taking iterates.

\begin{lemma}{\bf (Cowen's classification for $ g^k $)}\label{lemma-cowen-invariant}
	Let $ g\colon \mathbb{D}\to\mathbb{D}$ holomorphic, and let $ k $ be a positive integer. Then, $ g $ is elliptic (resp. doubly parabolic, hyperbolic, simply parabolic) if and only if so is $ g^k $.
\end{lemma}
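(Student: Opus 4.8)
The plan is to use the functional equation provided by Cowen's Theorem \ref{teo-cowen} and show it "transfers" cleanly to iterates. First I would dispose of the elliptic case: $g$ is elliptic iff its Denjoy-Wolff point $p$ lies in $\mathbb{D}$, and by the Denjoy-Wolff theorem the common Denjoy-Wolff point of $g$ and $g^k$ is the same (since $g^n(z)\to p$ forces $g^{kn}(z)\to p$, and conversely the fixed-point characterisation of $p$ is shared), so $g$ is elliptic iff $g^k$ is. This reduces the problem to the non-elliptic case, where $p\in\partial\mathbb{D}$ for both maps.

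In the non-elliptic case, apply Theorem \ref{teo-cowen} to $g$: there is a domain $\Omega\in\{\mathbb{C},\mathbb{H}\}$, a holomorphic $\psi\colon\mathbb{D}\to\Omega$, a fundamental set $V$ for $g$ in $\mathbb{D}$, and a Möbius $T\colon\Omega\to\Omega$ with $\psi\circ g=T\circ\psi$ on $\mathbb{D}$, $\psi$ univalent on $V$, and $\psi(V)$ a fundamental set for $T$ in $\Omega$. Iterating the functional equation gives $\psi\circ g^k=T^k\circ\psi$ on $\mathbb{D}$. The key step is then to check that the triple $(\Omega,\psi,T^k)$ is a valid Cowen datum for $g^k$: that $V$ (or a suitable absorbing sub-domain) is a fundamental set for $g^k$ in $\mathbb{D}$ — this is immediate since $g(V)\subset V$ implies $g^k(V)\subset V$, $g^k|_V$ is univalent as a composition of univalent maps, and the absorbing property for $g^k$ follows from that of $g$ (for a compact $K$, some $g^n(K)\subset V$, and increasing $n$ to a multiple of $k$ using $g(V)\subset V$); and that $\psi(V)$ is a fundamental set for $T^k$ in $\Omega$, which follows the same way from $T(\psi(V))\subset\psi(V)$. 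By the uniqueness clause of Theorem \ref{teo-cowen}, the model $(\Omega,T^k)$ for $g^k$ is then determined, up to Möbius conjugacy preserving $\Omega$, by $(\Omega,T)$ — so it suffices to read off the type of $T^k$ from the type of $T$.

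This last point is a direct computation in each of the three cases, and it is essentially trivial because the $k$-th power of a Möbius transformation stays in the same conjugacy class: if $\Omega=\mathbb{C}$ and $T=\mathrm{id}_\mathbb{C}+1$, then $T^k=\mathrm{id}_\mathbb{C}+k$, conjugate to $\mathrm{id}_\mathbb{C}+1$ via $z\mapsto z/k$ (doubly parabolic); if $\Omega=\mathbb{H}$ and $T=\lambda\,\mathrm{id}_\mathbb{H}$ with $\lambda>1$, then $T^k=\lambda^k\mathrm{id}_\mathbb{H}$ with $\lambda^k>1$ (hyperbolic); and if $\Omega=\mathbb{H}$ and $T=\mathrm{id}_\mathbb{H}\pm1$, then $T^k=\mathrm{id}_\mathbb{H}\pm k$, conjugate to $\mathrm{id}_\mathbb{H}\pm1$ via $z\mapsto z/k$ (simply parabolic). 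Thus $g$ and $g^k$ always have the same Cowen type, and conversely the type of $g^k$ forces that of $g$ since these are the only possibilities. I expect the only genuinely delicate point to be the verification that $V$ remains a fundamental set (in particular genuinely absorbing) for $g^k$, and that one is not accidentally changing $\Omega$; both are handled by the observations above, so no real obstacle arises.
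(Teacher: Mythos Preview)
Your proposal is correct and follows essentially the same approach as the paper: both argue that the elliptic case is immediate, that a fundamental set $V$ for $g$ is also a fundamental set for $g^k$, and then invoke the uniqueness clause of Cowen's theorem. Your version is in fact slightly more explicit than the paper's, since you spell out the verification that $V$ is absorbing for $g^k$ and you compute $T^k$ case by case to confirm the type is preserved, whereas the paper leaves these points implicit.
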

\begin{proof}
	It is clear that $ g $ is of elliptic type if and only if so is $ g^k $. Now, assume that $ p\in\partial \mathbb{D} $ is the Denjoy-Wolff point of $ g $, and choose a fundamental set $ V $ for $ g $ in $ \mathbb{D} $. Then, $ V $ is a fundamental set for $ g^k $ in $ \mathbb{D} $. It follows that $ g|_{V} $ is conformally conjugate to $ T_1\colon \Omega_1 \to \Omega_1$, and $ g^k|_{V} $ is conformally conjugate to $ T_2\colon \Omega_2 \to \Omega_2$. Hence, $T_1\colon \Omega_1 \to \Omega_1 $ and $ T_2\colon \Omega_2 \to \Omega_2$ are conformally conjugate. Since $ T $ and $ \Omega $ are unique up to conformal conjugacy, and do not depend on the choice of the fundamental set, it follows that $ g $ and $ g^k $ are of the same type in Cowen's classification.
\end{proof}
\subsection{Ergodic properties of the radial extension $ g^*\colon\partial\mathbb{D}\to  \partial\mathbb{D}$}
Let $ g\colon\mathbb{D}\to\mathbb{D} $ be an inner function. We consider the dynamical system induced by its the radial extension \[ g^*\colon\partial\mathbb{D}\to\partial\mathbb{D}.\] Recall that  if $ g $ is an inner function, so is $ g^k $ \cite[Lemma 4]{BakerDomínguez}, so the equality \[(g^n)^*(\xi)=(g^*)^n(\xi)\] holds $ \lambda $-almost everywhere. Moreover, the radial extension $ g^* $ is measurable (Thm. \ref{thm-mesurabilitat-funcions-disk}), and hence analyzable from the point of view of ergodic theory. The following is a recollection of ergodic properties of $ g^* $, with precise references.

\begin{thm}{\bf (Ergodic properties of $ g^* $)}\label{thm-ergodic-properties}
	Let $ g\colon \mathbb{D}\to\mathbb{D} $ be an inner function with Denjoy-Wolff point $ p\in\overline{\mathbb{D}} $. The following holds. \begin{enumerate}[label={\em (\alph*)}]
	
	\item  \label{ergodic-properties-nonsing} $ g^* $ is  non-singular. In particular, for $ \lambda $-almost every $ \xi\in\partial\mathbb{D} $, its infinite orbit under $ g^* $, $ \left\lbrace (g^n)^*(\xi)\right\rbrace _n $, is well-defined.
	
		\item   \label{ergodic-properties-a} $ g^* $ is ergodic if and only if $ g $ is elliptic or doubly parabolic.
		
		\item  \label{ergodic-properties-b} If $ g^* $ is recurrent, then it is ergodic. In this case,   for every $ A\in \mathcal{B} (\mathbb{D})$ with $ \lambda(A)>0 $, we have that for $ \lambda $-almost every $ \xi\in\partial\mathbb{D} $, there exists  a sequence $ n_k\to\infty $ such that $  (g^{n_k})^*(\xi)\in A $. In particular, $ \lambda $-almost every $ \xi\in\partial\mathbb{D} $, $ \left\lbrace (g^n)^*(\xi) \right\rbrace_n  $ is dense in $ \partial\mathbb{D} $.
		\item \label{ergodic-properties-c} If $ g $ is an elliptic inner function, then $ g^* $ is recurrent.

		\item \label{ergodic-properties-d}	The radial extension of a doubly parabolic inner function is not recurrent in general. However, if $ g $ is  doubly parabolic  and the Denjoy-Wolff point $ p $ is not a singularity for $ g $, then $ g^* $ is recurrent. Moreover, if $ g $ is doubly parabolic and there exists $ z\in\mathbb{D} $ and $ r>1 $ such that \[\textrm{\em dist}_\mathbb{D}(g^{n+1}(z), g^n(z))\leq \frac{1}{n}+O\left( \frac{1}{n^r}\right), \] as $ n\to\infty $, then $ g^* $ is recurrent.
		\item  \label{ergodic-properties-e} Let $ k $ be a positive integer. Then, $ g^k $ is an inner function. Moreover, $ g^* $ is ergodic (resp. recurrent) if and only if $ (g^k)^*$ is ergodic (resp. recurrent).
	\end{enumerate}
\end{thm}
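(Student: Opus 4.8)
\emph{Plan of proof.} The statement is a recollection: each item is either classical or is extracted from the ergodic theory of inner functions, and then glued to Cowen's classification (Theorem~\ref{teo-cowen}), Poincar\'e recurrence (Theorem~\ref{thm-poincare-recurrence}) and the abstract results of Section~\ref{sect-prelim}. The analytic quantity that drives everything is the decay rate of $1-|g^n(0)|$ as $n\to\infty$: by the classical series test, $g^*$ is conservative --- equivalently, by the usual ``returns once $\Rightarrow$ returns infinitely often'' argument (the one used in the proof of Lemma~\ref{lemma-ergodic-properties-invariant}), recurrent in the sense adopted here --- if and only if $\sum_{n\ge 0}(1-|g^n(0)|)=\infty$, and for inner functions conservativity is in fact equivalent to ergodicity, indeed to exactness (Doering--Ma\~n\'e~\cite{doeringmane}, see also \cite{Aaronson97}).

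For \textbf{(a)}, if $u$ is harmonic on $\mathbb{D}$ then so is $u\circ g$, so the mean value property together with the existence of radial limits (Theorem~\ref{thm-FatouRiez}) gives $\int_{\partial\mathbb{D}} u(g^*(\xi))\,d\lambda(\xi)=u(g(0))=\int_{\partial\mathbb{D}} u(\zeta)\,P_{g(0)}(\zeta)\,d\lambda(\zeta)$, where $P_a$ is the Poisson kernel at $a\in\mathbb{D}$. Hence $(g^*)_\ast\lambda$ has density $P_{g(0)}$ with respect to $\lambda$, so $\lambda((g^*)^{-1}(A))=\int_A P_{g(0)}\,d\lambda$; as $P_{g(0)}$ is everywhere positive (and bounded), this vanishes exactly when $\lambda(A)=0$, which is non-singularity (measurability of preimages being Theorem~\ref{thm-mesurabilitat-funcions-disk}). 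The ``in particular'' follows since each iterated preimage under $g^*$ of the full-measure set on which $g^*$ is defined again has full measure, so $\lambda$-a.e.\ $\xi$ has a well-defined infinite orbit. For \textbf{(d)}, choose a M\"obius automorphism $M$ of $\mathbb{D}$ with $M(0)=p$ (the fixed point in $\mathbb{D}$); then $\widetilde g\coloneqq M^{-1}\circ g\circ M$ is inner and fixes $0$, so the case $g(0)=0$ of the computation above (where $P_0\equiv 1$) shows that $\widetilde g^*$ preserves the finite measure $\lambda$, hence is recurrent by Poincar\'e recurrence (Theorem~\ref{thm-poincare-recurrence}); since $M,M^{-1}$ are diffeomorphisms of $\partial\mathbb{D}$, hence non-singular, and $\widetilde g^*=M^{-1}\circ g^*\circ M$ $\lambda$-a.e., recurrence passes to $g^*$.

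For \textbf{(b)} and \textbf{(c)} the plan is to match Cowen's four types against the conservative/dissipative dichotomy through the series test. From $\psi\circ g^n=T^n\circ\psi$ and comparison of hyperbolic metrics (using that $\psi$ is univalent on a fundamental set) one reads off the decay of $1-|g^n(0)|$: it is geometric in the hyperbolic type and $O(n^{-2})$ in the simply parabolic type, so $\sum_n(1-|g^n(0)|)<\infty$ and $g^*$ is totally dissipative, in particular neither recurrent nor --- by the dichotomy --- ergodic; while $1-|g^n(0)|\not\to 0$ in the elliptic type, so the series diverges; and in the doubly parabolic type the series may converge or diverge. Combined with the equivalences in the Plan this gives \textbf{(b)}. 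For \textbf{(c)}: recurrence of $g^*$ forces the series to diverge, so $g$ is elliptic or doubly parabolic and hence $g^*$ is ergodic; the density conclusion then follows from Theorem~\ref{thm-almost-every-orbit-dense} applied to each member of a countable basis of $\partial\mathbb{D}$, intersecting the resulting full-measure sets. For \textbf{(e)}, a doubly parabolic $g$ need not be recurrent --- an explicit example realizes $1-|g^n(0)|=O(n^{-r})$ with $r>1$ (cf.\ \cite{JoveFagella2}) --- whereas if $p\notin E(g)$ then $g$ extends holomorphically across the parabolic fixed point $p$ and the standard estimate for parabolic germs yields $1-|g^n(0)|\asymp n^{-1}$, so the series diverges; and the third hypothesis does the same directly, since $\mathrm{dist}_{\mathbb{D}}(g^{n+1}(z),g^n(z))\le \tfrac1n+O(n^{-r})$ with $r>1$ sums to $\mathrm{dist}_{\mathbb{D}}(g^n(z),0)\le \log n+O(1)$, whence $1-|g^n(0)|\gtrsim n^{-1}$ and $\sum_n(1-|g^n(0)|)=\infty$. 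Finally, \textbf{(f)}: $g^k$ is inner by \cite{BakerDomínguez} and $(g^k)^*=(g^*)^k$ $\lambda$-a.e., so recurrence is preserved in both directions by Lemma~\ref{lemma-ergodic-properties-invariant}(a); for ergodicity, Lemma~\ref{lemma-ergodic-properties-invariant}(b) gives one implication, while the reverse follows from \textbf{(b)} together with Lemma~\ref{lemma-cowen-invariant}, since being elliptic or doubly parabolic is an iterate-invariant property.

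\textbf{Expected main obstacle.} The bookkeeping is routine; the real content lies in (b)/(c) --- controlling $1-|g^n(0)|$ sharply enough in each Cowen type (via the Julia--Wolff--Carath\'eodory inequality and the normal forms), and quoting correctly the non-trivial dichotomy that a conservative inner boundary map must be ergodic. The doubly parabolic regime in (e) is the most delicate point, since there the decay rate of $1-|g^n(0)|$ is genuinely function-dependent, which is exactly why the extra hypotheses are needed and why the sharp threshold $\mathrm{dist}_{\mathbb{D}}(g^{n+1}(z),g^n(z))\lesssim 1/n$ is what appears.
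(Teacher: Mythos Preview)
Your arguments for (a), (d) and (f) are correct and in fact supply the content behind the citations that the paper gives (Poisson-kernel non-singularity, Poincar\'e recurrence after conjugating the fixed point to $0$, and Lemma~\ref{lemma-ergodic-properties-invariant} plus Lemma~\ref{lemma-cowen-invariant} for iterates). Your argument for (c), once (b) is in hand, is also fine.

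The gap is in (b). You assert that for inner functions ``conservativity is equivalent to ergodicity'' and then try to read off (b) from the series test $\sum_n(1-|g^n(0)|)$. But this equivalence is false, and your own item (e) supplies the counterexample: you exhibit doubly parabolic $g$ with $1-|g^n(0)|=O(n^{-r})$, $r>1$, so the series converges and $g^*$ is not recurrent (hence not conservative); yet by (b) such $g^*$ \emph{is} ergodic. So ``dissipative $\Rightarrow$ not ergodic'' fails, and your plan proves neither direction of (b): it does not show that every doubly parabolic $g^*$ is ergodic, nor does it show that hyperbolic/simply parabolic $g^*$ are non-ergodic (you infer the latter from the same false dichotomy). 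The series test controls recurrence, not ergodicity.

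The paper sidesteps this by direct citation (Doering--Ma\~n\'e, Thm.~G, together with Bonfert, Thm.~1.4). The underlying mechanism, if you want to repair your argument, is not the series test but the correspondence between $g^*$-invariant sets and bounded harmonic functions $u$ on $\mathbb{D}$ with $u\circ g=u$: ergodicity of $g^*$ is equivalent to every such $u$ being constant. For elliptic and doubly parabolic types this Liouville property holds (this is what the cited theorems establish, and it is genuinely independent of whether $\sum(1-|g^n(0)|)$ diverges), while in the hyperbolic and simply parabolic types one manufactures non-constant invariant harmonic functions by pulling back via Cowen's $\psi$ a suitable harmonic function on $\mathbb{H}$ invariant under the model M\"obius map $T$. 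Your estimates on $1-|g^n(0)|$ in each Cowen type are correct and useful for (d) and (e), but they are the wrong tool for (b).
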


\begin{proof}
	\begin{enumerate}[label={(\alph*)}]
		\item The proof that $ g^* $ is non-singular can be found in \cite[Prop. 6.1.1]{Aaronson97}. We claim that this already implies that, for $ \lambda $-almost every $ \xi\in\partial\mathbb{D} $,  its infinite orbit $ \left\lbrace (g^n)^*(\xi)\right\rbrace _n $ is well-defined. We shall prove it by induction. First, it is clear that the set \[\left\lbrace \xi\in\partial\mathbb{D}\colon g^*(\xi)\textrm{ is well-defined}\right\rbrace \]has full measure, and, since $ g^*$ is non-singular, $ g^*(\partial\mathbb{D}) =1$. Now, assume that the set \[\left\lbrace \xi\in\partial\mathbb{D}\colon\left\lbrace  (g^n)^*(\xi)\right\rbrace _{n=0}^{k-1}\textrm{ is well-defined}\right\rbrace \] has full measure, and $ \lambda((g^{k-1})^*(\partial\mathbb{D}))=1 $. Then, the set \[(g^{k-1})^*(\partial\mathbb{D})\cap\left\lbrace \xi\in\partial\mathbb{D}\colon (g^n)^*(\xi)\textrm{ is well-defined}\right\rbrace  \] has also full measure, proving that the orbit $ \left\lbrace  g^*(\xi)\right\rbrace _{n=0}^{k} $ is well-defined for $ \lambda $-almost every $ \xi\in\partial\mathbb{D} $, as desired.
		\item  It follows from combining \cite[Thm. G]{DoeringMané1991} with \cite[Thm. 1.4]{Bonfert}. 
		\item  See \cite[Thm. E, F]{DoeringMané1991}, as well as  \cite[Thm. 6.1.7]{Aaronson97}. 
		\item The first statement is \cite[Corol. 1]{DoeringMané1991} (see also \cite[Thm. 6.1.8]{Aaronson97}). The second statement follows from applying Theorem \ref{thm-almost-every-orbit-dense}, and applying that open sets in $ \partial\mathbb{D} $ have positive measure. See also \cite[Sect. 8.3]{befrs-dw}.
		\item An example of a doubly parabolic inner function whose boundary map is not recurrent is given in \cite[Example 1.3]{bfjk-escaping}.  Conditions which imply recurrence 
		are found in \cite[Thm. B]{bfjk-escaping} and \cite[Thm. E]{bfjk-escaping}, respectively. 
		\item 	Finally, the proof that $ g^k $ is an inner function can be found in \cite[Lemma 4]{BakerDomínguez}.  Since ergodicity depends only on the type in Cowen's classification \ref{teo-cowen}, which is invariant under talking iterates (Lemma \ref{lemma-cowen-invariant}), $ g^* $ is ergodic  if and only if $ (g^k)^*$ is ergodic. Recurrence is always preserved under taking iterates (see Lemma \ref{lemma-ergodic-properties-invariant}).
	\end{enumerate}
\end{proof}
\section{Inverse branches for inner functions at boundary points. Proof of \ref{thm-inverse-inner}}\label{sect-teoA}
Let $ g\colon\mathbb{D}\to\mathbb{D} $ be an inner function, considered as its maximal meromorphic extension \[g\colon\widehat{\mathbb{C}}\smallsetminus E(g)\to\widehat{\mathbb{C}},\]as before. We shall consider regular and singular values for $ g $ as introduced in Section \ref{subsect-regular-singular}. We observe that for any inner function $ g $ and any $ z\in E(g)$, the cluster set (as defined in Section \ref{subsect-regular-singular}) is $ \widehat{\mathbb{C}} $. Indeed, if $z\in E(g)$, then Lemma \ref{lemma-sing} and Schwarz reflection imply that $ Cl(g, z) =\widehat{\mathbb{C}}$ (compare also with \cite{BakerDomínguez}). Thus, $ g\in\mathbb{M} $, and all the previous description of singular values applies. 

Note that, since $ \mathbb{D} $ and $ \widehat{\mathbb{C}}\smallsetminus\overline{\mathbb{D}}$ are totally invariant, there cannot be critical points nor critical values in $ \partial\mathbb{D} $. 
Moreover, by symmetry, if $ v\in\mathbb{D} $ is regular (resp. singular), then so is $ 1/\overline{v} $. Hence, we consider \[SV(g, \overline{\mathbb{D}})=\left\lbrace v\in\overline{\mathbb{D}}\colon v\textrm{ is singular}\right\rbrace .\]
We start by proving that asymptotic paths actually land at points in $ E(g) $. 
\begin{lemma}{\bf (Asymptotic paths land)}\label{lemma-asymptotic-paths-inner}
	Let $ v\in\overline{\mathbb{D}} $ be an asymptotic value for $ g $, and let $ \gamma\colon \left[ 0,1\right) \to \widehat{\mathbb{C}}\smallsetminus E(g)$ be an asymptotic path for $ v $. Then, 
	there exists  a singularity $ \xi\in E(g)\subset\partial\mathbb{D} $ such that $ \gamma(t)\to \xi $, as $ t\to 1 $. 
\end{lemma}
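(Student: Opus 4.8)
The plan is to prove that the landing set $L(\gamma)$ reduces to a single point; that point then automatically belongs to $E(g)$. Write $\Omega\coloneqq\widehat{\mathbb{C}}\smallsetminus E(g)$. Since $E(g)$ is closed with empty interior, $\partial\Omega=E(g)\subseteq\partial\mathbb{D}$; and as $\gamma(t)\to\partial\Omega$, every subsequential limit of $\gamma$ lies in $E(g)$, so $L(\gamma)$ is a non-empty compact connected subset of $\partial\mathbb{D}$, i.e. a single point, a closed arc, or all of $\partial\mathbb{D}$. It remains to exclude the last two possibilities. First I would reduce to the case where a tail of $\gamma$ lies in $\mathbb{D}$: by total invariance of $\mathbb{D}$, $\widehat{\mathbb{C}}\smallsetminus\overline{\mathbb{D}}$ and $\partial\mathbb{D}\smallsetminus E(g)$, together with $g\circ\gamma\to v$, if $v\in\mathbb{D}$ then a tail of $\gamma$ lies in $\mathbb{D}$; the case $v\in\widehat{\mathbb{C}}\smallsetminus\overline{\mathbb{D}}$ is symmetric under Schwarz reflection; and if $v\in\partial\mathbb{D}$ one folds $\gamma$ into $\overline{\mathbb{D}}$ by the map equal to the identity on $\overline{\mathbb{D}}$ and to $z\mapsto 1/\overline{z}$ outside, which commutes with $g$ and is the identity on $\partial\mathbb{D}$ (hence preserves whether $\gamma$ lands), the sub-case of a tail running along $\partial\mathbb{D}$ being immediate from connectedness of $L(\gamma)$. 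So assume a tail of $\gamma$ lies in $\mathbb{D}$, $g\circ\gamma\to v\in\overline{\mathbb{D}}$, fix small $\epsilon>0$, and set $D_\epsilon\coloneqq D(v,\epsilon)\cap\mathbb{D}$, a (simply connected) neighbourhood of $v$ in $\overline{\mathbb{D}}$.

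The core of the argument studies the connected component $W$ of $g^{-1}(D_\epsilon)$ in $\mathbb{D}$ containing a tail of $\gamma$ (a single component, since $g\circ\gamma\to v$). One checks: (i) $W$ is simply connected --- a maximum--modulus argument applied to $z\mapsto g(z)-v$ on bounded complementary components rules out holes; (ii) $\overline{W}\cap\partial\mathbb{D}\subseteq E(g)$ --- at a point of $\partial\mathbb{D}\smallsetminus E(g)$, $g$ is analytic and sends a neighbourhood close to $\partial\mathbb{D}$, hence misses $g^{-1}(D_\epsilon)$ once $\epsilon$ is small; (iii) $g|_{W}\colon W\to D_\epsilon$ is proper away from $\partial\mathbb{D}$ --- a sequence in $W$ whose $g$-images remain in a compact subset of $D_\epsilon$ can escape $W$ only towards $E(g)$, the remaining boundary of $W$ in $\mathbb{D}$ lying over $\partial D_\epsilon$. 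In particular $L(\gamma)\subseteq\overline{W}\cap\partial\mathbb{D}\subseteq E(g)$. Now suppose $L(\gamma)$ is non-degenerate, and pick $\xi$ in its relative interior, so $\xi\in E(g)\cap\overline{W}$. Since $\gamma$ accumulates at $\xi$ and also at points of $L(\gamma)$ outside any small crosscut neighbourhood of $\xi$, its tail crosses the corresponding crosscuts infinitely often; feeding this into (iii) (the crossing points lie in $W$ with $g$-images near $v$, so they cannot accumulate in $\mathbb{D}$) one shows that $W$ contains a genuine crosscut neighbourhood $N_0$ of $\xi$. But then Lemma~\ref{lemma-sing} gives $\overline{\mathbb{D}}=\overline{g(N_0)}\subseteq\overline{g(W)}\subseteq\overline{D_\epsilon}\subsetneq\overline{\mathbb{D}}$, a contradiction. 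Hence $L(\gamma)$ is a single point $\xi\in E(g)$, i.e. $\gamma(t)\to\xi$ as $t\to1$.

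I expect the delicate point to be the step that upgrades ``$\gamma$ accumulates at $\xi$ from within $W$'' to ``$W$ contains a crosscut neighbourhood of $\xi$'': a priori a simply connected subdomain of $\mathbb{D}$ may meet $\partial\mathbb{D}$ in a non-degenerate set while approaching each of its boundary points only through thin tentacles. Making this work must genuinely exploit that $g(W)\subseteq D_\epsilon$ --- so that near the singularity $\xi$, where by Lemma~\ref{lemma-sing} every crosscut neighbourhood has $g$-image dense in $\overline{\mathbb{D}}$, the tract $W$ is forced to absorb all the approach to $\xi$ --- together with the connectedness of $L(\gamma)$, which prevents $\gamma$ from oscillating between separated parts of $\overline{W}\cap\partial\mathbb{D}$. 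An alternative that sidesteps the plane topology is the conformal model of a direct/logarithmic singularity: after removing a neighbourhood of $g^{-1}(v)$ one has $g\neq v$ on $W$, so $\log\frac{\epsilon}{g-v}$ maps $W$ into a half-plane with $\gamma$ an asymptotic path to $\infty$, and the classical analysis of such tracts pins the accumulation set of $\gamma$ on $\partial\mathbb{D}$ to one point.
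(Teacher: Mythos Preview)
The paper's proof is much shorter and rests on a different idea: it invokes the Fatou--Riesz theorem (Theorem~\ref{thm-FatouRiez}) directly. After reducing, as you do, to a tail of $\gamma$ lying in $\overline{\mathbb{D}}$, suppose $L(\gamma)$ were a non-degenerate arc $I\subset\partial\mathbb{D}$. For every interior point $\xi$ of $I$ the curve $\gamma$ accumulates on both sides of $\xi$ in $I$ but stays in $\overline{\mathbb{D}}\smallsetminus E(g)$, so it must cross the radial segment $R_\xi$ at a sequence of points tending to $\xi$. Since $g\circ\gamma\to v$, this forces $g^*(\xi)=v$ for every interior $\xi\in I$, i.e.\ $g^*$ equals the constant $v$ on a set of positive Lebesgue measure --- contradicting Theorem~\ref{thm-FatouRiez}. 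No tract analysis, no Lemma~\ref{lemma-sing}, no study of preimage components.

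Your route via the component $W$ of $g^{-1}(D_\epsilon)$ is considerably heavier, and the gap you yourself flag is real and not easily repaired. From ``$\gamma$ accumulates at $\xi$ within $W$'' one cannot conclude that $W$ contains a crosscut neighbourhood of $\xi$: a simply connected tract can touch $\partial\mathbb{D}$ along a non-degenerate arc while approaching every point of that arc only through thin fingers, and nothing in your items (i)--(iii) excludes this. Your logarithmic-singularity alternative does not close the gap either: it tells you the conformal type of $W$, but the question is about the \emph{embedding} of $W$ in $\mathbb{D}$ near $\partial\mathbb{D}$, which the half-plane model forgets. A smaller issue: claim (ii) can fail when $v\in\partial\mathbb{D}$, since a point $\xi\in\partial\mathbb{D}\smallsetminus E(g)$ with $g(\xi)\in\overline{D(v,\epsilon)}\cap\partial\mathbb{D}$ may well lie in $\overline{W}$. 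The radial-limit argument bypasses all of this.
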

\begin{proof}
	Assume, on the contrary, that the landing set $ L(\gamma) $ of the asymptotic path $ \gamma $ is  a continuum in $ E(g) $. Then, $ L(\gamma) $ is a closed non-degenerate interval in the unit circle. 
	
	On the one hand, for $ \lambda $-almost every point $ \xi $ in $ L(\gamma) $, the radial limit $ g^*(\xi) $ exists. Let us denote the radial segment  at $ \xi $ by $ R_\xi $.
	
On the other hand, without loss of generality, we can assume $  \gamma\colon \left[ 0,1\right) \to \overline{\mathbb{D}}\smallsetminus E(g)$. Then,  
since $ L(\gamma)\subset E(g) $ and $ \gamma\subset  \overline{\mathbb{D}}\smallsetminus E(g)$, for every point $ \xi\in L(\gamma) $ (except at the endpoints), there exists a sequence $\left\lbrace  \xi_n\right\rbrace \subset \gamma \cap R_\xi $ with $ \xi_n\to\xi $. Then, $ g (\xi_n)\to v $, implying that the radial limit $ g^* (\xi)$ equals $ v $. This contradicts the fact that radial limits are different almost everywhere (Thm. \ref{thm-FatouRiez}).
	\end{proof}

Next we prove that singular values in $ \partial\mathbb{D} $ correspond to accumulation points of singular values in $ \mathbb{D} $.
\begin{prop}{\bf (Singular values in $ \partial\mathbb{D} $)}\label{prop-inverse-branches}
	Let $ g\colon\mathbb{D}\to\mathbb{D} $ be an inner function, and let $ \xi\in\partial\mathbb{D} $. The following are equivalent.\begin{enumerate}[label={\em (\alph*)}]
		\item\label{SVa} There exists a crosscut $ C $, with crosscut neighbourhood $ N_C $ and $ \xi\in \partial N_C $ such that $ SV(g)\cap N_C =\emptyset$.
		\item\label{SVb} $ v $ is regular, i.e. there exists $ \rho\coloneqq \rho(\xi)>0 $ such that all inverse branches $ G_1 $ of $ g $ are well-defined in $ D(\xi, \rho) $.
	\end{enumerate}
\end{prop}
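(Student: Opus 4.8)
The statement is an equivalence, of which the implication (b)$\Rightarrow$(a) is essentially immediate; the content lies in (a)$\Rightarrow$(b), which is exactly the assertion (announced just before the Proposition) that a singular value lying on $\partial\mathbb{D}$ must be an accumulation point of singular values coming from inside $\mathbb{D}$. So the plan is to dispatch (b)$\Rightarrow$(a) quickly and then spend the effort on (a)$\Rightarrow$(b).

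For (b)$\Rightarrow$(a): if all branches of $g^{-1}$ are well-defined and conformal on $D(\xi,\rho)$, then every point of $D(\xi,\rho)$ is a regular value, so $D(\xi,\rho)\cap SV(g)=\emptyset$. Choosing $\rho'<\rho$ small enough that $C:=\partial D(\xi,\rho')\cap\mathbb{D}$ is a non-degenerate crosscut, the crosscut neighbourhood $N_C:=D(\xi,\rho')\cap\mathbb{D}$ has $\xi$ on its boundary (indeed in the interior of $\overline{N_C}\cap\partial\mathbb{D}$), is contained in $D(\xi,\rho)$, and therefore satisfies $SV(g)\cap N_C=\emptyset$. This gives (a).

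For (a)$\Rightarrow$(b): first I would use Schwarz reflection. Since $g$ commutes with $z\mapsto 1/\overline z$, the set $SV(g)$ is symmetric, so from $SV(g)\cap N_C=\emptyset$ we also get $SV(g)\cap N_C^{*}=\emptyset$, where $N_C^{*}$ is the reflection of $N_C$ across $\partial\mathbb{D}$. Writing $I:=\overline{N_C}\cap\partial\mathbb{D}$ for the boundary arc (with $\xi$ in its interior), the doubled region $W:=N_C\cup I^{\circ}\cup N_C^{*}$ is an open neighbourhood of $\xi$ in $\widehat{\mathbb{C}}$: near any point of $I^{\circ}$ it fills a full round disk, because the crosscut $C$ meets $\partial\mathbb{D}$ only at the two endpoints of $I$. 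As $SV(g)$ is already disjoint from $N_C\cup N_C^{*}$, the implication reduces to showing that $SV(g)$ does not reach $\xi$ along the arc, i.e.\ that for small $\rho$ the set $D(\xi,\rho)\cap SV(g)$, which a priori is contained in $I^{\circ}$, is in fact empty. Granting this, $D(\xi,\rho)$ is a simply connected subset of $\widehat{\mathbb{C}}\smallsetminus SV(g)$, over which $g\colon\widehat{\mathbb{C}}\smallsetminus(E(g)\cup g^{-1}(SV(g)))\to\widehat{\mathbb{C}}\smallsetminus SV(g)$ is a covering map, so every branch of $g^{-1}$ is well-defined and conformal on $D(\xi,\rho)$, which is (b).

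The crux — and the step I expect to be the main obstacle — is therefore the claim that \emph{if $\zeta\in\partial\mathbb{D}$ is a singular value of $g$ then singular values accumulate at $\zeta$ from inside $\mathbb{D}$} (applied to a putative $\zeta\in I^{\circ}\cap SV(g)$ near $\xi$ to contradict $SV(g)\cap N_C=\emptyset$). Here I would invoke the characterization $SV(g)=\overline{CV(g)\cup AV(g)}$. Total invariance of $\mathbb{D}$ and $\widehat{\mathbb{C}}\smallsetminus\overline{\mathbb{D}}$ forces $CV(g)\cap\partial\mathbb{D}=\emptyset$, so if $\zeta$ is an accumulation point of $CV(g)$ those critical values lie off the circle, and after reflecting we may take them inside $\mathbb{D}$ — done. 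The remaining, genuinely delicate case is $\zeta\in\overline{AV(g)}$ with $\zeta$ not accumulated by critical values from either side; passing if necessary to asymptotic values on $\partial\mathbb{D}$ tending to $\zeta$, each such boundary asymptotic value $v$ is realised along an asymptotic path which, by Lemma~\ref{lemma-asymptotic-paths-inner} together with the Lehto--Virtanen Theorem~\ref{thm-lehto-virtanen}, lands at a singularity $\eta\in E(g)$ with $g^{*}(\eta)=v$. Near $\eta$, Lemma~\ref{lemma-sing} gives $\overline{g(N_\eta)}=\overline{\mathbb{D}}$ for every crosscut neighbourhood $N_\eta$, whereas on the one sheet of $g^{-1}(D(v,\delta)\cap\mathbb{D})$ carrying the asymptotic path the map $g$ is conformal onto a half-disk about $v$; reconciling this ``global wildness versus local tameness'' forces the preimage sheets over $D(v,\delta)\cap\mathbb{D}$ that accumulate at $\eta$ to be thin interleaved tracts, and the turning of $g$ between such a tract and an adjacent region on which $g$ takes values far from $v$ produces either critical points of $g$ accumulating at $\eta$ — whose critical values then cluster at $v$ inside $\mathbb{D}$ — or a further asymptotic value, so in every case $SV(g)$ accumulates at $v$, hence at $\zeta$, from inside $\mathbb{D}$, contradicting (a). Everything outside this local analysis near $\eta$ is bookkeeping with reflections, the covering property of $g$ off $SV(g)$, and the Jordan-domain topology of crosscut neighbourhoods.
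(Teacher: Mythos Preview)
Your handling of (b)$\Rightarrow$(a) is fine and matches the paper. Your reduction for (a)$\Rightarrow$(b) --- reflecting $N_C$, forming $W=N_C\cup I^\circ\cup N_C^{*}$, and observing that it suffices to rule out singular values on the arc $I^\circ$ near $\xi$ --- is also correct. The problem is the ``crux'' step, where the argument is not a proof.

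Specifically: you claim that the interleaving of tracts near the singularity $\eta$ forces critical points of $g$ to accumulate at $\eta$, with critical values clustering at $v$ inside $\mathbb{D}$. Neither part is justified. There is no general principle that ``turning'' between adjacent tracts produces critical points (think of $e^z$, or any singular inner function with an atomic factor), and even if critical points did accumulate at $\eta$, the wildness $\overline{g(N_\eta)}=\overline{\mathbb{D}}$ means their images could land anywhere, not near $v$. The fallback ``or a further asymptotic value'' just restarts the problem without terminating it. In effect you are trying to prove directly the statement that appears in the paper as a \emph{corollary} of the proposition (``$v\in\partial\mathbb{D}$ is singular iff $v\in\overline{SV(g)\cap\mathbb{D}}$''), and the natural way to establish that corollary is precisely via (a)$\Rightarrow$(b), so the strategy is close to circular unless you supply a genuinely independent argument --- which the tract sketch does not.

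The paper avoids this entirely by a direct construction. It never analyses $SV(g)\cap\partial\mathbb{D}$; instead it fixes one inverse branch $G_1$ on $N_C$ and shows it extends across the arc. The point is that $G_1(N_C)\subset\mathbb{D}$ is itself a crosscut neighbourhood: $G_1$ carries the crosscut $C$ to a crosscut $C'$ (using that $G_1$ is holomorphic up to $\overline{N_C}\cap\mathbb{D}$ and that the endpoints have radial limits on $\partial\mathbb{D}$), and the cluster set of $G_1$ along $\partial N_C\cap\partial\mathbb{D}$ lies in $\partial\mathbb{D}$ because $g(\mathbb{D})\subset\mathbb{D}$. Hence $\partial G_1(N_C)$ is a Jordan curve, $G_1$ extends homeomorphically to $\overline{N_C}$ by Carath\'eodory, and maps $\overline{N_C}\cap\partial\mathbb{D}$ into $\partial\mathbb{D}$. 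Schwarz reflection then extends $G_1$ holomorphically across the arc, giving the disk $D(\xi,\rho)$. This argument works branch by branch and never needs to locate the boundary singular values.
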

\begin{proof}
	The implication {\em \ref{SVb}}$ \Rightarrow ${\em \ref{SVa}} is trivial. Let us prove {\em \ref{SVa}}$ \Rightarrow ${\em \ref{SVb}}. Without loss of generality, we can assume that there are no singular values in $ \partial\mathbb{D}\cap\overline{N_C} $. Moreover, note that {\em \ref{SVa}} implies that all inverse branches $ G_1 $ are well-defined (and conformal) in $ N_C $. The assumption that there are no singular values in $ \partial\mathbb{D}\cap\overline{N_C} $ implies that $ G_1 $ is holomorphic in $ \overline{N_C}\cap \mathbb{D} $.
	We shall show that $ G_1 $ can be extended across $ \partial\mathbb{D}  $   by Schwarz reflection (see Fig. \ref{fig-SV}). 
	
	To this end, let $ \varphi\colon\mathbb{D}\to N_C $ be a Riemann map. Note that it extends homeomorphically to $ \partial N_C $; we shall denote this extension again by $ \varphi $. Then, \[ G_1\circ\varphi\colon\mathbb{D}\to  G_1(N_C)\] is a Riemann map for the simply connected domain $ G_1(N_C) $, where $ G_1 $ is any branch of $ g^{-1} $.
	
	 Consider the radial extension \[G^*_1\colon\partial N_C\to \partial G_1(N_C),\] defined as \[\ G_1^*(x)= (G_1\circ\varphi)^*(\varphi^{-1}(x)), \] for $ x\in\partial N_C $. Note that, since $ G_1|_{N_C}$ is bounded, the radial extension is well-defined almost everywhere in $ \partial N_C $. Since we assume that $ G_1 $ is holomorphic in $ \overline{N_C}\cap \mathbb{D} $, it follows that $ G_1(\overline{N_C}\cap \mathbb{D})\subset  \mathbb{D}$. Indeed, assume $ z\in\overline{N_C}\cap\mathbb{D} $ and $ G_1(z)\in\partial\mathbb{D} $. Since $ G_1 $ is conformal, it would map points in $ \mathbb{D} $ to points in $ \widehat{\mathbb{C}}\smallsetminus\overline{\mathbb{D}} $, a contradiction.
	 
	  Moreover, modifying slightly the crosscut if needed, we can assume that, for the two endpoints $ \left\lbrace \xi_1, \xi_2\right\rbrace =\partial N_C \cap\partial\mathbb{D} $, the limit \[G_1^*(\xi_i)=\lim\limits_{z\to\xi_i, z\in N_C\cap\mathbb{D}} G_1(z)\]  exists, for $ i=1,2 $. This is possible since the radial extension is well-defined almost everywhere. Note that, since $ g $ is assumed to be an inner function,  $ G_1^*(\xi_i)\in \partial\mathbb{D} $ (otherwise, there would exist a point in $ \mathbb{D} $ mapped by $ g $ to $ \partial\mathbb{D} $, a contradiction). Hence, $ G_1(\partial N_C\cap \mathbb{D})$ is a crosscut in $ \mathbb{D} $; we shall denote this crosscut by $ C' $.
	 On the other hand, note that, for $ x\in \partial N_C\cap \partial\mathbb{D} $, it holds
	 \[Cl(G_1, x)\coloneqq \left\lbrace w\in \overline{\mathbb{D}}\colon \textrm{ there exists }\left\lbrace x_n\right\rbrace _n\subset N_C \textrm{ with } x_n\to x \textrm{ and } G_1(x_n)\to w\right\rbrace \subset \partial\mathbb{D},\] since $ g(\mathbb{D})\subset\mathbb{D} $.
	 Hence, \[\partial G_1(N_C)\subset C'\cup\partial\mathbb{D}.\]
	  Therefore, $ \partial G_1(N_C) $ is locally connected (and, in fact, a Jordan curve), so $ G_1 $ extends homeomorphically to $ \partial N_C $,
	and $ G_1(\partial N_C\cap \partial\mathbb{D})\subset  \partial\mathbb{D}$. 
	
	Thus, by Schwarz reflection, we can extend holomorphically $ G_1 $ to \[N_C\cup (\partial N_C\cap \partial\mathbb{D})\cup \left\lbrace z\in\widehat{\mathbb{C}}\colon \frac{1}{\overline{z}}\in N_C\right\rbrace ,\]  (see Fig. \ref{fig-SV}). In particular, there exists $ \rho\coloneqq \rho(\xi) >0 $ such that all inverse branches of $ g $ are well-defined in $ D(\xi, \rho) $, as desired.
	
			\begin{figure}[htb!]\centering
		\includegraphics[width=8cm]{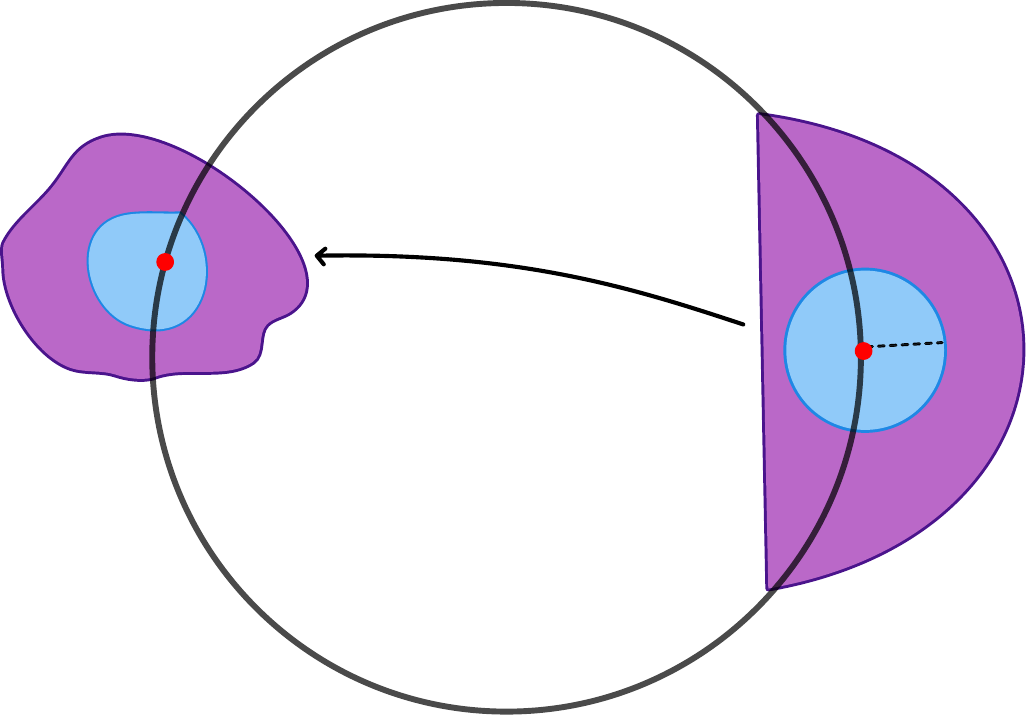}
			\setlength{\unitlength}{8cm}
		\put(-0.53, 0.45){$G_1$}
				\put(-0.13, 0.37){$\rho$}
				\put(-0.94, 0.43){\footnotesize$G_1(\xi)$}
						\put(-0.2, 0.07){$\mathbb{D}$}
		\put(-0.19,0.35){$\xi$}
		\caption{\footnotesize Whenever an inverse branch $ G_1 $ is well-defined in a crosscut neighbourhood, it can be extended across the unit circle by Schwarz reflection. }\label{fig-SV}
	\end{figure}
\end{proof}
It follows from Proposition \ref{prop-inverse-branches} that a value $ v\in\partial\mathbb{D} $ is singular for $ g $ if and only if it is accumulated by singular values in $ \mathbb{D} $, i.e. $ v\in \overline{SV(g)\cap\mathbb{D}} $. Clearly, for finite Blaschke products, all values $ v\in\partial\mathbb{D} $ are regular, and the same is true if $ SV(g)\cap\mathbb{D} $ is compactly contained in $ \mathbb{D} $. Moreover,
\begin{corol}{\bf (Non-singular Denjoy-Wolff point)}\label{coro-nonsingDW}
	Let $ g\colon\mathbb{D}\to\mathbb{D} $ be an inner function with Denjoy-Wolff point $ p\in\partial\mathbb{D} $. If $ p\notin \overline{SV(g)} $, then $ p $ is not a singularity for $ g $.
\end{corol}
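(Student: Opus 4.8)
The plan is to exploit the non‑accumulation hypothesis to get inverse branches of $g$ on a full disc $D(p,r)$ centred at $p$, and then to use Wolff's lemma to single out \emph{one} such branch that fixes $p$; that branch will display $g$ as a holomorphic map on a neighbourhood of $p$, so $p\notin E(g)$.

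First I would fix $r>0$ with $D(p,r)\cap SV(g)=\emptyset$, possible since $p\notin\overline{SV(g)}$. Because $SV(g)$ is exactly the set over whose complement $g$ is a covering, and $D(p,r)$ is simply connected and disjoint from $SV(g)$, the preimage $g^{-1}(D(p,r))$ decomposes as a disjoint union $\bigsqcup_G G(D(p,r))$ of conformal sheets, where each $G$ is a branch of $g^{-1}$; moreover $G(D(p,r))\subset\widehat{\mathbb C}\smallsetminus E(g)$, since points of $E(g)$ are not in the domain of $g$. Next, by Wolff's lemma (Theorem~\ref{wolfflemma}) I would choose a horodisc $D$ internally tangent to $\partial\mathbb D$ at $p$, small enough that $\overline D\subset D(p,r)$; then $g(D)\subset D$, i.e.\ $D\subset g^{-1}(D)=\bigsqcup_G G(D)$. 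Since $D$ is connected and the sets $G(D)$ are pairwise disjoint and open, $D$ lies inside a single one, say $D\subset G_0(D)$.

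The crucial point is that $G_0(p)=p$. As in the proof of Proposition~\ref{prop-inverse-branches}, the branch $G_0$ maps $\mathbb D\cap D(p,r)$ into $\mathbb D$, the exterior disc into itself, and hence (by continuity) the boundary arc $\partial\mathbb D\cap D(p,r)$ into $\partial\mathbb D$; in particular $G_0(D)\subset\mathbb D$ and, since $G_0$ is conformal on a neighbourhood of the compact set $\overline D$, $\overline{G_0(D)}=G_0(\overline D)=G_0(D)\cup\{G_0(p)\}$ with $G_0(p)\in\partial\mathbb D$. Thus $\overline{G_0(D)}\cap\partial\mathbb D=\{G_0(p)\}$, while $\overline D\cap\partial\mathbb D=\{p\}$; the inclusion $\overline D\subset\overline{G_0(D)}$ then forces $G_0(p)=p$. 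Finally, $G_0(D(p,r))$ is an open neighbourhood of $p=G_0(p)$ contained in $\widehat{\mathbb C}\smallsetminus E(g)$, and on it $g=G_0^{-1}$ is holomorphic; hence $g$ is analytic at $p$, i.e.\ $p\notin E(g)$.

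I expect the middle step — producing a globally defined inverse branch that genuinely fixes $p$ — to be the real obstacle. The connectedness argument isolating $G_0$, and the closure computation $\overline D\cap\partial\mathbb D\subset\overline{G_0(D)}\cap\partial\mathbb D$ that pins $G_0(p)=p$, are precisely where the Denjoy–Wolff hypothesis is used; without it a regular boundary value need not lie outside $E(g)$. The remaining bookkeeping — that the horodisc, its closure, and the reflection behaviour of $G_0$ all fit inside $D(p,r)$ — is routine once $r$ is fixed.
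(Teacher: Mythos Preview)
Your proof is correct and takes essentially the same approach as the paper: both use Wolff's lemma to single out an inverse branch of $g$ fixing $p$, which then displays $g$ as conformal on a neighbourhood of $p$. The only cosmetic difference is that the paper identifies this branch via a curve $\gamma\subset N_p$ landing at $p$ with $g(\gamma)$ also landing at $p$ (using $g^*(p)=p$) and then invokes Proposition~\ref{prop-inverse-branches} to extend to a disc, whereas you work directly with branches on $D(p,r)$ and use the horodisc invariance $g(D)\subset D$ plus the closure argument $\{p\}=\overline D\cap\partial\mathbb D\subset\overline{G_0(D)}\cap\partial\mathbb D=\{G_0(p)\}$ to pin down $G_0(p)=p$.
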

\begin{proof}
	If $ p\notin \overline{SV(g)} $, then there exists a crosscut neighbourhood $ N_p $ such that $ p\in \partial N_p $ and $ SV(g)\cap N_p=\emptyset $. Since the Denjoy-Wolff point is radially fixed (Thm. \ref{wolfflemma}), there exists a curve $ \gamma\subset N_p $ landing at $ p $, such that $ g(\gamma)\subset N_p $ also lands at $ p $. Consider $ G_1 $ the inverse branch of $ g^{-1} $ defined in $ N_p $ such that $ G_1(g(\gamma))=\gamma $. By Proposition \ref{prop-inverse-branches}, $ G_1 $ extends conformally to $ D(p,\rho) $ for some $ \rho >0 $, and $ G_1(p)=p $. Then, $ D_1\coloneqq G_1(D(p, \rho)) $ is a negihbourhood of $ p $, and \[g\colon D_1\to D(p, \rho)\] conformally. Therefore, by Lemma \ref{lemma-sing}, $ p $ is not a singularity for $ g $, as desired.
\end{proof}

\begin{remark}{\bf (At most one asymptotic value per singularity)} 
	By 	the Lehto-Virtanen Theorem \ref{thm-lehto-virtanen}, given a singularity $ \xi\in E(g) $, there exists at most one asymptotic value $ v\in\overline{\mathbb{D}} $ corresponding to $ \xi $. Indeed, if $ v $ is an asymptotic value corresponding to the singularity $ \xi $, there exists a curve landing at $ \xi $ whose image lands at $ v $. By Lehto-Virtanen Theorem, $ g^*(\xi)=v $. Since radial limits, if they exist, are unique, there cannot be more asymptotic values corresponding to $ \xi $. Hence,
	\[\# \textrm{Sing} (g)\geq \# AV(g)\cap\overline{\mathbb{D}}.\]
	
	This differs from when a meromorphic function $ f\colon\mathbb{C}\to\widehat{\mathbb{C}} $ is considered, where the essential singularity (infinity) can have infinitely many asymptotic values corresponding to it.
\end{remark}

Next, we analyse the distortion induced by inverse branches near $ \partial\mathbb{D} $, and how we can control the preimages of radial limits in terms of Stolz angles.
\begin{prop}{\bf (Control of radial limits in terms of Stolz angles)}\label{prop-radial-limits}
Let $ g\colon\mathbb{D}\to\mathbb{D} $ be an inner function with Denjoy-Wolff point $ p\in\overline{\mathbb{D}} $. Let $ \xi\in\partial\mathbb{D} $, $ \xi\neq p $.
Assume there exists $ \rho_0>0$ such that $ D(\xi, \rho_0)\cap SV(g)\neq 0 $.  Then, for all $0< \alpha<\frac{\pi}{2} $, there exists $ \rho_1\coloneqq \rho_1(\alpha, \rho_0)<\rho_0 $ such that all branches $ G_1 $ of $ g^{-1} $ are well-defined in $ D(\xi, \rho_1) $ and, for all $ \rho<\rho_1 $,\[G_1(R_\rho(\xi, p))\subset \Delta_{\alpha,\rho}(G_1(\xi), p),\] where $ R_\rho(\cdot, p) $ and $ \Delta_{\alpha,\rho}(\cdot, p) $ stand for the generalized radial segment and Stolz angle with respect to $ p $ {\em (Def. \ref{defi-radi-stolz-angle})}. 
\end{prop}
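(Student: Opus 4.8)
The plan is to reduce to the case where the Denjoy--Wolff point $p$ lies at a convenient location (namely $\infty$ in the upper half-plane) by pre- and post-composing with the Möbius maps implicit in Definition \ref{defi-radi-stolz-angle}, and then to exploit the fact, established in Proposition \ref{prop-inverse-branches}, that the hypothesis $D(\xi,\rho_0)\cap SV(g)=\emptyset$ forces every branch $G_1$ of $g^{-1}$ to extend conformally (by Schwarz reflection) to a genuine disk $D(\xi,\rho')$ with $\rho'\le\rho_0$. Once we know $G_1$ is univalent on such a disk, the statement becomes a purely quantitative distortion estimate: we must show that the (Euclidean or hyperbolic) image of a radial segment stays inside a Stolz angle of prescribed opening $\alpha$ at $G_1(\xi)$, for all sufficiently short segments and uniformly in the branch.

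First I would fix $\alpha\in(0,\pi/2)$ and pass to coordinates in which both $\xi$ and $G_1(\xi)$ sit on $\mathbb{R}=\partial\mathbb{H}$ and $p$ corresponds to $\infty$; in these coordinates the generalized radial segment $R_\rho(\xi,p)$ is a vertical segment and the generalized Stolz angle $\Delta_{\alpha,\rho}(G_1(\xi),p)$ is an honest Euclidean cone of half-angle $\alpha$ with vertex at $G_1(\xi)$. By the Lehto--Virtanen remark following Theorem \ref{thm-lehto-virtanen}, it is harmless to work with these generalized objects. Now $G_1$, read in these charts, is a univalent holomorphic map defined on a half-disk based at a real point, with a definite conformal radius of univalence coming from $\rho'$. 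The key point is that $G_1$ maps $\mathbb{H}$ into $\mathbb{H}$ locally (since $g(\mathbb{D})\subset\mathbb{D}$ translates to $G_1$ preserving the side), so $G_1'$ at the boundary point is (a positive real multiple of) a rotation by angle $0$ relative to the boundary; consequently the linear approximation $L(z)=G_1(\xi)+G_1'(\xi)(z-\xi)$ sends the vertical segment $R_\rho(\xi)$ to a vertical segment at $G_1(\xi)$ — i.e.\ into the axis of the cone. The nonlinear error is then controlled by Corollary \ref{corol-distotion}: for $z\in D(\xi,\rho)$ we have $|G_1(z)-L(z)|\le C(\rho/\rho')\,|G_1'(\xi)|\,|z-\xi|$, and $C(\rho/\rho')\to 0$. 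Since the distance from a point of the axis at height $t=|z-\xi|$ to the boundary of the half-angle-$\alpha$ cone is $t\sin\alpha$, choosing $\rho_1$ small enough that $C(\rho_1/\rho_0)<\sin\alpha$ forces $G_1(R_\rho(\xi,p))$ into $\Delta_{\alpha,\rho}(G_1(\xi),p)$ for every $\rho<\rho_1$. Crucially $C$ in Corollary \ref{corol-distotion} depends only on the ratio of radii, not on the univalent map, so $\rho_1$ depends only on $\alpha$ and $\rho_0$ and not on the choice of branch $G_1$ — this is exactly the uniformity asserted.

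The main obstacle I anticipate is the transfer of the distortion bound from the half-disk in the $\mathbb{H}$-chart to a statement about the original generalized radial segments and Stolz angles relative to an \emph{arbitrary} $p\in\overline{\mathbb{D}}$: one must check that the Möbius changes of coordinate do not destroy the uniformity. When $p\in\mathbb{D}$ this is harmless because $M$ is a fixed Möbius automorphism of $\mathbb{D}$, but when $p\in\partial\mathbb{D}$ one conjugates to $\mathbb{H}$ and must verify that a Euclidean disk $D(\xi,\rho_0)$ in $\mathbb{D}$ contains, and is contained in, comparable ``half-disk'' neighbourhoods of $M(\xi)$ in $\mathbb{H}$ with comparability constants depending only on the distance from $\xi$ to $p$ (which is bounded below since $\xi\neq p$); the size of the disk on which $G_1$ is univalent in the $\mathbb{H}$-chart is then bounded below in terms of $\rho_0$ alone. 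A secondary, more bookkeeping-type point is to confirm that $G_1$ genuinely preserves the upper half-plane near the boundary point (so that $\arg G_1'$ aligns the vertical direction with the cone axis rather than tilting it by a fixed angle); this follows from the reflection construction in Proposition \ref{prop-inverse-branches}, which shows $G_1$ maps $N_C\cap\mathbb{D}$ into $\mathbb{D}$ and the boundary arc into $\partial\mathbb{D}$, hence respects sides. With these two points dispatched, the estimate is exactly the elementary ``linear part dominates, quadratic error is uniformly small'' argument sketched above.
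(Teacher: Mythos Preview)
Your approach matches the paper's: reduce via the Möbius map $M$ of Definition~\ref{defi-radi-stolz-angle} to standard radial segments and Stolz angles, compare $G_1$ (or its $\mathbb{H}$-conjugate $H_1$) with its linear part using Corollary~\ref{corol-distotion}, observe that the linear part sends the radius to the axis of the cone because $G_1$ preserves sides, and conclude once the distortion constant $C$ is small. The paper treats the cases $p\in\mathbb{D}$ and $p\in\partial\mathbb{D}$ separately (the first in $\mathbb{D}$ with $p=0$, the second in $\mathbb{H}$ with $p=\infty$), exactly as you anticipate, and arrives at the threshold $\frac{C}{1-C}<\tan\alpha$ rather than your $C<\sin\alpha$; either works.

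There is, however, one genuine omission. The Stolz angle $\Delta_{\alpha,\rho}$ is not merely a cone of opening $\alpha$: it also carries a \emph{height} constraint, namely $|z|>1-\rho$ in $\mathbb{D}$ or, equivalently, $\mathrm{Im}\,w<\rho$ in $\mathbb{H}$. Your distortion argument addresses only the angular condition. The height condition does not follow from distortion and requires a separate dynamical input: Schwarz's lemma (Theorem~\ref{scharzlemma}) in the elliptic case, giving $|G_1(z)|\ge|z|$, and Wolff's lemma (Theorem~\ref{wolfflemma}) in the non-elliptic case, giving $\mathrm{Im}\,H_1(w)\le\mathrm{Im}\,w$. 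The paper invokes these explicitly at the start of each case. Without them the inclusion $G_1(R_\rho(\xi,p))\subset\Delta_{\alpha,\rho}(G_1(\xi),p)$ could fail in the radial direction even when the angular bound holds, so this is a necessary step, not bookkeeping.
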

		\begin{figure}[htb!]\centering
		\includegraphics[width=9cm]{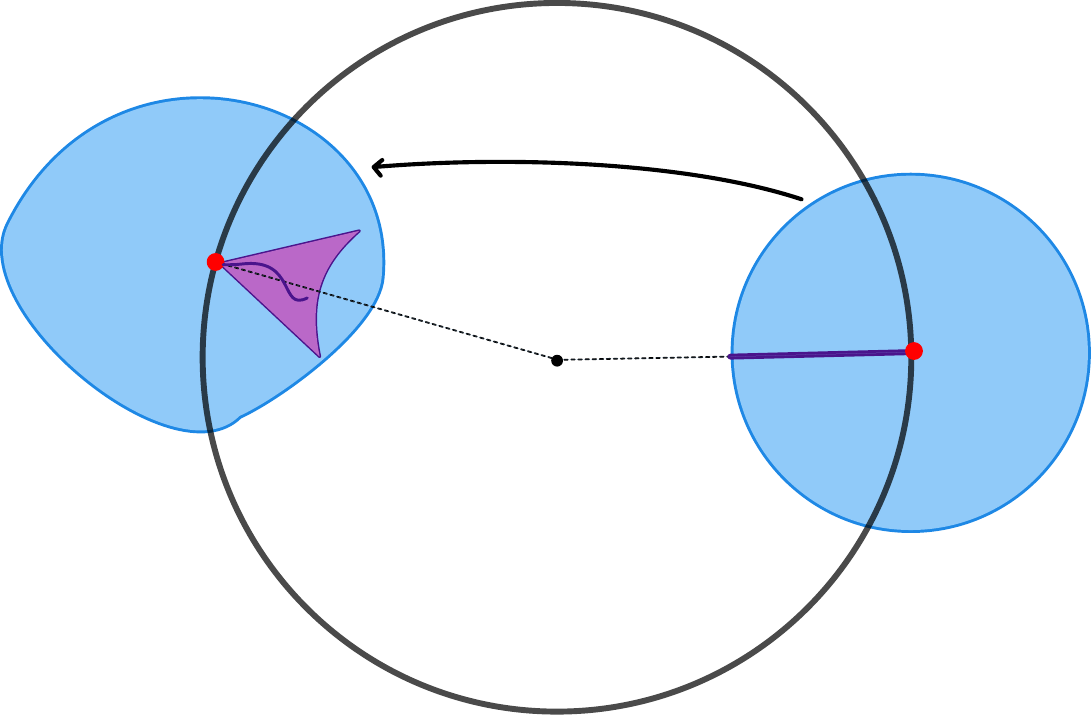}
	\setlength{\unitlength}{9cm}
	\put(-0.5, 0.52){$G_1$}
	\put(-0.25, 0.34){$R$}
	\put(-0.9, 0.43){\footnotesize$G_1(\xi)$}
	\put(-0.2, 0.07){$\mathbb{D}$}
		\put(-0.53, 0.28){$p=0$}
	\put(-0.15,0.33){$\xi$}
	\caption{\footnotesize Whenever an inverse branch $ G_1 $ is well-defined at a boundary point $ \xi\in\partial\mathbb{D} $, $ G_1 $ sends radial segments into angular neighbourhoods of a given openning. In the figure, $ p=0\in\mathbb{D} $.}\label{fig-radiallim}
\end{figure}

Note that $ \rho_1 $ depends only on $ \rho_0 $ and $ \alpha $, but not on the point $ \xi\in\partial \mathbb{D} $, nor on the inverse branch $ G_1 $.

\begin{proof}
	Note that, since $ D(\xi, \rho_0)\cap SV(g)\neq \emptyset $, all branches $ G_1 $ of $ g^{-1}$ are well-defined in $ D(\xi, \rho_0) $. We shall distinguish two cases. \begin{itemize}
		\item Assume first that $ g $ is elliptic, so $ p\in\mathbb{D} $. According to Definition \ref{defi-radi-stolz-angle}, it is enough to consider $ g\colon\mathbb{D}\to\mathbb{D} $ with $ g(0)=0 $ and prove \[G_1(R_\rho(\xi, 0))\subset \Delta_{\alpha,\rho}(G_1(\xi), 0).\]
		
	By Schwarz lemma \ref{scharzlemma}, $ \left| G_1(z)\right| \geq\left| z\right|  $ for $ z\in D(\xi, \rho_0) \cap\mathbb{D}$. It is left to see that, for $ z\in  R_\rho(\xi, 0)$, \[\left| \textrm{Arg }G_1(\xi)- \textrm{Arg }(G_1(\xi)-G_1(z))\right| <\alpha.\] To do so, consider the linear map \[L_1(z)\coloneqq G_1(\xi)+ G'_1(\xi)(z-\xi). \] Note that $ \left| L_1(z)-G_1(\xi)\right| =\left| G'_1(\xi)\right| \left| z-\xi\right|  $. Moreover, by Corollary \ref{corol-distotion}, there exists $ \rho_1<\rho_0 $ and a constant $ C(\rho_1)>0 $ such that \[\left| G_1(z)-L_1(z)\right| \leq C(\rho_1)\left| z-\xi\right| \left| G'_1(\xi)\right| ,\]for all $ z\in D(\xi, \rho_1) $. That is, the point $ G_1(z) $ belongs to the disk of center $ L_1(z)$ and radius $C(\rho_1)\left| z-\xi\right| \left| G'_1(\xi)\right|  $ (see Fig. \ref{fig-stolzangle1}).
	
	 Since $ C(\rho_1) \to 0$ as $ \rho_1 \to 0$, we have $ \frac{C(\rho_1)}{1-C(\rho_1)} \to 0$ as $ \rho_1 \to 0$. Without loss of generality, we assume $ C(\rho_1) $ satisfies \[\frac{C(\rho_1)}{1-C(\rho_1)} <\tan\alpha . \]  
	Let \[	\beta \coloneqq \left| \textrm{Arg }G_1(\xi)- \textrm{Arg }(G_1(\xi)-G_1(z))\right|. \]
We claim that, if $ z\in R_\rho(\xi, 0) $, then $ \textrm{Arg }L_1(z)= \textrm{Arg }(G_1(\xi))$. Indeed, $ L_1 $ is the affine map associated to $ G_1 $, which is an inverse branch of $ g $. The map $ G_1 $ is conformal in $ D(\xi, \rho_0) $, and hence angle preserving, and $ G_1(D(\xi, \rho_0)\cap\partial\mathbb{D})\subset \partial \mathbb{D} $. From this, it follows that, if $ \textrm{Arg }z= \textrm{Arg }\xi$, then $ \textrm{Arg }L_1(z)= \textrm{Arg }(G_1(\xi))$, i.e. if $ z $ lies on the radial segment at $\xi $, then $ L_1(z) $ lies on the radial segment at $ G_1(\xi) $.
	
		Then, noting that $ G_1(z) $ belongs to the disk of center $ L_1(z)$ and radius $C(\rho_1)\left| z-\xi\right| \left| G'_1(\xi)\right|  $, it follows
		\[\tan\beta\leq \dfrac{C(\rho_1)\left| G'_1(\xi)\right| \left| z-\xi\right| }{(1-C(\rho_1))\left| G'_1(\xi)\right| \left| z-\xi\right| }= \frac{C(\rho_1)}{1-C(\rho_1)} \leq \tan\alpha,\]as desired. See also Figure \ref{fig-stolzangle1}. 
		\begin{figure}[h]
			\centering

			\begin{subfigure}{0.3\textwidth}
					\centering
				\includegraphics[width=\textwidth]{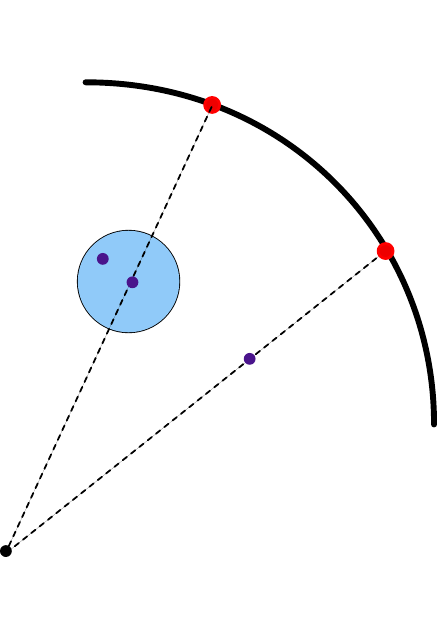}
				\setlength{\unitlength}{\textwidth}
				\put(-0.08, 0.87){$\xi$}
				\put(-0.5, 1.22){$G_1(\xi)$}
					\put(-1, 0.1){$0$}
								\put(-0.4, 0.6){$z$}
				\put(-0.67, 0.75){\footnotesize$L_1(z)$}
							\put(-1, 0.85){\footnotesize$G_1(z)$}
						\put(-0.05, 0.4){$\partial\mathbb{D}$}
				\caption{\footnotesize }
			\end{subfigure}
			\hspace{4cm}
			\begin{subfigure}{0.3\textwidth}
				\centering
				\includegraphics[width=0.8\textwidth]{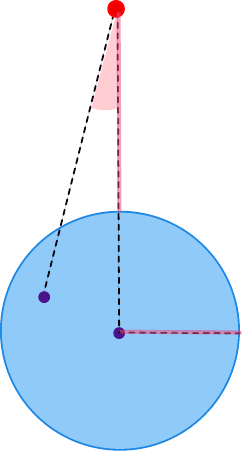}
				\setlength{\unitlength}{\textwidth}
				\put(-0.37, 1.43){$G_1(\xi)$}
				\put(-0.49, 1.05){$\beta$}
				\put(-0.5, 0.3){$L_1(z)$}
					\put(-0.4, 0.43){\tiny$C(\rho_1)\left|G_1'(\xi) \right| \left| z-\xi\right| $}
								\put(-0.4, 1){\tiny$(1-C(\rho_1))\left|G_1'(\xi) \right| \left| z-\xi\right| $}
				\put(-0.7, 0.4){$G_1(z)$}
				\caption{\footnotesize }
			\end{subfigure}
					\caption{\footnotesize Image (a) shows how the inverse branch $ G_1 $ acts near $ \xi $, and (b) provides more detail on it. Indeed, the point $ G_1 (z)$ lies on the disk $ D(L_1(z), C(\rho_1)\left| G_1'(\xi)\right| \left| z-\xi\right| ) $. The angle $ \beta $ captures the opening of the vector $ G_1(z)-G_1(\xi) $ with respect to the vector $ L_1(z)-G_1 (\xi)$. Hence, $ \tan\beta $ is bounded above by the maximal distance between $ G_1(z) $ and $ L_1(z) $ divided by the minimal distance between $ G_1(\xi) $ and $ G_1(z) $. }\label{fig-stolzangle1}
		\end{figure}
		\item Assume $ g $ is non-elliptic, so $ p\in\partial\mathbb{D} $. Note that $ G_1(\xi)\neq p $, since $ p $ is the Denjoy-Wolff point and, hence, it is radially fixed (Thm. \ref{wolfflemma}).
		
		Now, consider $ h\colon\mathbb{H}\to\mathbb{H} $, $ h\coloneqq M\circ g\circ M^{-1} $, where $ M\colon\mathbb{D}\to\mathbb{H} $, $ M(z)=i\dfrac{p+z}{p-z} $. Then, there exists $ \widetilde{\rho}_0 $ such that $ D(M(\xi), \widetilde{\rho}_0 )\cap SV(h)=\emptyset $, and consider $ H_1 $ the branch of $ h^{-1} $ corresponding to $ G_1 $, well-defined in $ D(M(\xi), \widetilde{\rho}_0 ) $. It is enough to prove that there exists $ \widetilde{\rho}_1 <\widetilde{\rho}_0  $ such that, for all $ \rho<\widetilde{\rho}_1 $,
		\[H_1(R^\mathbb{H}_\rho(M(\xi)))\subset \Delta^\mathbb{H}_{\alpha,\rho}(H_1(M(\xi))).\] First note that, by Wolff lemma \ref{wolfflemma}, if $ \textrm{Im }w<\rho $, then $ \textrm{Im }H_1(w)<\rho $.
		Now, consider the linear map \[ L_1(w)\coloneqq H_1(M(\xi))+ H'_1(M(\xi))(w-M(\xi)).\]
		Note that $ \left| L_1(w)-H_1(\xi)\right| =\left| H'_1(M(\xi))\right| \left| w-M(\xi)\right|  $. Moreover, by Corollary \ref{corol-distotion}, there exists $ \widetilde{\rho}_1<\widetilde{\rho}_0 $ and a constant $ C(\widetilde{\rho}_1)>0 $ such that \[\left| H_1(w)-L_1(w)\right| \leq C(\widetilde{\rho}_1)\left| w-M(\xi)\right| \left| H'_1(M(\xi))\right| ,\]for all $ w\in D(\xi, \widetilde{\rho}_1) $. We assume, without loss of generality, \[\dfrac{C(\widetilde{\rho}_1)}{1-C(\widetilde{\rho}_1)}<\tan\alpha .\]
		
		Since $ h(\mathbb{H})\subset \mathbb{H} $, and $ H_1 $ is a branch of $ h^{-1} $, conformal where defined, then if $ w\in  R_{\rho}^\mathbb{H}(M(\xi))$, then $ \textrm{Re }L_1(w)=H_1(M(\xi)) $, i.e. if $ w $ lies on the radial segment at $ M(\xi) $, then $L_1 (w) $ lies on the radial segment at $ L_1(M(\xi)) $. 
		
		We claim that, for $ w\in R_{\rho}^\mathbb{H}(M(\xi)) $, it holds \[ \dfrac{\left|\textrm{Re }H_1(w)-H_1(M(\xi))\right| }{\textrm{Im }H_1(w)}<\tan\alpha.\] Indeed, 
		\[\left| \textrm{Re }H_1(w)-H_1(M(\xi))\right| =\left| \textrm{Re }H_1(w)- \textrm{Re }L_1(w)\right|\leq C(\widetilde{\rho}_1)\left| w-M(\xi)\right| \left| H'_1(M(\xi))\right| ,\]\[\textrm{Im }H_1(w)=\textrm{Im }(H_1(w)-H_1(M(\xi)))\geq (1-C(\widetilde{\rho}_1)) \left| H_1'(M(\xi))\right| \left| w-M(\xi) \right| ,\]as desired.
		See also Figure \ref{fig-stolzangle2}. 
		
		\begin{figure}[htb!]\centering
			\includegraphics[width=13cm]{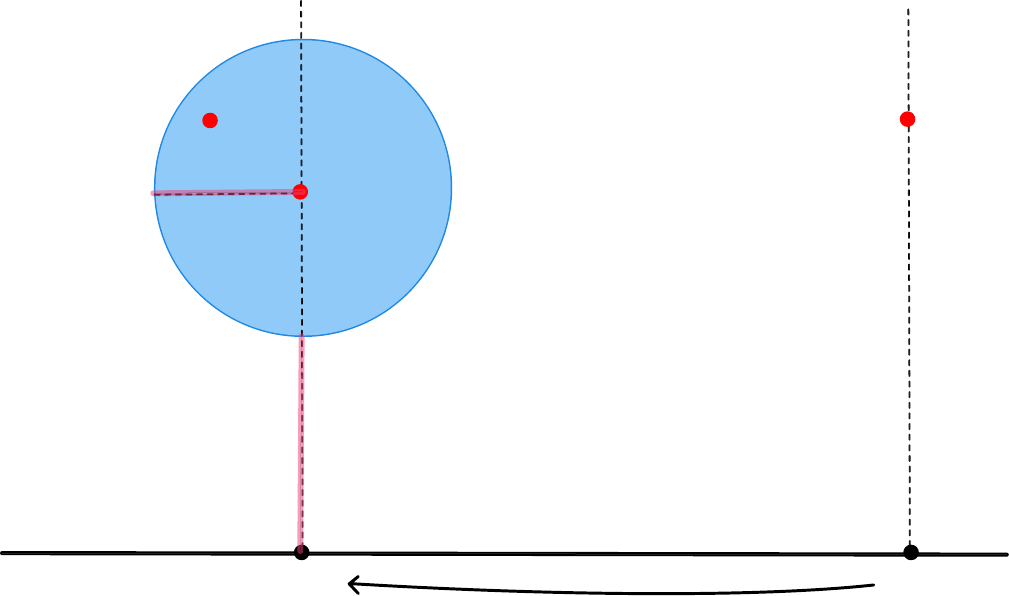}
				\setlength{\unitlength}{13cm}
			\put(-0.14, 0.47){$w$}
				\put(-0.12, 0.0){$M(\xi)$}
					\put(-0.8, 0.0){$H_1(M(\xi))$}
			\put(-0.69, 0.39){$L_1(w)$}
				\put(-0.78, 0.46){$H_1(w)$}
					\put(-0.97, 0.38){\tiny$C(\widetilde{\rho_1})\left|H_1'(M(\xi)) \right| \left| w-M(\xi)\right| $}
				\put(-0.695, 0.17){\tiny$(1-C(\widetilde{\rho_1}))\left|H_1'(M(\xi)) \right| \left| w-M(\xi)\right| $}
			\caption{\footnotesize The point $ H_1 (w)$ lies on the disk $ D(L_1(w), C(\widetilde{\rho}_1)\left| H_1'(\xi)\right| \left| z-\xi\right| ) $, and this gives the estimates on its real and imaginary part. }\label{fig-stolzangle2}
		\end{figure}
	\end{itemize}
\end{proof}

We are now ready to prove \ref{thm-inverse-inner}. In fact, we show a stronger statement (Thm. \ref{thm-postsingular}), from which \ref{thm-inverse-inner} is an immediate corollary.

Consider the \textit{postsingular set} \[ P(g)\coloneqq\overline{\bigcup\limits_{v\in SV(g)}\bigcup\limits_{n\geq 0} g^n(v)}.\] With our accurate study of the singular values, the following theorem is quite straight forward.

\begin{thm}{\bf (Inverse branches at boundary points)}\label{thm-postsingular}
	Let $ g\colon\mathbb{D}\to\mathbb{D} $ be an inner function, such that $ g^*|_{\partial\mathbb{D}} $ is recurrent. Assume there exists $ \zeta \in\partial\mathbb{D}$ and  a  crosscut neighbourhood $ N_\zeta$  of $ \zeta $ such that $ P(g)\cap N_\zeta =\emptyset$. Then,
	for $ \lambda $-almost every $ \xi\in\partial\mathbb{D} $, there exists $ \rho_0\coloneqq \rho_0(\xi)>0 $ such that all  branches $ G_n$ of $ g^{-n }$ are well-defined in $ D(\xi, \rho_0) $. In particular, the set $ E(g) $ of singularities of $ g $ has $ \lambda $-measure zero.
	
	\noindent In addition, for all $0< \alpha<\frac{\pi}{2} $, there exists $ \rho_1<\rho_0 $ such that, for all $ n\geq0 $, all branches $ G_n $ of $ g^{-n} $ are well-defined in $ D(\xi, \rho_1) $ and, for all $ \rho<\rho_1 $,\[G_n(R_\rho(\xi), p)\subset \Delta_{\alpha,\rho}(G_n(\xi),p),\] where $ R_\rho(\cdot, p) $ and $ \Delta_{\alpha,\rho}(\cdot, p) $ stand for the radial segment and the Stolz angle with respect to $ p $ {\em (Def. \ref{defi-radi-stolz-angle})}. 
\end{thm}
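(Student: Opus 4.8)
The plan is to bootstrap the local analysis of singular values from Propositions~\ref{prop-inverse-branches} and~\ref{prop-radial-limits}, which applies on a fixed boundary arc, to $\lambda$‑almost every point of $\partial\mathbb{D}$ by means of the recurrence of $g^*$. Set $W\coloneqq\widehat{\mathbb{C}}\smallsetminus P(g)$. Since $g$ commutes with the reflection $z\mapsto 1/\overline z$, both $P(g)$ and $W$ are symmetric in $\partial\mathbb{D}$; since $g$ carries the orbits defining $P(g)$ into $P(g)$ one gets $g^{-1}(W)\subseteq W$; and since $SV(g)\subseteq P(g)=\widehat{\mathbb{C}}\smallsetminus W$, the open set $W$ contains no singular values. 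By hypothesis $N_\zeta\subseteq W$, and $\overline{N_\zeta}\cap\partial\mathbb{D}$ is a non‑trivial arc; let $A$ be its interior. For $\xi\in A$ some one‑sided neighbourhood $D(\xi,r)\cap\mathbb{D}$ lies in $N_\zeta$, so $\xi$ is a regular value by Proposition~\ref{prop-inverse-branches}. By compactness I fix a compact sub‑arc $A'\subseteq A$ with $\lambda(A')>0$ and a uniform $r_0>0$ with $D(\xi,r_0)\cap\mathbb{D}\subseteq N_\zeta$ for all $\xi\in A'$.

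The core of the argument is the claim that for every $\xi\in A'$ and every $N\geq0$, each branch $G_N$ of $g^{-N}$ is defined and conformal on the fixed disk $D(\xi,r_0)$, with $G_N(D(\xi,r_0))\cap\mathbb{D}\subseteq W$ and $G_N(D(\xi,r_0))\cap SV(g)=\emptyset$. I would prove this by induction on $N$. Writing $G_N=\gamma\circ G_{N-1}$ with $\gamma$ a branch of $g^{-1}$, the inductive hypothesis makes $G_{N-1}$ conformal on $D(\xi,r_0)$ with image disjoint from $SV(g)$, so $\gamma$ is defined and conformal on the simply connected set $G_{N-1}(D(\xi,r_0))$; the $\mathbb{D}$‑part of its image lies in $g^{-1}(W)\subseteq W$, hence misses $SV(g)\cap\mathbb{D}$, and then — using that $SV(g)$ is symmetric and that $SV(g)\cap\partial\mathbb{D}=\overline{SV(g)\cap\mathbb{D}}\cap\partial\mathbb{D}$ (the remark following Proposition~\ref{prop-inverse-branches}) — the whole open image misses $SV(g)$. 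I expect this step, i.e.\ keeping the domains of definition and the position relative to $SV(g)$ under control simultaneously in $\mathbb{D}$, $\partial\mathbb{D}$ and the exterior along arbitrarily long compositions, to be the main obstacle.

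Granting the claim, let $\xi\in\partial\mathbb{D}$ be a point whose orbit $\{(g^n)^*(\xi)\}_n$ is well‑defined and meets $A'$; by Theorem~\ref{thm-ergodic-properties}(a),(b) (and $\lambda(A')>0$) this holds for $\lambda$‑almost every $\xi$. Pick $m$ with $y\coloneqq(g^*)^m(\xi)\in A'$, let $\Phi$ be the branch of $g^{-m}$ at $y$ following the backward orbit $y\to(g^*)^{m-1}(\xi)\to\cdots\to\xi$, so $\Phi(y)=\xi$ and, by the claim, $\Phi$ is conformal on $D(y,r_0)$; set $V\coloneqq\Phi(D(y,r_0))$ and choose $\rho_0=\rho_0(\xi)>0$ with $D(\xi,\rho_0)\subseteq V$. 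For any $n\geq0$ and any branch $G_n$ of $g^{-n}$ at $\xi$, concatenating the backward orbit of $\Phi$ with that of $G_n$ gives a branch $\widetilde G$ of $g^{-(m+n)}$ at $y$ with $\widetilde G=G_n\circ\Phi$, which is conformal on $D(y,r_0)$ by the claim; hence $G_n=\widetilde G\circ\Phi^{-1}$ extends conformally to all of $V\supseteq D(\xi,\rho_0)$, with $\rho_0$ not depending on $n$. This is the first assertion. For the second, fix $0<\alpha<\pi/2$: every $G_n$ is conformal on $D(\xi,\rho_0)$, maps $D(\xi,\rho_0)\cap\partial\mathbb{D}$ into $\partial\mathbb{D}$ (as $g^n$ is inner), and satisfies the Schwarz (elliptic) or Wolff (non‑elliptic) inequality relative to the Denjoy--Wolff point $p$, which is shared by all $g^n$. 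Consequently the distortion estimate in the proof of Proposition~\ref{prop-radial-limits} (built on the Koebe estimate of Corollary~\ref{corol-distotion}) goes through verbatim with $G_1$ replaced by $G_n$ and with the present $\rho_0$, producing $\rho_1=\rho_1(\alpha,\rho_0)<\rho_0$ — independent of $n$ and of the branch — with $G_n(R_\rho(\xi,p))\subseteq\Delta_{\alpha,\rho}(G_n(\xi),p)$ for all $\rho<\rho_1$.

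It remains to see that $\lambda(E(g))=0$. By the first assertion, $\lambda$‑almost every $\xi\in\partial\mathbb{D}$ is a regular value, so $\lambda\big(SV(g)\cap\partial\mathbb{D}\big)=0$. If $\eta\in E(g)$ had a radial limit $g^*(\eta)=v$ that were a regular value, the radial segment at $\eta$ would eventually lie inside a single conformal sheet of $g^{-1}$ over $D(v,r)$, an open set on which $g$ is analytic; this forces $\eta$ into that open set (impossible, since $\eta\in E(g)$) or onto the negligible set of endpoints of its boundary arc. Hence, up to a $\lambda$‑null set, $E(g)\subseteq(g^*)^{-1}\big(SV(g)\cap\partial\mathbb{D}\big)$, which has measure zero because $g^*$ is non‑singular (Theorem~\ref{thm-ergodic-properties}(a)).
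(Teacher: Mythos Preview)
Your proof is correct and follows essentially the same strategy as the paper: first show that all branches $G_n$ of $g^{-n}$ are defined on a fixed disk centred at the distinguished boundary point $\zeta$ (or on your arc $A'$), then use recurrence of $g^*$ to transport this to $\lambda$-almost every $\xi$, and finally invoke the distortion estimate of Proposition~\ref{prop-radial-limits}, noting that it depends only on the radius and not on the particular branch. Your explicit induction on $N$ (tracking that images stay inside $W=\widehat{\mathbb{C}}\smallsetminus P(g)$) makes precise a step the paper only sketches; and in your argument for $\lambda(E(g))=0$ the ``endpoints'' alternative does not actually occur, since $\eta=G_1(v)$ lies in the open image $G_1(D(v,r))$, so the first alternative already gives the contradiction.
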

\begin{proof}
	By Proposition \ref{prop-inverse-branches}, the existence of  a  crosscut neighbourhood $ N_\zeta $  of $ \zeta\in\partial\mathbb{D} $ such that $ P(g)\cap N_\zeta =\emptyset$, implies the existence of $ \rho_\zeta>0 $ such that all branches $ G_n$ of $ g^{-n} $ are well-defined in $ D(\zeta, \rho_\zeta) $.
	
	We have to  see that, for  $ \lambda $-almost every $ \xi \in\partial\mathbb{D}$, there exists  $
	\rho_\xi >0$  such that all branches $ G_n$ of $ g^{-n} $ are well-defined in $ D(\xi, \rho_\xi) $.
	Since we are assuming $ g^* $ to be recurrent, for $ \lambda $-almost every $ \xi\in\partial\mathbb{D} $, $ \left\lbrace (g^n)^*(\xi) \right\rbrace_n  $ is dense in $ \partial\mathbb{D} $ (Thm. \ref{thm-ergodic-properties}{\em\ref{ergodic-properties-b}}). Therefore, there exists $ n_0\coloneqq n_0(\xi) $ such that $ (g^{n_0})^*(\xi) \in D(\zeta, \rho_\zeta)$. This already implies the existence of $ \rho_\xi>0 $ such that all inverse branches $ G_n $ of $ g^{-n} $ are well-defined in $ D(\xi, \rho_\xi) $.
	
	Next we prove that $ \lambda (E(g))=0 $. To do so, let 
	\[ K\coloneqq \left\lbrace \xi\in\partial\mathbb{D}\colon \exists\rho>0 \textrm{ such that all branches } G_n \textrm{ of }g^{-n} \textrm{ are well-defined in } D(\xi, \rho)\right\rbrace .\] Note that points in $ g^{-1}(K) $ do not belong to $ E(g) $. Indeed, if $ \zeta\in g^{-1}(K) $, then there exists a neighbourhood of $ \zeta $ which is mapped conformally to $ D(g(\zeta), \rho) $, and hence $ \zeta $ cannot be a singularity. Therefore, it is enough to prove that $ \lambda(g^{-1}(K))=1 $. This follows from the fact that $ \lambda (K)=1 $ and that $ g^* $ is non-singular (Thm. \ref{thm-ergodic-properties}{\em\ref{ergodic-properties-nonsing}}).
	
Finally,	the control of the image of radial segments by inverse branches in terms of angular neighbourhoods follows from Proposition \ref{prop-radial-limits}. Indeed, note that the estimates obtained therein do not depend on the inverse branches considered, but only on the radius of the disk where they are defined.
\end{proof}

\begin{proof}[Proof of \ref{thm-inverse-inner}]
It follows straightforward from applying Theorem \ref{thm-postsingular}.
\end{proof}

\begin{remark}
	Observe that a sufficient condition so that hypotheses of Theorem \ref{thm-postsingular} are satisfied is that singular values are compactly contained in $ \mathbb{D} $. Indeed, it is enough to show that, if singular values of $ g|_{\mathbb{D}} $ are compactly contained in $ \mathbb{D} $, then there exists $ \zeta \in\partial\mathbb{D}$ and  a  crosscut neighbourhood $ N_\zeta$  of $ \zeta $ such that $ P(g)\cap N_\zeta =\emptyset$. 
	
		Assume first that $ g $ is elliptic, with Denjoy-Wolff point $ p\in\mathbb{D} $. After conjugating by a Möbius transformation, we assume $ p=0 $. Now, consider an Euclidean disk $ D(0,r) $, with $ r\in(0,1) $ big enough so that $ SV(g)\subset D(0,r) $. By Schwarz Lemma \ref{scharzlemma}, $ D(0,r) $ is forward invariant under $ g $, so $ P(g) \subset D(0,r)$. This implies that, for all $ \zeta\in  \partial\mathbb{D}$, we can find a crosscut neighbourhood of $ \zeta$ disjoint from the postsingular set.
	
	If $ g $ is doubly parabolic, the procedure is analogous, with the difference that we should work with a holodisk thangent to the Denjoy-Wolff point instead of an Euclidean disk, and we apply Wolff Lemma \ref{wolfflemma} instead of Schwarz Lemma. Note that we can find a crosscut neighbourhood disjoint from the postsingular set for any $ \zeta \in \partial \mathbb{D}$, except for the Denjoy-Wolff point.
\end{remark}
\section{Invariant Fatou components of functions in class $ \mathbb{K} $}\label{sect-classeK}

As mentioned in the introduction, consider $ f\in\mathbb{K} $, i.e. \[f\colon \widehat{\mathbb{C}}\smallsetminus E(f)\to \widehat{\mathbb{C}},\] where $ \Omega(f)\coloneqq\widehat{\mathbb{C}}\smallsetminus E(f)  $ is the largest set where $ f $ is meromorphic and $ E(f) $ is the set of singularities of $ f $, which is assumed to be closed and countable. Note that $ \Omega(f)$ is open.

\vspace{0.2cm}
{\bf Notation.} Having fixed a function $ f\in\mathbb{K} $, we denote $ \Omega(f) $ and $ E(f) $ simply by $ \Omega $ and $ E $, respectively. Given a domain $ U\subset\Omega $, we denote by $ \partial U $ the boundary of $ U $ in $ \Omega $, and we keep the notation $ \widehat{\partial} U $ for the boundary with respect to $ \widehat{\mathbb{C}} $.

The dynamics of such functions was studied in \cite{Bolsch_repulsivepp,Bolsch-thesis,Bolsch-Fatoucomponents, BakerDominguezHerring,BakerDominguezHerring2,  Dominguez4,Dominguez3,}. The Fatou set $ \mathcal{F}(f) $ is defined as the largest open set in which $ \left\lbrace f^n\right\rbrace _n $ is well defined and normal, and the Julia set $ \mathcal{J}(f) $, as its complement in $ \widehat{\mathbb{C}} $. The standard theory of Fatou and Julia for rational or entire functions extends successfully to this more general setting. We need the following properties.

\begin{thm}{\bf (Properties of Fatou and Julia sets, {\normalfont \cite[Thm. A]{BakerDominguezHerring}})}
	Let $ f\in\mathbb{K} $. Then,
	\begin{enumerate}[label={\em (\alph*)}]
		\item\label{Fatou1} $ \mathcal{F}(f) $ is completely invariant in the sense that $ z\in\mathcal{F}(f) $ if and only if $ f(z)\in \mathcal{F}(f) $;
		\item\label{Fatou2} for every positive integer $ k $, $ f^k\in\mathbb{K} $, $ \mathcal{F}(f^k)=\mathcal{F}(f) $ and $ \mathcal{J}(f^k)=\mathcal{J}(f) $;
		\item $ \mathcal{J}(f) $ is perfect;
		\item repelling periodic points are dense in $ \mathcal{J}(f)  $.
	\end{enumerate}
\end{thm}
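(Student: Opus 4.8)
The plan is to establish the four assertions in increasing order of difficulty. The first two are soft consequences of normal-family arguments together with the openness of $f$ on $\Omega$; the third uses the blow-up property of the Julia set; and the fourth, which is the real content, rests on Ahlfors' theory of covering surfaces.

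For the complete invariance of $\mathcal{F}(f)$, the implication ``$f(z_0)\in\mathcal{F}(f)\Rightarrow z_0\in\mathcal{F}(f)$'' is direct: if $\{f^n\}$ is normal on a neighbourhood $V$ of $f(z_0)$ then $z_0\in\Omega$, so by continuity there is a neighbourhood $U$ of $z_0$ with $f(U)\subset V$; writing $f^{n+1}=f^n\circ f$ on $U$ and using that precomposition with the fixed continuous map $f$ turns a locally uniformly (spherically) convergent subsequence on $V$ into one on $U$ (with $f^0=\mathrm{id}$ handling index $0$), $\{f^n\}$ is normal on $U$. For the converse I would invoke that $f$ is a non-constant meromorphic map on $\Omega$, hence open: if $\{f^n\}$ is normal on a connected $U\subset\Omega$ containing $z_0$, then $W\coloneqq f(U)$ is an open neighbourhood of $f(z_0)$, and the identity $f^n|_W\circ f|_U=f^{n+1}|_U$ together with locally defined holomorphic branches of $f^{-1}$ on $W\smallsetminus CV(f)$ transports normality from $U$ to $W\smallsetminus CV(f)$; since the critical values are isolated and the family takes values in the compact $\widehat{\mathbb{C}}$, normality extends across them by a removable-singularity argument for normal families, so $f(z_0)\in\mathcal{F}(f)$. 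For the iterate statement, $f^k\in\mathbb{K}$ with $E(f^k)=\overline{\bigcup_{j=0}^{k-1}(f^{j})^{-1}(E(f))}$, a closed countable set because preimages of countable sets under non-constant meromorphic maps are discrete hence countable, and the full-cluster-set condition at the new singularities is inherited from that of $E(f)$; then $\mathcal{F}(f^k)=\mathcal{F}(f)$ follows by writing $\{f^n\}_n=\bigcup_{i=0}^{k-1}\{f^{kn+i}\}_n$ and noting, via the precomposition argument above, that each of these finitely many families is normal on any open set where $\{f^{kn}\}_n$ is; the reverse inclusion $\mathcal{F}(f)\subset\mathcal{F}(f^k)$ is trivial.

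For perfectness of $\mathcal{J}(f)$, which is closed by definition, I must exclude isolated points, and I would use the blow-up property: for $z_0\in\mathcal{J}(f)$ and any neighbourhood $N$, non-normality forces $\bigcup_n f^n(N)$ to omit at most two points of $\widehat{\mathbb{C}}$ (Montel), hence every non-exceptional $w$ satisfies $\bigcup_n f^{-n}(w)\cap N\neq\emptyset$; taking $w\in\mathcal{J}(f)$ non-exceptional (possible since $\mathcal{J}(f)$ is infinite while the exceptional set is finite) and using backward invariance of $\mathcal{J}(f)$ produces points of $\mathcal{J}(f)$ arbitrarily close to $z_0$, distinct from $z_0$ unless $z_0$ is itself exceptional, a case handled by taking $w\neq z_0$. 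Density of repelling periodic points is the heart of the matter. The plan is to adapt Baker's argument: fix $z_0\in\mathcal{J}(f)$ and a small disk $N\ni z_0$ with $\overline{N}\subset\Omega$ avoiding the finite exceptional set; by non-normality and a normality criterion derived from Ahlfors' five islands theorem, there are $n\geq1$ and a Jordan subdomain $V$ with $\overline{V}\subset N$ such that $f^n$ maps $V$ conformally onto a round disk $D$ with $\overline{N}\subset D$; the inverse branch of $f^n$ on $D$ then contracts $D$ strictly into itself, producing a fixed point $\zeta\in V$ of $f^n$, i.e. a periodic point of $f$ inside $N$ that is repelling for $f$ (attracting for the inverse branch). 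Removing, for each fixed period, the finitely many periodic points that might fail to be repelling yields density of \emph{repelling} periodic points in $\mathcal{J}(f)$.

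The genuine difficulty is concentrated in the last assertion: in class $\mathbb{K}$ the map $f|_U$ may have infinite degree and $f$ may have an essential singularity near $N$ unless $N$ is chosen away from $E(f)$, so the classical finite-degree tools fail and the Ahlfors covering-surface input of Bolsch and of Baker--Dom\'inguez--Herring is indispensable; I would cite \cite{Bolsch_repulsivepp,BakerDominguezHerring} for that normality criterion rather than reprove it. The only other mildly delicate step is the extension of normality across the critical values in the converse direction of the first assertion; the rest is bookkeeping with normal families and with the countability of $E(f)$ and its preimages.
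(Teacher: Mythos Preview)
The paper does not prove this theorem at all: it is quoted verbatim from \cite[Thm.~A]{BakerDominguezHerring} and no argument is given. Your proposal therefore goes well beyond what the paper does; in effect you are sketching the content of the cited reference rather than comparing with a proof in the paper.

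As a sketch your outline is sound and follows the standard route (normal-family bookkeeping for (a) and (b), Montel blow-up for (c), Ahlfors five islands for (d), with the latter deferred to \cite{Bolsch_repulsivepp,BakerDominguezHerring} exactly as one should). Two small slips are worth flagging. First, in the converse direction of (a) the phrase ``critical values are isolated'' is not correct for $f\in\mathbb{K}$ globally; what you need, and what is true, is that on a sufficiently small neighbourhood $U\Subset\Omega(f)$ of $z_0$ the map $f$ has only finitely many critical points, hence $f|_U$ has only finitely many critical values in $f(U)$, and those are the only points across which you must extend normality. Second, in (b) the closure in your formula for $E(f^k)$ is unnecessary: the finite union $\bigcup_{j=0}^{k-1}(f^j)^{-1}(E(f))$ is already closed (its complement is $\{z:\ z,f(z),\dots,f^{k-1}(z)\in\Omega(f)\}$, which is open), and since each term is countable so is the union; taking a closure would in general destroy countability, so it is important not to need it. A more genuine subtlety you pass over in (c) is that Montel's theorem applies to a family of maps \emph{defined} on $N$, whereas for $z_0\in\mathcal{J}(f)$ the iterates $f^n$ may fail to be defined on all of $N$ when $N$ meets the backward orbit of $E(f)$; in class $\mathbb{K}$ this is handled by observing that $E(f^n)$ is countable, hence of empty interior, so one may restrict to $N\smallsetminus\bigcup_n E(f^n)$ (still dense and open in $N$) or invoke the formulation of non-normality used in \cite{BakerDominguezHerring}. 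None of these points invalidates your strategy, but they are exactly the places where class $\mathbb{K}$ differs from the rational or entire case and where the cited references do the real work.
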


By {\em\ref{Fatou1}}, Fatou components (i.e. connected components of $ \mathcal{F}(f) $) are mapped among themselves, and hence classified into periodic, preperiodic or wandering. By {\em\ref{Fatou2}}, the study of periodic Fatou components reduces to the invariant ones, i.e. those for which $ f(U)\subset U $. Those Fatou components are classified into attracting basins, parabolic basins, Siegel disks, Herman rings and Baker domains \cite[Thm. C]{BakerDominguezHerring}. A \textit{Baker domain} is, by definition, a periodic Fatou component $ U $ of period $ k\geq 1 $ for which there exists $ z_0\in\widehat{\partial} U$ such that $ f^{nk}(z)\to z_0 $, for all $ z\in U $ as $ n\to\infty $, but $ f^k $ is not meromorphic at $ z_0 $. In such case, $ z_0 $ is accessible from $ U $  \cite[658]{BakerDominguezHerring}.

\begin{thm}{\bf (Connectivity of Fatou components, {\normalfont \cite{Bolsch-Fatoucomponents}})}	Let $ f\in\mathbb{K} $, and let $ U $ be a periodic Fatou component of $ f $. Then, the connectivity of $ U $ is 1, 2, or $ \infty $.\end{thm}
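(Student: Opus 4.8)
The plan is to reduce to an invariant Fatou component and then exclude every finite connectivity $c\ge 3$ by a Riemann--Hurwitz argument. First I would invoke the theorem above ($f^{k}\in\mathbb K$, $\mathcal F(f^{k})=\mathcal F(f)$) to replace $f$ by $g:=f^{p}$, where $p$ is the period of $U$: then $U$ is an invariant Fatou component of $g$, $g(U)\subseteq U$, and its connectivity is unchanged. I record that $E(g)\subseteq\mathcal J(g)$ (at a singularity the cluster set is all of $\widehat{\mathbb C}$, so $\{g^{n}\}$ cannot be normal there), whence $U\subseteq\Omega(g)$, every iterate $g^{n}$ is meromorphic on $U$, and $\partial U\subseteq\mathcal J(g)$ is forward invariant. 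I would also use the standard fact that $g(U)$ equals $U$ minus a finite set $F\subset U$ (with $F=\emptyset$ unless $g$ is transcendental with omitted values, in which case $|F|\le 2$).

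Now suppose, for contradiction, that $U$ has finite connectivity $c\ge 3$; then $U$ is hyperbolic and $\chi(U)=2-c<0$. Applying the Schwarz--Pick lemma to $g|_{U}\colon U\to U$ gives a dichotomy. If $g|_{U}$ is a conformal automorphism of $U$, then, since a planar domain of finite connectivity $\ge 3$ has finite automorphism group, $g^{m}|_{U}=\mathrm{id}_{U}$ for some $m\ge 1$; as $g^{m}$ is meromorphic on the connected domain $\Omega(g^{m})\supseteq U$, analytic continuation (together with the $\mathbb K$-axiom on cluster sets, which forces $E(g^{m})=\emptyset$) yields $g^{m}=\mathrm{id}$ on $\widehat{\mathbb C}$, that is, $f^{pm}$ is a M\"obius transformation --- impossible, since $f$ is neither constant nor M\"obius (so $f^{pm}$ is transcendental, or rational of degree $\ge 2$).

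If instead $g|_{U}$ is not an automorphism, the key claim is that $g|_{U}\colon U\to g(U)$ is then a proper holomorphic branched covering of \emph{finite} degree $d\ge 1$. Granting this, Riemann--Hurwitz gives
\[
2-c=\chi(U)=d\,\chi\big(g(U)\big)-\delta=d\,\big(2-c-|F|\big)-\delta,\qquad \delta\ge 0,
\]
where $\delta$ is the total branching of $g|_{U}$ over $g(U)$. If $|F|\ge 1$ the right-hand side is at most $d(1-c)\le 1-c<2-c$, a contradiction; if $F=\emptyset$ then $(d-1)(2-c)=\delta\ge 0$ with $2-c<0$ forces $d=1$, so $g|_{U}$ is a conformal automorphism of $U$, contradicting the standing hypothesis. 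Hence no finite $c\ge 3$ can occur, and $U$ has connectivity $1$, $2$, or $\infty$.

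The real obstacle is the properness and finite-degree claim for $g|_{U}$, which is exactly where class $\mathbb K$ departs from the rational case: a priori $g|_{U}$ may have infinite degree, and if $U$ is a Baker domain its distinguished boundary point lies in $E(g)$, so the naive argument (that a sequence in $U$ converging to $\partial U$ must have image converging to $\partial U$) can fail at singularities of $g$ lying on $\partial U$. The content of \cite{Bolsch-Fatoucomponents} is that, \emph{under the hypothesis that $\widehat{\mathbb C}\setminus U$ has only finitely many components}, these boundary singularities can be localized and the branched-covering structure of $g|_{U}$ (equivalently, finiteness of a generic fibre and of the critical set of $g|_{U}$) is nonetheless recovered; once that is in hand, the Riemann--Hurwitz bookkeeping and the automorphism argument above are routine.
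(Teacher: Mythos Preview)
The paper does not prove this statement; it is quoted from Bolsch's paper \cite{Bolsch-Fatoucomponents} with no argument given, so there is no ``paper's own proof'' to compare against.

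Your outline follows the classical Riemann--Hurwitz strategy and is correct where it is complete: the reduction to the invariant case via $\mathcal F(f^{k})=\mathcal F(f)$, the automorphism case (finite $\mathrm{Aut}(U)$ for connectivity $\ge 3$, identity principle on the connected domain $\Omega(g^{m})$, and the contradiction with $f$ not being M\"obius), and the Euler-characteristic bookkeeping once a finite-degree branched covering is in hand are all sound. The observation $g(U)=U\setminus F$ with $|F|\le 2$ is also appropriate here.

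That said, your own last paragraph concedes that the only substantive step---showing that when $U$ has finite connectivity the restriction $g|_{U}\colon U\to g(U)$ is \emph{proper of finite degree}---is not argued but handed back to the very reference the theorem cites. In class~$\mathbb K$ this is exactly where the difficulty lives (singularities of $g$ may sit on $\partial U$, and $g|_{U}$ can \emph{a priori} be of infinite degree), and it is not a routine consequence of the facts assembled in the paper. So as written the proposal is a correct reduction of the theorem to Bolsch's key lemma rather than an independent proof: if the aim is only to indicate why Riemann--Hurwitz finishes the job once that lemma is available, the sketch is accurate; if the aim is to \emph{prove} the theorem, the acknowledged gap \emph{is} the theorem.
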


In this paper, we focus on simply connected periodic Fatou components, which we assume to be invariant. We analyse them by means of the Riemann map $ \varphi\colon\mathbb{D}\to U $.

\subsection{Inner function associated to an invariant Fatou component}\label{subsect-associated-inner-function}
Let $ f\in\mathbb{K} $, and let $ U $ be an invariant Fatou component for $ f $, which we assume to be simply connected. Consider $ \varphi\colon\mathbb{D}\to U $ to be a Riemann map. Then,  $ f\colon U\to U $ is conjugate by $ \varphi $ to a holomorphic map $ g\colon\mathbb{D}\to\mathbb{D} $, such that the diagram
\[\begin{tikzcd}
U \arrow{r}{f} & U \\	
	\mathbb{D} \arrow{r}{g} \arrow{u}{\varphi} & \mathbb{D}\arrow[swap]{u}{\varphi}
\end{tikzcd}
\]
commutes.  

For entire functions, where the unique essential singularity lies at $ \infty $, it is well-known that  $ g $ is an inner function (see e.g. \cite[Sect. 2.3]{efjs}, or \cite[Prop. 1.1]{FatousAssociates}). Actually, the same holds for functions in class $ \mathbb{K} $ (see Prop. \ref{prop-g-inner}), and we say that $ g $ is an {\em inner function associated to $ (f,U) $}. Note that two  inner functions associated to the same $ (f,U) $ are conformally conjugate and hence have the same dynamical behaviour.

As usual, singular values (defined in Sect. \ref{subsect-regular-singular}) play a distinguished role in the dynamics. It follows from \cite[Thm. 1.2]{Bolsch-thesis} that $ Cl(f, z)=\widehat{\mathbb{C}} $, for every $ z\in E(f) $ (see also \cite[p. 651]{BakerDominguezHerring}). Thus, $ \mathbb{K}\subset\mathbb{M} $, and the discussion in Section \ref{subsect-regular-singular} applies.

Note that, as in the case of inner functions, asymptotic paths land. This comes from the fact that $ \partial\Omega=E(f) $ is countable, and hence if the  landing set $ L(\gamma) $ of a curve $ \gamma \subset \Omega $ consists of more than two points $ \partial \Omega $, then $ L(\gamma)\cap\Omega\neq\emptyset $.

The dynamics of $ f|_U $ is controlled by the following subset of singular values.

\begin{defi}{\bf (Relevant singular values)}
Let $ f\in\mathbb{K} $, and let $ U $ be an invariant Fatou component for $ f $. We define:\begin{itemize}
	\item Relevant critical values for $ f|_U  $,\[ CV(f, U)\coloneqq \left\lbrace v\in  U\colon v=f(z), f'(z)=0\right\rbrace. \]
	\item Relevant asymptotic values for $ f|_U  $,\[ AV(f,{U})\coloneqq\left\lbrace v\in{U}\cap AV(f)\colon \textrm{ there exists an asymptotic path  } \gamma\subset U \textrm{ for }v \right\rbrace.\]
	\item Relevant singular values for $  f|_U $, $SV(f,{U})\coloneqq\overline{CV(f,{U})\cup AV(f,{U})}\cap U$.
	\item Relevant postsingular set for $ f|_U $, $ P(f, {U})\coloneqq \overline {\bigcup\limits_{v\in SV(f, {U})}\textrm{ }\bigcup\limits_{n\geq 0} f^n(v)}\cap U$.
\end{itemize}
\end{defi}
Note that there may be other singular values in $ U $, if another Fatou component is mapped into $ U $ non-conformally. However, these singular values do not play a role in the internal dynamics of $ (f, U) $, and the inner function does not see them.
Note that \[SV(g,\mathbb{D})=\varphi^{-1}(SV(f, U)),\] and  \[P(g,\mathbb{D})=\varphi^{-1}(P(f, U)).\]

It is well-known that $ SV(f, U) \neq \emptyset$ whenever $ U $ is an attracting or parabolic basin, or a doubly parabolic Baker domain. If $  SV(f, U) $ is compactly contained in $ U $, the behaviour of the associated inner function $ g $ is well understood, in the following sense.

\begin{prop}{\bf (Singular values compactly contained)}\label{prop-SVcompact}
Let $ f\in\mathbb{K} $, let $ U $ be an invariant simply connected Fatou component for $ f $, and $ g $ its associated inner function. Assume $ SV(f,U) $ is compactly contained in $ U $. Then, the following hold.
\begin{enumerate}[label={\em (\alph*)}]
	\item If $ U $ is an attracting basin, then, for all $ \xi\in\partial\mathbb{D} $, there exists a crosscut neighbourhood $ N_\xi $ of $ \xi$, such that $ N_\xi\cap P(g,\mathbb{D})=\emptyset $. 
	\item If $ U$ is either a parabolic basin or a Baker domain, then 
	the Denjoy-Wolff point $ p\in\partial\mathbb{D} $ of $ g $ is not a singularity for $ g $. Moreover, for all $ \xi\in\partial\mathbb{D} $, $ \xi\neq p $, there exists a crosscut neighbourhood $ N_\xi $ of $ \xi\in\partial\mathbb{D} $, such that $ N_\xi\cap P(g,\mathbb{D})=\emptyset $. 
\end{enumerate}
\end{prop}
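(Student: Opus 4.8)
The plan is to push everything to the unit disk through the Riemann map $\varphi\colon\mathbb{D}\to U$ and let the Denjoy--Wolff theorem carry the load. By the identities $SV(g,\mathbb{D})=\varphi^{-1}(SV(f,U))$ and $P(g,\mathbb{D})=\varphi^{-1}(P(f,U))$ noted above, and since $\varphi^{-1}\colon U\to\mathbb{D}$ is a homeomorphism, the hypothesis that $SV(f,U)$ is compactly contained in $U$ means that $\widetilde K\coloneqq\overline{SV(g,\mathbb{D})}$ is a compact subset of $\mathbb{D}$, and $P(g,\mathbb{D})=\overline{\bigcup_{n\ge 0}g^{n}(SV(g,\mathbb{D}))}$, the closure being taken in $\mathbb{D}$. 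Since $U$ is an attracting or parabolic basin or a Baker domain, $g$ is neither the identity nor an elliptic M\"obius map, so it has a Denjoy--Wolff point $p\in\overline{\mathbb{D}}$ with $g^{n}\to p$ locally uniformly on $\mathbb{D}$; in particular $g^{n}\to p$ uniformly on $\widetilde K$. Moreover $p\in\mathbb{D}$ precisely when $g$ is elliptic, i.e. precisely when $U$ is an attracting basin (otherwise $\varphi(p)$ would be an attracting fixed point of $f$ inside $U$, contradicting that the iterates of $f$ on $U$ converge to a point of $\widehat{\partial}U$).

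\textbf{Case (a): $U$ an attracting basin, $p\in\mathbb{D}$.} Fix $\varepsilon>0$ with $\overline{D(p,\varepsilon)}\subset\mathbb{D}$. Uniform convergence on $\widetilde K$ gives $N$ with $g^{n}(\widetilde K)\subset\overline{D(p,\varepsilon)}$ for all $n\ge N$, while $\bigcup_{0\le n<N}g^{n}(\widetilde K)$ is a compact subset of $\mathbb{D}$. Hence $P(g,\mathbb{D})\subseteq\bigcup_{0\le n<N}g^{n}(\widetilde K)\cup\overline{D(p,\varepsilon)}$ is compactly contained in $\mathbb{D}$, so $\overline{P(g,\mathbb{D})}\cap\partial\mathbb{D}=\emptyset$. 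Then for any $\xi\in\partial\mathbb{D}$ one picks $\delta$ small enough that $D(\xi,\delta)\cap P(g,\mathbb{D})=\emptyset$ and $C\coloneqq\{z\in\mathbb{D}\colon|z-\xi|=\delta\}$ is a non-degenerate crosscut; the crosscut neighbourhood $N_\xi\coloneqq\{z\in\mathbb{D}\colon|z-\xi|<\delta\}$ is then disjoint from $P(g,\mathbb{D})$.

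\textbf{Case (b): $U$ a parabolic basin or a Baker domain, $p\in\partial\mathbb{D}$.} The same iterate bound shows that for every $\varepsilon>0$ one has $\overline{P(g,\mathbb{D})}\cap\partial\mathbb{D}\subseteq\overline{D(p,\varepsilon)}\cap\partial\mathbb{D}$, whence $\overline{P(g,\mathbb{D})}\cap\partial\mathbb{D}\subseteq\{p\}$; for $\xi\ne p$ the circular-crosscut construction of case (a) again produces $N_\xi$ with $N_\xi\cap P(g,\mathbb{D})=\emptyset$. Finally, since $SV(g)\cap\mathbb{D}=SV(g,\mathbb{D})$ is compactly contained in $\mathbb{D}$, Proposition \ref{prop-inverse-branches} and the remark following it show that every point of $\partial\mathbb{D}$ is a regular value of $g$; as $SV(g)$ is closed this means $p\notin SV(g)=\overline{SV(g)}$, so Corollary \ref{coro-nonsingDW} yields that $p$ is not a singularity of $g$.

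The argument is short because the Denjoy--Wolff theorem and the earlier analysis of singular values do the real work; the point to watch is the bookkeeping between closures taken in $\mathbb{D}$ and in $\overline{\mathbb{D}}$, and the fact that the compact-containment hypothesis on $SV(f,U)$ is used in exactly one essential place — it is what keeps the finitely many early iterates $g^{n}(\widetilde K)$, $0\le n<N$, uniformly away from $\partial\mathbb{D}$. Without it $SV(g,\mathbb{D})$ could accumulate on $\partial\mathbb{D}$ and $P(g,\mathbb{D})$ would typically be dense there (as happens for rotation domains), and the trichotomy in the statement would collapse; the split between $p\in\mathbb{D}$ and $p\in\partial\mathbb{D}$ is exactly the split between attracting basins and the other two types, which is why the two cases look formally identical except for where $p$ sits.
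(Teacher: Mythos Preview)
Your argument is correct, but it takes a somewhat different route from the paper's. The paper normalises so that the Denjoy--Wolff point sits at $0$ (attracting case) and then uses the Schwarz Lemma~\ref{scharzlemma} to trap $SV(g,\mathbb{D})$ in a forward-invariant round disk $D(0,r)$; in the non-elliptic case it uses the Wolff Lemma~\ref{wolfflemma} to trap $SV(g,\mathbb{D})$ in a forward-invariant horodisk tangent at $p$. In both cases forward invariance immediately gives $P(g,\mathbb{D})\subset D(0,r)$ (resp.\ the horodisk), and the crosscut neighbourhoods are read off directly. You instead appeal to the locally uniform convergence $g^{n}\to p$ furnished by Denjoy--Wolff on the compact set $\widetilde K$, splitting the orbit into finitely many early iterates (compact in $\mathbb{D}$) and a tail inside $\overline{D(p,\varepsilon)}$.

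Both approaches are short and essentially equivalent in strength. The paper's invariant-disk argument is slightly more explicit (one gets a concrete region containing $P(g,\mathbb{D})$ without passing through an $\varepsilon$--$N$ statement), while yours has the virtue of treating the elliptic and non-elliptic cases by exactly the same mechanism, with the case split appearing only in whether the unique accumulation point $p$ lies in $\mathbb{D}$ or on $\partial\mathbb{D}$. For the claim that $p$ is not a singularity you and the paper do the same thing, invoking Corollary~\ref{coro-nonsingDW} once it is observed that $p\notin\overline{SV(g)}$.
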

\begin{proof}\begin{enumerate}[label={(\alph*)}]

		\item  Let $ z_0\in U $ be the attracting fixed point, and consider $ g $ to be the inner function associated by a Riemann map $ \varphi$, with $ \varphi(0)=z_0 $. Then, there exists $ r\in (0,1) $ big enough so that $SV(g,\mathbb{D})\subset D(0,r)$. 
		
		By Schwarz lemma \ref{scharzlemma}, $ D(0,r) $ is forward invariant, so $P(g,\mathbb{D})\subset D(0,r)$, and (a) follows trivially. 
		
				\item 	
		By Corollary \ref{coro-nonsingDW}, it is enough to find a crosscut neighbourhood $ N_p $ of $ p $ such that $ N_p\cap SV(g,\mathbb{D}) =\emptyset$, and this is immediate from the hypothesis.
		
		The second statement follows for applying the same argument as in (a), using a tangent disk at the Denjoy-Wolff point and Wolff Lemma \ref{wolfflemma}.
	\end{enumerate}
\end{proof}

\subsection{Ergodic properties of the boundary map $ f\colon\partial U\to  \partial U$}
Let $ \varphi\colon\mathbb{D}\to U $ be the Riemann map, as in Section \ref{subsect-associated-inner-function}.
Then, the radial extension \[\varphi^*\colon\partial\mathbb{D}\to \widehat{\partial}U\]is well-defined $ \lambda $-almost everywhere, and   $ \widehat{\partial}  U$ admits a harmonic measure $ \omega_U $, which stands for the push-forward of the normalized Lebesgue measure $ \lambda $ of $ \partial\mathbb{D} $ (see Sect. \ref{subsection-harmonic-measure}). The ergodic properties of $ f|_{\partial U} $ will be derived from the ergodic properties of $ g^* $, where $ g $ is the  inner function associated to $ (f, U) $. 

We start by proving that $ g $ is actually an inner function. To that end, consider the following subsets of $ \partial \mathbb{D} $.
\[\Theta_E\coloneqq \left\lbrace \xi\in\partial\mathbb{D}\colon \varphi^*(\xi)\in E(f) \right\rbrace  \] 
\[\Theta_\Omega\coloneqq \left\lbrace \xi\in\partial\mathbb{D}\colon \varphi^*(\xi)\in\Omega(f) \right\rbrace \] 
Note that, since $ E(f) $ is countable, $ \lambda(\Theta_E)=0 $, so $ \lambda(\Theta_\Omega)=1 $. Moreover, the conjugacy $ f\circ \varphi=\varphi\circ g $ extends for the radial extensions wherever it makes sense, as it is shown in the following lemma.
\begin{lemma}{\bf(Radial limits commute)}\label{lemma-radial-limit}
	Let $ \xi\in\Theta_\Omega $, then $ g^*(\xi) $ and $ \varphi^*(g^*(\xi)) $ are well-defined, and \[f(\varphi^*(\xi))=\varphi^*(g^*(\xi)).\]
	\end{lemma}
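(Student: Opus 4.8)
The plan is to evaluate the conjugacy $f\circ\varphi=\varphi\circ g$ along the radial segment at $\xi$, observe that the image curve lands at a boundary point of $U$, and then transport that landing back through $\varphi^{-1}$ using the standard correspondence between accessible boundary points and points of $\partial\mathbb{D}$.

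First I would fix $\xi\in\Theta_\Omega$ and set $w\coloneqq\varphi^*(\xi)$, which exists by the definition of $\Theta_\Omega$. Since the radial curve $t\mapsto\varphi(t\xi)$ lands at $w$ and $\varphi^{-1}$ is continuous, $w$ cannot lie in $U$ (otherwise $t\xi=\varphi^{-1}(\varphi(t\xi))\to\varphi^{-1}(w)\in\mathbb{D}$, contradicting $t\xi\to\xi\in\partial\mathbb{D}$); hence $w\in\widehat{\partial} U$. By definition of $\Theta_\Omega$ we also have $w\in\Omega(f)$, so $f$ is meromorphic, and in particular continuous into $\widehat{\mathbb{C}}$, near $w$. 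Moreover $w\in\widehat{\partial} U\cap\Omega(f)$ forces $w\in\mathcal{J}(f)$: a point of $\overline{U}\setminus U$ that lies in $\Omega$ cannot belong to any Fatou component $U'$, since $U'$ would be open, meet $U$, and hence equal $U$. By complete invariance of the Fatou set it follows that $p_*\coloneqq f(w)\in\mathcal{J}(f)$, so in particular $p_*\notin U$.

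Now I would apply the conjugacy along the radial segment: for every $t\in[0,1)$ we have $\varphi(g(t\xi))=f(\varphi(t\xi))$, and as $t\to1^-$ the right-hand side tends to $f(w)=p_*$ by continuity of $f$ at $w$. Thus $\Gamma(t)\coloneqq\varphi(g(t\xi))$ is a curve in $U$ landing at $p_*\in\widehat{\partial} U$, whose $\varphi$-preimage is exactly $t\mapsto g(t\xi)$. By the classical description of the boundary behaviour of conformal maps (a curve in $U$ landing at a boundary point has $\varphi$-preimage landing at a single point of $\partial\mathbb{D}$, see \cite{Pommerenke}), the curve $t\mapsto g(t\xi)$ lands at some $\eta\in\partial\mathbb{D}$; this is precisely the assertion that $g^*(\xi)=\eta$ exists. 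Finally, since $\widehat{\mathbb{C}}\setminus U$ contains the infinite set $\mathcal{J}(f)$, the map $\varphi$ omits at least three values, so Theorem \ref{thm-lehto-virtanen} applies: the curve $t\mapsto g(t\xi)$ lands at $\eta$ while its image $\Gamma(t)=\varphi(g(t\xi))$ lands at $p_*$, hence $\varphi$ has angular, and therefore radial, limit $p_*$ at $\eta$, i.e. $\varphi^*(g^*(\xi))=\varphi^*(\eta)=p_*$. Combining, $\varphi^*(g^*(\xi))=p_*=f(w)=f(\varphi^*(\xi))$, as claimed.

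The only non-elementary step is the one producing the single landing point $\eta$ from the curve $\Gamma$; this is where prime-end theory (equivalently, Lindelöf's theorem for the inverse of a conformal map) is essential, and I expect it to be the main obstacle to state cleanly. Everything else is a direct chase through the conjugacy together with Lehto--Virtanen and forward invariance of $\mathcal{J}(f)$.
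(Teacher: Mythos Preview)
Your proof is correct and follows essentially the same route as the paper: push the conjugacy along the radius, use continuity of $f$ at $w=\varphi^*(\xi)\in\Omega(f)$ to see that $\varphi(g(t\xi))\to f(w)\in\widehat{\partial}U$, deduce that $g(t\xi)$ lands at a single point $\eta\in\partial\mathbb{D}$, and finish with Lehto--Virtanen. The only difference is in how the landing of $t\mapsto g(t\xi)$ is justified: you invoke the standard prime-end fact that the $\varphi$-preimage of a curve in $U$ landing at a boundary point lands at a single point of $\partial\mathbb{D}$, whereas the paper reproves this in place by noting that a non-degenerate landing arc $I\subset\partial\mathbb{D}$ would force $\varphi^*\equiv f(w)$ on $I$, contradicting Theorem~\ref{thm-FatouRiez}. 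These are two presentations of the same argument.
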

\begin{proof}
	Let $ r_\xi(t)=t\xi $, with $ t\in\left[ 0,1\right)  $. By assumption, $ \varphi(r_\xi(t))\to \varphi^*(\xi)\eqqcolon w\in\partial U $, as $ t\to1^- $ (recall that $ \partial U $ denotes the boundary of $ U $ taken in $ \Omega $). Since $ f $ is continuous at $ w $ and $ f\circ\varphi=\varphi\circ g $, for all $ 0<t<1 $,\[\varphi(g(r_\xi(t)))=f(\varphi(r_\xi(t)))\to f(w)\in\widehat{\partial}U,\] as $ t\to 1^{-} $. We claim that this already implies that $ \gamma(t)\coloneqq g(r_\xi(t)) $ lands at some $ \zeta\in\partial\mathbb{D} $. Indeed, consider \[L_{\gamma, 1}\coloneqq\left\lbrace z\in\overline{\mathbb{D}}\colon\textrm{ there exists }t_n\to1^-\textrm{ such that }\gamma(t_n)\to z\right\rbrace ,\] which is a non-empty, compact, connected set contained in $ \partial\mathbb{D} $, since points in $ \mathbb{D} $ are mapped to $ U $ by $ \varphi $. If $ L_{\gamma, 1} $ is a non-degenerate arc $ I $, with $ \lambda(I)>0 $, for $ \lambda $-almost every $ \zeta\in I $, $ \varphi^*(\zeta)=f(w) $, which is a contradiction with  Fatou's Theorem \ref{thm-FatouRiez}. Hence, $  L_{\gamma, 1}=\zeta\in\partial \mathbb{D}$.
	
	Finally, the Lehto-Virtanen Theorem \ref{thm-lehto-virtanen} implies that $  g^*(\xi)=\zeta$ and $ \varphi^*(g^*(\xi))=f(w) $, as desired. 
\end{proof}

\begin{prop}{\bf ($ g $ is inner)}\label{prop-g-inner}
	Let $ f\in\mathbb{K} $, and let $ U $ be an invariant simply connected Fatou component of $ f $. Then, the associated map $ g\colon\mathbb{D} \to\mathbb{D}$ is an inner function. 
\end{prop}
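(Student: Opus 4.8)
The plan is to show directly that $g^*(\xi)\in\partial\mathbb{D}$ for $\lambda$-almost every $\xi\in\partial\mathbb{D}$, which is precisely the assertion that $g$ is inner. The key inputs are Lemma \ref{lemma-radial-limit} together with the fact, already recorded above, that the set $\Theta_\Omega=\{\xi\in\partial\mathbb{D}:\varphi^*(\xi)\in\Omega(f)\}$ has full Lebesgue measure.

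First I would recall why $\lambda(\Theta_\Omega)=1$. By Theorem \ref{thm-FatouRiez} the radial limit $\varphi^*(\xi)$ exists for $\lambda$-almost every $\xi$ and lands in $\widehat{\partial}U$, and by definition of harmonic measure $\varphi^*$ pushes $\lambda$ forward to $\omega_U$. Since $U$ is a simply connected Fatou component, $\widehat{\partial}U$ contains more than one point, so $\omega_U$ does not charge single points; as $E(f)$ is countable this gives $\omega_U(\widehat{\partial}U\cap E(f))=0$, hence $\lambda(\Theta_E)=0$ and $\lambda(\Theta_\Omega)=1$.

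Next, for an arbitrary $\xi\in\Theta_\Omega$ I would invoke Lemma \ref{lemma-radial-limit}: writing $r_\xi(t)=t\xi$ and $w=\varphi^*(\xi)\in\partial U$, one has $\varphi(g(r_\xi(t)))=f(\varphi(r_\xi(t)))\to f(w)$ as $t\to1^-$, and the argument in the proof of that lemma shows that the curve $t\mapsto g(r_\xi(t))$ lands at a single point $\zeta\in\partial\mathbb{D}$, so that $g^*(\xi)=\zeta\in\partial\mathbb{D}$. Alternatively one can argue from scratch: the landing set of $t\mapsto g(r_\xi(t))$ cannot meet $\mathbb{D}$, for otherwise $\varphi$ of such a limit point would lie in $U$ while also lying in the cluster set of $\varphi\circ g$ along $r_\xi$, namely $\{f(w)\}$, which is contained in $\mathcal{J}(f)$ by complete invariance of $\mathcal{J}(f)$ and $\partial U\subset\mathcal{J}(f)$; and a nondegenerate arc as landing set is excluded by the almost-everywhere uniqueness of radial limits in Theorem \ref{thm-FatouRiez}, exactly as in Lemma \ref{lemma-radial-limit}. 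Since $\lambda(\Theta_\Omega)=1$, this proves that $g$ is inner.

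I do not expect a genuine obstacle here: the statement is essentially a corollary of Lemma \ref{lemma-radial-limit}. The only points requiring a little care are the measure-zero bookkeeping for the exceptional set $\Theta_E$ (handled by countability of $E(f)$ together with the non-atomicity of $\omega_U$), and making sure the limiting value produced by the lemma genuinely lies on $\partial\mathbb{D}$ rather than inside $\mathbb{D}$ — which is exactly where the complete invariance of $\mathcal{J}(f)$, equivalently of $\mathcal{F}(f)$, enters.
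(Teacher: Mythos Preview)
Your proposal is correct and follows essentially the same approach as the paper: both rely on $\lambda(\Theta_\Omega)=1$, Lemma \ref{lemma-radial-limit}, and the total invariance of $\mathcal{F}(f)$. The only difference is presentational --- the paper phrases it as a contradiction (assuming $\lambda(\{\xi:|g^*(\xi)|<1\})>0$ and using the commutation relation $f(\varphi^*(\xi))=\varphi^*(g^*(\xi))$ to land in $U\cap\mathcal{J}(f)$), whereas you argue directly by observing that the proof of Lemma \ref{lemma-radial-limit} already forces the landing point of $t\mapsto g(t\xi)$ to lie on $\partial\mathbb{D}$.
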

\begin{proof}
	Since $ g $ is a self-map of the unit disk, its radial extension $ g^* $ is well-defined $ \lambda $-almost everywhere. We have to see that $ \left| g^*(\xi)\right| =1 $, for $ \lambda $-almost every $ \xi\in\partial\mathbb{D} $. 
	
	Assume, on the contrary, that there exists $ A\subset\partial\mathbb{D} $ with $ \lambda (A)>0 $ and $ \left| g^*(\xi)\right| <1 $, for  all $ \xi\in A $. We can assume, without loss of generality, that $ \varphi^*(\xi)\in\partial U \subset \mathcal{J}(f)$, for all $ \xi\in A $. By Lemma \ref{lemma-radial-limit}, for all $ \xi\in A $, $ \varphi^*(\xi)\in\mathcal{J}(f)\smallsetminus E(f) $, and
	\[f(\varphi^*(\xi))=\varphi^*(g^*(\xi))\in U\subset\mathcal{F}(f)\] which is a contradiction with the total invariance of the Fatou set.
\end{proof}

We note that, since $ \omega_U(E(f))=0 $, for every Borel set $ B\subset\widehat{\mathbb{C}} $ and $ z\in U $, we have \[\omega_U(z,B)=\omega_U(z, B\cap\Omega(f)).\]

With these tools at hand, and those developed in the previous sections, we can now prove ergodic properties like ergodicity and recurrence for the boundary map of Fatou components of maps in class $ \mathbb{K} $, generalizing the results of Doering and Mañé \cite{DoeringMané1991} for rational maps.

\begin{thm}{\bf (Ergodic properties of the boundary map)}\label{thm-ergodic-boundary-map}
	Let $ f\in\mathbb{K}$, and let $ U $ be an invariant simply connected Fatou component for $ f $. Let  $ g $ be an inner function associated to $ (f, U) $. Then, the following are satisfied. \begin{enumerate}[label={\em (\roman*)}]
		\item If $ U $ is either an attracting basin, a parabolic basin, or a Siegel disk, then $ g^*|_{\partial\mathbb{D}} $ is ergodic and recurrent with respect to the Lebesgue measure $ \lambda$. 
		\item If $ U $ is a doubly parabolic Baker domain, $ g^*|_{\partial \mathbb{D}} $ is ergodic with respect to $ \lambda$. In addition,  assume one of the following conditions is satisfied. \begin{enumerate}[label={\em (\alph*)}]
			\item $f|_ U $ has finite degree. 
			\item Relevant singular values $ SV(f, U) $ are compactly contained in $ U $. 
			\item The Denjoy-Wolff point of $ g $ is not a singularity for $ g $.
		\item There exists $ z\in U $ and $ r>1 $ such that \[\textrm{\em dist}_U(f^{n+1}(z), f^n(z))\leq \frac{1}{n}+O\left( \frac{1}{n^r}\right), \] as $ n\to\infty $, where $ \textrm{\em dist}_U $ denotes the hyperbolic distance in $ U $.
		\end{enumerate}
		Then, $ g^*|_{\partial \mathbb{D}}  $ is  recurrent with respect to $ \lambda $. 

\item If $ g^*|_{\partial \mathbb{D}} $ is ergodic (resp. recurrent) with respect to $ \lambda $,  so is $ f|_{{\partial} U} $ with respect to $ \omega_U $. 

\noindent If $ g^*|_{\partial \mathbb{D}} $ is recurrent with respect to $ \lambda $, then for $ \omega_U $-almost every point $ x\in\partial U $, $ \left\lbrace f^n(x)\right\rbrace _n $ is dense in $ \partial U$.

		\item  Let $ k $ be a positive integer. Then, the inner function associated to $ (f,U) $ has the same ergodic properties than the inner function associated to $ (f^k,U) $.
	\end{enumerate}
\end{thm}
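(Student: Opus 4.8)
Every assertion reduces to the ergodic dictionary for the radial extension $ g^* $ collected in Theorem \ref{thm-ergodic-properties}, transported to $ \partial U $ through the Riemann map $ \varphi\colon\mathbb{D}\to U $ conjugating $ f|_U $ to the associated inner function $ g $ (Proposition \ref{prop-g-inner}). So the plan is: for (i)--(ii), first identify the Cowen type of $ g $ and, when needed, whether its Denjoy--Wolff point is a singularity, then quote the relevant part of Theorem \ref{thm-ergodic-properties}; for (iii), observe that $ \varphi^* $ realizes $ (f|_{\partial U},\omega_U) $ as a measure-theoretic factor of $ (g^*,\lambda) $; and for (iv), use that the inner function of $ (f^k,U) $ is $ g^k $.

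\textbf{Parts (i) and (ii).} If $ U $ is an attracting basin, the Denjoy--Wolff point of $ g $ is $ \varphi^{-1} $ of the attracting fixed point, hence lies in $ \mathbb{D} $, so $ g $ is elliptic; ergodicity is Theorem \ref{thm-ergodic-properties}\ref{ergodic-properties-a} and recurrence is Theorem \ref{thm-ergodic-properties}\ref{ergodic-properties-c}. If $ U $ is a Siegel disk, $ g $ is conformally conjugate to an irrational rotation, so $ g^* $ is an irrational rotation of $ \partial\mathbb{D} $, hence ergodic and recurrent (Weyl equidistribution together with the Poincar\'e Recurrence Theorem \ref{thm-poincare-recurrence}). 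If $ U $ is a parabolic basin, $ g $ is doubly parabolic (the Fatou coordinate conjugates $ f|_U $ to a translation on $ \mathbb{C} $, so Cowen's domain is $ \mathbb{C} $) and its Denjoy--Wolff point --- the $ \varphi^* $-preimage of the parabolic fixed point, at which $ f $ is holomorphic with multiplier $ 1 $ --- is not a singularity of $ g $; then ergodicity is Theorem \ref{thm-ergodic-properties}\ref{ergodic-properties-a} and recurrence follows from Theorem \ref{thm-ergodic-properties}\ref{ergodic-properties-d}. For (ii), a doubly parabolic Baker domain has $ g $ doubly parabolic by definition, so $ g^* $ is ergodic by Theorem \ref{thm-ergodic-properties}\ref{ergodic-properties-a}; for recurrence, under (a) finite degree forces $ g $ to be a finite Blaschke product so $ E(g)=\emptyset $; under (b) Proposition \ref{prop-SVcompact}(b) gives that the Denjoy--Wolff point is not a singularity; under (c) this is the hypothesis itself; and in all three cases Theorem \ref{thm-ergodic-properties}\ref{ergodic-properties-d} applies. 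Under (d), since $ \varphi $ is a hyperbolic isometry we have $ \mathrm{dist}_U(f^{n+1}(z),f^n(z))=\mathrm{dist}_\mathbb{D}(g^{n+1}(w),g^n(w)) $ with $ w=\varphi^{-1}(z) $, which is exactly the quantitative condition in the last part of Theorem \ref{thm-ergodic-properties}\ref{ergodic-properties-d}.

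\textbf{Parts (iii) and (iv).} By the definition of harmonic measure, $ \varphi^* $ is a measure-preserving map from $ (\partial\mathbb{D},\lambda) $ onto $ (\widehat\partial U,\omega_U) $, and Lemma \ref{lemma-radial-limit} gives $ f\circ\varphi^*=\varphi^*\circ g^* $ $ \lambda $-almost everywhere; since $ E(f) $ is countable, $ \omega_U(E(f))=0 $, so $ f|_{\partial U} $ is defined $ \omega_U $-a.e.\ and maps $ \partial U $ to $ \partial U $, and the intertwining relation persists along orbits by non-singularity of $ g^* $ (Theorem \ref{thm-ergodic-properties}\ref{ergodic-properties-nonsing}). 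Thus $ (f|_{\partial U},\omega_U) $ is a measure-theoretic factor of $ (g^*,\lambda) $: ergodicity passes to factors immediately, and for recurrence one pulls back $ B $ with $ \omega_U(B)>0 $ to $ A=(\varphi^*)^{-1}(B) $, applies recurrence of $ g^* $ on $ A $, and pushes forward to conclude that $ \omega_U $-almost every point of $ B $ returns to $ B $ under $ f $. If moreover $ g^* $ is recurrent, then $ f|_{\partial U} $ is ergodic and recurrent; since $ \mathrm{supp}\,\omega_U=\widehat\partial U $ (Lemma \ref{lemma_support_harmonic_measure}), every relatively open subset has positive $ \omega_U $-measure, so applying Theorem \ref{thm-almost-every-orbit-dense} to a countable basis yields that $ \omega_U $-almost every orbit is dense in $ \widehat\partial U $, hence in $ \partial U $. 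Finally, (iv): by the commuting diagram, $ g^k $ is an inner function associated to $ (f^k,U) $, and any two inner functions associated to the same pair are conformally conjugate; by Theorem \ref{thm-ergodic-properties}\ref{ergodic-properties-e} (equivalently Lemmas \ref{lemma-cowen-invariant} and \ref{lemma-ergodic-properties-invariant}), $ g^* $ is ergodic (resp.\ recurrent) if and only if $ (g^k)^* $ is.

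\textbf{Main obstacle.} I expect the delicate point to be case (i) for parabolic basins: one must establish that $ g $ is doubly parabolic and, more subtly, that its Denjoy--Wolff point is not a singularity of $ g $. This cannot be deduced from Proposition \ref{prop-SVcompact} since $ SV(f,U) $ need not be compactly contained in $ U $, and must instead rely on the local holomorphy of $ f $ at the parabolic fixed point and the geometry of the attracting petal (so that $ g $ extends analytically across the corresponding boundary point, equivalently $ \overline{g(N)}\neq\overline{\mathbb{D}} $ for a suitable crosscut neighbourhood $ N $, by Lemma \ref{lemma-sing}). The remaining steps are essentially bookkeeping: translating each hypothesis through $ \varphi $ and invoking the cited results.
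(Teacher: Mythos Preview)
Your proposal is correct and follows essentially the same architecture as the paper: identify the Cowen type of $g$, invoke the appropriate clause of Theorem~\ref{thm-ergodic-properties}, and push the result to $\partial U$ via $\varphi^*$ using Lemma~\ref{lemma-radial-limit} and the definition of harmonic measure. Parts (ii), (iii), and (iv) match the paper's argument almost verbatim.

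The one substantive divergence is the parabolic basin case in part (i). The paper does not go through ``Denjoy--Wolff point not a singularity $\Rightarrow$ recurrent''; instead it cites \cite[Thm.~6.1]{DoeringMané1991} directly, observing that although that result is stated for rational maps, its proof uses only the local behaviour of $f$ at the parabolic fixed point and therefore carries over to $f\in\mathbb{K}$ verbatim. Your route is a reasonable alternative, and you are right to flag it as the delicate step: showing that the Denjoy--Wolff point is not a singularity for a general parabolic basin is not a consequence of Corollary~\ref{coro-nonsingDW} (which needs $p\notin\overline{SV(g)}$, and nothing in the hypotheses prevents relevant singular values from accumulating on $\widehat\partial U$ near the parabolic point), nor does Wolff's lemma suffice (horodisks have degenerate crosscuts, so Lemma~\ref{lemma-sing} does not apply). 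A complete argument along your lines would need an analysis of the petal geometry and the boundary behaviour of $\varphi$ near the parabolic point that the paper does not develop; the citation to Doering--Ma\~n\'e sidesteps this entirely.
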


\begin{proof}
	\begin{enumerate}[label={ (\roman*)}]
		\item The associated inner function to these Fatou components is either elliptic or doubly parabolic, so $ g^* $ is ergodic (Thm. \ref{thm-ergodic-properties}{\em\ref{ergodic-properties-a}}).
		
	Recurrence for the inner function associated to a Siegel disk or an attracting basin follows from the fact that these inner functions are always elliptic (Thm. \ref{thm-ergodic-properties}{\em\ref{ergodic-properties-c}}).
		Recurrence for parabolic basins follows from \cite[Thm. 6.1]{DoeringMané1991} (although it is stated for rational maps, the proof only uses the local behaviour around the parabolic fixed point, so it is valid for $ f\in\mathbb{K} $).

		\item Ergodicity of doubly parabolic Fatou components follows from Theorem \ref{thm-ergodic-properties}{\em\ref{ergodic-properties-a}}.  Recurrence (under the stated conditions) follows from Theorem \ref{thm-ergodic-properties}{\em\ref{ergodic-properties-d}}.  Indeed, note that, if $ f|_U $ has finite degree, then relevant singular values $ SV(f, U) $ are compactly contained in $ U $. By Proposition \ref{prop-SVcompact}, in this case, the Denjoy-Wolff point $ p\in\partial\mathbb{D} $ is not a singularity for $ g $. Hence, by Theorem \ref{thm-ergodic-properties}{\em\ref{ergodic-properties-d}}, the boundary map $ g^* |_{\partial\mathbb{D}}$ is recurrent with respect to $ \lambda $. 
		
		To see that condition (d) implies recurrence, note that \[\textrm{dist}_U(f^{n+1}(z), f^n(z))= \textrm{ dist}_\mathbb{D}(g^{n+1}(\varphi^{-1}(z)), g^n(\varphi^{-1}(z))), \] and apply again Theorem \ref{thm-ergodic-properties}{\em\ref{ergodic-properties-d}}.

\item 
We start with ergodicity. Let $ A\subset\partial U $ be measurable. If $ A= f^{-1}(A)$, then \[(\varphi^*)^{-1}(A)= (\varphi^*)^{-1}(f^{-1}(A))=( g^*)^{-1}((\varphi^*)^{-1}(A)) .\] If $ g^* $ is ergodic, then $ \lambda((\varphi^*)^{-1}(A))=0 $ or $ \lambda((\varphi^*)^{-1}(A))=1 $. Then, $ \omega_U(A)=0 $ or $ \omega_U(A)=1 $, and $ f|_{\partial U} $ is ergodic. 

For the recurrence, assume $ A\subset\partial U $ is measurable, and consider $ (\varphi^*)^{-1}(A) $. Then, for $ \lambda $-almost every $ \xi\in (\varphi^*)^{-1}(A)  $, there exists $ n_k\to\infty $ such that $ (g^{n_k})^*(\xi)\in (\varphi^*)^{-1}(A)  $. Since $ A\subset\partial U\subset \Omega $, Lemma \ref{lemma-radial-limit} applies, and\[\varphi^*((g^{n_k})^*(\xi))=f^{n_k}(\varphi^*(\xi))\in A,\] proving recurrence for $ f|_{\partial U} $.
		Since $ \textrm{supp }\omega_U =\widehat{\partial} U$, it follows from 
		Theorem \ref{thm-almost-every-orbit-dense} that $ \omega_U $-almost every orbit is dense.
		\item It follows from Theorem \ref{thm-ergodic-properties}{\em\ref{ergodic-properties-e}}.
	\end{enumerate}
\end{proof}

\section{Density of periodic boundary points. Proof of \ref{teo:A}}\label{sect-demostracio}

With the tools developed in the previous sections we are now able to 
 prove a more general version of  \ref{teo:A}, namely Theorem \ref{thm-periodic-points-are-dense}.

\begin{thm}{\bf (Periodic boundary points are dense)}\label{thm-periodic-points-are-dense}
	Let $ f\in \mathbb{K}$, and let $U$ be an invariant simply connected Fatou component for $ f $. Let $ \varphi\colon\mathbb{D}\to U $ be a Riemann map, and let $ g\colon\mathbb{D}\to\mathbb{D} $ be the  inner function  associated to $ (f, U) $ by $ \varphi $. 
	\\ \noindent Assume the following conditions are satisfied.
	\begin{enumerate}[label={\em (\alph*)}]
		\item\label{hypothesisc} $ g^*|_{\partial \mathbb{D}}$ is recurrent with respect to $ \lambda$.
		\item \label{hypothesisa} There exists 
		 $ x_0\in\partial U $ and $ r_0\coloneqq r_0(x_0)>0 $ such that, for all $ n\geq 0 $, if $ D_n $ is a connected component of $ f^{-n}(D(x_0, r_0)) $ such that $ D_n\cap U\neq\emptyset $, then $ f^n|_{D_n} $ is conformal.
		\item\label{hypothesisb} There exists a crosscut $ C\subset\mathbb{D} $ and a crosscut neighbourhood $ N_C $ with $ N_C\cap P(g)=\emptyset $. 
	\end{enumerate}
Then, accessible periodic points are dense in $ \partial U $.
\end{thm}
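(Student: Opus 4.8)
The plan is to reduce the statement to the following local, measure-theoretic assertion: \emph{for $\omega_U$-almost every $x\in\partial U$ and every $\varepsilon>0$ there is an accessible periodic point of $f$ in $D(x,\varepsilon)\cap\partial U$.} Because $\operatorname{supp}\omega_U=\widehat\partial U$ (Lemma \ref{lemma_support_harmonic_measure}), every relatively open subset of $\partial U$ has positive harmonic measure, so such an assertion — holding at $\omega_U$-a.e.\ point and at all scales — immediately yields density of accessible periodic points in $\partial U$. Throughout I would work with the genericity coming from hypothesis \ref{hypothesisc}: by Theorem \ref{thm-ergodic-boundary-map} the boundary map $f|_{\partial U}$ is ergodic and recurrent for $\omega_U$, so by Theorem \ref{thm-almost-every-orbit-dense} the orbit $\{f^n(x)\}_n$ is dense in $\partial U$ for $\omega_U$-a.e.\ $x$; after discarding a further $\omega_U$-null set (the preimages of $E(f)$, null by non-singularity of $f|_{\partial U}$, Theorem \ref{thm-ergodic-properties}\ref{ergodic-properties-nonsing}) the whole forward orbit of $x$ is defined, and, transporting Theorem \ref{thm-postsingular} through a Riemann map $\varphi$ via Lemma \ref{lemma-radial-limit} and hypothesis \ref{hypothesisb}, I may also assume the inverse branches of the $f^n$ that track the orbit of $x$ exist and return non-tangentially.

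Next I would manufacture genuine inverse branches in $\widehat{\mathbb C}$. Let $x_0\in\partial U$, $r_0>0$ be as in \ref{hypothesisa}. Since the orbit of $x$ is dense it enters $D(x_0,r_0/2)$ infinitely often; for each such visit $f^{m}(x)\in D(x_0,r_0/2)$, the component $D_m$ of $f^{-m}(D(x_0,r_0))$ through $x$ meets $U$, so by \ref{hypothesisa} the branch $F_m$ of $f^{-m}$ with $F_m(f^m(x))=x$ is conformal on $D(x_0,r_0)$. Given any $n$, choosing a visit time $m>n$ and factoring $f^m=f^{m-n}\circ f^n$ on $D_m$ shows that $f^n|_{D_m}$ is conformal; hence $F_n:=(f^n|_{D_m})^{-1}$ is a conformal inverse branch of $f^n$ defined on the genuine neighbourhood $f^n(D_m)$ of $f^n(x)$, with $F_n(f^n(x))=x$, carrying the $U$-side to the $U$-side and $\partial U$ into $\partial U$ locally. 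This is the transcendental substitute for "$f$ extends analytically to $\partial U$" in \cite{przytycki_zdunik}.

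The crux is the contraction mechanism, which replaces the Pesin-theoretic Euclidean contraction of \cite{przytycki_zdunik} by hyperbolic contraction in a suitable domain. Let $Y$ be the complement in $\widehat{\mathbb C}$ of the closure of the relevant postsingular set $P(f,U)$, enlarged if necessary by finitely many points of $\partial U$ far from $x_0$ so that $Y$ is hyperbolic. Hypothesis \ref{hypothesisa} forces $D(x_0,r_0)\subset Y$: an orbit point of a relevant critical or asymptotic value landing in $D(x_0,r_0)$ would make some $f^k|_{D_k}$, with $D_k$ a preimage component meeting $U$, non-conformal, contradicting \ref{hypothesisa}. Forward invariance of $P(f,U)$ under $f$ then propagates this backwards along $f^{-1}$, so that all the domains $D_m$ and $f^n(D_m)$ above lie in $Y$; thus the branches $F_n$ are holomorphic self-maps of $Y$ (restricted to subdomains), hence weak $d_Y$-contractions by Schwarz–Pick. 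I would upgrade weak to \emph{uniform strict} contraction at the return points: the non-tangential control inherited from Theorem \ref{thm-postsingular} keeps the returning orbit at a bounded $d_Y$-distance from the part of $\partial Y$ it approaches, and this is exactly the geometric condition making the contraction factor of $F_n$ bounded away from $1$ independently of $n$.

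With uniform contraction in hand, I would close up an orbit in the standard way: by recurrence the return points $f^{m_j}(x)\in D(x_0,r_0)\cap\partial U$ accumulate, so one can choose $j<l$ with $d_Y\big(f^{m_j}(x),f^{m_l}(x)\big)$ below the contraction slack; the branch $F$ of $f^{-(m_l-m_j)}$ sending $f^{m_l}(x)$ to $f^{m_j}(x)$ (conformal on a genuine neighbourhood, obtained by pulling back through a later visit to $D(x_0,r_0)$) then maps a small closed $d_Y$-ball about $f^{m_j}(x)$ strictly into its interior, so it has a unique fixed point $z^\*$ with $f^{m_l-m_j}(z^\*)=z^\*$. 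Since $F$ preserves $\partial U$ locally and $\partial U$ is closed, $z^\*\in\partial U$; and $z^\*$ is accessible because, for a short arc $\gamma_0\subset U$ joining a point of $U\cap\operatorname{dom}F$ to its $F$-image, the concatenation $\bigcup_{k\ge0}F^k(\gamma_0)$ is a curve in $U$ landing at $z^\*$. Anchoring the return ball at the generic $x$ itself places $z^\*$ in $D(x,\varepsilon)$, completing the proof. The step I expect to be hardest is the uniform hyperbolic contraction: weak Schwarz–Pick contraction is automatic, but pinning the contraction factor away from $1$ uniformly in $n$ is where all three hypotheses are genuinely needed — \ref{hypothesisb} to have inverse branches at all, \ref{hypothesisa} to push the relevant dynamics inside the hyperbolic domain $Y$, and \ref{hypothesisc}, through Theorem \ref{thm-postsingular}, to force non-tangential returns keeping the orbit uniformly deep in $Y$ — together with the bookkeeping needed to verify that $Y$ really is invariant in the required sense despite the singularities of $f$ and the possibly non-compact $P(f,U)$.
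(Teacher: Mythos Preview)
Your overall architecture is right — reduce to a local statement at $\omega_U$-generic points, spread hypothesis \ref{hypothesisa} along dense orbits to get relevant inverse branches everywhere, contract in a hyperbolic metric, close up with a fixed-point argument, and certify accessibility with a concatenated curve. The gap is in the contraction step.

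You take $Y=\widehat{\mathbb C}\setminus\overline{P(f,U)}$ and assert that the relevant branches $F_n$, being ``holomorphic self-maps of $Y$ (restricted to subdomains)'', are weak $d_Y$-contractions by Schwarz--Pick. That inference is false: Schwarz--Pick applied to $F_n\colon D(x_0,r_0)\to D_n\subset Y$ yields only $d_Y(F_n(z),F_n(w))\le d_{D(x_0,r_0)}(z,w)$, and since $D(x_0,r_0)\subsetneq Y$ one has $d_{D(x_0,r_0)}\ge d_Y$, the wrong inequality. To get $\rho_Y(F_n(y))|F_n'(y)|\le\rho_Y(y)$ you would need $f$ to restrict to a covering $f^{-1}(Y)\to Y$ with $f^{-1}(Y)\subset Y$; forward invariance of $\overline{P(f,U)}$ does give $f^{-1}(Y)\subset Y$, but the covering property fails because $f^{-1}(Y)$ may contain critical points of $f$ lying outside $U$ (their critical values are not in $P(f,U)$). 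Replacing $P(f,U)$ by the full $P(f)$ would repair the covering, but then hypothesis \ref{hypothesisa} no longer forces $D(x_0,r_0)\subset Y$: a postsingular point coming from a singular value irrelevant to $U$ can sit inside $D(x_0,r_0)$ without violating \ref{hypothesisa}. So neither choice of $Y$ works.

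The paper resolves this by building the hyperbolic domain in the opposite direction: instead of deleting the obstruction set, it takes $W$ to be the \emph{union} of all relevant inverse images $F_n(D(x,r(x)))$. By construction $f$ maps the corresponding $W'$ (the union over $n\ge1$) onto $W$ as a covering on components, with $W'\subset W$, and this is exactly what makes every relevant $F_1$ a weak $d_W$-contraction. Strict contraction is then obtained not uniformly but at a single well-chosen spot: one finds a specific relevant branch $F_{n_0}$ whose image component satisfies $W_1'\subsetneq W_1$ strictly, yielding a fixed factor $k<1$ there, and one uses density of the orbit of $\varphi^*(\xi)$ to force arbitrarily many passages through that spot before closing up. The non-tangential control from Theorem \ref{thm-postsingular} is not used for contraction at all; its role is to guarantee that the connecting arc $\gamma\subset\mathbb D$ can be taken inside a Stolz angle, so that $\varphi(\gamma)$ stays in the hyperbolic ball where $F_N$ is known to act, making the concatenation $\bigcup_m F_N^m(\varphi(\gamma))$ a genuine access curve.
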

Note that in Theorem \ref{thm-periodic-points-are-dense} we assume  $ U $ to be invariant, i.e. $ f(U)\subset U $. This is not restrictive:  indeed, if $ U $ was a $ p $-periodic Fatou component with $ p\geq 2 $, i.e. $ f^p(U)\subset U $, we shall replace $ f $ by $ f^p $. Note that $ f^p\in\mathbb{K}$, since class $  \mathbb{K}$ is closed under composition.

The proof of Theorem \ref{thm-periodic-points-are-dense} is postponed until the end of the section but we show now how to deduce \ref{teo:A} from it.

\begin{proof}[Proof of \ref{teo:A}]
	It is enough to show that, if the hypotesis {\em \ref{maini}}-{\em \ref{mainiii}} in \ref{teo:A} are satisfied, then {\em \ref{hypothesisc}}-{\em \ref{hypothesisb}} in Theorem \ref{thm-periodic-points-are-dense} also hold. First, if hypotesis {\em \ref{maini}} and {\em \ref{mainiii}} in \ref{teo:A} are satisfied, then  {\em \ref{hypothesisc}} holds, applying Theorem \ref{thm-ergodic-boundary-map}. Second, it is clear that {\em \ref{mainii}} implies {\em \ref{hypothesisa}}. Finally, {\em \ref{mainiii}} is equivalent to  {\em \ref{hypothesisb}}, by means of the Riemann map.
\end{proof}

Finally, we shall state some other conditions under which the hypothesis of Theorem \ref{thm-periodic-points-are-dense} are satisfied, and hence periodic points are dense in the boundary of the Fatou component. Note that the following conditions allow for the possibility of having infinitely many singular values accumulating at infinity.

First let $ U $ be an attracting or a parabolic basin. In this case, the boundary map is always recurrent. Hence, it is enough to ask that there exists a domain $ V \subset U$, such that  $$ P(f)\cap \overline{U}\subset \overline{V}$$ and $ \omega_U(\overline{V}) =0$ (see Fig. \ref{fig-starlike}).

\begin{figure}[htb!]\centering
	\includegraphics[width=15cm]{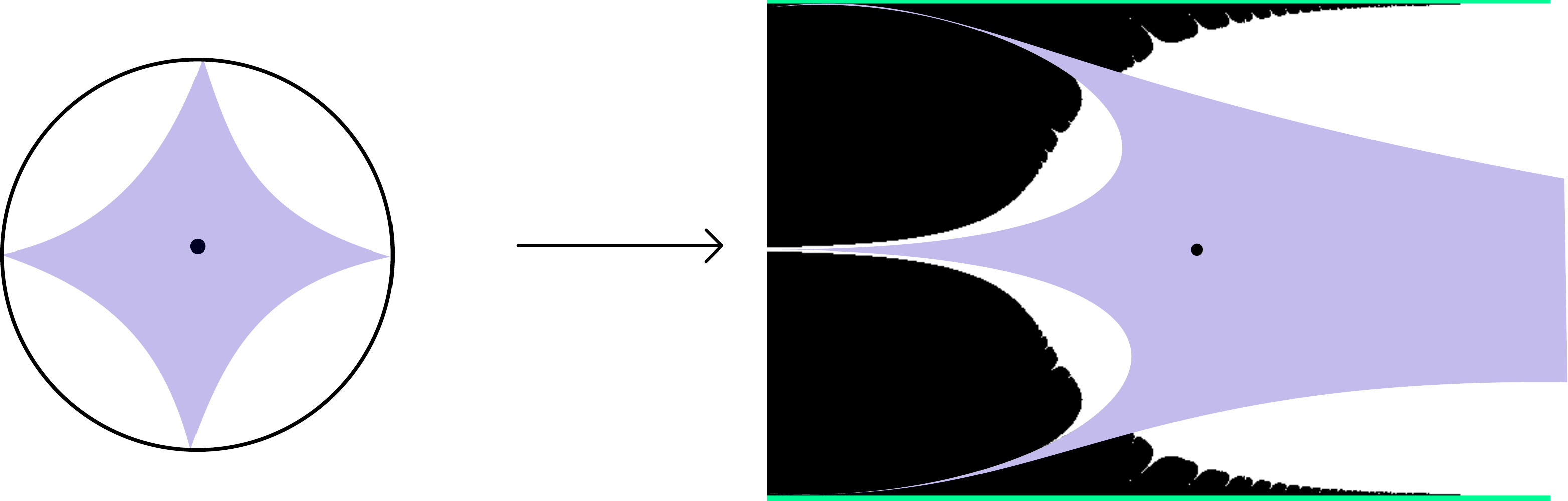}
	\setlength{\unitlength}{15cm}
	\put(-0.05, 0.29){$U$}
	\put(-0.2, 0.175){$V$}
	\put(-0.61, 0.175){$\varphi$}
	\put(-0.77, 0.25){$\mathbb{D}$}
	\caption{\footnotesize Schematic representation of a Fatou component satisfying the previous condition of the postsingular set. The Fatou component of the right is a Baker domain of $ z+e^{-z} $. For this particular example, the domain $ V $ could have been taken simpler. However, we wanted to illustrate how $ V $ looks like in general. It follows that, for the associated inner function, there exist crosscut neighbourhoods without postsingular values, and hence hypothesis {\em \ref{hypothesisb}} is satisfied.
	}\label{fig-starlike}
\end{figure}

Let us check that such a Fatou component satisfies the  hypothesis of Theorem \ref{thm-periodic-points-are-dense}. It is clear that 
this already implies that for $ \omega_U $-almost every $ x\in\partial U $, there exists $ r\coloneqq r(x)>0 $ such that $ D(x,r)\cap P(f)=\emptyset $, and hence {\em\ref{hypothesisa}} is satisfied. To see {\em\ref{hypothesisb}}, let $ \varphi\colon\mathbb{D}\to U $ be a Riemann map, and note that $ \varphi^{-1}(\overline{V}\cap U) $ is a closed set in $ \mathbb{D} $. By hypothesis, $ \lambda(\overline  {\varphi^{-1}({V}\cap U)} )=0 $, implying {\em\ref{hypothesisb}}.
Compare with the Fatou components considered in \cite{JoveFagella2}.

In the case of a doubly parabolic Baker domain, we need to ask additionally that $ f|_{\partial U} $ is recurrent with respect to $ \omega_U $. This can be ensured with any of the conditions given in Theorem \ref{thm-ergodic-boundary-map}{\em \ref{ergodic-properties-d}}.

\subsection{Proof of Theorem \ref{thm-periodic-points-are-dense}}

		We shall split the proof into several steps.
	
		\begin{enumerate}
		\item {\em Inverse branches well-defined $ \omega_U $-almost everywhere.}
		
		First note that hypothesis {\em \ref{hypothesisa}} implies that inverse branches of $ f^n $ interplaying with $ \partial U $ are well-defined in $ D(x_0,r_0) $, being $ r_0 $ uniform for all $ n\geq 0 $ and all inverse branches. Let us start by proving that this actually holds for $ \omega_U $-almost every $ x\in \partial U $, as an easy consequence of {\em \ref{hypothesisa}} together with the ergodic properties of $ f|_{\partial U} $.
		
		\begin{claim*}
For $ \omega_U $-almost every $ x\in\partial U $, there exists $ r\coloneqq r(x)>0 $ such that, for all $ n\geq 0 $, $ f^{-n}(D(x, r)) \cap U $ is non-empty, and,
if $ D_n $ is a connected component of $ f^{-n}(D(x, r)) $ such that $ D_n\cap U\neq\emptyset $, then $ f^n|_{D_n} $ is conformal.
		\end{claim*}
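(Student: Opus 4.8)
The plan is to propagate hypothesis \ref{hypothesisa} from $x_0$ to $\omega_U$-almost every boundary point using the recurrence of $f|_{\partial U}$, via the factorisation $f^{n_0+n}=f^{n_0}\circ f^n$. Since $g^*|_{\partial\mathbb{D}}$ is recurrent (hypothesis \ref{hypothesisc}), Theorem \ref{thm-ergodic-boundary-map} gives that $f|_{\partial U}$ is recurrent and non-singular with respect to $\omega_U$. Put $A\coloneqq D(x_0,r_0/2)\cap\partial U$; by Lemma \ref{lemma_support_harmonic_measure} (together with $\omega_U(E(f))=0$) we have $\omega_U(A)>0$, so by Theorem \ref{thm-almost-every-orbit-dense} for $\omega_U$-almost every $x\in\partial U$ there is $n_0\coloneqq n_0(x)$ with $f^{n_0}(x)\in A$. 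Discarding a further $\omega_U$-null set (the points whose forward orbit meets $E(f)$, null by non-singularity), we may assume the whole orbit of $x$ avoids $E(f)$; since $f(\partial U\cap\Omega)\subseteq\partial U$, also $f^{n_0}(x)\in\partial U$. The component $D_{n_0}$ of $f^{-n_0}(D(x_0,r_0))$ containing $x$ meets $U$ (it is open and $x\in\partial U$), so by \ref{hypothesisa} the map $f^{n_0}|_{D_{n_0}}$ is a conformal isomorphism onto $D(x_0,r_0)$; let $F_{n_0}$ be its inverse. As $|f^{n_0}(x)-x_0|<r_0/2$, $F_{n_0}$ is defined on $D(f^{n_0}(x),r_0/2)$, so $W\coloneqq F_{n_0}(D(f^{n_0}(x),r_0/2))$ is an open neighbourhood of $x$; choose $r=r(x)>0$ with $D(x,r)\subseteq W$. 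By construction $f^{n_0}(D(x,r))\subseteq D(f^{n_0}(x),r_0/2)\subseteq D(x_0,r_0)$.

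\textbf{The factorisation argument.} Fix $n\geq0$ and let $D_n$ be a component of $f^{-n}(D(x,r))$ with $D_n\cap U\neq\emptyset$. Then $f^{n_0+n}(D_n)=f^{n_0}(f^n(D_n))\subseteq f^{n_0}(D(x,r))\subseteq D(x_0,r_0)$, so $D_n$ lies in a component $\widetilde D$ of $f^{-(n_0+n)}(D(x_0,r_0))$, and $\widetilde D\cap U\supseteq D_n\cap U\neq\emptyset$. By \ref{hypothesisa}, $f^{n_0+n}|_{\widetilde D}$ is a conformal isomorphism onto $D(x_0,r_0)$; in particular it has no critical points and no poles in $\widetilde D\supseteq D_n$. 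Since $f^{n_0+n}|_{D_n}=\bigl(f^{n_0}|_{f^n(D_n)}\bigr)\circ\bigl(f^n|_{D_n}\bigr)$ is injective, $f^n|_{D_n}$ is an injective holomorphic map, hence conformal onto its image; a standard properness argument — pulling back a convergent sequence in $D(x,r)$ through the enveloping biholomorphism $f^{n_0+n}|_{\widetilde D}$ — shows this image is all of $D(x,r)$, so $f^n|_{D_n}$ is a conformal isomorphism onto $D(x,r)$. Finally, $f^{-n}(D(x,r))\cap U\neq\emptyset$: since $U$ is an attracting or parabolic basin or a Baker domain, $f^n(U)=U$, so any $y\in D(x,r)\cap U$ has a preimage in $U$, whose component in $f^{-n}(D(x,r))$ meets $U$. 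This proves the Claim with the radius $r(x)$ just constructed.

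\textbf{Where the difficulty lies.} The two points requiring care are: (i) that a \emph{single} return time $n_0$ already suffices for \emph{all} $n$ — this works precisely because \ref{hypothesisa} is uniform in $n$ and the factorisation reduces an inverse branch of $f^n$ on $D(x,r)$ to one of $f^{n_0+n}$ on $D(x_0,r_0)$; and (ii) upgrading ``injective'' to ``conformal isomorphism onto $D(x,r)$'', i.e. the surjectivity/properness step, which I expect to be the delicate part, together with the bookkeeping of the $\omega_U$-null exceptional sets (orbits meeting $E(f)$; and, if one does not wish to invoke $f^n(U)=U$, the null set of values of $D(x,r)\cap U$ potentially omitted by $f^n|_U$, discarded using non-singularity of $f|_{\partial U}$). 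Everything else is a direct combination of recurrence (Theorem \ref{thm-ergodic-boundary-map}) with Theorem \ref{thm-almost-every-orbit-dense}.
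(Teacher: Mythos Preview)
Your argument is essentially the paper's: both use recurrence to find $n_0$ with $f^{n_0}(x)\in D(x_0,r_0)$, choose $r$ so that $f^{n_0}(D(x,r))\subset D(x_0,r_0)$, and then factor $f^{n+n_0}=f^{n_0}\circ f^n$ to pull hypothesis \ref{hypothesisa} back from $D(x_0,r_0)$ to $D(x,r)$.

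One small correction on the first assertion: the claim $f^n(U)=U$ need not hold in general for $f\in\mathbb{K}$ (the associated inner function, and hence $f|_U$, may omit values), and your alternative of ``discarding a null set of $x$'' does not quite work either, since the condition must hold for a fixed $x$ and \emph{all} $n$ simultaneously. The paper's fix is immediate: each $g^n$ is again an inner function and therefore omits at most two values in $\mathbb{D}$, so $f^n|_U$ omits at most two points of $U$; since $D(x,r)\cap U$ is open and nonempty, it always meets $f^n(U)$, giving $f^{-n}(D(x,r))\cap U\neq\emptyset$.
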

	 We shall refer to the inverse branches considered above as \textit{relevant inverse branches} of $ f^n $ at $ x\in\partial U $.  When we refer to a particular inverse branch, we write $ F_{n,y,x} $ meaning that $ F_n $ is an inverse branch   of $ f^{n} $ sending $ y$ to $ x $. When the points $ x $ and $ y $ are clear from the context, we shall write only $ F_n $ to lighten the notation. Since we are interested in the study of $ f|_{\partial U} $,  relevant inverse branches are the only ones that play a role in our construction. 
	\begin{proof}[Proof of the claim]
		The first statement of the claim is deduced from 
		the conjugacy between $ f|_U $ and the inner function $ g $, and the fact that inner functions associated to Fatou components of functions in class  omit at most two values (see e.g. 	\cite[Thm. 1]{Bolsch-Fatoucomponents}).
		
		For the second assertion, 
	let $ x_0 $ and $ r_0 $ be the ones given by hypothesis {\em \ref{hypothesisa}}.	Since $ \omega_U(D(x_0, r_0))>0 $, by hypothesis {\em \ref{hypothesisc}} and Theorem \ref{thm-ergodic-boundary-map}, the orbit of $ \omega_U $-almost every point visits infinitely many times $ D(x_0, r{(x_0)}) $. Hence, $ \omega_U $-almost every $ x\in\widehat{\partial} U$ there exists $ n_0\coloneqq n_0(x) $ such that $ f^{n_0}(x)\in D(x_0,r{(x_0)}) $.
		
		Fix $ x\in\widehat{\partial} U $ with this property.  Then, there exists $ r\coloneqq r(x) >0$ such that $ f^{n_0}(D(x,r))\subset D(x_0, r_0) $. 
		By hypothesis {\em \ref{hypothesisa}}, $ f^{n_0}|_{D(x,r) }$ is conformal.
		
Let $ D_{m} $ be a connected component of $ f^{-m}(D(x, r)) $ such that $ D_{m} \cap U\neq\emptyset $.  Then, $ D_{m} $ is contained in a connected component of $ f^{-m-n_0}(D(x_0, r_0)) $, so there exists $ x_m\in D_m$  and a relevant inverse branch of $ f^{m+n_0} $ 
\[F_{m+n_0, x_0, x_m}\colon f^{n_0} (D(x,r))\subset D(x_0, r_0)  \longrightarrow D_m \]
which is well-defined, and hence conformal. Then, \[F_{m, x, x_m}=F_{m+n_0, x_m, x_0}\circ f^{m}\colon D(x,r) \longrightarrow D_m \] is a well-defined inverse branch of $ f^m $. In particular, $ f^m|_{D_m} $ is conformal, proving the claim.
	\end{proof}

		\item {\em Construction of an expansive metric around $ \partial U $.}
		
		\begin{lemma*}
			There exists a hyperbolic open  set $ W\subset \mathbb{C} $ and a measurable set $ X\subset W$ for which the following are satisfied. \begin{enumerate}[label={\em (2.\arabic*)}]
				\item\label{IL1-a} The set  $ X $ is contained in $ {\partial} U $, and it has full $ \omega_U$-measure.
				\item\label{IL1-b} For all  $ x\in X $ and  $ n\geq 0 $, there exists $ r_W\coloneqq r_W(x) >0$ such that the hyperbolic disk $ D_W(x,r_W) $ is simply connected and compactly contained in $ W $,  with all relevant branches $ F_n $ of $ f^{-n} $ at $ x $ well-defined in $ D_W(x,r_W) $. Moreover, $ F_n(D_W(x,r_W))\subset W $.
				\item\label{IL1-c} Relevant inverse branches do not increase the hyperbolic distance $\textrm{\em dist} _W $ between points, i.e. for any $ x\in W $ and $ F_1 $ relevant branch of $ f^{-1} $ at $ x $, if $ z,w\in  D_W(x,r_W)$, \[\textrm{\em dist}_W(F_1(z), F_1(w))\leq  \textrm{\em dist}_W(z, w).\]
			\end{enumerate}
		\end{lemma*}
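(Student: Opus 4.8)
The plan is to build $W$ as (a neighbourhood of) the union of the backward orbits, under the \emph{relevant} inverse branches of $f$, of a family of round disks centred at the points of a full‑measure set $X\subset\partial U$; the non‑expansion \ref{IL1-c} will then follow from the Schwarz–Pick lemma, the point being that $W$ misses the relevant postsingular set, so that $f$ restricts to a covering over $W$ and its inverse branches contract the hyperbolic metric $\rho_W$. Concretely, take $X$ to be the full‑measure set of the Claim in Step~1: for each $x\in X$ there is $r(x)>0$ such that for every $n\ge 0$ each relevant branch $F_n$ of $f^{-n}$ at $x$ is defined and conformal on the Euclidean disk $D(x,r(x))$. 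By that Claim $X\subset\partial U$ and $\omega_U(X)=1$, which is \ref{IL1-a} (if needed, intersect $X$ with the full‑measure set of points $x=\varphi^*(\xi)$ at which $\varphi^*$ and the conjugacy behave as in Lemma~\ref{lemma-radial-limit}).

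\textbf{Construction of $W$.} Set
\[
W:=\bigcup_{x\in X}\ \bigcup_{n\ge 0}\ \bigcup_{F_n} F_n\!\left(D(x,r(x))\right),
\]
the inner union running over all relevant branches $F_n$ of $f^{-n}$ at $x$ (the terms with $n=0$ give $\bigcup_{x\in X}D(x,r(x))\supseteq X$). Each set in the union is open and contained in $\Omega(f)=\mathbb{C}\smallsetminus E(f)$, so $W$ is open; by construction $W$ is \emph{relevantly backward invariant}, i.e.\ every relevant branch of $f^{-1}$ maps $W$ into $W$ (a relevant branch post‑composed with a relevant branch of $f^{-n}$ is a relevant branch of $f^{-(n+1)}$). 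Since the $F_n$ of Step~1 are conformal, no relevant backward image contains a critical point, and since asymptotic paths of maps in class $\mathbb{K}$ land (Sect.~\ref{subsect-associated-inner-function}), $W$ contains no singular value relevant to $U$; in fact $W$ is disjoint from the relevant postsingular set. As $SV(f,U)\neq\emptyset$ for attracting/parabolic basins and doubly parabolic Baker domains and $W\subset\mathbb{C}$, the set $W$ omits at least two points of $\widehat{\mathbb{C}}$, so each of its components is hyperbolic.

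\textbf{Proof of \ref{IL1-b} and \ref{IL1-c}.} For $x\in X$, completeness of $\rho_W$ makes every hyperbolic ball of finite radius relatively compact in $W$; shrinking $r_W=r_W(x)>0$ we may also assume $D_W(x,r_W)\subseteq D(x,r(x))$ and that $D_W(x,r_W)$ is a smooth topological disk, hence simply connected. Then each relevant $F_n$ at $x$ is defined on $D(x,r(x))\supseteq D_W(x,r_W)$ and $F_n\!\left(D_W(x,r_W)\right)\subseteq F_n\!\left(D(x,r(x))\right)\subseteq W$, giving \ref{IL1-b}. For \ref{IL1-c}, let $W'$ be the union of the relevant components of $f^{-1}(W)$; then $W'\subseteq W$ and, as $W$ carries no relevant singular value, $f\colon W'\to W$ is a holomorphic covering over each relevant component, of which a relevant branch $F_1$ is a local section. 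Covering maps are local isometries for the hyperbolic metric, so $f^*\rho_W=\rho_{W'}$ on $W'$, whence pointwise
\[
F_1^*\rho_W=(\rho_W\circ F_1)\,|F_1'|\ \le\ (\rho_{W'}\circ F_1)\,|F_1'|\ =\ F_1^*\rho_{W'}\ =\ \rho_W ,
\]
the inequality because $W'\subseteq W$ forces $\rho_W\le\rho_{W'}$ on $W'$. Integrating this along the $\rho_W$‑geodesic between $z$ and $w$ (continuing $F_1$ along it, legitimate since $W$ has no relevant singular value) yields $\mathrm{dist}_W(F_1(z),F_1(w))\le\mathrm{dist}_W(z,w)$ for $z,w\in D_W(x,r_W)$, which is \ref{IL1-c}.

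\textbf{Main obstacle.} All the delicacy is in $W$: verifying genuine hyperbolicity (it omits the nonempty relevant postsingular set together with $\infty$), that relevant inverse branches extend along all paths in $W$ and keep $W$ invariant, and — since $W$ need not be simply connected — ensuring that continuing $F_1$ along the $W$‑geodesic between two points of $D_W(x,r_W)$ returns the same relevant determination, so that the integrated estimate concerns the intended points; this is handled by working inside the relevant component $W'_i$ of $f^{-1}(W)$, using the covering $f\colon W'_i\to W$ and lifting geodesics (or by further shrinking $r_W$). Threading the notion of ``relevant'' consistently through all of these steps is the main bookkeeping burden.
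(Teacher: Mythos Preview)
Your construction of $X$ and $W$, and the Schwarz--Pick strategy for \ref{IL1-c}, match the paper's proof exactly. There is, however, a genuine gap in the step where you assert that ``$W$ contains no singular value relevant to $U$; in fact $W$ is disjoint from the relevant postsingular set.'' This is not justified and is generally false. The definition $CV(f,U)=\{v\in U: v=f(z),\ f'(z)=0\}$ does not require the critical point $z$ to lie in $U$ (it may lie in another Fatou component mapped into $U$), so such a $v$ can sit inside $D(x,r(x))$ without obstructing any \emph{relevant} inverse branch at $x$; hence $v\in W$ is perfectly possible. More concretely, each $D(x,r(x))$ is a Euclidean disk centred on $\partial U$ and meets $U$, and nothing prevents points of $P(f,U)\subset U$ from lying there. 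Since you use this claim twice --- for hyperbolicity of $W$ and to justify that $f\colon W'\to W$ is a covering --- both deductions are unsupported.

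The paper avoids this entirely. For hyperbolicity it simply notes that, shrinking the radii $r(x)$ if necessary, one can force $W$ to omit two prescribed points. For \ref{IL1-c} it does not invoke absence of singular values at all: it defines $W'$ explicitly as the same union as $W$ but restricted to $n\ge 1$, so $W'\subset W$ is immediate, and the covering structure on components comes from the fact that $f$ maps each piece $F_n(D(x,r(x)))$ (with $n\ge 1$) conformally onto $F_{n-1}(D(x,r(x)))\subset W$. Your alternative $W'$ (``union of relevant components of $f^{-1}(W)$'') is not obviously contained in $W$: a connected component of $f^{-1}(W)$ that meets $U$ may well be strictly larger than the union of the relevant pieces $F_{n+1}(D(x,r(x)))$, and your earlier observation that relevant branches of $f^{-1}$ map $W$ into $W$ does not bridge this. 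The paper's explicit $W'$ sidesteps exactly the bookkeeping difficulty you flag in your final paragraph.
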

		\begin{proof}
		For all $ x\in\partial U $, let $ r_n(x)\in \left[ 0, +\infty\right)  $ be the radius of the maximal Euclidean disk $ D(x, r_n(x)) $ for which all relevant branches of $ f^{-n} $ at $ x $ are well-defined. We assume at least one such inverse branch exists, otherwise set $ r_n(x) =0$ (by the previous claim, this situation only happens on a set of zero $ \omega_U $-measure). Clearly, $ r_n(x)\geq r_{n+1}(x) $, so $ \left\lbrace r_n(x)\right\rbrace _n $ is a convergent sequence for all $ x\in\partial U $. Consider \[X\coloneqq \left\lbrace x\in\partial U\colon r_n(x)\to r(x)>0 \right\rbrace. \]
		By the claim in the first step, $ \omega_U(X) =1$. Let \[ W\coloneqq \bigcup\limits_{x\in X}\bigcup\limits_{n\geq 0} \left\lbrace F_n(D(x, r(x))) \colon F_n\textrm{ is relevant at } x\right\rbrace .\]
			Note that $ W $ is open, and $ X\subset W $. Hence, {\em \ref{IL1-a}} holds.
			
			Taking $ r(x) >0$ smaller if needed, we can assume $ W $ omits at least two points, so it is hyperbolic and admits a hyperbolic metric on it. 
			
			Note that $ W $ may be disconnected. In this case, the hyperbolic density is defined on each connected component separatedly. Indeed, each connected component $ W_1 $ of $ W $ is a hyperbolic domain, and hence admits a hyperbolic density $ \rho_{W_1} $. Given $ z\in W $, we define \[\rho_W(z)\coloneqq \rho_{W_1}(z),\] where $ W_1 $ stands for the connected component of $ W $ with $ z\in W_1 $. Given $ z,w\in W $, the hyperbolic distance is defined as $ \textrm{dist}_W(z, w)= \textrm{dist}_{W_1}(z, w) $, if $ z $ and $ w $ lie in the same connected component $ W_1 $ of $ W $; and $ \textrm{dist}_W (z,w)=\infty $, otherwise.
			
			By construction, it holds that, for every $ x\in X $, all relevant branches of $ f^{-n} $ are well-defined in $ D(x,r(x))\subset W $. Since the Euclidean and the hyperbolic metrics are locally equivalent, there exists $ r_W\coloneqq r_W(x) $ such that $ \overline{D_W(x,r_W)} \subset {D(x,r)}$. 
			Hence, $ D_W(x,r_W) $ is simply connected and compactly contained in $ W $, and all relevant inverse branches $ F_n $ are well-defined in $ D_W(x,r_W)$, and $ F_n(D_W(x,r_W))\subset F_n (D(x,r))\subset W $. Thus, {\em \ref{IL1-b}} holds.
			
			Finally, we claim that $ f $ does not increase the hyperbolic distance between points. 
			Consider \[ W'\coloneqq \bigcup\limits_{x\in X}\bigcup\limits_{n\geq 1} \left\lbrace F_n(D(x, r(x))) \colon F_n\textrm{ is relevant at } x\right\rbrace \subset W.\] Consider the hyperbolic density $ \rho_{W'} $ in $ W' $, defined component by component if needed. Note that each connected component $ W_1 $ of $ W' $ is mapped onto a connected component $ W_2 $ of $ W $ as a holomorphic covering. Hence, if $ x\in W_1\subset W'$ and $ f(x)\in W_2 \subset W$, then \[ \rho_{W'}(x)=\rho_{W_1}(x)=\rho_{W_2}(f(x))\cdot \left| f'(x) \right|=\rho_{W}(f(x))\cdot \left| f'(x) \right|. \]
			
			 Since each connected component of $ W' $ is contained in a connected component of $ W $,  we have $ \rho_W\leq\rho_{W'} $ and, in particular, if $ x\in W' $, 
			 \[ \rho_{W}(x)\leq \rho_{W}(f(x))\cdot \left| f'(x) \right|. \]
			
			Now, let $ x\in W $ and let  $ F_1 $ be a relevant branch of $ f^{-1} $ at $ x $.  Since $ F_1 $ is well-defined in $ D_W(x,r_W)$, it holds \[\rho_W(F_1(z))\left| F'_1(z)\right| \leq \rho_W(z),\] for all $ z\in  D_W(x,r_W)$. Next, take $ z,w\in  D_W(x,r_W)$. Note that $ z,w \in  W_1 $, and $ F_1(z), F_1(w) \in W_2$, for two connected components $ W_1 , W_2$ of $ W $.
			Moreover,  since hyperbolic disks are hyperbolically convex (i.e. a geodesic joining two points in the disk is contained in the disk), we can  take $ \gamma\subset D_W(x,r_W)$ geodesic between $ z $ and $ w $. Then,
			\[\textrm{dist}_W(F_1(z),F_1(w))= \textrm{dist}_{W_2}(F_1(z),F_1(w))\leq \int_{F_1(\gamma)}\rho_W(t)dt=\]\[=\int_\gamma \rho_W(F_1(t))\left| F_1'(t) \right| dt\leq \int_\gamma \rho_W(t)dt= \textrm{dist}_{W_1}(z,w)= \textrm{dist}_{W}(z,w). \]
			proving the claim.
		\end{proof}

		\item{\em Control of radial limits in terms of Stolz angles.}

		Let us fix $ \alpha\in (0, \frac{\pi}{2}) $, and let $ p\in\overline{\mathbb{D}} $ be the Denjoy-Wolff point of the associated inner function $ g $.
		It follows from Theorem \ref{thm-postsingular} that, $ \lambda $-almost every $ \xi\in\partial\mathbb{D} $, there exists $ \rho\coloneqq\rho(\xi)>0 $ such that:\begin{enumerate}[label={ (3.\arabic*)}]
			\item \label{IL2a} for all $ n\geq 0 $, every branch $ G_n $ of $ g^{-n}$ is well-defined in $ D(\xi, \rho) $;
			\item\label{IL2b} there exists $ \rho_1 \coloneqq\rho_1 (\xi)$ such that, for all $ n\geq 0 $,  \[  G_n(R_{\rho_1}, p)\subset \Delta_{\alpha, \rho_1}(G_n(\xi), p),\]
			 where $ R_\rho(\cdot, p) $ and $ \Delta_{\alpha,\rho}(\cdot, p) $ stand for the radial segment and the Stolz angle with respect to $ p $ {(Def. \ref{defi-radi-stolz-angle})}. 
		\end{enumerate}
		In the sequel, to lighten the notation we denote the radial segments and the Stolz angles just by  $ R_{\rho} $ and $ \Delta_{\rho} $, respectively. However, one should keep in mind that they are radial segments and Stolz angles with respect to the Denjoy-Wolff point, and that the opening $ \alpha $ of the Stolz angles is fixed throughtout the proof.
	
		\item {\em  Choice of a set $  K_\varepsilon \subset \partial\mathbb{D}$, where bounds on $ \varphi $ and $ G_n $ are uniform.}
		\begin{lemma*}\label{intermediate lemma}
					Fix $ \varepsilon>0 $. There exists a measurable set $ K_\varepsilon \subset\partial\mathbb{D} $ with $ \lambda(K_\varepsilon) \geq 1-\varepsilon$ such that the following holds.
			\begin{enumerate}[label={\em (4.\arabic*)}]
			 \item \label{IL3a} For all $ \xi\in K_\varepsilon $, $ \varphi^*(\xi) $ exists and $ \varphi^*(\xi) \in X$. Moreover, $ (g^n)^*(\xi) \in \Theta_\Omega$, for all $ n\geq0 $.
				\item \label{IL3b} There exists $ r_\varepsilon>0$ such that for all $ \xi\in K_\varepsilon $ and $ n\geq 0 $,  all relevant inverse branches of $ f^n $ are well-defined in $ D_W(\varphi^*(\xi), r_\varepsilon) $. 
				\item \label{IL3c}  There exist $ \rho_\varepsilon>0 $ such that\begin{enumerate}[label={\em \roman*.}]
					\item For every $ \xi\in K_\varepsilon $ and $ n\geq 0 $, every branch $ G_n $ of $ g^{-n} $ is well-defined in $ D(\xi, \rho_\epsilon) $.
					\item For every $ \xi\in K_\varepsilon $, \[  G_n(R_{\rho_\varepsilon}(\xi))\subset \Delta_{ \rho_\varepsilon}(G_n(\xi)).\] 
						\item For every $ \xi\in K_\varepsilon $, if $ z\in\Delta_{ \rho_\varepsilon} (\xi)$,  then $ \varphi(z)\in W $ and \[	\textrm{\em dist}_W(\varphi(z), \varphi^*(\xi))<\frac{r_\varepsilon}{3} .\]
				\end{enumerate}
				\item\label{IL3 (d)} There are no isolated points in $ K_\varepsilon  $. In fact, for every $ \xi\in K_\varepsilon $, there exists a subsequence $ \left\lbrace n_k\coloneqq n_k(\xi)\right\rbrace _k $, $ n_k \to\infty$,  such that $ (g^{n_k})^*(\xi)\in K_\varepsilon $ and $ (g^{n_k})^*(\xi) \to \xi$.
				
				\item\label{IL3 (e)} For every $ \xi\in K_\varepsilon $, the orbit of $ \varphi^*(\xi) $ under $ f $ is dense in $ \partial U $.
			\end{enumerate}
		\end{lemma*}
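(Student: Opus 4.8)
The statement is a uniformization lemma: each of (4.1)--(4.5) holds for $\lambda$-almost every $\xi\in\partial\mathbb D$ once the constant it involves ($r_\varepsilon$, $\rho_\varepsilon$, the return times) is allowed to depend on $\xi$, so the task is to pass to a set $K_\varepsilon$ with $\lambda(K_\varepsilon)\ge1-\varepsilon$ on which finitely many of these constants become uniform. All of this is routine measure theory except the return property (4.4): there the orbit of $\xi$ must come back to $K_\varepsilon$ \emph{itself}, which is where recurrence of $g^*$ (hypothesis \ref{hypothesisc}) and non-singularity of every iterate of $g^*$ are used together.

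\textbf{The properties (4.1), (4.2), (4.3), (4.5).} First I would take $C_0$ to be the intersection of the following full-measure sets: $\{\xi:\varphi^*(\xi)\text{ exists}\}$ (Theorem \ref{thm-FatouRiez}); $(\varphi^*)^{-1}(X)$ (co-null because $\omega_U(X)=1$); $\{\xi:(g^n)^*(\xi)=(g^*)^n(\xi)\in\Theta_\Omega\text{ for all }n\ge0\}$ (co-null by Theorem \ref{thm-ergodic-properties}{\em\ref{ergodic-properties-nonsing}} and $\lambda(\Theta_E)=0$); the set where (3.1)--(3.2) hold, with positive $\rho(\xi),\rho_1(\xi)$; and $(\varphi^*)^{-1}$ of $\{x\in\partial U:\{f^n(x)\}_n\text{ dense in }\partial U\}$ (Theorem \ref{thm-ergodic-boundary-map}). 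Every $\xi\in C_0$ already satisfies (4.1) and (4.5). Since $\xi\mapsto r_W(\varphi^*(\xi))$ is positive and measurable on $C_0$, I pick $r_\varepsilon>0$ with $K^{(1)}:=\{\xi\in C_0:r_W(\varphi^*(\xi))>r_\varepsilon\}$ of measure $\ge1-\varepsilon/3$, which gives (4.2) by {\em(2.2)}. Because $\varphi^*(\xi)\in X\subset W$ with $W$ open, and $\varphi$ has angular limit $\varphi^*(\xi)$ along generalized Stolz angles (Lehto--Virtanen, Theorem \ref{thm-lehto-virtanen}, as $\varphi$ omits at least three values), for each $\xi\in K^{(1)}$ there is $\rho_2(\xi)\in(0,\rho_1(\xi)]$ with $\varphi(\Delta_{\alpha,\rho_2(\xi)}(\xi,p))\subset W$ and $\mathrm{dist}_W(\varphi(z),\varphi^*(\xi))<r_\varepsilon/3$ on that angle; choosing $\rho_\varepsilon>0$ with $K^{(2)}:=\{\xi\in K^{(1)}:\rho_2(\xi)>\rho_\varepsilon\}$ of measure $\ge1-2\varepsilon/3$, all of (4.3) holds on $K^{(2)}$ --- part (i) since $\rho_\varepsilon<\rho(\xi)$, part (ii) since $\rho_\varepsilon<\rho_1(\xi)$ and Proposition \ref{prop-radial-limits} holds for every radius below $\rho_1(\xi)$, part (iii) by the choice of $\rho_2$.

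\textbf{The crux: property (4.4).} For each $m$ I would cover $\partial\mathbb D$ by finitely many arcs of diameter $<1/m$ and apply recurrence of $g^*$ to each positive-measure piece $K^{(2)}\cap I$: $\lambda$-a.e.\ point of it returns to it, hence to $K^{(2)}$, infinitely often within distance $1/m$. Collecting the exceptional null sets (and the skipped null pieces) into a null set $B$, a diagonal argument over $m$ yields, for every $\xi\in K^{(2)}\setminus B$, a sequence $n_k\to\infty$ with $(g^*)^{n_k}(\xi)\in K^{(2)}$ and $(g^*)^{n_k}(\xi)\to\xi$. To force the returns into $K_\varepsilon$ itself, I set
\[
K_\varepsilon:=(K^{(2)}\setminus B)\setminus\bigcup_{n\ge1}(g^*)^{-n}(B);
\]
this removes only a null set, since each $(g^*)^n=(g^n)^*$ is non-singular, so $\lambda(K_\varepsilon)\ge\lambda(K^{(2)})\ge1-\varepsilon$. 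For $\xi\in K_\varepsilon$, each return $(g^*)^{n_k}(\xi)$ lies in $K^{(2)}$, avoids $B$ (as $\xi\notin(g^*)^{-n_k}(B)$), and has every forward iterate avoiding $B$ (as $\xi\notin(g^*)^{-(n_k+m)}(B)$), hence lies in $K_\varepsilon$. Finally, since $f$ is not Möbius, $g^q\ne\mathrm{id}_{\mathbb D}$ for all $q$, so the periodic set of $g^*$ is $\lambda$-null (the fixed set of $(g^q)^*$, real-analytic off the null set $E(g^q)$ by Theorem \ref{thm-postsingular}, is discrete there); discarding it from $K_\varepsilon$ gives $(g^*)^{n_k}(\xi)\ne\xi$, hence (4.4) together with the absence of isolated points. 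Properties (4.1)--(4.3), (4.5) survive because $K_\varepsilon\subset K^{(2)}\subset C_0$.

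\textbf{Main obstacle.} Everything but (4.4) is Egorov-type bookkeeping combined with the countable stability of co-null sets. The delicate part is making the orbit return to $K_\varepsilon$ \emph{exactly}: this is what forces the extra excision of $\bigcup_n(g^*)^{-n}(B)$ and the genuine use of non-singularity of all iterates of $g^*$, and it is also why one must rule out --- easily, but not for free --- a positive-measure set of periodic points of $g^*$.
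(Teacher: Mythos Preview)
Your proof is correct and follows essentially the same architecture as the paper's: both obtain (4.1)--(4.3) and (4.5) by intersecting co-null sets and then performing successive Egorov-type refinements to make $r_\varepsilon$ and $\rho_\varepsilon$ uniform. The only genuine difference is the handling of (4.4).

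The paper first passes to the set of Lebesgue density points of $K_\varepsilon$ (which already gives ``no isolated points''), picks a countable dense family $\{\xi_n\}$ of such points, sets $E_{j,n}=D(\xi_n,1/j)\cap K_\varepsilon$, and then uses ergodicity plus recurrence (Theorem~\ref{thm-almost-every-orbit-dense}) to get, for $\lambda$-a.e.\ $\xi$, visits to every $E_{j,n}$. Your route---covering $\partial\mathbb D$ by small arcs, applying recurrence to each $K^{(2)}\cap I$, and then excising $B\cup\bigcup_{n\ge1}(g^*)^{-n}(B)$---is different in detail but equivalent in spirit. It has one advantage: by removing all backward pullbacks of the exceptional set you guarantee that the returns land in the \emph{final} $K_\varepsilon$, whereas the paper's argument, read literally, only places the returns in the set constructed before the last null-excision (this is harmless for the application in Step~5, but your formulation matches the lemma statement more precisely).

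Two minor remarks. First, your justification that $g^q\neq\mathrm{id}_{\mathbb D}$ is valid: if $g^q=\mathrm{id}$ then $f^q|_U=\mathrm{id}$, hence $f^q\equiv\mathrm{id}$ by the identity theorem, contradicting that $f$ is not M\"obius. Second, the periodic-point excision is not strictly needed for ``no isolated points'': any set of positive Lebesgue measure has at most countably many isolated points, so you could simply delete those instead (this is effectively what the paper's passage to Lebesgue density points accomplishes).
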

	
	\begin{proof}
	Consider $ K\coloneqq (\varphi^*)^{-1}(X)\subset \partial \mathbb{D}$. Observe that $ \lambda(K)=1 $.
	There is no loss of generality on assuming that $ (g^n)^*(\xi) \in \Theta_\Omega$, for all $ n\geq0 $, since this holds $ \lambda $-almost everywhere. Hence, all points in $ K $ satisfy {\em \ref{IL3a}}.
	
	Next, by {\em \ref{IL1-b}}, for all $ \xi \in K $, there exists $ r\coloneqq r(\xi)>0 $, such that all relevant inverse branches $ F_n $ are well-defined in $ D_W(\varphi^*(\xi), r(\xi)) $. Hence, we can write $ K $ as the countable union of the nested measurable sets \[ K_m\coloneqq \left\lbrace \xi\in K\colon\textrm{ all relevant }  F_n\textrm{ are  well-defined in }D_W(\varphi^*(\xi), 1/m)\right\rbrace\subset K_{m+1}. \] Choose $ m_0 $ such that $ \lambda(K_{m_0}) \geq1-\varepsilon/3$, which satisfies {\em \ref{IL3b}} with $ r_\varepsilon=1/m_0 $. 
	
	Now, by \ref{IL2a} and \ref{IL2b}, we can assume that, for all $ \xi\in K_{m_0} $,  there exists $ \rho \coloneqq\rho (\xi)>0$ such that $ G_n $ is well-defined in $ D(\xi, \rho) $ and  $G_n(R_{\rho})\subset \Delta_{ \rho}(G_n(\xi))$.
	Hence, $ K_{m_0} $ can be written as the countable union of the nested measurable sets\[K_{m_0}^k\coloneqq\left\lbrace \xi\in K_{m_0}\colon  G_n \textrm{ well-defined in }D(\xi, 1/k) \textrm{ and } G_n(R_{1/k})\subset \Delta_{ \frac{1}{k}}(G_n(\xi))\right\rbrace.\]
	Choose $ k_0 $ such that $ \lambda(K^{k_0}_{m_0}) \geq1-\varepsilon/2$. Finally, note that  the angular limit of $ \varphi $ exists at every $ \xi\in K^{k_0}_{m_0} $, that is, for all $ \xi\in K^{k_0}_{m_0} $ there exists $ \rho_1(\xi)<\rho(\xi) $ such that, for all $ z\in \Delta_{\rho_1} $, \[\textrm{dist}_W(\varphi(z), \varphi^*(\xi))<\frac{r_\varepsilon}{3} .\]Hence, proceeding as before, we find $ K_\varepsilon\subset  K^{k_0}_{m_0}$, with $ \lambda(K_\varepsilon)\geq 1-\varepsilon $ and satisfying {\em \ref{IL3c}} for some $ \rho_\varepsilon >0 $, uniform for $ \xi\in K_\varepsilon $.
	
	Since $ \lambda $-almost every point in $ K_\varepsilon $ is a Lebesgue density point (Def. \ref{def-lebesgue-density}), there is no loss of generality on assuming that every $\xi\in K_\varepsilon $ is a Lebesgue density point. In particular, there are no isolated points in $ K_\varepsilon $. 
	
	Moreover, since $ g^*|_{\partial\mathbb{D}} $ is recurrent, every measurable set $ E\subset X $ with $ \lambda(E)>0 $, we have that for $ \lambda $-almost every point $ \xi\in K_\varepsilon $, there exists a subsequence $ \left\lbrace n_k\right\rbrace _k $, $ n_k\to\infty $, with $ (g^{n_k})^*(\xi)\in E $.
	
	Now, take a countable sequence $ \left\lbrace \xi_n\right\rbrace _n\subset K_\varepsilon $, such that $ \left\lbrace \xi_n\right\rbrace _n $ is dense in $ K_\varepsilon  $ and each $ \xi_n $ is a Lebesgue density point for $ K_\varepsilon $. Consider $ E_{j, n}\coloneqq D(\xi_n, 1/j)\cap K_\varepsilon $, for $ j, n\geq 1 $. Since each $ \xi_n$ is a Lebesgue density point for $ K_\varepsilon $, $ \lambda (E_{j,n}) >0$, for all $ j, n\geq 1 $. 
	
	Applying the previous property to the sequence $ \left\lbrace E_{j, n}\right\rbrace _{j,n }$, we have that, for $ \lambda $-almost every $ \xi\in K_\varepsilon  $, there exists a subsequence $ \left\lbrace n_{k}\right\rbrace _k $, $ n_k\to\infty $, with $ (g^{n_k})^*(\xi)\in E_{k,k}$. Hence, there exists a subsequence $ \left\lbrace n_k\right\rbrace _k $, $ n_k\to\infty $, with $ (g^{n_k})^*(\xi)\in  K_\varepsilon$, $ (g^{n_k})^*(\xi)\to \xi$, proving {\em \ref{IL3 (d)}}.
	
	Finally, since points in $ \partial U $ with dense orbit have full harmonic measure, we can assume that $ K_\varepsilon $ is chosen so that {\em \ref{IL3 (e)}} holds.
	\end{proof}

		\item {\em Construction of a periodic point in $ D_W(\varphi^*(\xi), r) $, for all $ \xi\in K_\varepsilon $ and $ r\in (0,r_\varepsilon )$.}
		
		Set $\xi\in K_\varepsilon $ and $ r\in (0,r_\varepsilon )$. The goal in this section is to find a periodic point in  $ D_W(\varphi^*(\xi), r) \cap\partial U$.
		
		Write  $ \xi_n \coloneqq (g^n)^*(\xi)$, for all $ n\geq 0 $. By {\em\ref{IL3a}}, $ \varphi^*(\xi_n) $ exists and belongs to $ \Omega $, for all $ n\geq0 $. By {\em \ref{IL3 (d)}} and  {\em \ref{IL3b}}, $ \xi_n\in K_\varepsilon $ infinitely often, and all relevant inverse branches are well-defined in $ D_W(\varphi^*(\xi_n), r_\varepsilon) $. In particular, for all $ 0\leq m<n $, there is a relevant inverse branch $ F_{n-m} $ of $ f^{n-m} $ in $ D(\varphi^*(\xi_n), r_\varepsilon) $. 
		Hence, for all $ n\geq 0 $, $ f $ maps conformally a neighbourhood of $ \varphi^*(\xi_n) $ onto a neighbourhood of $ \varphi^*(\xi_{n+1}) $. This implies that, although we are considering $ f\in\mathbb{K} $, in practice, everywhere where we consider $ f $, it is holomorphic (and, in fact, conformal).

		Now, consider $ D_W(\varphi^*(\xi_0), r_\varepsilon)  $, and let $ W_1 $ be the connected component of $ W $ such that $\varphi^*(\xi_0)\in W_1  $. Note that, by {\em \ref{IL3b}}, all relevant branches of $ f^{-n} $ at $ \varphi^*(\xi_0) $ are well-defined in $ D_W(\varphi^*(\xi_0), r_\varepsilon)  $. In particular, there exists $ n_0\geq 1 $ and  a relevant inverse branch of $ f^{n_0} $, say $ F_{n_0} $, such that $ F_{n_0}(\varphi^*(\xi_0))\in W_1 $ (recall that preimages of any point are dense in the Julia set, with at most two exceptions). Consider $ D_{n_0}\coloneqq F_{n_0}(D_W(\varphi^*(\xi_0), r_\varepsilon)) $. Therefore, \[F_{n_0}\colon D_W(\varphi^*(\xi_0), r_\varepsilon)\longrightarrow D_{n_0}\] conformally.
		\begin{claim*}
			There exists $ k \in (0,1)$ such that, for all $ z,w\in  D_W(\varphi^*(\xi_0), r_\varepsilon)$, \[\textrm{\em dist}_W(F_{n_0}(z), F_{n_0}(w))\leq k\cdot \textrm{\em dist}_W(z, w).\] 
		\end{claim*}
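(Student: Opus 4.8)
The plan is to upgrade the \emph{non-expansion} established in Step~2 — relevant inverse branches, and hence the composite $F_{n_0}$, do not increase $\mathrm{dist}_W$ — to a \emph{strict} contraction, with a constant uniform on $D_W(\varphi^*(\xi_0),r_\varepsilon)$. The extra input is that $F_{n_0}$ is really defined on a larger set than this ball: since $\varphi^*(\xi_0)\in X$, the construction of $W$ in Step~2 (together with~(2.2)) shows that every relevant branch of $f^{-m}$ at $\varphi^*(\xi_0)$ is conformal on the whole Euclidean disk $\widehat B:=D(\varphi^*(\xi_0),r(\varphi^*(\xi_0)))$, which compactly contains $D_W(\varphi^*(\xi_0),r_\varepsilon)$ (shrinking $r_\varepsilon$ a little if necessary). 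Hence $F_{n_0}$ extends conformally to $\widehat B$, its image $\widehat D_{n_0}:=F_{n_0}(\widehat B)$ is one of the building blocks of $W'$ (as $n_0\ge 1$), so $\widehat D_{n_0}\subseteq W$, and $D_{n_0}=F_{n_0}(D_W(\varphi^*(\xi_0),r_\varepsilon))$ is \emph{compactly} contained in $\widehat D_{n_0}$.

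\textbf{Main estimate.} Recall from the proof of Step~2 that at each step of the backward orbit $\rho_W(F_1(z))\,|F_1'(z)|=\dfrac{\rho_W(F_1(z))}{\rho_{W'}(F_1(z))}\,\rho_W(z)$, with $F_1(z)\in W'$; multiplying these identities along the $n_0$ steps gives $\rho_W(F_{n_0}(z))\,|F_{n_0}'(z)|=\bigl(\prod_j \rho_W(\zeta_j)/\rho_{W'}(\zeta_j)\bigr)\rho_W(z)$, where the $\zeta_j\in W'$ are the intermediate preimages and the last factor is $\rho_W/\rho_{W'}$ evaluated on $D_{n_0}$. I would then invoke rigidity of the Schwarz--Pick inequality for the inclusion $W'\hookrightarrow W$: if this inclusion is not a covering onto the component $W_1$ of $W$ containing $\widehat D_{n_0}$, it strictly decreases the hyperbolic density everywhere on the relevant component of $W'$, and since $\overline{D_{n_0}}$ is compact therein, $k:=\sup_{\overline{D_{n_0}}}\rho_W/\rho_{W'}<1$. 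All the other factors being $\le 1$, this yields $\rho_W(F_{n_0}(z))\,|F_{n_0}'(z)|\le k\,\rho_W(z)$ for every $z\in D_W(\varphi^*(\xi_0),r_\varepsilon)$; integrating along the $\mathrm{dist}_W$-geodesic from $z$ to $w$ (which stays in the hyperbolically convex ball, exactly as in Step~2) gives $\mathrm{dist}_W(F_{n_0}(z),F_{n_0}(w))\le k\,\mathrm{dist}_W(z,w)$, as claimed; it is harmless that $k$ depends on $\xi$ and $n_0$.

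\textbf{Main obstacle.} The step I expect to be delicate is justifying that the inclusion $W'\hookrightarrow W$ is genuinely \emph{not} a covering near $\widehat D_{n_0}$ — equivalently, that the component of $W'$ through $\widehat D_{n_0}$ is a proper subdomain of $W_1$, so that $\rho_W<\rho_{W'}$ on $\overline{D_{n_0}}$. This is precisely where the recurrence hypothesis~\ref{hypothesisc} is essential. Note that $W_1$ also contains $\widehat B\ni\varphi^*(\xi_0)$, because $n_0$ was chosen so that $F_{n_0}(\varphi^*(\xi_0))$ lies in the same component of $W$ as $\varphi^*(\xi_0)$. If that component of $W'$ were all of $W_1$, then $\rho_W\equiv\rho_{W'}$ on $W_1$ and one would in particular obtain $\widehat B\subseteq\widehat D_{n_0}$, so that $f^{n_0}|_{\widehat B}$ would be an injective holomorphic self-map of the simply connected hyperbolic domain $\widehat B$; by the Denjoy--Wolff theorem its iterates converge locally uniformly to a single point of $\overline{\widehat B}$, whence $f^{jn_0}(\varphi^*(\xi_0))=\varphi^*\bigl((g^*)^{jn_0}(\xi_0)\bigr)$ would converge in $\widehat{\partial}U$. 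But by hypothesis~\ref{hypothesisc}, Lemma~\ref{lemma-ergodic-properties-invariant} and Theorem~\ref{thm-ergodic-boundary-map}, $(f^{n_0})|_{\partial U}$ is recurrent and ergodic, so the subsequence $\{f^{jn_0}(\varphi^*(\xi_0))\}_j$ is dense in $\partial U$ for the $\lambda$-a.e.\ point $\xi$ fixed in Step~5 — contradicting that $\partial U$ is an infinite (perfect) set. This contradiction forces $\widehat D_{n_0}\subsetneq W_1$ and closes the argument.
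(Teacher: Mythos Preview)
Your overall strategy coincides with the paper's: obtain strict contraction from the Schwarz--Pick inequality for the inclusion of the relevant component of $W'$ into $W_1$, and use compactness of $\overline{D_{n_0}}$ to get a uniform $k<1$. Your ``Main estimate'' paragraph is essentially the paper's computation.

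The problem is in your ``Main obstacle'' paragraph. From the hypothesis that the component $W'_1$ of $W'$ through $\widehat D_{n_0}$ equals $W_1$, you assert $\widehat B\subseteq \widehat D_{n_0}$. This does not follow: $W'_1$ is in general a union of many building blocks $F_n(D(x,r(x)))$, and even if $W'_1=W_1$ there is no reason the single piece $\widehat D_{n_0}=F_{n_0}(\widehat B)$ should contain $\widehat B$. Without that inclusion, $f^{n_0}|_{\widehat B}$ need not be a self-map of $\widehat B$, so the Denjoy--Wolff\,/\,density-of-orbits contradiction you set up never gets off the ground. (There is also a minor loose end: property {\em (4.5)} gives density of the $f$-orbit of $\varphi^*(\xi_0)$, not of the $f^{n_0}$-orbit; this can be repaired, but the first gap cannot.)

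The paper closes this step in one line, without invoking recurrence or boundary dynamics at all. If $W'_1=W_1$, then, since $f^{n_0}\colon W'_1\to W_1$ is a holomorphic covering, $f^{n_0}(W_1)=W_1$; as $W_1$ is hyperbolic, Montel's theorem gives $W_1\subset\mathcal F(f^{n_0})=\mathcal F(f)$, contradicting $\varphi^*(\xi_0)\in W_1\cap\partial U\subset\mathcal J(f)$. Hence $W'_1\subsetneq W_1$, so $\rho_{W_1}<\rho_{W'_1}$, and compactness of $\overline{D_{n_0}}$ in $W'_1$ yields the uniform $k<1$ exactly as you intended.
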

		\begin{proof}
				Let $ W_1 $ be the connected component of $ W $ in which $ \varphi^*(\xi_0) $ lies. Consider, as in the proof of {\em \ref{IL1-c}},\[ W'\coloneqq \bigcup\limits_{x\in X\cap W_1}\bigcup\limits_{n\geq 1} \left\lbrace F_n(D(x, r(x))) \colon F_n\textrm{ is relevant at } x \right\rbrace \subset W.\] 
			
			Let $ W'_1 $ be the connected component of $ W' $ that contains $ F_{n_0}(\varphi^*(\xi_0)) $. Then, $ W'_1\subset W_1 $, and \[f^{n_0}\colon W'_1\to W_1\] is a holomorphic covering. Then,  for $ x\in W'_1\subset W_1$, it holds \[ \rho_{W_1'}(x)|=\rho_{W_1}(f^{n_0}(x))\cdot \left| (f^{n_0})'(x) \right| .\]
			Note that the inclusion $ W'_1\subset W_1 $ is strict (otherwise $ f^{n_0}(W_1)=W_1 $, and this is impossible since $ W_1 $ contains points of $ \mathcal{J}(f) $). Hence, $ \rho_{W_1}< \rho_{W'_1}$, so  \[ \rho_{W_1}(x)|<\rho_{W_1}(f^{n_0}(x))\cdot \left| (f^{n_0})'(x) \right| .\]
				
	Moreover,	since $ D_ W(\varphi^*(\xi_0), r_W)$ is compactly contained in $ W $, there exists $ k \in (0,1)$ such that, for all $ x\in D_ W(\varphi^*(\xi_0), r_W)$,  \[ \rho_{W_1}(F_{n_0}(x))\cdot \left| F_{n_0}'(x) \right|\leq k\cdot\rho_{W_1}(x). \]Finally, for  $ z,w\in D_W(\varphi^*(\xi_0),r_W) $, take $ \gamma\subset D_W(\varphi^*(\xi_0),r_W)$ geodesic between $ z $ and $ w $, and
	\[\textrm{dist}_W(F_{n_0}(z),F_{n_0}(w))= \textrm{dist}_{W_1}(F_{n_0}(z),F_{n_0}(w))\leq \int_{F_{n_0}(\gamma)}\rho_W(t)dt=\]\[=\int_\gamma \rho_W(F_{n_0}(t))\left| F_{n_0}'(t) \right| dt\leq k\int_\gamma \rho_W(t)dt= k\textrm{ dist}_{W_1}(z,w)= k\textrm{ dist}_{W}(z,w), \]
			proving the claim.
		\end{proof}
		Now, we claim that we can find $ N\geq 1 $ satisfying the following properties. \begin{enumerate}[label={ (5.\arabic*)}]
			\item\label{IL4b}  If $ N_0\coloneqq \#\left\lbrace n\leq N\colon \varphi^*(\xi_n)\in D_{n_0} \right\rbrace  $, then $ k^{N_0}<\dfrac{r}{3r_\varepsilon} $.
			\item\label{IL4a}  $ \xi_N\coloneqq (g^N)^*(\xi_0)\in K_\varepsilon $
			\item\label{IL4c} There exists $ t_N\in(0,1) $ such that $t_N\xi_N\in R_{\rho_\varepsilon}(\xi_N) \cap \Delta_{	\rho_\varepsilon}(\xi_0)$.
		\end{enumerate} 
	
	Indeed, by {\em \ref{IL3 (e)}}, the orbit of $ \varphi^*(\xi_0) $ is dense in $ \partial U $. In particular, it visits $ D_{n_0} $ infinitely many times. Hence, there exists $ N' $ so that 	{\ref{IL4b}} is satisfied for $ N' $. By {\em \ref{IL3 (d)}}, there exists a subsequence $ \left\lbrace n_k\right\rbrace _k $, $ n_k\to\infty $, such that $ \xi_{n_k} \in K_\varepsilon$ and $  \xi_{n_k}\to \xi_0$. Thus, we can find $ N\geq N' $ for which conditions  \ref{IL4a} and \ref{IL4c} are also satisfied.
	Note that the geometric condition in \ref{IL4c} is satisfied as long $ \xi_N $ is close enough to $ \xi_0 $, since the radius $ \rho_\varepsilon $ and the angle $  \alpha$ are uniform (see Fig. \ref{fig-tnxin} for a geometric intuition). 
			\begin{figure}[htb!]\centering
		\includegraphics[width=8cm]{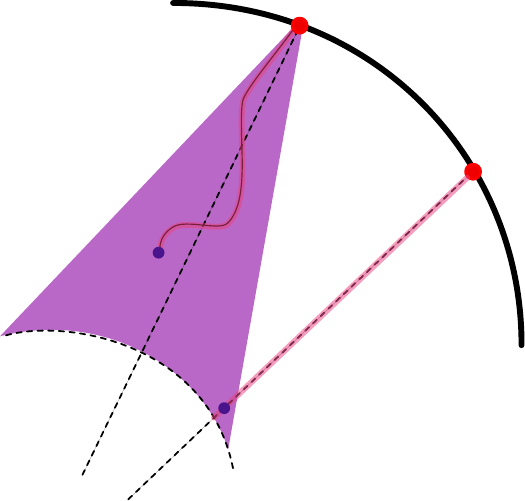}
			\setlength{\unitlength}{8cm}
		\put(-0.07, 0.62){$\xi_N$}
			\put(-0.55, 0.15){$t_N\xi_N$}
				\put(-0.82, 0.42){$G_N(t_N\xi_N)$}
						\put(-0.4, 0.3){$R(\xi_N)$}
							\put(-0.84, 0.65){$\Delta_{\rho_\varepsilon}(\xi_0)$}
			\put(-0.04, 0.25){$\partial\mathbb{D}$}
				\put(-0.42, 0.93){$\xi_0$}
		\caption{\footnotesize The choice of the point $t_N\xi_N\in R_{\rho_\varepsilon}(\xi_N) \cap \Delta_{	 \rho_\varepsilon}(\xi_0)$.}\label{fig-tnxin}
	\end{figure}

\begin{claim*}
	There exists a relevant inverse branch $ F_N $ of $ f^N $ at $ \varphi^*(\xi_N) $ defined in $ D_W(\varphi^*(\xi_N), r_\varepsilon) $, which satisfies $ F_N(\varphi^*(\xi_N) )=\varphi^*(\xi_0) $
	and \[ F_N( D_W(\varphi^*(\xi_N), r_\varepsilon))\subset  D_W\left( \varphi^*(\xi_0), \frac{r}{3}\right) \subset  D_W(\varphi^*(\xi_N), r_\varepsilon).\]
\end{claim*}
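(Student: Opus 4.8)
The plan is to build $F_N$ as the pull-back of $f^{-N}$ along the finite orbit $\varphi^*(\xi_0),\varphi^*(\xi_1),\dots,\varphi^*(\xi_N)$, to check that it is well-defined on $D_W(\varphi^*(\xi_N),r_\varepsilon)$ using the properties of $K_\varepsilon$, and then to estimate its contraction of $\textrm{dist}_W$ by combining the non-expansion statement {\em \ref{IL1-c}} with a uniform strict contraction produced each time the backward orbit meets $D_{n_0}$.

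First I would construct $F_N$. Applying Lemma \ref{lemma-radial-limit} $N$ times — which is legitimate because $\xi_n=(g^n)^*(\xi_0)\in\Theta_\Omega$ for every $n$ by {\em \ref{IL3a}} — gives $\varphi^*(\xi_N)=f^N(\varphi^*(\xi_0))$. As noted at the beginning of this step, for each $0\le n<N$ the map $f$ is conformal on a neighbourhood of $\varphi^*(\xi_n)$ mapping it onto a neighbourhood of $\varphi^*(\xi_{n+1})$, and composing the corresponding local inverses defines a branch $F_N$ of $f^{-N}$ with $F_N(\varphi^*(\xi_N))=\varphi^*(\xi_0)$. Since $\varphi^*(\xi_0)\in X\subset\partial U$, every image neighbourhood of $\varphi^*(\xi_0)$ meets $U$, so $F_N$ is a \emph{relevant} inverse branch of $f^N$ at $\varphi^*(\xi_N)$. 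Because $\xi_N\in K_\varepsilon$ (by {\em \ref{IL4a}}) and $\varphi^*(\xi_N)\in X$, property {\em \ref{IL3b}} then upgrades this to: $F_N$ is well-defined and conformal on all of $D_W(\varphi^*(\xi_N),r_\varepsilon)$, with image contained in $W$ by the very definition of $W$.

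For the contraction, I would write $F_N=\phi_1\circ\cdots\circ\phi_N$, where $\phi_j$ is the branch of $f^{-1}$ carrying $\varphi^*(\xi_j)$ to $\varphi^*(\xi_{j-1})$; each $\phi_j$ is a relevant inverse branch of $f$, hence $\textrm{dist}_W$-nonincreasing by {\em \ref{IL1-c}}. For each of the $N_0$ indices $n\le N$ with $\varphi^*(\xi_n)\in D_{n_0}$, the step $\phi_{n+1}$ lands in $D_{n_0}$, whose closure is a compact subset of $W'$ (cf. the proof of the Claim above, $D_{n_0}$ being the image of a compact subset of the domain of $F_{n_0}$); on such a compact subset of $W'$ the infinitesimal contraction factor of a relevant inverse branch of $f$ is bounded by the uniform constant $k\in(0,1)$ furnished by that proof. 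Carrying out the computation one backward step at a time, each of those $N_0$ steps contributes the factor $k$ while every other step contributes at most $1$, so $F_N$ contracts $\textrm{dist}_W$ by at most $k^{N_0}$ on $D_W(\varphi^*(\xi_N),r_\varepsilon)$.

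Finally I would deduce the two inclusions. For $z\in D_W(\varphi^*(\xi_N),r_\varepsilon)$ we get $\textrm{dist}_W(F_N(z),\varphi^*(\xi_0))\le k^{N_0}\,\textrm{dist}_W(z,\varphi^*(\xi_N))<k^{N_0}r_\varepsilon<r/3$ by {\em \ref{IL4b}}, which is the first inclusion. For the second, note that $t_N\xi_N\in R_{\rho_\varepsilon}(\xi_N)\subset\Delta_{\rho_\varepsilon}(\xi_N)$ and $t_N\xi_N\in\Delta_{\rho_\varepsilon}(\xi_0)$ by {\em \ref{IL4c}}; applying {\em \ref{IL3c}} at the two points $\xi_0,\xi_N\in K_\varepsilon$ together with the triangle inequality gives $\textrm{dist}_W(\varphi^*(\xi_0),\varphi^*(\xi_N))<\tfrac{r_\varepsilon}{3}+\tfrac{r_\varepsilon}{3}$, whence $D_W(\varphi^*(\xi_0),r/3)\subset D_W(\varphi^*(\xi_N),r_\varepsilon)$ because $r<r_\varepsilon$. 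The step I expect to be delicate is the contraction estimate: it is essential to run it one backward step at a time so that each of the $N_0$ visits of the backward orbit to the fixed compact set $\overline{D_{n_0}}\subset W'$ yields an \emph{independent} factor $k$; grouping the visits into blocks of length $n_0$ would be problematic, since consecutive visits may force the blocks to overlap, and one would then only recover a contraction of the order $k^{N_0/n_0}$.
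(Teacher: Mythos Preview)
Your argument follows the paper's proof essentially step by step: the construction of $F_N$ via Lemma~\ref{lemma-radial-limit} and {\em\ref{IL3b}}, the first inclusion from $k^{N_0}r_\varepsilon<r/3$, and the second inclusion via the triangle inequality through $\varphi(t_N\xi_N)$ are all exactly as in the paper.

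The only point worth flagging is your bookkeeping of the contraction. The constant $k$ established in the previous Claim bounds the $\textrm{dist}_W$-Lipschitz constant of the specific branch $F_{n_0}$ on $D_W(\varphi^*(\xi_0),r_\varepsilon)$; it is \emph{not} a bound for single backward steps of $f^{-1}$ landing in $D_{n_0}$, so the sentence ``the uniform constant $k\in(0,1)$ furnished by that proof'' is not literally justified. The paper avoids this by observing that each visit $\varphi^*(\xi_n)\in D_{n_0}$ forces $\varphi^*(\xi_{n+n_0})\in D_W(\varphi^*(\xi_0),r_\varepsilon)$, and that the branch of $f^{-n_0}$ taking $\varphi^*(\xi_{n+n_0})$ back to $\varphi^*(\xi_n)$ is precisely $F_{n_0}$; one then integrates the infinitesimal inequality $\rho_W(y)\le k\,\rho_{W'_1}(y)$ along the pullback, and overlaps of the $n_0$-blocks cause no difficulty because the inequality is pointwise on the compact image $\overline{D_{n_0}}$, not a statement about disjoint factorisation. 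Your single-step variant can be repaired by proving a separate constant $k'<1$ for branches of $f^{-1}$ with image in $\overline{D_{n_0}}$ (same strict-inclusion argument $W'_1\subsetneq W_1$ applied to $f$ instead of $f^{n_0}$), but as written it borrows a constant that was proved for a different map.
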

We note that, in particular, $ F_N( D_W(\varphi^*(\xi_N), r_\varepsilon))\subset  D_W\left( \varphi^*(\xi_0), r\right)  $.
		\begin{proof}
			First note that $ \xi_N\in K_\varepsilon $ \ref{IL4a}, so all relevant inverse branches are well-defined in $ D_W(\varphi^*(\xi_N), r_\varepsilon) $ {\em \ref{IL3b}}. Since $ \xi_N= (g^N)^*(\xi_0)$, by Lemma \ref{lemma-radial-limit}, we have $ f^N(\varphi^*(\xi_0))= \varphi^*(\xi_N) $. Hence, there exists a relevant inverse branch $ F_N $ of $ f^N $ at $ \varphi^*(\xi_N) $ defined in $ D_W(\varphi^*(\xi_N), r_\varepsilon) $, which satisfies $ F_N(\varphi^*(\xi_N) )=\varphi^*(\xi_0) $.
			
			Note that $ F_N $ is the composition of different inverse branches of $ f $, and each of them does not increase the hyperbolic distance $ \textrm{dist}_W $ between points {\em \ref{IL1-c}}. Moreover, $ \left\lbrace f^n(\xi_0) \right\rbrace_{n=0}^N  $ visits $ D_{n_0} $ at least $ N_0 $ times \ref{IL4b}. This means that applying $ F_N $ corresponds to applying the inverse $ F_{n_0} $, which acts as a contraction by $ k$,  at least $ N_0 $ times. Thus, we have \[F_N(D_W(\varphi^*(\xi_N), r_\varepsilon))\subset D_W\left( \varphi^*(\xi_0),k^{N_0} r_\varepsilon\right) \subset D_W\left( \varphi^*(\xi_0), \frac{r}{3}\right).\]
			
			To see the remaining inclusion, note that, by  {\em \ref{IL3c}} and \ref{IL4b}, we have \[\varphi(t_N\xi_N)\in D_W\left( \varphi^*(\xi_0), \frac{r_\varepsilon}{3}\right) \cap D_W\left( \varphi^*(\xi_N), \frac{r_\varepsilon}{3}\right).\] Hence, applying the triangle inequality,
			\[\textrm{dist}_W(\varphi^*(\xi_0), \varphi^*(\xi_N))\leq \textrm{dist}_W(\varphi^*(\xi_0), \varphi(t_N\xi_N))+\textrm{dist}_W(\varphi(t_N\xi_N), \varphi^*(\xi_N))<\frac{2r_\varepsilon}{3},\] implying the desired inclusion. 
	\end{proof}
	
	\begin{claim*}
		The map \[F_N\colon D_W(\varphi^*(\xi_N), r_\varepsilon)\longrightarrow D_W(\varphi^*(\xi_N), r_\varepsilon)\] has an attracting fixed point in $ D_W( \varphi^*(\xi_0), r)  $, which is accessible from $ U $. Hence, $ f $ has a repelling $ N $-periodic point in $ D_W\left( \varphi^*(\xi_0), r\right) \cap \partial U $.
	\end{claim*}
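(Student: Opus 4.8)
The plan is to exploit the strict contraction established in the previous claim together with the self-map property to invoke a fixed-point argument, and then transfer the fixed point back through the Riemann map and the conjugacy to produce the desired periodic point of $f$.

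\medskip

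\textbf{Step 1: Existence of an attracting fixed point for $F_N$.}
By the preceding claim, $F_N$ maps $D_W(\varphi^*(\xi_N),r_\varepsilon)$ into the relatively compact subset $D_W(\varphi^*(\xi_0),r/3)$, which is itself contained in $D_W(\varphi^*(\xi_N),r_\varepsilon)$ since $r<r_\varepsilon$ and $\textrm{dist}_W(\varphi^*(\xi_0),\varphi^*(\xi_N))<2r_\varepsilon/3$. Moreover $F_N$ is a composition of relevant inverse branches of $f$, each non-expanding for $\textrm{dist}_W$ by {\em\ref{IL1-c}}, and at least $N_0$ of the factors equal $F_{n_0}$, which contracts by the factor $k\in(0,1)$; hence $F_N$ is a strict contraction of $D_W(\varphi^*(\xi_N),r_\varepsilon)$ into itself with contraction constant at most $k^{N_0}<r/(3r_\varepsilon)<1$. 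Since closed hyperbolic balls in a component of $W$ are complete metric spaces, the Banach fixed point theorem gives a unique fixed point $z^*\in D_W(\varphi^*(\xi_0),r/3)\subset D_W(\varphi^*(\xi_0),r)$ of $F_N$, and $|F_N'(z^*)|<1$, so $z^*$ is an attracting fixed point of the holomorphic map $F_N$. (Here we use that, as noted above, wherever $f$ intervenes in this construction it is locally conformal, so $F_N$ is genuinely holomorphic on $D_W(\varphi^*(\xi_N),r_\varepsilon)$.)

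\medskip

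\textbf{Step 2: $z^*$ is a repelling $N$-periodic point of $f$ and lies in $\partial U$.}
Since $F_N$ is a branch of $f^{-N}$, the relation $F_N(z^*)=z^*$ gives $f^N(z^*)=z^*$, so $z^*$ is $N$-periodic for $f$; and $(f^N)'(z^*)=1/F_N'(z^*)$ has modulus $>1$, so the cycle is repelling, in particular $z^*\in\mathcal J(f)$. It remains to check $z^*\in\partial U$. The point $\varphi^*(\xi_0)\in X\subset\partial U$ by {\em\ref{IL3a}}, and $z^*$ lies in the hyperbolic ball $D_W(\varphi^*(\xi_0),r)$; since $\partial U$ is forward invariant and backward invariant under relevant inverse branches (each such branch, being a branch of $f^{-n}$ at a point of $\partial U$, maps a neighbourhood of that point into $\widehat{\mathbb C}$ respecting $U$, $\partial U$ and the complement), and since $\varphi^*(\xi_0)\in\partial U$, the whole forward orbit $\varphi^*(\xi_0),\dots,\varphi^*(\xi_N)=f^N(\varphi^*(\xi_0))$ lies in $\partial U$, and $F_N$ maps a neighbourhood of $\varphi^*(\xi_N)$ in $W$ conformally so as to preserve the partition into $U$, $\partial U$ and $\mathbb C\setminus\overline U$ near these points. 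Consequently the iterates $F_N^m(\varphi^*(\xi_N))$ stay in $\partial U$, converge to $z^*$, and hence $z^*\in\overline{\partial U}=\partial U$ (as $\partial U$ is closed in $\Omega$). Finally, $z^*$ is accessible from $U$: pick a point $w_0\in U$ close to $\varphi^*(\xi_0)$ inside $D_W(\varphi^*(\xi_0),r/3)$ — for instance $w_0=\varphi(t_N\xi_N)$, which lies in $U$ by {\em\ref{IL3c}}iii — and join it to $F_N(w_0),F_N^2(w_0),\dots$ by short arcs inside $U$; the iterated images shrink geometrically towards $z^*$, so the concatenation of these arcs is a curve in $U$ landing at $z^*$, exhibiting accessibility.

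\medskip

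\textbf{Conclusion and the main obstacle.}
Running Step~1 and Step~2 for each $\xi\in K_\varepsilon$ and each $r\in(0,r_\varepsilon)$ produces an accessible repelling periodic point of $f$ in $D_W(\varphi^*(\xi),r)\cap\partial U$; since $\{\varphi^*(\xi):\xi\in K_\varepsilon\}$ has full $\omega_U$-measure in $\partial U$ and $\omega_U$ has support all of $\widehat\partial U$ (Lemma~\ref{lemma_support_harmonic_measure}), letting $r\to0$ and $\varepsilon\to0$ shows accessible periodic points are dense in $\partial U$, which is exactly the assertion of Theorem~\ref{thm-periodic-points-are-dense}. The step I expect to require the most care is the verification that the fixed point genuinely lies on $\partial U$ rather than merely in $\overline U$ or in $\mathbb C\setminus\overline U$: this is where one must use that $F_N$ is a \emph{relevant} inverse branch, so that it preserves the trichotomy $U/\partial U/(\mathbb C\setminus\overline U)$ in a neighbourhood of the orbit, together with the fact that the base point $\varphi^*(\xi_0)$ was chosen in $\partial U$; the contraction estimate then pins the fixed point to the same stratum. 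The accessibility claim is a routine consequence once this is in place.
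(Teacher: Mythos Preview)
Your strategy coincides with the paper's: get the fixed point by a contraction argument (Banach in your version, Denjoy--Wolff in the paper's---equivalent here), deduce it is repelling for $f^N$ and hence lies in $\mathcal J(f)$, and then exhibit a curve in $U$ landing on it. Step~1 is fine.

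The genuine gap is in your accessibility argument, and this is also where your separate ``$z^*\in\partial U$'' argument is underspecified. You write ``join $w_0$ to $F_N(w_0),F_N^2(w_0),\dots$ by short arcs inside $U$; the iterated images shrink geometrically'', but you give no reason why an arc from $w_0$ to $F_N(w_0)$ can be taken inside $U\cap D_W(\varphi^*(\xi_N),r_\varepsilon)$, nor why the successive $F_N$-images of that arc remain in $U$. A priori $F_N$, being a \emph{relevant} inverse branch, only guarantees that its image meets $U$; it could also meet other preimage Fatou components of $U$, so ``$F_N$ preserves the trichotomy $U/\partial U/(\mathbb C\setminus\overline U)$'' is not automatic on the whole ball. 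This is precisely why the Stolz-angle estimates {\em\ref{IL3c}}ii,\,iii and the choice \ref{IL4c} of $t_N\xi_N$ were set up, and your sketch bypasses them.

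The paper fills this gap by working on the disc side. Both $t_N\xi_N$ and $G_N(t_N\xi_N)$ lie in the Stolz angle $\Delta_{\rho_\varepsilon}(\xi_0)$ (the first by \ref{IL4c}, the second by {\em\ref{IL3c}}ii), so one chooses a curve $\gamma\subset\Delta_{\rho_\varepsilon}(\xi_0)$ joining them. Then $\varphi(\gamma)\subset U$ automatically and $\varphi(\gamma)\subset D_W(\varphi^*(\xi_N),r_\varepsilon)$ by {\em\ref{IL3c}}iii; the conjugacy $F_N\circ\varphi=\varphi\circ G_N$ shows $\varphi(\gamma)$ joins $w_0$ to $F_N(w_0)$, and a clopen-in-$\alpha$ argument (or the conjugacy again) shows each $F_N^m(\varphi(\gamma))$ stays in $U$. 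The concatenation $\Gamma=\bigcup_{m\ge0}F_N^m(\varphi(\gamma))$ then lands at $z^*$. Note that once accessibility is established, $z^*\in\partial U$ follows immediately from $z^*\in\overline U\cap\mathcal J(f)$, so no separate trichotomy argument is needed---contrary to what you flag as the main obstacle, it is the accessibility that carries the weight, not the other way round.
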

	\begin{proof}
  Since $ F_N(D_W(\varphi^*(\xi_N), r_\varepsilon))\subset  D_W\left( \varphi^*(\xi_0), r\right) $, by the Denjoy-Wolff Theorem, $ F_n $ as a fixed point $p\in D_W(\varphi^*(\xi_0), r)$, which attracts all points in $ D_W(\varphi^*(\xi_N),r_\varepsilon)$ under the iteration of $ F_n $. Hence it is repelling under $ f^n $ and thus belongs to $ \mathcal{J}(f) $.
  
  It is left to see that $ p $ is accessible from $ U $. To do so, first note that, by {\em \ref{IL3c}}, there exists a branch $ G_N $ of $ g^{-N} $ such that $ G_N $ is well-defined in $ D(\xi_N, \rho_\varepsilon ) $ and $ G_N(\xi_N)=\xi_0 $. It holds that $ \varphi\circ G_N= F_N\circ \varphi $ in $ \Delta_{ \rho_\varepsilon}(\xi_N) $. Moreover, $ G_N(R_{\rho_\varepsilon}(\xi_N))\subset  \Delta_{	\rho_\varepsilon}(\xi_0) $. In particular, $ G_N(t_N\xi_N)\in  \Delta_{ \rho_\varepsilon}(\xi_0)$. 
  
  Since $ t_N\xi_N\in  \Delta_{	 \rho_\varepsilon}(\xi_0)$, we can find a curve $ \gamma\subset  \Delta_{	 \rho_\varepsilon}(\xi_0)$ joining $ t_N\xi_N $ and $ G_N(t_N\xi_N) $. Then, $ \varphi(\gamma)\subset D_W(\varphi^*(\xi_N), r_\varepsilon) $ joins $ \varphi(t_N\xi_N) $ with $ F_N(\varphi(t_N\xi_N)) $. Define \[\Gamma\coloneqq \bigcup\limits_{m\geq 0}F^m_N(\gamma).\]Then, $ \Gamma \subset\partial U$ lands at $ p $, proving the claim.
	\end{proof}
  	\begin{figure}[htb!]\centering
	\includegraphics[width=15cm]{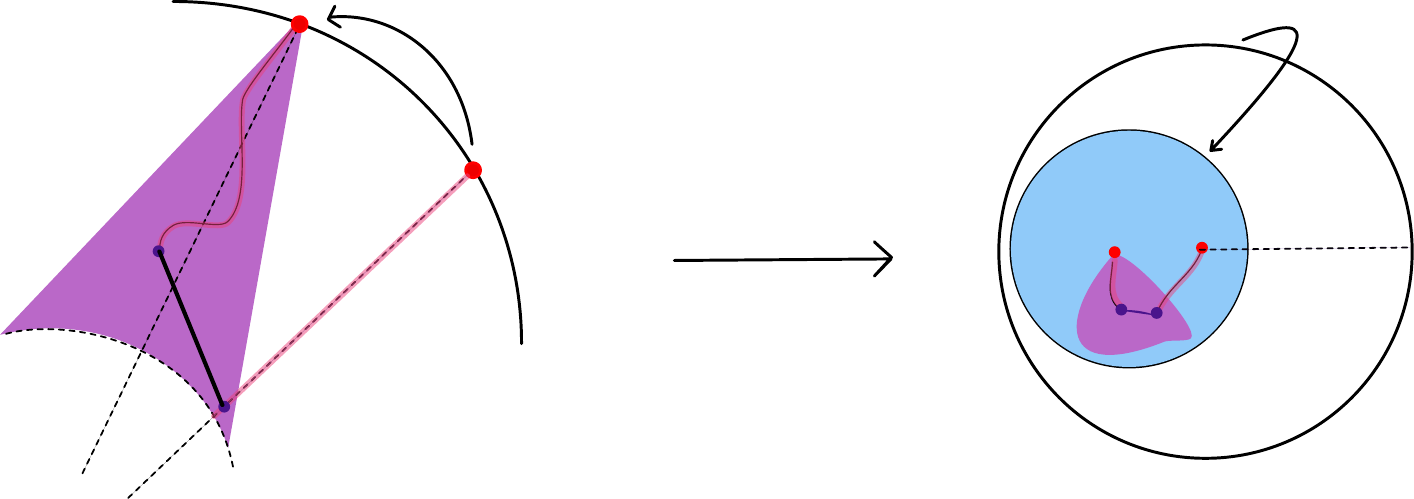}
		\setlength{\unitlength}{15cm}
	\put(-0.65, 0.23){$\xi_N$}
	\put(-0.83, 0.05){$t_N\xi_N$}
	\put(-1.005, 0.15){$G_N(t_N\xi_N)$}
		\put(-0.68, 0.3){$G_N$}
			\put(-0.08, 0.32){$F_N$}
	\put(-0.86, 0.12){$\gamma$}
	\put(-0.46, 0.18){$\varphi$}
	\put(-0.65, 0.085){$\partial\mathbb{D}$}
	\put(-0.8, 0.35){$\xi_0$}
		\put(-0.22, 0.11){$\varphi(\gamma)$}
			\put(-0.17, 0.18){$\varphi^*(\xi_N)$}
				\put(-0.25, 0.18){$\varphi^*(\xi_0)$}
					\put(-0.07, 0.16){$\varepsilon$}
	\caption{\footnotesize The construction of the curve $ \gamma $ in $ \mathbb{D} $, and its image $ \varphi(\gamma) $ in the dynamical plane.}\label{fig-puntsperiodics}
\end{figure}
		\item {\em Periodic points are dense in $ \partial U $.}
		
		Finally, to see that the previous construction leads to density of periodic points in $ \partial U $, one should take into account that $ \textrm{supp }\omega_U=\widehat{\partial} U $ (Lemma \ref{lemma_support_harmonic_measure}). Hence, for all $ x\in\partial U $ and $ \delta>0 $, it holds $ \omega_U (D(x,\delta))>0 $. Take $ \varepsilon\coloneqq  \omega_U (D(x,\delta))/2$, and consider $ K_\varepsilon $ as before. Note that, by the choice of $ \varepsilon $, we have $ \omega_U (D(x,\delta)\cap \varphi^*(K_\varepsilon))>0$.
		
		 Let $ \xi\in K_\varepsilon $ be such that $ \varphi^*(\xi)\in  D(x,\delta)$, and let  $ r\leq r_\varepsilon $.
		 In the previous step, we proved the existence of a periodic point in $ D_W(\varphi^*(\xi), r) $. 
		Taking $ r $ small enough, since $ \varphi^*(\xi)\in  D(x,\delta)$, we can ensure that the periodic point is in $ D(x,\delta) $. 
	\end{enumerate}
This ends the proof of Theorem \ref{thm-periodic-points-are-dense}.

\vspace{1cm}
{\bf Conflicts of interest.} The author states that there is no conflict of interest.

\vspace{0.2cm}
{\bf Data availability.} Data sharing not applicable to this article as no datasets were generated or analysed during the current study.
\printbibliography

@article {bieberbach,
	AUTHOR = {de Branges, L.},
	TITLE = {A proof of the {B}ieberbach conjecture},
	JOURNAL = {Acta Math.},
	FJOURNAL = {Acta Mathematica},
	VOLUME = {154},
	YEAR = {1985},
	NUMBER = {1-2},
	PAGES = {137--152},
}

@article {przytycki_zdunik,
	AUTHOR = {Przytycki, F. and Zdunik, A.},
	TITLE = {Density of periodic sources in the boundary of a basin of
	attraction for iteration of holomorphic maps: geometric coding
	trees technique},
	JOURNAL = {Fund. Math.},
	FJOURNAL = {Fundamenta Mathematicae},
	VOLUME = {145},
	YEAR = {1994},
	NUMBER = {1},
	PAGES = {65--77},
}

@book {milnor,
	AUTHOR = {Milnor, J.},
	TITLE = {Dynamics in one complex variable},
	SERIES = {Annals of Mathematics Studies},
	VOLUME = {160},
	EDITION = {Third},
	PUBLISHER = {Princeton University Press, Princeton, NJ},
	YEAR = {2006},
}

@book {Pommerenke,
	AUTHOR = {Pommerenke, Ch.},
	TITLE = {Boundary behaviour of conformal maps},
	SERIES = {Grundlehren der mathematischen Wissenschaften },
	VOLUME = {299},
	PUBLISHER = {Springer-Verlag, Berlin},
	YEAR = {1992},
}

@book {Hawkins,
	AUTHOR = {Hawkins, J.},
	TITLE = {Ergodic dynamics-from basic theory to applications},
	SERIES = {Graduate Texts in Mathematics},
	VOLUME = {289},
	PUBLISHER = {Springer},
	YEAR = {2021},
}

@book {PrzytyckiUrbanski,
	AUTHOR = {Przytycki, F. and Urba\'{n}ski, M.},
	TITLE = {Conformal fractals: ergodic theory methods},
	VOLUME = {371},
	PUBLISHER = {Cambridge University Press},
	YEAR = {2010},
}

@book {Rudin,
	AUTHOR = {Rudin, W.},
	TITLE = {Real and complex analysis},
	EDITION = {Third},
	PUBLISHER = {McGraw-Hill Book Co., New York},
	YEAR = {1987},
}

@book {garnett,
	AUTHOR = {Garnett, J. B.},
	TITLE = {Bounded analytic functions},
	SERIES = {Graduate Texts in Mathematics},
	VOLUME = {236},
	PUBLISHER = {Springer, New York},
	YEAR = {2007},
}

@article {wolff,
	AUTHOR = {Wolff, J.},
	TITLE = {Sur une généralisation d'un théorème de Schwarz},
	JOURNAL = {Comptes Rendus Acad. Sci.},
	VOLUME = {183},
	YEAR = {1926},
	NUMBER = {},
	PAGES = {918-920},
}

@article {cowen,
	AUTHOR = {Cowen, C. C.},
	TITLE = {Iteration and the solution of functional equations for
	functions analytic in the unit disk},
	JOURNAL = {Trans. Amer. Math. Soc.},
	FJOURNAL = {Transactions of the American Mathematical Society},
	VOLUME = {265},
	YEAR = {1981},
	NUMBER = {1},
	PAGES = {69--95},
}

@article {BakerDomínguez,
	AUTHOR = {Baker, I. N. and Dom\'{\i}nguez, P.},
	TITLE = {Boundaries of unbounded {F}atou components of entire
	functions},
	JOURNAL = {Ann. Acad. Sci. Fenn. Math.},
	FJOURNAL = {Annales Academi\ae  Scientiarum Fennic\ae . Mathematica},
	VOLUME = {24},
	YEAR = {1999},
	NUMBER = {2},
	PAGES = {437--464},
}

@article{DoeringMané1991,
	author = {Doering, C. and Mañé, R.},
	journal = {Ensaios Matemáticos (SBM)},
	pages = {1-79},
	title = {The Dynamics of Inner Functions},
	volume = {3},
	year = {1991},
}

@incollection {Bargmann,
	AUTHOR = {Bargmann, D.},
	TITLE = {Iteration of inner functions and boundaries of components of
	the {F}atou set},
	BOOKTITLE = {Transcendental dynamics and complex analysis},
	SERIES = {London Math. Soc. Lecture Note Ser.},
	VOLUME = {348},
	PAGES = {1--36},
	PUBLISHER = {Cambridge Univ. Press, Cambridge},
	YEAR = {2008},
}

@article {Bonfert,
	AUTHOR = {Bonfert, P.},
	TITLE = {On iteration in planar domains},
	JOURNAL = {Michigan Math. J.},
	FJOURNAL = {Michigan Mathematical Journal},
	VOLUME = {44},
	YEAR = {1997},
	NUMBER = {1},
	PAGES = {47--68},
}

@article {bfjk-escaping,
	AUTHOR = {Bara\'{n}ski, K. and Fagella, N. and Jarque, X. and
	Karpi\'{n}ska, B.},
	TITLE = {Escaping points in the boundaries of {B}aker domains},
	JOURNAL = {J. Anal. Math.},
	FJOURNAL = {Journal d'Analyse Math\'{e}matique},
	VOLUME = {137},
	YEAR = {2019},
	NUMBER = {2},
	PAGES = {679--706},
	shorthand={BFJK19}
}

@article {BakerDominguezHerring,
	AUTHOR = {Baker, I. N. and Dom\'{\i}nguez, P. and Herring, M. E.},
	TITLE = {Dynamics of functions meromorphic outside a small set},
	JOURNAL = {Ergodic Theory Dynam. Systems},
	FJOURNAL = {Ergodic Theory and Dynamical Systems},
	VOLUME = {21},
	YEAR = {2001},
	NUMBER = {3},
	PAGES = {647--672},
}

@article {Bolsch_repulsivepp,
	AUTHOR = {Bolsch, A.},
	TITLE = {Repulsive periodic points of meromorphic functions},
	JOURNAL = {Complex Variables Theory Appl.},
	FJOURNAL = {Complex Variables. Theory and Application. An International
	Journal},
	VOLUME = {31},
	YEAR = {1996},
	NUMBER = {1},
	PAGES = {75--79},
}

@article {bfjk_Newton,
	AUTHOR = {Bara\'{n}ski, K. and Fagella, N. and Jarque,
	X. and Karpi\'{n}ska, B.},
	TITLE = {Connectivity of {J}ulia sets of {N}ewton maps: a unified
	approach},
	JOURNAL = {Rev. Mat. Iberoam.},
	FJOURNAL = {Revista Matem\'{a}tica Iberoamericana},
	VOLUME = {34},
	YEAR = {2018},
	NUMBER = {3},
	PAGES = {1211--1228},
		shorthand={BFJK18}
}

@article {bfjk_connectivity,
	AUTHOR = {Bara\'{n}ski, K. and Fagella, N. and J.,
	Xavier and Karpi\'{n}ska, B.},
	TITLE = {On the connectivity of the {J}ulia sets of meromorphic
	functions},
	JOURNAL = {Invent. Math.},
	FJOURNAL = {Inventiones Mathematicae},
	VOLUME = {198},
	YEAR = {2014},
	NUMBER = {3},
	PAGES = {591--636},
		shorthand={BFJK14}
}

@book {Conway,
	AUTHOR = {Conway, J. B.},
	TITLE = {Functions of one complex variable. {II}},
	SERIES = {Graduate Texts in Mathematics},
	VOLUME = {159},
	PUBLISHER = {Springer-Verlag, New York},
	YEAR = {1995},
}

@article {Bolsch-Fatoucomponents,
	AUTHOR = {Bolsch, A.},
	TITLE = {Periodic {F}atou components of meromorphic functions},
	JOURNAL = {Bull. London Math. Soc.},
	FJOURNAL = {The Bulletin of the London Mathematical Society},
	VOLUME = {31},
	YEAR = {1999},
	NUMBER = {5},
	PAGES = {543--555},
}

@book {Aaronson97,
	AUTHOR = {Aaronson, J.},
	TITLE = {An introduction to infinite ergodic theory},
	SERIES = {Mathematical Surveys and Monographs},
	VOLUME = {50},
	PUBLISHER = {American Mathematical Society, Providence, RI},
	YEAR = {1997},
}

@misc{befrs-dw,
	doi = {10.48550/ARXIV.2203.06235},
	
	
	author = {Benini, A. M. and Evdoridou, V. and Fagella, N. and Rippon, P. J. and Stallard, G. M.},
	
	
	title = {The Denjoy--Wolff set for holomorphic sequences, non-autonomous dynamical systems and wandering domains},
	
	publisher = {arXiv},
	
	year = {2022},
	
	shorthand={BEFRS22},
	
}

@misc{JoveFagella2,
	title={Boundary dynamics in unbounded Fatou components}, 
	author={A. Jové and N. Fagella},
	year={2023},
	eprint={2307.11384},
	archivePrefix={arXiv},
	primaryClass={math.DS}
}

@book {harmonicmeasure2,
	AUTHOR = {Garnett, J. B. and Marshall, D. E.},
	TITLE = {Harmonic measure},
	SERIES = {New Mathematical Monographs},
	VOLUME = {2},
	PUBLISHER = {Cambridge University Press, Cambridge},
	YEAR = {2005},
	ISBN = {9780521470186},
}

@article {FatousAssociates,
	AUTHOR = {Evdoridou, V. and Rempe, L. and Sixsmith, D. J.},
	TITLE = {Fatou's associates},
	JOURNAL = {Arnold Math. J.},
	FJOURNAL = {Arnold Mathematical Journal},
	VOLUME = {6},
	YEAR = {2020},
	NUMBER = {3-4},
	PAGES = {459--493},
}

@article {efjs,
	AUTHOR = {Evdoridou, V. and Fagella, N. and Jarque, X.
	and Sixsmith, D. J.},
	TITLE = {Singularities of inner functions associated with hyperbolic
	maps},
	JOURNAL = {J. Math. Anal. Appl.},
	FJOURNAL = {Journal of Mathematical Analysis and Applications},
	VOLUME = {477},
	YEAR = {2019},
	NUMBER = {1},
	PAGES = {536--550},
			shorthand={EFJS19},
}

@article {bergweiler,
	AUTHOR = {Bergweiler, W.},
	TITLE = {Iteration of meromorphic functions},
	JOURNAL = {Bull. Amer. Math. Soc. (N.S.)},
	FJOURNAL = {American Mathematical Society. Bulletin. New Series},
	VOLUME = {29},
	YEAR = {1993},
	NUMBER = {2},
	PAGES = {151--188},
}

@article {przytycki_attracting,
	AUTHOR = {Przytycki, F.},
	TITLE = {Hausdorff dimension of harmonic measure on the boundary of an
	attractive basin for a holomorphic map},
	JOURNAL = {Invent. Math.},
	FJOURNAL = {Inventiones Mathematicae},
	VOLUME = {80},
	YEAR = {1985},
	NUMBER = {1},
	PAGES = {161--179},
}

@article {przytycki_attracting2,
	AUTHOR = {Przytycki, F.},
	TITLE = {Riemann map and holomorphic dynamics},
	JOURNAL = {Invent. Math.},
	FJOURNAL = {Inventiones Mathematicae},
	VOLUME = {85},
	YEAR = {1986},
	NUMBER = {3},
	PAGES = {439--455},
}

@article {BaranskiKarpinska_GeometricCodingTree,
	AUTHOR = {Bara\'{n}ski, K. and Karpi\'{n}ska, B.},
	TITLE = {Coding trees and boundaries of attracting basins for some
	entire maps},
	JOURNAL = {Nonlinearity},
	FJOURNAL = {Nonlinearity},
	VOLUME = {20},
	YEAR = {2007},
	NUMBER = {2},
	PAGES = {391--415},
}

@article {Baker_SimplyConnected,
	AUTHOR = {Baker, I. N.},
	TITLE = {Wandering domains in the iteration of entire functions},
	JOURNAL = {Proc. London Math. Soc. (3)},
	FJOURNAL = {Proceedings of the London Mathematical Society. Third Series},
	VOLUME = {49},
	YEAR = {1984},
	NUMBER = {3},
	PAGES = {563--576},
}

@article {Baker_FixedPoints,
	AUTHOR = {Baker, I. N.},
	TITLE = {Repulsive fixpoints of entire functions},
	JOURNAL = {Math. Z.},
	FJOURNAL = {Mathematische Zeitschrift},
	VOLUME = {104},
	YEAR = {1968},
	PAGES = {252--256},
}

@article {BakerKotusLu_I,
	AUTHOR = {Baker, I. N. and Kotus, J. and  L\"{u}, Y.},
	TITLE = {Iterates of meromorphic functions. {I}},
	JOURNAL = {Ergodic Theory Dynam. Systems},
	FJOURNAL = {Ergodic Theory and Dynamical Systems},
	VOLUME = {11},
	YEAR = {1991},
	NUMBER = {2},
	PAGES = {241--248},
}

@article {PUZ91,
	AUTHOR = {Przytycki, F. and Urba\'{n}ski, M. and Zdunik, A.},
	TITLE = {Harmonic, {G}ibbs and {H}ausdorff measures on repellers for
	holomorphic maps. {II}},
	JOURNAL = {Studia Math.},
	FJOURNAL = {Polska Akademia Nauk. Instytut Matematyczny. Studia
	Mathematica},
	VOLUME = {97},
	YEAR = {1991},
	NUMBER = {3},
	PAGES = {189--225},

}

@article {Taixés1,
	AUTHOR = {Fagella, N. and Jarque, X. and Taix\'{e}s, J.},
	TITLE = {On connectivity of {J}ulia sets of transcendental meromorphic
	maps and weakly repelling fixed points. {I}},
	JOURNAL = {Proc. Lond. Math. Soc. (3)},
	FJOURNAL = {Proceedings of the London Mathematical Society. Third Series},
	VOLUME = {97},
	YEAR = {2008},
	NUMBER = {3},
	PAGES = {599--622},
}

@article {Taixés2,
	AUTHOR = {Fagella, N. and Jarque, X. and Taix\'{e}s, J.},
	TITLE = {On connectivity of {J}ulia sets of transcendental meromorphic
	maps and weakly repelling fixed points {II}},
	JOURNAL = {Fund. Math.},
	FJOURNAL = {Fundamenta Mathematicae},
	VOLUME = {215},
	YEAR = {2011},
	NUMBER = {2},
	PAGES = {177--202},
}

@article {BergweilerEremenko,
	AUTHOR = {Bergweiler, W. and Eremenko, A.},
	TITLE = {On the singularities of the inverse to a meromorphic function
	of finite order},
	JOURNAL = {Rev. Mat. Iberoamericana},
	FJOURNAL = {Revista Matem\'{a}tica Iberoamericana},
	VOLUME = {11},
	YEAR = {1995},
	NUMBER = {2},
	PAGES = {355--373},
}

@PHDTHESIS{Bolsch-thesis, 
	author =       {A. Bolsch}, 
	title =        {Iteration of meromorphic functions with countably many essential singularities}, 
	school =       {Technischen Universität Berlin}, 
	year =         {1997}, 
}

@PHDTHESIS{Iversen-thesis, 
	author =       {F. Iversen}, 
	title =        {Recherches sur les fonctions inverses des fonctions méromorphes}, 
	school =       {Helsingfors}, 
	year =         {1914}, 
}

@article {BakerDominguezHerring2,
	AUTHOR = {Baker, I. N. and Dom\'{\i}nguez, P. and Herring, M. E.},
	TITLE = {Functions meromorphic outside a small set: completely
	invariant domains},
	JOURNAL = {Complex Var. Theory Appl.},
	FJOURNAL = {Complex Variables. Theory and Application. An International
	Journal},
	VOLUME = {49},
	YEAR = {2004},
	NUMBER = {2},
	PAGES = {95--100},
}

@article {Dominguez3,
	AUTHOR = {Dom\'{\i}nguez, P. and Montes de Oca, M. A. and
	Sienra, G. J. F.},
	TITLE = {Extended escaping set for meromorphic functions outside a
	countable set of transcendental singularities},
	JOURNAL = {Ann. Polon. Math.},
	FJOURNAL = {Annales Polonici Mathematici},
	VOLUME = {129},
	YEAR = {2022},
	NUMBER = {1},
	PAGES = {25--41},
}

@article {Dominguez4,
	AUTHOR = {Dom\'{\i}nguez, P.},
	TITLE = {Residual {J}ulia sets for meromorphic functions with countably
	many essential singularities},
	JOURNAL = {J. Difference Equ. Appl.},
	FJOURNAL = {Journal of Difference Equations and Applications},
	VOLUME = {16},
	YEAR = {2010},
	NUMBER = {5-6},
	PAGES = {519--522},
}

@article {EvdoridouMartiPeteSixsmith,
	AUTHOR = {Evdoridou, V. and Mart\'{\i}-Pete, D. and Sixsmith,
	D. J.},
	TITLE = {On the connectivity of the escaping set in the punctured
	plane},
	JOURNAL = {Collect. Math.},
	FJOURNAL = {Collectanea Mathematica},
	VOLUME = {72},
	YEAR = {2021},
	NUMBER = {1},
	PAGES = {109--127},
		shorthand={EMS21},
}

@misc{huang2022connectivity,
	title={Connectivity of Fatou Components of Meromorphic Functions}, 
	author={J. Huang and C. Wu and J.-H. Zheng},
	year={2022},
	eprint={2211.14258},
	archivePrefix={arXiv},
	primaryClass={math.CV}
}

@misc{ivrii2023inner,
	title={Inner Functions, Composition Operators, Symbolic Dynamics and Thermodynamic Formalism}, 
	author={O. Ivrii and M. Urbański},
	year={2023},
	eprint={2308.16063},
	archivePrefix={arXiv},
	primaryClass={math.DS}
}

@InProceedings{Keen,
	author={Keen, L.},
	title={Dynamics of holomorphic self-maps of $ \mathbb{C}^* $},
	booktitle={Holomorphic Functions and Moduli I},
	year={1988},
	publisher={Springer US},
	address={New York, NY},
	pages={9--30},
}

\end{document}